\newlength{\baseunit} 
\newtheorem{thm}{Theorem}[section]
\newtheorem{lem}[thm]{Lemma}
\newtheorem{prop}[thm]{Proposition}
\newtheorem{cor}[thm]{Corollary}
\newtheorem{theorem}[thm]{Theorem}
\newtheorem{lemma}[thm]{Lemma}
\newtheorem{corollary}[thm]{Corollary}
\theoremstyle{definition}
\newtheorem{defn}[thm]{Definition}
\newtheorem{example}[thm]{Example}
\newtheorem{definition}[thm]{Definition}
\newtheorem{remark}[thm]{Remark}
\theoremstyle{remark}
\numberwithin{equation}{section}
  \newcommand\cC{\mathcal{C}}\newcommand\cD{\mathcal{D}}  \newcommand\cG{\mathcal{G}}\newcommand\cH{\mathcal{H}}\newcommand\cL{\mathcal{L}}\newcommand\cM{\mathcal{M}}\newcommand\cO{\mathcal{O}}\newcommand\cP{\mathcal{P}}\newcommand\cS{\mathcal{S}}\newcommand\cW{\mathcal{W}}\newcommand\cX{\mathcal{X}}\newcommand\cY{\mathcal{Y}}\newcommand\cZ{\mathcal{Z}}
\renewcommand\AA{\mathbb{A}}\newcommand\CC{\mathbb{C}}\newcommand\GG{\mathbb{G}}\newcommand\PP{\mathbb{P}}\newcommand\QQ{\mathbb{Q}}
\newcommand\ZZ{\mathbb{Z}}
  \newcommand\fD{\mathfrak{D}}  \newcommand\fZ{\mathfrak{Z}}
  \newcommand\fm{\mathfrak{m}}
\newcommand{\bk}{{\bf k}}
\newcommand{\bs}{{\bf s}}
\newcommand{\bc}{{\bf c}}
\newcommand\hookarr{\hookrightarrow}
\newcommand\im{\operatorname{im}}
\renewcommand{\setminus}{\smallsetminus}
\renewcommand{\ss}{\operatorname{ss}}
\newcommand{\Proj}{\operatorname{Proj}}
\newcommand{\Sym}{\operatorname{Sym}}
\newcommand{\oh}{\cO}
\newcommand{\Spec}{\operatorname{Spec}}
\newcommand{\tensor} {\otimes}
\renewcommand{\tilde}{\widetilde}
\newcommand{\ch}{\operatorname{char}}
\newcommand{\Spf}{\operatorname{Spf}}
\newcommand{\iso}{\stackrel{\sim}{\to}}
\renewcommand{\bar}{\overline}
\newcommand{\GL}{\operatorname{GL}}
\newcommand{\PGL}{\operatorname{PGL}}
\newcommand{\dual}{\vee}
\renewcommand{\hat}{\widehat}
\renewcommand{\AA}{{\mathbb A}}
\renewcommand{\emptyset}{\varnothing}
\renewcommand{\leq}{\leqslant}
\renewcommand{\geq}{\geqslant}
\renewcommand{\tilde}{\widetilde}
\renewcommand{\hat}{\widehat}
\renewcommand{\bar}{\overline}
\def\co{\colon\thinspace} 
\newcommand{\sar}[1][]{{\ar@<-0.5ex>@{^{(}->}[#1]}}
\newcommand{\sarl}[1][]{{\ar@<0.5ex>@{_{(}->}[#1]}}
\newcommand{\minus}{\setminus}
\newcommand{\ctdin}{\subseteq}  
\newcommand{\isom}{\cong}
\newcommand{\arr}{\rightarrow}
\newcommand{\inn}{\cap}
\newcommand{\unn}{\cup}
\newcommand{\dunn}{\sqcup}
\newcommand{\cross}{\times}
\newcommand{\eps}{{\varepsilon}}
\DeclareMathOperator{\Cr}{LT}
\DeclareMathOperator{\Tri}{Tri}
\DeclareMathOperator{\TT}{T}
\DeclareMathOperator{\Aut}{Aut}
\DeclareMathOperator{\Def}{Def}
\DeclareMathOperator{\LT}{LT}
\newcommand{\q}[1]{\{ q_i \}_{i=1}^{#1}}
\def\C{\mathcal{C}}
\def\E{\mathcal{E}}
\def\Frac{\text{Frac\,}}
\def\H{\mathcal{H}}
\def\im{\text{im\,}}
\def\pn{\{p_i\}_{i=1}^{n}}
\def\pnprime{\{p'_i\}_{i=1}^{n}}
\def\qr{\{q_i\}_{i=1}^{r}}
\def\qm{\{q_i\}_{i=1}^{m}}
\def\K{\mathcal{K}}
\def\M{\overline{M}}
\def\SM{\overline{\mathcal{M}}}
\def\O{\mathscr{O}}
\def\T{\mathcal{T}}
\def\B{\mathcal{B}}
\def\P{\mathbb{P}}
\def\Q{\mathbb{Q}}
\def\X{\mathcal{X}}
\def\U{\mathcal{U}}
\def\Y{\mathcal{Y}}
\def\sigman{\{\sigma_{i}\}_{i=1}^{n}}
\def\sigmaneta{\{\sigma_{i}(\eta)\}_{i=1}^{n}}
\def\sigmanprime{\{\sigma'_{i}\}_{i=1}^{n}}
\def\sigmanstar{\{\sigma^*_{i}\}_{i=1}^{n}}\def\taun{\{\tau_{i}\}_{i=1}^{n}}
\def\taun{\{\tau_{i}\}_{i=1}^{n}}
\def\taunprime{\{\tau'_{i}\}_{i=1}^{n}}
\def\S{\mathcal{S}}
\def\Spec{\text{\rm Spec\,}}
\def\Proj{\text{\rm Proj\,}}
\newcommand{\epf}{\qed \vspace{+10pt}}
\begin{document}

\title{Weakly proper moduli stacks of curves}

\author[Alper]{Jarod Alper}
\author[Smyth]{David Smyth}
\author[van der Wyck]{Frederick van der Wyck}

\address[Alper]{Department of Mathematics\\
Columbia University\\
2990 Broadway\\
New York, NY 10027}
\email{jarod@math.columbia.edu}

\address[Smyth]{Department of Mathematics\\
Harvard University\\
One Oxford Street\\
Cambridge, MA 01238}
\email{smyth@math.harvard.edu}

\address[van der Wyck]{Department of Mathematics\\
Harvard University\\
One Oxford Street\\
Cambridge, MA 01238}
\email{wyck@math.harvard.edu}

\begin{abstract}
This is the first in a projected series of three papers in which we construct the second flip in the log minimal model program for $\M_g$. We introduce the notion of a weakly proper algebraic stack, which may be considered as an abstract characterization of those mildly non-separated moduli
problems encountered in the context of Geometric Invariant Theory (GIT), and develop techniques for proving that a stack is weakly proper without the usual semistability analysis of GIT. We define a sequence of moduli stacks of curves involving nodes, cusps, tacnodes, and ramphoid cusps, and use the aforementioned techniques to show that these stacks are weakly proper. This will be the key ingredient in forthcoming work, in which we will prove that these moduli stacks have projective good moduli spaces which are log canonical models for $\bar{M}_g$.
\end{abstract}

\maketitle

\setcounter{tocdepth}{1}
\tableofcontents
\section{Introduction}

\noindent
In \cite{hassett_genus2}, Hassett proposed the problem of studying log canonical models of $\M_{g}$. For any $\alpha \in \Q \cap [0,1]$ such that $K_{\SM_{g}}+\alpha\Delta$ is big, Hassett and Keel define
\[
\M_{g}(\alpha):=\Proj \oplus_{m \geq0} H^0(\SM_{g}, \lfloor m(K_{\SM_{g}}+\alpha\Delta) \rfloor),
\]
and ask whether the spaces $\M_{g}(\alpha)$ admit a modular interpretation. In \cite{HH1, HH2}, Hassett and Hyeon carried out the first two steps of this program by showing that:
$$
\M_{g}(\alpha)=\begin{cases}
\M_{g} & \text{  if }\alpha \in (9/11, 1]\\
\M_{g}^{ps}&  \text{  if } \alpha \in (7/10,9/11]\\
\M_{g}^{c} & \text{  if } \alpha =7/10\\
\M_{g}^{h}&  \text{  if } \alpha \in (7/10-\epsilon, 7/10)
\end{cases}
$$
where $\bar{M}_g^{ps}$ is the moduli space of pseudostable curves (parameterizing certain curves with nodes and cusps), and $\M_{g}^{c}$ and $\M_{g}^{h}$ are the moduli spaces of bicanonical c-semistable and h-semistable curves respectively (parameterizing certain curves with nodes, cusps, and tacnodes). In \cite{HH1, HH2}, these alternate birational models of $\bar{M}_g$ are constructed using Geometric Invariant Theory (GIT). Indeed, one of the most appealing features of the Hassett-Keel program is the way it ties together the different compactifications of $M_{g}$ obtained by varying the parameters implicit in Gieseker and Mumford's classical GIT construction of $\bar{M}_g$ \cite{git, gieseker}. 

In this paper, however, we will outline a program to construct modular interpretations for the spaces $\M_{g}(\alpha)$ without GIT. The program has three steps.
\begin{enumerate}
\item Define a \emph{weakly proper} moduli stack $\SM_{g}(\alpha)$ of singular curves.
\item Construct a \emph{good moduli space} $\SM_{g}(\alpha) \rightarrow X$.
\item Show that some multiple of the $\QQ$-line bundle $K_{\SM_{g}(\alpha)}+\alpha\delta$ on $\SM_{g}(\alpha)$ descends to an ample line bundle on $X$. Use a discrepancy calculation to conclude that $X=\M_{g}(\alpha)$.
\end{enumerate}
Let us elaborate on each of these steps.
\begin{enumerate}
\item The notion of a \emph{weakly proper} algebraic stack is
introduced in Section~\ref{section-weak-properness}, and is the
key definition of this paper. Roughly speaking, weak properness is
an abstract characterization of those mildly non-separated moduli
problems encountered in the context of GIT, which nevertheless
possess a proper moduli space.  If $\bar{\cM}$ is any moduli stack of curves, we say that $\bar{\cM}$ is {\it weakly proper} if the following condition holds: given any family of curves $\cC^* \to \Delta^*$ in $\bar{\cM}$ over a punctured disc,
\begin{enumerate}
 \item We may complete  $\cC^* \to \Delta^*$ to a family $\cC \to \Delta$  in $\bar{\cM}$, possibly after a base change.
 \item If $\cC \to \Delta$ and $\cC' \to \Delta$ are two such completions whose central fibers $C$ and $C'$ are \emph{closed} in $\bar{\cM}$, there is an isomorphism $C \cong C'$. 
 \end{enumerate} 
 
\item Good moduli spaces are introduced and studied in \cite{alper_good_arxiv},
and they should be considered as an abstract version of the quotients produced by GIT. One essential difference however is that, whereas GIT quotients are automatically projective, good moduli spaces are \emph{a priori} only algebraic spaces. In forthcoming work, we will prove that weakly proper algebraic stacks satisfying certain additional hypotheses possess a good moduli space. This result may be considered as an analogue of the Keel-Mori theorem guaranteeing the existence of a coarse moduli space for separated Deligne-Mumford stacks.
\item Under mild hypotheses, it is relatively simple to understand what linear combinations of $K_{\bar{\cM}_{g}(\alpha)}$ and $\delta$ descend to the good moduli space. Thus, using Kleiman's criterion, the problem of proving that
$K_{\bar{\cM}_{g}(\alpha)}+\alpha\delta$ defines an ample divisor
class reduces to showing that a certain linear combination of
tautological classes is positive on one parameter families of curves contained in $\SM_{g}(\alpha)$. Finally, a straightforward discrepancy calculation can be used to show that sections of $K_{\bar{\cM}_{g}(\alpha)}+\alpha\delta$ lift to sections of $K_{\bar{\cM}_{g}}+\alpha\delta$. It follows formally that $\M_{g}(\alpha)$ is the good moduli space associated to $\bar{\cM}_{g}(\alpha)$.
\end{enumerate}

This is the first in a projected series of papers in which we will follow this approach to construct the second flip in the Hassett-Keel log minimal model program for $\M_{g}$. In addition, we will recover the results of Hassett and Hyeon, and extend their constructions to the case of $\M_{g,n}$ with $n>0$. In the present paper, we accomplish the first of the three steps outlined above. For $k \in \{2,3,4\}$, we define moduli stacks $\SM_{g,n}(A_k^-)$, $\SM_{g,n}(A_k)$, $\SM_{g,n}(A_k^+)$ parameterizing certain curves with $A_k$-singularities, which we call $A_k^-$-stable, $A_k$-stable, and $A_k^+$-stable curves respectively (Definition~\ref{definition-main-stacks}). Note that our notation has built-in redundancy: $A_k^+$-stability is the same as $A_{k+1}^-$-stability. Our main result is the following (proved in Corollary \ref{C:Openness} and Theorem~\ref{main-theorem}). 
\begin{theorem} \label{theorem-main}
\begin{enumerate}
\item[]
\item $\SM_{g,n}(A_k^-)$, $\SM_{g,n}(A_k)$, $\SM_{g,n}(A_k^+)$ are weakly proper algebraic stacks.
\item These stacks fit into the following diagram, where the horizontal arrows are open immersions.
\[
\def\objectstyle{\scriptstyle}
\def\labelstyle{\scriptstyle}
\begin{matrix} \SM_{g} \\  \shortparallel \\ \SM_{g,n}(A_2^-)  \end{matrix} 
\hookrightarrow
\SM_{g,n}(A_2) 
\hookleftarrow
\begin{matrix} \SM_{g,n}(A_2^+)\\ \shortparallel \\ \SM_{g,n}(A_3^-) \end{matrix} 
\hookrightarrow
\SM_{g,n}(A_3)
\hookleftarrow 
\begin{matrix} \SM_{g,n}(A_3^+) \\ \shortparallel \\ \SM_{g,n}(A_4^-)  \end{matrix}
\hookrightarrow
\SM_{g,n}(A_4)
\hookleftarrow
\SM_{g,n}(A_4^+) 
\] 
\end{enumerate}
\end{theorem}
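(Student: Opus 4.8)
The plan is to establish weak properness by verifying its two defining conditions—existence of limits up to base change, and uniqueness of \emph{closed} limits—for each of the three stability notions in turn, the arguments being parallel. Algebraicity I would first dispose of by realizing each $\SM_{g,n}(A_k^{\bullet})$ as a substack of the (algebraic, locally of finite type) stack of all $n$-pointed proper curves of arithmetic genus $g$ with singularities of type $A_{\le k}$, cut out by an open stability condition; since part (2) exhibits the three stacks as open inside $\SM_{g,n}(A_k)$, it is enough to know the latter is algebraic. For the existence condition, given a family $\cC^* \to \Delta^*$ of stable curves I would apply stable reduction, after a finite base change of $\Delta$, to obtain an extension $\wt{\cC} \to \Delta$ by Deligne--Mumford stable curves. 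Its central fiber typically contains subcurves forbidden in the relevant stable locus—elliptic tails, elliptic bridges, and their higher analogues—and the existence argument is then a sequence of explicit surgeries in the family: one contracts each forbidden subcurve and replaces it by the permitted singularity appropriate to the stability condition at hand (node, cusp, tacnode, or ramphoid cusp), checking at each stage that the modification extends flatly over $\Delta$ and that the new central fiber is stable in the required sense.

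The main obstacle is condition (b), the uniqueness of \emph{closed} limits, where I expect the essential work to lie. I would first characterize the closed points of each stack intrinsically: an $A_k$-stable curve should be closed precisely when it admits no nontrivial isotrivial specialization, which I anticipate corresponds to its carrying the maximally degenerate permitted singularities together with the associated multiplicative automorphisms. Then, given two completions $\cC \to \Delta$ and $\cC' \to \Delta$ agreeing over $\Delta^*$ with closed central fibers $C$ and $C'$, the goal is to prove $C \cong C'$. The key reduction is local: using the deformation theory of $A_k$-singularities together with the \'etale-local structure of the stack near a closed point (presenting it as a quotient of a versal deformation space by the automorphism group), I would reduce the comparison to the $\GG_m$-equivariant geometry of that deformation space, and show that $C$ and $C'$ both arise as the unique closed orbit in the closure of the orbit determined by the punctured family. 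It is precisely this local, deformation-theoretic substitute for the global semistability analysis of GIT that makes the method work, and pinning down the equivariant local models is the crux of the argument.

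For part (2), the open immersions follow once one observes that each of $\SM_{g,n}(A_k^-)$ and $\SM_{g,n}(A_k^+)$ is the complement in $\SM_{g,n}(A_k)$ of a closed substack. Passing from $A_k$-stability to the stricter $A_k^-$- or $A_k^+$-stability forbids certain curves—those bearing an $A_k$-singularity, or those containing a distinguished subcurve at the stability threshold—and each such locus is closed because the relevant invariants (the analytic type of a singular point and the arithmetic genus of a subcurve) are upper semicontinuous in flat families. I would make this precise by showing that in any family of $A_k$-stable curves the locus of geometric fibers failing the stricter condition is closed in the base, so that its complement defines an open substack. The identifications $\SM_g = \SM_{g,n}(A_2^-)$ and $\SM_{g,n}(A_k^+) = \SM_{g,n}(A_{k+1}^-)$ are immediate from Definition~\ref{definition-main-stacks}, and assembling these open immersions produces the displayed diagram.
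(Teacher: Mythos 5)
Your overall architecture (algebraicity via deformation-openness, then existence and uniqueness of closed limits via local quotient presentations) points in the right direction, but two of your central steps have genuine gaps. The first is your openness argument. You claim the loci removed by the stricter stability conditions are closed because ``the analytic type of a singular point and the arithmetic genus of a subcurve'' are semicontinuous. Taken at face value this fails: the locus $T_2^k$ of curves containing an $H_{2,1}$-tail (a genus-two tail attached at a Weierstrass point) is \emph{not} closed in $\U_{g,n}(A_4)$, because a family of $H_{2,1}$-tails can degenerate to a reducible genus-two subcurve containing an elliptic tail or an elliptic bridge, which is no longer an $H_{2,1}$-curve; in such a degeneration the arithmetic genus of the tail and all singularity types are constant, so no semicontinuity of those invariants detects the loss of the hyperelliptic structure. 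Closedness of $T_2^k$ holds only \emph{after} first excising the elliptic-tail and elliptic-chain loci $T_1^j$ and $B_1^{i,j}$ (this is the nested structure of Lemma \ref{L:Openness}), and even then the proof requires two nontrivial inputs your sketch omits: the classification of which degenerations of hyperelliptic curves remain hyperelliptic (Lemma \ref{L:HmLimits}, whose genus-two case tracks how the six Weierstrass points are absorbed into the node and tacnode of the limit), and the analysis of how attaching nodes and tacnodes of a tail or chain can collide or worsen in the limit (Lemma \ref{L:LimitNode}).

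The second gap is in existence and uniqueness of limits for the plus stacks, where your plan is stable reduction followed by explicit surgeries contracting each forbidden subcurve to the permitted singularity. For $\SM_{g,n}(A_4^+)$ this surgery is the contraction, in a family, of a genus-two tail attached at a Weierstrass point to a ramphoid cusp; the existence of that contraction is precisely the hard content of the flip being constructed, and there is no elementary ``contract and check flatness'' argument for it. The paper deliberately avoids this: existence and uniqueness of closed $A_k^+$-limits are deduced purely formally (Proposition \ref{theorem-induction-step2}) from the \'etale-local VGIT presentation of Proposition \ref{theorem-etale-variation}, using projectivity of $W^+ \to W$ on good moduli spaces and universal closedness of $\cW^+ \to W^+$. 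Relatedly, your proposal treats the three stability notions ``in parallel,'' but the actual proof is an induction on $k$: weak properness of $\SM_{g,n}(A_k^-)$ is an input to that of $\SM_{g,n}(A_k)$ (Proposition \ref{theorem-induction-step1}, which also needs the crimping/Grassmannian argument of Proposition \ref{prop-S-weakly-proper} when the generic fiber has $A_k$-singularities), and weak properness of $\SM_{g,n}(A_k)$ is an input to that of $\SM_{g,n}(A_k^+)$. Without this scaffolding your uniqueness step has no mechanism for placing two closed limits in a common \'etale chart: in the plus step one first invokes weak separatedness of the \emph{larger} stack $\SM_{g,n}(A_k)$ to produce a common closed $A_k$-limit and only then passes to the chart around it, while in the $A_k$ step uniqueness is not a chart argument at all but rests on the characterization of closed points as maximally degenerate curves (Proposition \ref{P:ClosedPoints}) and the fact that the core determines the closed limit (Corollary \ref{C:CoreIso}, via Lemma \ref{L:lifting}).
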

In forthcoming work, we will complete steps two and three of the program outlined above to prove that these stacks have projective good moduli spaces. We should emphasize that there is no currently known GIT construction of the good moduli spaces of $\bar{\cM}_{g,n}(A_4)$ and $\bar{\cM}_{g,n}(A_4^+)$.  Moreover, our methods avoid a GIT-stability analysis.  Indeed, once completed, our program will provide the first examples where projective moduli spaces associated to non-separated moduli functors  are constructed without GIT (except in very special cases, e.g. where one has an explicit understanding of the global sections of a polarizing line bundle as in \cite{faltings}). Furthermore, we will have the following modular interpretation of the second flip:

{\tiny
$$\xymatrix{
\bar{\cM}_{g}(A_4^-) \ar@{^(->}[rd] \ar[dd]	&	& \bar{\cM}_{g}(A_4^+) \ar@{_(->}[ld] \ar[dd]\\
									& \bar{\cM}_{g}(A_4) \ar[dd]	\\
\bar{M}_{g}(2/3+\epsilon)\ar[rd]				& 			& \bar{M}_{g}(2/3-\epsilon)\ar[ld] \\
									& \bar{M}_{g}(2/3)	
}$$}
in which the locus of curves containing a genus 2 tail attached at a Weierstrass point is flipped to the locus of curves containing a ramphoid cusp.

\noindent
\textbf{Remark.} Our hope is that the techniques developed in this paper will be sufficient to construct $\M_{g}(\alpha)$ for all $\alpha>3/8$. Our main reason for doing just one new step of the program in the present series of papers is that whereas the first three steps are handled by a similar combinatorial formalism (i.e., we can make the definition of $A_k$-stability in a uniform way for $k \in \{2,3,4\}$), the next anticipated step (which occurs at $\alpha=12/19$) does not continue in this vein. Rather than replacing genus two bridges attached at conjugate points as one might expect from naively extending the definition of $A_k$-stability, one instead replaces arbitrary genus two tails by dangling $A_5$-singularities, i.e. curves of the form $C \cup Z$, where $Z$ is an arbitrary genus two curve meeting $C$ in a single node are replaced by curves of the form $C \cup R$, where $R$ is a smooth rational curve meeting $C$ in a single $A_5$-singularity. We refer the reader to \cite{afs} for an explanation of the heuristics behind these predictions.

Before laying out a roadmap for the rest of this paper, it may be useful to give an informal introduction to some of the key concepts in this paper, namely isotrivial specialization, weak properness, and local variation of GIT.

\subsection*{Isotrivial Specialization}
Let $\bar{\cM}$ be a moduli stack of pointed curves.  Let $(C, \{p_i\})$ and $(C', \{p'_i\})$ be two $\CC$-valued points of $\bar{\cM}$.  We say that $(C, \{p_i\})$ \emph{isotrivially specializes} to $(C', \{p'_i\})$ and write $(C, \{p_i\}) \rightsquigarrow (C', \{p'_i\})$ if any of the following equivalent conditions are satisfied:
\begin{enumerate}
\item $(C',\{p'_i\}) \in \overline{\{(C,\{p_i\}) \}}$
\item There exists a map $f \co \Delta \rightarrow \bar{\cM}$ with $f(\eta)=(C,p_i)$ and $f(0)=(C',p'_i)$, where $\Delta$ is the spectrum of a discrete valuation ring with generic point $\eta$ and closed point $0$.
\item There exists a family $(\C \rightarrow \Delta, \sigma_i)$ with $(\cC^*,\{\sigma^*_i\})  = \Delta^* \times (C,\{p_i\})$ and with special fiber $(C',\{p_i'\})$. 
\end{enumerate}
We call a family as in (3) an \emph{isotrivial specialization}. In particular, a $\CC$-valued point $(C, \{p_i\})$ is closed in $\bar{\cM}$, i.e. contains no other $\mathbb{C}$-points in its closure, iff it admits no nontrivial isotrivial specializations.

\subsection*{Weak properness}
In order to illuminate the content of the main theorem, it may be useful to verify the weak properness of a very simple moduli stack of curves. Let
$$\begin{aligned}
\SM_{1,1}(A_2):=\{(E,p) \,\mid \, &\text{$E$ is a genus one curve with nodes and cusps, } \\
	& \text{$p \in E$ smooth, $\omega_{E}(p)$ ample}\}
\end{aligned}
$$
To prove that $\SM_{1,1}(A_2)$ is weakly proper, the first step is to describe the closed points of $\SM_{1,1}(A_2)$. We claim that there is a \emph{unique} closed point of $\SM_{1,1}(A_2)$, namely the unique isomorphism class of a rational cuspidal curve with a smooth marked point. To prove this claim, it suffices to exhibit an isotrivial specialization from any one-pointed stable curve of genus one to the rational cuspidal curve. To do this, start with any stable elliptic curve $(C,p)$ and consider the trivial family $(C,p) \times \Delta$. We may modify the central fiber by blowing up the point $(p,0)$ and blowing down the strict transform of the special fiber. It is an easy exercise to check that blowing down this elliptic curve produces a cusp in the special fiber, i.e. the new special fiber is the one-pointed rational cuspidal curve as desired.

Now it is easy to prove that $\SM_{1,1}(A_2)$ is weakly proper. If $(\C^* \rightarrow \Delta^*, \sigma^*)$ is any family of smooth 1-pointed elliptic curves over the unit disc, we must show that there is a unique closed limit to this family. Evidently, we may complete the family to a stable family $(\C \rightarrow \Delta, \sigma)$ by the usual stable reduction theorem, but this central fiber is not closed in $\SM_{1,1}(A_2)$. But we may modify the central fiber of this family precisely as above, by blowing-up the point $\sigma(0)$ and then contracting the strict transform of the special fiber of $\C$ to obtain a rational cuspidal special fiber. Since this is the \emph{only} closed point of $\SM_{1,1}(A_2)$, it is evidently the unique closed limit of this family. Needless to say, in cases where the stack in question has more closed points, the verification is weak properness is much more delicate.

\subsection*{Local variation of GIT}
One of the key ingredients for proving the weak properness of the moduli stacks $\bar{\cM}_{g,n}(A_k^-), \bar{\cM}_{g,n}(A_k), \bar{\cM}_{g,n}(A_k^+)$ is Proposition \ref{theorem-etale-variation}, which asserts that \'etale locally around any closed point $[C] \in
\bar{\cM}_{g,n}(A_k)$, the open inclusions
$$\bar{\cM}_{g,n}(A_k^-) \subseteq \bar{\cM}_{g,n}(A_k) \supseteq
\bar{\cM}_{g,n}(A_k^+)$$
correspond to the open chambers
$$\hat{\Def}(C) ^- \subseteq \hat{\Def}(C) \supseteq \hat{\Def}(C) ^+$$ given by applying variation of GIT to the action of $\Aut(C)$ on the miniversal deformation space
$\hat{\Def}(C)$.  This gives a powerful tool for analyzing the local geometry of the stacks $\bar{\cM}_{g,n}(A_k^-) \subseteq \bar{\cM}_{g,n}(A_k) \supseteq
\bar{\cM}_{g,n}(A_k^+)$, and is
essential in the proof of Theorem \ref{theorem-main}.
\begin{figure}
\scalebox{1.15}{\includegraphics{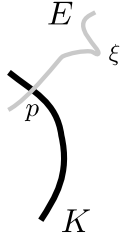}}
\caption{The curve $C=K \cup E$.}\label{F:RamphoidTail}
\end{figure}
To see how this works, let us compute the $\hat{\Def}(C) ^-/\hat{\Def}(C) ^+$-chambers for the action of $\Aut(C)$ on $\hat{\Def}(C)$, where $C$ is the union of a smooth genus $g-2$
curve $K$ with a nodally attached rational ramphoid cuspidal curve $E$
(see Figure \ref{F:RamphoidTail}).  Let $p \in C$ be the attaching node, $\xi \in E$ be the
ramphoid cusp ($y^2 = x^5$), and $\nu: \PP^1 \to E$ the
normalization of $E$ with $\nu(0) = \xi, \nu(\infty)=p$.

To understand the action of $\Aut(C)$ on $\TT^1(C)$, the space of first order deformations of $C$, recall that $\TT^1(C)$ maps surjectively onto the product of
the spaces of first order deformations of the singularities $p$ and $\xi$ with kernel given by the
space of first order locally trivial deformations $\LT^1(C)$.  Moreover, a
locally trivial deformation of $C$ induces a deformation of $(K,p)$ and the map $\LT^1(C) \to \TT^1(K, p)$ is surjective with
kernel $\LT^1(E,p)$, parameterizing how $0 \in \PP^1$ is
``crimped" to a ramphoid cusp.  Thus, we obtain a diagram
$$\xymatrix{
	& 0 \ar[d] \\
	& \LT^1(E,p) \ar[d]\\
0 \ar[r]	& \LT^1(C)\ar[d] \ar[r]	& \TT^1(C) \ar[r]	&
\TT^1(\hat{\oh}_{C,\xi}) \times \TT^1(\hat{\oh}_{C,p}) \to 0 \\
	& \TT^1(K,p) \ar[d] \\
	& 0
}$$	
where the column and row are exact sequences of
$\Aut(C)$-representations.  Let us now explicitly describe each
representation. We have
$$\begin{aligned}
\TT^1(\hat{\oh}_{C, \xi}) &= \{ y^2 = x^5 + s_3x^3 + s_2x^2 + s_1x + s_0\,:\, s_i \in \CC \} \\
\TT^1(\hat{\oh}_{C, p}) &= \{y^2 = x^2 + n\,:\, n \in \CC\}.
\end{aligned}$$
Furthermore, the first order crimping space $\LT^1(E,p)$ parameterizes subrings of the power
series ring $\CC[[z]]$ abstractly isomorphic to $\CC[[z^2, z^5]]$.  One easily sees that this space is parameterized by a parameter $c \in \CC$ so that
$$\LT^1(E,p) = \{ \CC[[(z+cz^2)^2, (z+cz^2)^5]] \subseteq \CC[[z]]\,:\, c \in \CC \}.$$
Therefore, we can write
$$\hat{\Def}(C) = \hat{\Def}(K,p) \times \Spf \CC[[\{s_i\}_{i=0}^{3}, n, c]].$$
If we fix an isomorphism $\Aut(C)^{\circ} \cong \GG_m = \Spec
\CC[t]_t$ which acts on a local coordinate $z$ around $0 \in \PP^1$ by
$z \mapsto tz$, then the action of $\Aut(C)^{\circ}$ on $\hat{\Def}(C)$ is given
by
$$ s_i \mapsto t^{-(10-2i)}s_i, \qquad n \mapsto tn, \qquad c \mapsto tc$$
and is trivial on $\hat{\Def}(K,p)$. Now, in this simple case, the chambers $\hat{\Def}(C) ^-$ and $\hat{\Def}(C) ^+$ are defined as the non-vanishing locus of functions of negative and positive weight respectively (for the general definition of local variation of GIT chambers, see Section \ref{section-local-vgit}). Thus, $\hat{\Def}(C) ^- \subseteq \hat{\Def}(C) \supseteq
\hat{\Def}(C) ^+$ are defined by the closed loci:
$$
\hat{\Def}(C)  \setminus \hat{\Def}(C) ^- = V(\{s_i\}_{i=0}^{3}) \qquad \text{ and } \qquad
\hat{\Def}(C)  \setminus \hat{\Def}(C) ^+  = V(n,c).
$$
The locus $V(\{s_i\}_{i=0}^{3})$ precisely corresponds to deformations in which the ramphoid cusp
singularity $\xi \in C$ is preserved, and the locus $V(n,c)$ corresponds to deformations preserving the node $p \in C$ \emph{and} with the node being a Weierstrass point of the genus 2 curve $E$. Since $\bar{\cM}_g(A_k) \setminus \bar{\cM}_g(A_k^-)$ is the locus of curves with a ramphoid
cusp, and $\bar{\cM}_g(A_k) \setminus \bar{\cM}_g(A_k^+)$
corresponds to the locus
of curves containing a genus 2 tail attached at a Weierstrass
point, we see that the GIT chambers do cut out the inclusions $\bar{\cM}_{g}(A_k^-) \subseteq \bar{\cM}_{g}(A_k) \supseteq
\bar{\cM}_{g}(A_k^+)$ in a neighborhood of $C$. The bulk of this
paper (Sections 5-7) is devoted to generalizing this description of the local structure of the inclusions $\bar{\cM}_{g}(A_k^-) \subseteq \bar{\cM}_{g}(A_k) \supseteq \bar{\cM}_{g}(A_k^+)$ to arbitrary closed points of $\bar{\cM}_g(A_k)$, where combinatorial considerations require a fairly extensive case-by-case analysis.

\subsection*{Roadmap}  In Section \ref{section-weak-properness}, we introduce the key definition of this paper, namely weakly proper morphisms, and systematically develop their properties.  In particular, we show that if an algebraic stack $\cX$ admits a good moduli space $\cX \to Y$, then under mild hypotheses, $\cX$ is weakly proper if and only if $Y$ is proper (Proposition \ref{prop-good-weakly-separated}).

In Section \ref{section-sh}, we introduce the algebraic stacks $\bar{\cH}_{m,1}$ (resp., $\bar{\cH}_{m,2}$) parameterizing genus $m$ hyperelliptic curves with a marked Weierstrass point (resp., genus $m$ hyperelliptic curves with two marked Weierstrass conjugate points).  The algebraic stack $\bar{\cH}_{m,1}$ (resp., $\bar{\cH}_{m,2}$) is a birational model of the variety of ``stable tails'' associated to an $A_{2m}$-singularity (resp., $A_{2m+1}$-singularity).  We also introduce the algebraic stacks $\bar{\cS}_{m,1}$ (resp., $\bar{\cS}_{m,2}$) parameterizing genus $m$ curves obtained by imposing an $A_{2m}$-singularity on a smooth rational curve (resp., an $A_{2m+1}$-singularity on two smooth rational curves.  The algebraic stack $\bar{\cS}_{m,1}$ (resp., $\bar{\cS}_{m,2}$) is a birational model of the variety of ``crimping deformations'' associated to an $A_{2m}$-singularity (resp., $A_{2m+1}$-singularity). We give explicit quotient presentations for the stacks  $\bar{\cH}_{m,1}$, $\bar{\cH}_{m,2}$, $\bar{\cS}_{m,1}$, $\bar{\cS}_{m,2}$ (Proposition \ref{h-quotient-stack} and Proposition \ref{s-quotient-stack}) from which we obtain the key fact that will be used repeatedly throughout the sequel, namely that any $H_{m,1}$ or $S_{m,1}$-curve (resp. $H_{m,2}$ or $S_{m,2}$-curve) admits an isotrivial degeneration to the \emph{monomial $H_{m,1}$-curve} (resp., \emph{monomial $H_{m,2}$-curve}), which is the curve obtained by imposing a monomial $A_{2m}$-singularity at $0 \in \PP^1$ and marking $\infty$ (resp.,  imposing a monomial $A_{2m+1}$-singularity at the two origins in  $\PP^1$ with marked points at the $\infty$'s); see Definition \ref{definition-monomial}.


In Section \ref{section-stability}, we define the moduli stacks $\bar{\cM}_{g,n}(A_k^-)/\bar{\cM}_{g,n}(A_k^+)/\bar{\cM}_{g,n}(A_k^+)$ parameterizing
 $A_k^-/A_k/A_k^+$-stable curves respectively for $k \in \{2,3,4\}$; see Definition \ref{definition-stability}. 
 The definitions are inductive so that $A_k^+$-stability is equivalent to $A_{k+1}^-$-stability.  The main result of this section is Proposition \ref{P:Openness} which states that these 
 stability conditions are deformation open, from which it follows immediately that  $\bar{\cM}_{g,n}(A_k^-)$, $\bar{\cM}_{g,n}(A_k)$, $\bar{\cM}_{g,n}(A_k^+)$ are algebraic stacks.  The proof is surprisingly subtle, due to the intricate combinatorics of the loci being added and removed. Note that in this section and all future sections, we always assume $k \in \{2,3,4\}$.


In Section \ref{section-closed-points}, we give a geometric characterization of the closed points of $\bar{\cM}_{g,n}(A_k)$. We show that for $k$ even (resp., $k$ odd), any curve $C$ in $\bar{\cM}_{g,n}(A_k)$ has a 
canonical decomposition $C=K \cup E_1 \cup \cdots \cup E_r$ where the ``core" $K$ is 
an $A_k$-stable curve containing no $H_{m,1}$-tails and 
$E_1, \ldots, E_r$ are the nodally attached $H_{m,1}$-tails of $C$
(resp., $C=K \cup E_1 \cup \cdots \cup E_r$ where the ``core" $K$
is an $A_k$-stable curve containing no $H_{m,2}$-chains and $E_1, \ldots, E_r$ are the nodally attached $H_{m,2}$-links of $C$).  See Definitions
\ref{canonical-decomposition-even} and
\ref{canonical-decomposition-odd} for precise details.
 The main result of this section is Proposition \ref{P:ClosedPoints} which states that an $A_k$-stable curve $(C,\{p_i\})$ is a closed point of $\bar{\cM}_{g,n}(A_k)$ if and only if the core $K$ is a closed point in $\bar{\cM}_{h,m}(A_k^-)$ and every nodally attached $H_{m,1}$-tail/$H_{m,2}$-bridge is monomial.

In Section \ref{section-deformation-theory}, we give an explicit 
description, around any closed point  $[(C, \{p_i\})] \in \bar{\cM}_{g,n}(A_k)$, of the action of the automorphism group $\Aut(C, \{p_i\})$ on the deformation
space of a marked curve $(C, \{p_i\})$. This makes essential use of the geometric characterization of closed points in Section \ref{section-closed-points}, as well as the description of the deformation theory of $H_{m,1}/H_{m,2}$-curves and $S_{m,1}/S_{m,2}$-curves contained in Section \ref{section-sh}.
First, we describe the action of $\Aut(C, \{p_i\})$ on the
first order deformation space $\TT^1(C, \{p_i\})$
(see Propositions \ref{first-order-action-even} and \ref{first-order-action-odd}).  
We then describe how one can choose geometric coordinates for the 
miniversal deformation space $\hat{\Def}(C, \{p_i\})$ which diagonalize the 
action of $\Aut(C, \{p_i\})$ and have the property that the vanishing of certain subsets of these coordinates cuts out the closed loci $\bar{\cS}_{g,n}(A_k) := 
\bar{\cM}_{g,n}(A_k) \setminus \bar{\cM}_{g,n}(A_k^-)$ 
and $\bar{\cH}_{g,n}(A_k) := \bar{\cM}_{g,n}(A_k) \setminus \bar{\cM}_{g,n}(A_k^+)$ 
(see Propositions \ref{formal-action-even} and \ref{formal-action-odd}).  

In Section \ref{section-local-vgit}, we calculate the plus/minus-chambers obtained by variation of GIT for the action of the automorphism group $\Aut(C, \{p_i\})$ on the deformation space $\Def(C, $ $\{p_i\})$ of a closed point $[C, \{p_i\}] \in \bar{\cM}_{g,n}(A_k)$.  The main result is Proposition \ref{theorem-etale-variation}, which asserts that the inclusions  $\bar{\cM}_{g,n}(A_k^{-}) \subseteq \bar{\cM}_{g,n}(A_k) \supseteq  \bar{\cM}_{g,n}(A_k^{+})$ correspond \'etale locally to the plus/minus open loci obtaining by the variation of GIT.

Section \ref{section-proof} proves the main result of this paper,
namely that the stacks $\SM_{g,n}(A_{k}^-)$, $\SM_{g,n}(A_{k})$
and $\SM_{g,n}(A_{k}^+)$ are weakly proper.  Since
$\bar{\cM}_{g,n} = \bar{\cM}_{g,n}(A_2^-)$ is proper, it suffices
to show: (1) $\SM_{g,n}(A_k^-)$ weakly proper $\implies$
$\SM_{g,n}(A_k)$ weakly proper, and (2) $\SM_{g,n}(A_k)$ weakly
proper $\implies$ $\SM_{g,n}(A_k^+)$ weakly proper.  For (1), given a
family $\C^* \rightarrow \Delta^*$ of smooth curves, one obtains
the unique $A_k$-stable limit as follows: first, take the unique
$A_k^-$-stable limit, then degenerate all $H_{m,1}/H_{m,2}$-curves
to monomial $H_{m,1}/H_{m,2}$-curves. Since this procedure is
canonical, the uniqueness of $A_k$-stable limits essentially follows from the uniqueness of $A_k^-$-stable limits.  For (2), we do not give an explicit construction of the limiting process but instead deduce it from the \'etale local description in Proposition \ref{theorem-etale-variation} of the inclusion $\bar{\cM}_{g,n}(A_k^+) \subset \bar{\cM}_{g,n}(A_k) $ as the variation of GIT locus $\Def(C, \{p_i\})^+ \subseteq \Def(C, \{p_i\})$ using a purely formal diagram chase.


\subsection*{Conventions}
The symbol $\CC$ will denote a fixed algebraically closed field of
characteristic zero.  All schemes, algebraic spaces and algebraic
stacks are assumed to be quasi-separated.  In Section \ref{section-weak-properness} we work over an arbitrary base scheme but in all later sections we work over $\Spec \CC$.

 A \emph{curve}
is a connected reduced proper $\CC$-scheme of dimension one. An
\emph{$n$-pointed curve} $(C, \{p_i\}_{i=1}^{n})$ is a curve $C$
with $n$ distinct smooth marked points $p_i \in C$.
We use the
notation $\Delta = \Spec R$ and $\Delta^* = \Spec K$, where $R$ is
a discrete valuation ring with fraction field $K$; we set $0$, $\eta$
and $\bar{\eta}$ to be the closed point, the generic point and the
geometric generic point respectively of $\Delta$.  If $\cX$ is an algebraic stack, $|\cX|$ will denote the topological space of equivalence classes of field valued points. If $x, y \in \X$ are two $\CC$-points of a stack $\X$, we sometimes say that $x$ admits an {\it isotrivial specialization} to y and write $x \rightsquigarrow y$ to indicate that $y \in \overline{\{ x\}}$. 

Whenever $\X_{g,n}$ is any moduli space of $n$-pointed curves of genus $g$, we always implicitly assume $g \geq 3$ or else $g \geq 1$ and $n \geq 1$.

\subsection*{Acknowledgments} We thank Brendan Hassett for his
enthusiastic support of this project as well as for many useful
conversations and suggestions.  We also thank Maksym Fedorchuk,
Joe Harris, David Hyeon and Johan de Jong for interesting conversations.


\section{Weak properness}\label{section-weak-properness}
Let $G$ be a reductive group acting on a projective scheme $X$ with an ample $G$-lin\-earization $\cL$.  Consider the quotient stack $\cX = [X^{\ss}_{\cL} / G]$ where $X^{\ss}_{\cL}$ is the semistable locus.  If there exists strictly semi-stable points in $X$, then $\cX$ is necessarily non-separated (and therefore not proper) as a strictly semi-stable point $x \in X$ has a non-finite affine stabilizer $G_x$.  However, there is a good moduli space $[X^{\ss} / G] \to X^{\ss}/G:= \Proj \bigoplus_{d \ge 0} \Gamma(X, \cL^{d})^G$ where the GIT quotient $X^{\ss}//G$ is projective.  For the definition of a good moduli space and a proof of this fact, see \cite{alper_good_arxiv}.

The GIT quotient stack $[X^{\ss} / G]$ satisfies the well-known semistable replacement property:  if $0 \in \Delta$ is a pointed disc and $f \co \Delta^* \to [X^{\ss}/G]$ is a morphism from the punctured disc, then there exists a covering $\Delta' \to \Delta$ branched over $0$ and a morphism $\Delta' \to [X^{\ss} /G]$ extending $f$.  In other words, $[X^{\ss}/G]$ satisfies the valuative criterion for universally closedness.  Moreover, one can always choose an extension $\tilde f \co \Delta' \to [X^{\ss} / G]$ such that $\tilde f (0')$ is a closed point.  Furthermore, the closed point $\tilde f(0')$ is unique; as $X^{\ss} //G$ is proper there exists a unique extension $h \co \Delta' \to X^{\ss}//G$ and $\tilde f(0')$ corresponds to the unique closed point in the stack $[X^{\ss}/G]$ (i.e., the unique closed orbit in $X^{\ss}$) over $h(0')$.

In this section, we introduce a notion of weakly separatedness (resp., weak properness) 
modeled on the semistable replacement property of GIT quotient stacks
such that:
\begin{enumerate}
\item the property can be checked in practice on moduli problems,
\item given the existence of a good moduli space, the property is equivalent to the separation (resp., properness) of the good moduli space, and 
\item the property is useful for establishing the existence of a good moduli space.  
\end{enumerate}

In Definition \ref{definition-weakly-proper} and Lemma \ref{lemma-specialization}, we give two different formulations of the condition of weak properness.  Propositions \ref{lemma-technical-uc} through \ref{prop_weakly_separated_k} are proved in order to show that one may check the valuative criterion for weak properness on DVRs of the form $k[[x]]$ or complex analytic discs (when working over the complex numbers).   Finally, in Proposition \ref{prop_weakly_separated_k}, we prove that an algebraic stack with a locally separated good moduli space is weakly proper if and only if the good moduli space is proper.


\begin{defn}  
\label{definition-weakly-proper}
Let $f \co \cX \to \cY$ be a morphism of algebraic stacks.
\begin{enumerate}
\item We say that $f$ is \emph{weakly separated} if for every valuation ring $R$ with fraction field $K$, and 2-commutative diagrams
\begin{equation}
\label{diagram_weakly_separated}
\xymatrix{
 \Delta^* = \Spec K \ar[r] \ar[d]	& \cX \ar[d]^f \\
\Delta = \Spec R \ar[r] \ar@<.5ex>[ur]^{h_1} \ar@<-.5ex>[ur]_{h_2}		& \cY
}
\end{equation}
such that $h_1(0)$ and $h_2(0)$ are closed in $|\cX \times_{\cY} \Delta|$, then $h_1(0) = h_2(0) \in |\cX \times_{\cY} \Delta|$.
\item We say that $f$ is \emph{weakly proper} if $f$ is weakly separated, finite type and universally closed.
\end{enumerate}
\end{defn}


\begin{remark} Note that in (1) we are only requiring that the images of the points $h_1(0)$ and $h_2(0)$ agree as points in the topological space $|\cX \times_{\cY} \Delta|$.
If one was to require in (1) above the existence of an isomorphism $h_1 \iso h_2$ (not necessarily extending the given isomorphism $h_1|_{\Spec K} \iso h_2|_{\Spec K}$), the resulting definition would be too strong. Indeed, even GIT quotient stacks $[X^{\ss} / G]$ with projective good moduli spaces will not necessarily satisfy this stronger property.  For instance, consider the quotient stack $[ (\PP^1)^{4, \ss} / \PGL_2]$ with the symmetric linearization.  Consider the two families $(0, [t^2, 1], [1, t^2], \infty)$ and $(0, [t^3, 1], [1,t], \infty)$ over $\Spec \CC[[t]]$.  Over the generic fiber $\Spec \CC((t))$, the families are isomorphic by $x \mapsto tx$.  The central fibers both correspond to the closed point $(0,0, \infty, \infty)$ but there is no isomorphism of the families over $\Spec \CC[[t]]$.
\end{remark}

\medskip \noindent
We now give a different interpretation of this definition.  Let $\Delta$ be the spectrum of a valuation ring $R$ with fraction field $K$.  Consider a 2-commutative diagram $\fD$:
\begin{equation}
\label{diagram}
\xymatrix{
\Delta^* \ar[r] \ar[d]	& \cX \ar[d]^f \\
 \Delta \ar[r]		& \cY
}
\end{equation}
\begin{definition} We define \emph{the set of extensions of $\cD$} to be the set $\Sigma_{\fD} \subseteq | \cX \times_{\cY} \Delta |$ consisting of points $x$ such that there exists an extension $K'$ of $K$ and a valuation ring $R'$ for $K'$ dominating $R$ with an extension of $\fD$ to a 2-commutative diagram $\fD'$
\begin{equation}
\label{diagram-extension}
\xymatrix{
\Delta'^* = \Spec K'	\ar[r] \ar[d]			& \Delta^* = \Spec K \ar[r] \ar[d]	& \cX \ar[d]^f \\
 \Delta' = \Spec R' \ar[r] \ar[rru]^{h'}		& \Delta = \Spec R \ar[r]		& \cY
}
\end{equation} 
such that $x = h'(0')$.  
\end{definition}

\begin{lem}  
\label{lemma-specialization}
Let $f \co \cX \to \cY$ be a morphism of algebraic stacks.
Let $\fD$ be a commutative diagram as in Diagram (\ref{diagram}).  Let $\xi \in |\cX \times_{\cY} \Delta|$ be the image of $\Delta^* \to \cX \times_{\cY} \Delta$.  Let $x \in |\cX \times_{\cY} \Delta|$.  Then the following are equivalent:
\begin{enumerate} 
\item $x\in \Sigma_{\fD}$.
\item There is a specialization $\xi \rightsquigarrow x$ in $|\cX \times_{\cY} \Delta|$ over $\eta \rightsquigarrow 0$.
\item There exists an extension to a diagram $\fD'$ as in Diagram (\ref{diagram-extension}) where $K \hookarr K'$ is a finite type, separable extension and $x = h'(0')$.
\end{enumerate}
In particular, the topological space $\Sigma_{\fD}$ is closed under specialization.  
\end{lem}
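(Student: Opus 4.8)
The plan is to prove the cycle of implications $(3) \Rightarrow (1) \Rightarrow (2) \Rightarrow (3)$, after which the final assertion is formal. Throughout write $\cZ = \cX \times_{\cY} \Delta$ and let $p \co \cZ \to \Delta$ be the projection, so that $\xi$ is the image of the tautological point of $\Delta^* = \Spec K$ and in particular lies over $\eta$. The implication $(3) \Rightarrow (1)$ is immediate, since a finite type separable extension is one of the extensions allowed in the definition of $\Sigma_{\fD}$. For $(1) \Rightarrow (2)$, suppose $x = h'(0')$ for an extension $\fD'$ as in Diagram (\ref{diagram-extension}). Combining $h'$ with the structure map $\Delta' \to \Delta$ gives a morphism $\bar h' \co \Delta' \to \cZ$; because $R'$ dominates $R$ the generic point $\eta'$ maps to $\eta$ and $0'$ maps to $0$, while by construction $\bar h'|_{\Delta'^*}$ factors through the given $\Delta^* \to \cZ$. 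Hence $\bar h'(\eta') = \xi$ and $\bar h'(0') = x$, and continuity of $\bar h'$ together with $\eta' \rightsquigarrow 0'$ yields the specialization $\xi \rightsquigarrow x$ over $\eta \rightsquigarrow 0$.

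The substantive implication is $(2) \Rightarrow (3)$. First I would realize the abstract specialization $\xi \rightsquigarrow x$ by a valuation ring: the standard existence result for algebraic stacks (reducing, via a smooth presentation, to the classical statement for schemes) produces a valuation ring $W$ with fraction field $L \supseteq K$ and a morphism $\Spec W \to \cZ$ whose restriction to $\Spec L$ is the base change of the given $\Delta^* \to \cZ$ and whose closed point maps to $x$. Composing with $p$ shows the induced map $R \to W$ is a local injection, so $W$ dominates $R$; then $\Spec W \to \cZ \to \cX$ already exhibits $x \in \Sigma_{\fD}$, witnessing $(2) \Rightarrow (1)$ with the possibly enormous extension $L/K$. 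What remains is to shrink $L$ to a finitely generated \emph{separable} extension of $K$.

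I expect this refinement to be the main obstacle, and I would carry it out in two stages. For finite generation, the plan is a spreading-out argument: after choosing a presentation so that the relevant data is of finite presentation, the morphism $\Spec W \to \cZ$ and the condition that the closed point hits $x$ are cut out by only finitely much data over $K$, so they are already defined over some finitely generated subextension $K \subseteq K' \subseteq L$; taking $R' = W \cap K'$ gives a valuation ring of $K'$ dominating $R$ together with $\Spec R' \to \cZ$ realizing the same specialization. For separability, I would replace $K'$ by the separable closure $K_s$ of $K$ inside $K'$: the extension $K_s \subseteq K'$ is purely inseparable, hence induces a universal homeomorphism on the relevant fiber products, so the specialization $\xi \rightsquigarrow x$ descends to $|\cZ_{K_s}|$ and is realized by $R' \cap K_s$, whose fraction field $K_s/K$ is finite type and separable. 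This produces the diagram $\fD'$ demanded in $(3)$; the delicate point throughout is making spreading-out precise without a finiteness hypothesis on $f$, which is why one reduces to a finite-presentation situation before descending.

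Finally, the closure of $\Sigma_{\fD}$ under specialization follows formally. If $x \in \Sigma_{\fD}$ and $x \rightsquigarrow x'$ in $|\cZ|$, then $p(x) = 0$ and $0$ is closed in $|\Delta|$, so $p(x') = 0$ as well; by transitivity $\xi \rightsquigarrow x \rightsquigarrow x'$, so $x'$ satisfies condition $(2)$, and applying $(2) \Rightarrow (3)$ to $x'$ exhibits the required extension, whence $x' \in \Sigma_{\fD}$.
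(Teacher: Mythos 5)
Your handling of $(3) \Rightarrow (1) \Rightarrow (2)$ and of the final closure-under-specialization assertion is fine and matches the paper, which dismisses those as clear; and you correctly isolate $(2) \Rightarrow (3)$ as the substantive point, which the paper does not prove but simply cites as \cite[Prop.~7.2.2]{lmb}. However, your self-contained argument for $(2) \Rightarrow (3)$ has a genuine gap, and the decisive failure is the separability stage. For a finitely generated field extension $K'/K$, the separable closure $K_s$ of $K$ inside $K'$ (the elements of $K'$ separably \emph{algebraic} over $K$) does not make $K'/K_s$ purely inseparable unless $K'/K$ is algebraic: already for $K'=K(x)$ one has $K_s=K$ and $K'/K$ transcendental. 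Since in this situation $K'$ arises as the residue field of a point on a positive-dimensional fiber, the transcendental case is the typical one, not an edge case. Note also that ``separable'' in the lemma means separably generated, and this is not a defect one can repair by passing to a subfield of a badly chosen $K'$ (for $K=\FF_p(t)\subseteq K'=K(t^{1/p})$ there is no intermediate fix). Finally, even in the algebraic case where $K'/K_s$ is purely inseparable, a universal homeomorphism only matches topological points; it gives no descent of the \emph{morphism} $\Spec R' \to \cX\times_{\cY}\Delta$ to a morphism out of $\Spec(R'\cap K_s)$, which is what the diagram $\fD'$ demands.

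The finite-generation stage has the same structural defect, and this matters even in characteristic zero where separability is vacuous. The lemma imposes no finiteness hypothesis on $f$, so $\cX\times_{\cY}\Delta$ is not locally of finite presentation over $\Delta$, and the limit/spreading-out principle (``the data is cut out by finitely much information, hence defined over a finitely generated subextension'') has nothing to apply to; moreover, while $W\cap K'$ is indeed a valuation ring of $K'$ dominating $R$, there is no induced morphism $\Spec(W\cap K')\to \cX\times_{\cY}\Delta$ — morphisms from spectra of valuation rings do not restrict along subfields of the fraction field. The correct mechanism, which is what the cited result of Laumon--Moret-Bailly (and the Remark following the lemma in the paper) exploits, is the smooth presentation itself: choose $U\to\cX\times_{\cY}\Delta$ smooth surjective from a scheme, lift $x$ to a point $u_0\in U$, lift the generization $\xi$ to a point of $U$ specializing to $u_0$ (smooth morphisms are open, so generizations lift), and then replace that point by a generic point $u$ of $U\times_{\cX\times_{\cY}\Delta}\Delta^*$, which is a smooth — hence geometrically reduced — algebraic space locally of finite type over $K$. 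Smoothness of the presentation is what simultaneously forces $k(u)/K$ to be finitely generated \emph{and} separably generated; one then takes a valuation ring with fraction field $k(u)$ dominating the local ring at $u_0$ of the closure in $U$ of the image of $u$. In short, separability and finite generation must be built in by the choice of point on the presentation; they cannot be recovered afterwards by field theory or by spreading out on the stack.
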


\begin{proof} It is clear that $(3) \implies  (1) \implies (2)$.  The direction
$(2) \implies (3)$ follows from \cite[Prop. 7.2.2]{lmb}.
\end{proof}

\begin{remark}  Suppose in addition that $f$ is locally of finite presentation.  If $\xi \rightsquigarrow x$ is a specialization in $|\cX \times_{\cY} \Delta|$ over $\eta \rightsquigarrow 0$ with $x \in |\cX \times_{\cY} \Delta|$ closed, then there exists an extension to a diagram $\fD'$ as in Diagram (\ref{diagram-extension}) where $K \hookarr K'$ is a finite, separable extension and $x = h'(0')$.  Indeed, if $U \to \cX$ is any smooth presentation and $u \rightsquigarrow u_0$ is a specialization over $\xi \rightsquigarrow x$ with $u_0 \in U$ closed, then one may slice $U$ to obtain a morphism $U' \to \cX$ and a specialization $u' \rightsquigarrow u'_0$ over $\xi \rightsquigarrow x$ where the induced maps on residue fields $K \to k(u')$ is finite and separable.  
\end{remark}

\begin{lem} \label{lemma-technical-uc}
 Let $f \co \cX \to \cY$ be a quasi-compact morphism of algebraic stacks.  If $f \co \cX \to \cY$ is not universally closed, there exists a morphism $Y' \to \cY$ which is locally of finite presentation such that $\cX_{Y'} \to Y'$ is not closed.
\end{lem}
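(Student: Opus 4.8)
The plan is to unwind the definition of universal closedness, reduce to an affine base, and then \emph{rise} the failure of closedness from an arbitrary base change to a finitely presented one via the standard limit formalism.

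\textbf{Set-up and reductions.} By the definition of universally closed, the hypothesis furnishes a scheme $T$ and a morphism $T \to \cY$ for which the base change $g \co \cX_T := \cX \times_{\cY} T \to T$ fails to be a closed map. Since closedness of a morphism is local on the target, I may shrink $T$ to an affine open over which $g$ is still not closed, and so assume $T$ is affine. Its image in $\cY$ is then quasi-compact, hence contained in a quasi-compact open substack $\cY^\circ \subseteq \cY$; replacing $\cY$ by $\cY^\circ$ (which is quasi-compact and, by our standing convention, quasi-separated) and noting that $f$ remains quasi-compact, I reduce to the case where $\cY$ is qcqs and $T \to \cY$ is a morphism from an affine scheme.

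\textbf{Approximation.} Using the limit formalism for qcqs algebraic stacks (EGA IV, \S 8, and its stack-theoretic extension, cf. \cite{lmb}), I write $T$ as a cofiltered limit $T = \varprojlim_{\lambda} T_\lambda$ of $\cY$-schemes (or algebraic spaces) $T_\lambda$ that are of finite presentation over $\cY$, with affine transition morphisms. Base-changing $f$ and using that fibre products commute with limits gives $\cX_T = \varprojlim_\lambda \cX_{T_\lambda}$ with affine transition maps. Because $f$ is quasi-compact, every $g_\lambda \co \cX_{T_\lambda} \to T_\lambda$ is quasi-compact; consequently all the stacks in sight have spectral topological spaces, the maps are spectral, and one has homeomorphisms $|T| = \varprojlim_\lambda |T_\lambda|$ and $|\cX_T| = \varprojlim_\lambda |\cX_{T_\lambda}|$.

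\textbf{Descent of non-closedness, and the main obstacle.} I argue the contrapositive: assuming each $g_\lambda$ is closed, I show $g$ is closed, contradicting the first step and forcing some $g_\lambda$ to be non-closed; then $Y' := T_\lambda \to \cY$, being of finite presentation, proves the lemma. Let $p_\lambda \co |\cX_T| \to |\cX_{T_\lambda}|$ and $q_\lambda \co |T| \to |T_\lambda|$ be the projections. Since closed subsets of an inverse limit of spectral spaces are intersections of pulled-back closed subsets, any closed $Z \subseteq |\cX_T|$ satisfies $Z = \bigcap_\lambda p_\lambda^{-1}\big(\overline{p_\lambda(Z)}\big)$, and I claim
\[
g(Z) = \bigcap_\lambda q_\lambda^{-1}\Big(g_\lambda\big(\overline{p_\lambda(Z)}\big)\Big),
\]
each set $g_\lambda(\overline{p_\lambda(Z)})$ being closed because $g_\lambda$ is closed, whence $g(Z)$ is closed. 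The inclusion $\subseteq$ is immediate. The reverse inclusion is the crux and the main obstacle: given $t \in |T|$ with $q_\lambda(t) \in g_\lambda(\overline{p_\lambda(Z)})$ for all $\lambda$, I must produce a point of $Z$ over $t$. The sets $E_\lambda := \overline{p_\lambda(Z)} \cap g_\lambda^{-1}(q_\lambda(t))$ form a cofiltered system of non-empty quasi-compact spaces with spectral transition maps, and a point of $Z$ over $t$ is precisely a point of $\varprojlim_\lambda E_\lambda$. The non-emptiness of this limit --- equivalently, that an inverse limit of non-empty quasi-compact spectral spaces along spectral maps is non-empty (pass to the constructible topology to invoke compactness) --- is exactly where the quasi-compactness of $f$ is indispensable. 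Verifying that the transition maps carry $E_\mu$ into $E_\lambda$ and carefully justifying $|\cX_T| = \varprojlim |\cX_{T_\lambda}|$ in the stacky setting are the remaining technical points.
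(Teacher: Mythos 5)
Your overall mechanism is the right one---it is the same limit-formalism-plus-constructible-topology compactness argument that underlies the lemma the paper ultimately quotes---but as written your proof has a genuine gap at its first substantive step. You write $T = \varprojlim_\lambda T_\lambda$ with each $T_\lambda \to \cY$ of finite presentation, citing EGA IV \S 8 and \cite{lmb}. Neither source contains such a statement over a stack base, and the statement is not routine: the morphism $T \to \cY$, while representable, quasi-compact and quasi-separated, is in general \emph{not} affine or even quasi-affine (consider $\Spec \CC \to B A$ for an abelian variety $A$, whose self-fiber product is $A$), so you cannot realize $T$ as the relative spectrum of a filtered colimit of finitely presented quasi-coherent $\cO_{\cY}$-algebras. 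What your argument actually needs is relative approximation of representable quasi-compact quasi-separated morphisms over quasi-compact quasi-separated stack bases, together with the homeomorphism $|\cX_T| \cong \varprojlim_\lambda |\cX_{T_\lambda}|$ and the spectrality of the spaces $|\cX_{T_\lambda}|$. These are substantial theorems (essentially Rydh's approximation theory for stacks), not extensions of EGA that one may wave at; the latter two you yourself defer as ``remaining technical points.'' In effect the hard content of the lemma has been pushed into unproven and mis-attributed background, which is the gap.

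The missing idea---and the route the paper takes---is to reduce to schemes \emph{before} invoking any limit formalism. First, universal closedness can be checked after smooth surjective base change, so one may assume $\cY$ is an affine scheme; this costs nothing for the conclusion, since a smooth chart $V \to \cY$ is locally of finite presentation and hence any locally finitely presented $Y' \to V$ composes to a locally finitely presented $Y' \to \cY$. Second, choose an affine smooth presentation $U \to \cX$. Non-closedness of $\cX_Y \to Y$ is witnessed by a point $x$ over $y$ and a specialization $y \rightsquigarrow y_0$ such that $\cZ = \overline{\{x\}}$ misses the fiber $\cX_{y_0}$; pulling back, $Z = p_Y^{-1}(\cZ) \subseteq U_Y$ is a closed subset missing $U_{y_0}$, and now $U \to \cY$ is a quasi-compact morphism of \emph{schemes}. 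At this point the statement your spectral-space argument is re-proving from scratch is exactly the classical lemma \cite[Tag 05BD]{stacks_project}: after shrinking $Y$ around $y_0$ it produces a factorization $Y \to Y' \to \cY$ with $Y' \to \cY$ locally of finite presentation and a closed $Z' \subseteq U_{Y'}$ containing the image of $Z$ and missing the fiber over the image of $y_0$; since $U_{Y'} \to \cX_{Y'}$ is open and surjective, the closure of the image point $x'$ then misses $\cX_{a(y_0)}$, so $\cX_{Y'} \to Y'$ is not closed. With these two reductions your compactness argument becomes the standard scheme-level one (and could simply be replaced by the citation); without them, your proof is not self-contained.
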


\begin{proof}  We may assume that $\cY$ is an affine scheme.  Let $g \co Y \to \cY$ be a morphism such that $\cX_Y \to Y$ is not closed.  There exists a diagram
$$\xymatrix{
x \ar@{|->}[d]	&		& \cX_Y \ar[d] \\
y \ar@{~>}[r]	& y_0	& Y
}$$
where there does not exist a specialization $x \rightsquigarrow x_0$ over $y \rightsquigarrow y_0$.  If $\cZ = \overline{ \{x\} } \subseteq \cX_Y$, then $\cZ \cap \cX_{y_0} = \emptyset$.  Let $p \co U \to \cX$ be a smooth presentation where $U$ is an affine scheme.  Denote $p_Y \co U_Y \to \cX_Y$ and $Z = p_Y^{-1}(\cZ) \subseteq U_Y$.  Note that $Z \cap U_{y_0} = \emptyset$.  Then  \cite[Tag \href{http://math.columbia.edu/algebraic_geometry/stacks-git/locate.php?tag=05BD}{05BD}]{stacks_project}
implies that after replacing $Y$ with an open subscheme containing $y_0$, there exists a factorization $g \co Y \xrightarrow{a} Y' \xrightarrow{g'} \cY$ and a closed subscheme $Z' \subseteq U_{Y'}$ such that (1) $Y' \to \cY$ is locally of finite presentation, (2) $Z' \cap U_{a(x_0)} = \emptyset$ and (3)  $\im (Z \to U_{Y'}) \subseteq Z'$.  Let $x'$ be the image of $x$ under $\cX_Y \to \cX_{Y'}$.  Then $p_{Y'}^{-1}( \overline{ \{x'\} } ) \subseteq Z'$ where $p_{Y'} \co U_{Y'} \to \cX_{Y'}$ 
so that $\overline{ \{x' \} } \cap \cX_{a(x_0)} = \emptyset$.  Therefore, $\cX_{Y'} \to Y'$ is not closed.
\end{proof}

\begin{lem}  Let $f \co \cX \to \cY$ be a morphism of algebraic stacks.  Then
\begin{enumerate}
\item $f$ is weakly separated if and only if for all diagrams $\fD$ as in Diagram (\ref{diagram}), the set of extensions $\Sigma_{\fD}$ has at most one closed point.
\item If $f$ is quasi-compact, then $f$ is universally closed if and only if for all diagrams $\fD$ as in Diagram (\ref{diagram}), the set of extensions $\Sigma_{\fD}$ is non-empty.
\item If $f$ is finite type, then $f$ is weakly proper if and only if for all diagrams $\fD$ as in Diagram (\ref{diagram}), the set of extensions $\Sigma_{\fD}$ has a unique closed point.
\end{enumerate}
Furthermore, in $(1)$, $(2)$ and $(3)$ we may restrict to diagrams $\fD$ as in Diagram (\ref{diagram}) where $R$ is a complete valuation ring with algebraically closed residue field.  If $f \co \cX \to \cY$ is a locally of finite type morphism with $\cY$ locally noetherian, then in $(1)$, $(2)$ and $(3)$ we may restrict to diagrams $\fD$ as in Diagram (\ref{diagram}) where $R$ is a complete discrete valuation ring with algebraically closed residue field. 
\end{lem}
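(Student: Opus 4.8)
The plan is to translate each of the three geometric properties into the corresponding statement about $\Sigma_{\fD}$ using the two preceding lemmas, treating $\cX \times_{\cY} \Delta \to \Delta$ as a universal test object. Throughout I write $\cZ = \cX \times_{\cY} \Delta$, let $\cZ_0$ be the special fibre over $0$, and let $\xi \in |\cZ|$ be the image of the generic point; by Lemma \ref{lemma-specialization} the set $\Sigma_{\fD}$ is exactly $\overline{\{\xi\}} \cap |\cZ_0|$, the (closed, hence specialization-closed) set of specializations of $\xi$ lying over $0$.

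For (2) I would argue as follows. If $f$ is universally closed then the base change $\cZ \to \Delta$ is closed, so the image of $\overline{\{\xi\}}$ is a closed subset of $\Delta$ containing $\eta$, hence equals $\Delta$; in particular it meets $0$, producing a specialization $\xi \rightsquigarrow x$ over $0$ and thus a point of $\Sigma_{\fD}$. Conversely, suppose every $\Sigma_{\fD}$ is nonempty but $f$ is not universally closed. Since $f$ is quasi-compact, Lemma \ref{lemma-technical-uc} furnishes a locally finitely presented $Y' \to \cY$ and a specialization $y \rightsquigarrow y_0$ in $Y'$ beneath a point $x' \in |\cX_{Y'}|$ admitting no lift. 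I would realize $y \rightsquigarrow y_0$ by a valuation ring $R$, lift the generic point to $x'$ after a field extension, and assemble the resulting diagram $\fD$ by composing with $Y' \to \cY$. Any point of $\Sigma_{\fD}$ is then produced by a map $\Spec R' \to \cX$ over $\cY$ that factors through $\cX_{Y'} = \cX \times_{\cY} Y'$ (because $\Spec R' \to \Delta \to Y'$), and so yields precisely the forbidden specialization $x' \rightsquigarrow x'_0$ over $y \rightsquigarrow y_0$, a contradiction.

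For (1), the implication ``at most one closed point $\Rightarrow$ weakly separated'' is immediate: two lifts $h_1,h_2$ of a fixed $\Spec R \to \cY$ agreeing on $\Spec K$ determine a single diagram $\fD$, and $h_1(0),h_2(0)$ are then closed points of $\Sigma_{\fD}$, forcing $h_1(0)=h_2(0)$. The converse is the main difficulty. The naive attempt -- realize two closed points $x_1,x_2\in\Sigma_{\fD}$ as limits over valuation rings $R_1',R_2'$ via Lemma \ref{lemma-specialization} and compare -- fails, because two valuation rings need not admit a common dominating valuation ring (e.g. the rings of $k(t)$ centred at $0$ and at $\infty$), whereas the definition of weak separatedness requires a \emph{single} valuation ring carrying both lifts. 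I would instead first reduce (via the Furthermore clause below) to the case where $R$ is complete with algebraically closed residue field $\kappa$, and then show that over such an $R$ every closed point $x$ of $\Sigma_{\fD}$ -- which then has residue field $\kappa$ -- is the limit of an honest section $\Spec R \to \cX$ extending the given generic map. Granting this, distinct closed points $x_1\neq x_2$ produce two lifts over one and the same $R$, contradicting weak separatedness. The heart of the matter is this descent step: I would take a smooth presentation $U \to \cX$, lift $x$ to a $\kappa$-point of $U$ (possible since $\kappa$ is algebraically closed), and use the completeness (hence Henselianness) of $R$ together with formal smoothness to descend to $R$ the section guaranteed a priori only over an extension $R'$. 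This is the step I expect to be the principal obstacle.

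Finally, (3) follows by combining (1) and (2): finite type implies quasi-compact, and then $\Sigma_{\fD}$ is a nonempty closed subset of the quasi-compact space $|\cZ_0|$, hence has a closed point, so ``nonempty and at most one closed point'' coincides with ``unique closed point.'' For the Furthermore clause I would show that the $\Sigma_{\fD}$-conditions may be checked after base change to a complete valuation ring $R_0$ with algebraically closed residue field dominating $R$: since $\Delta_0 \to \Delta$ is flat, specializations lift, so distinct closed points of $\Sigma_{\fD}$ acquire distinct closed preimages in $\Sigma_{\fD_0}$ while points of $\Sigma_{\fD_0}$ push down into $\Sigma_{\fD}$; hence emptiness, ``at most one,'' and ``exactly one'' all transfer between $\fD$ and $\fD_0$, giving the reductions needed above. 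When $f$ is locally of finite type and $\cY$ is locally noetherian, specializations can moreover be realized by discrete valuation rings, which lets me replace $R_0$ by a complete DVR with algebraically closed residue field.
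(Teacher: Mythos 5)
Your treatments of (2), (3) and the ``Furthermore'' clause are broadly workable (the paper itself delegates (2) and (3) to \cite[Theorem 7.3]{lmb}), modulo the slip that flatness alone does not lift specializations --- what you actually need is that $\cX\times_{\cY}\Delta_0 \to \cX\times_{\cY}\Delta$ is quasi-compact and surjective, which it is here. But part (1) contains a genuine gap, and it originates in your rejection of the ``naive attempt.'' Your objection rests on a false premise: given two extensions of $\fD$ over valuation rings $R_1 \subseteq K_1$ and $R_2 \subseteq K_2$, both dominating $R$, a common dominating valuation ring \emph{does} exist. Indeed $R_1\otimes_R R_2$ is nonzero (it surjects onto $(R_1/\fm_{R_1})\otimes_{R/\fm_R}(R_2/\fm_{R_2})$) and is flat over each $R_i$; choose a maximal ideal $\fM$ containing the images of $\fm_{R_1}$ and $\fm_{R_2}$, a minimal prime $\fq\subseteq\fM$ (which contracts to $(0)$ in each $R_i$ by going-down), and a valuation ring of $\Frac\bigl((R_1\otimes_R R_2)/\fq\bigr)$ dominating the local ring at $\fM/\fq$. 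Your counterexample (the local rings of $k(t)$ at $0$ and $\infty$) only shows that one cannot take $K'=k(t)$ with both embeddings equal to the identity; nothing forces this --- the two copies of $k(t)$ may land in $K'$ as different subfields (dominate the local ring of $(0,\infty)$ on $\PP^1\times\PP^1$). Moreover, weak separatedness is quantified over \emph{all} valuation rings, so the contradiction may be produced over the new ring $R'$; it need not occur over the original $R$. This compositum argument is exactly the paper's proof of (1), and it is short and correct.

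By contrast, the claim you substitute for it --- that over a complete valuation ring $R$ with algebraically closed residue field, every closed point of $\Sigma_{\fD}$ is the limit of a section $\Spec R \to \cX$ extending the given generic map --- is false, so the ``principal obstacle'' you flag cannot be overcome. Take $\cY = \Spec\CC$, $\cX = [\AA^1/\mu_2]$, $R = \CC[[t]]$, and let $\Spec\CC((t)) \to \cX$ be given by the nontrivial $\mu_2$-torsor $\Spec\CC((t^{1/2}))$ with equivariant map to $\AA^1$ sending the coordinate to $t^{1/2}$. After the base change $R' = \CC[[t^{1/2}]]$ the torsor trivializes and the family extends, with limit the origin (the $B\mu_2$-point), a closed point of $\Sigma_{\fD}$; but no extension over $R$ itself exists, since every $\mu_2$-torsor over the strictly henselian ring $R$ is trivial while the generic torsor is not. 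Note that this happens even though $\cX$ is separated, finite type, and the closed point has residue field $\CC$: the obstruction is the required $2$-isomorphism with the fixed generic object over $K$, which is not something Henselianness of $R$ together with formal smoothness of a presentation $U\to\cX$ can produce --- smoothness lets you extend a section of $U$, but not make it agree with the prescribed map on $\Spec K$. So your argument for (1) breaks exactly at the step you identified, and the repair is precisely the common-domination argument you discarded.
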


\begin{proof}
For $(1)$, the ``if'' direction is clear.  Conversely, suppose there is a commutative diagram $\fD$ as in Diagram (\ref{diagram}) and two extensions as in Diagram (\ref{diagram-extension})
$$\xymatrix{
\Delta_i^* = \Spec K_i	\ar[r] \ar[d]			& \Delta^* \ar[r] \ar[d]	& \cX \ar[d]^f \\
 \Delta_i  = \Spec R_i \ar[r] \ar@{-->}[rru]^{h_i}		& \Delta \ar[r]		& \cY
}$$
for $i = 1,2$ with both $h_1(0_1)$ and $h_2(0_2)$ distinct closed points of $|\cX \times_{\cY} \Delta|$, where $0_i \in \Spec R_i$ is the closed point. There is a field extension $K \hookarr K'$ containing both $K_1$ and $K_2$ and a valuation ring $R' \subseteq K'$ dominating both $R_1$ and $R_2$ giving a 2-commutative diagram 
$$
\xymatrix{
\Delta'^* =  \Spec K' \ar[r] \ar[d]	& \cX \ar[d]^f \\
\Delta' = \Spec R' \ar[r] \ar@<.5ex>[ur]^{h'_1} \ar@<-.5ex>[ur]_{h'_2}		& \cY
}$$
with two lifts $h'_1$ and $h'_2$ such that $h'_1(0')$ and $h'_2(0')$ are closed and distinct.  This contradicts $f$ being weakly separated.  

Statements $(2)$ and $(3)$ follow from \cite[Theorem 7.3]{lmb}.  The refinements for the valuative criterion for $(2)$ follow from \cite[7.2.1-7.2.3, 7.10]{lmb} and Lemma \ref{lemma-technical-uc}.

We now check that the condition of weakly separatedness for morphisms (resp., locally of finite type morphisms of locally noetherian algebraic stacks) can be tested on complete valuation rings (resp., complete discrete valuation rings) with algebraically closed residue field.  Suppose $f$ is not weakly separated so that there exists a commutative diagram $\cD$ with $\Delta$ a valuation ring
$$\xymatrix{
 \Delta^* \ar[r] \ar[d]	& \cX \ar[d]^f \\
 \Delta \ar[r]  \ar@<.5ex>[ur]^{h_1} \ar@<-.5ex>[ur]_{h_2}		& \cY
}$$
and two extensions $h_1, h_2 \co \Delta \to \cX$ where $h_1(0), h_2(0) \in |\cX \times_{\cY} \Delta|$ are closed and distinct.  Let $\xi$ be the image of $\Delta^* \to \cX \times_Y \Delta$.  The extensions $h_1$ and $h_2$ give specializations $\xi \rightsquigarrow h_1(0) $ and $\xi \rightsquigarrow h_2(0)$ over $\eta \rightsquigarrow 0$. 
By applying \cite[Prop. 7.2.1]{lmb}(resp., \cite[Prop. 7.2.2]{lmb}), there exists a complete valuation ring (resp., complete discrete valuation ring) with algebraically closed residue field $R'$ and morphisms $h'_1, h'_2 \co \Spec R' \to \cX \times_{\cY} \Delta$ such that $h'_i(0') = h_i(0)$ where $0' \in \Spec R'$ denotes the closed point.  This gives the desired 2-commutative diagram
$$\xymatrix{
 \Spec K' \ar[r] \ar[d]	& \cX \ar[d]^f \\
 \Spec R' \ar[r] \ar@<.5ex>[ur]^{h'_1} \ar@<-.5ex>[ur]_{h'_2}		& \cY
}$$
with $h_1'(0')$ and $h_2'(0')$ closed in $|\cX \times_{\cY} \Spec R'|$.
\end{proof}

\begin{lem}  \label{lemma-ws}
Let $\cX$ be an algebraic stack finite type over an algebraically closed field $k$.  Then $\cX$ is weakly separated over $k$ if and only for every field extension $k \to k'$ with $k'$ algebraically closed, the morphism $\cX \times_k k' \to \im (\Delta_{\cX/k}) \times_k k'$ surjects onto closed points, where $\im(\Delta_{\cX/k})$ denotes the scheme-theoretic image of $\Delta_{\cX/k}: \cX \to \cX \times_k \cX$.
\end{lem}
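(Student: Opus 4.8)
The plan is to isolate the clean statement, for $\cX$ finite type over an \emph{arbitrary} algebraically closed field, that $\cX$ is weakly separated if and only if $\cX \to \im(\Delta_{\cX/k})$ is surjective onto closed points, and then account for the quantifier ``for every $k'$'' by base change. Both sides behave well under field extension: the scheme-theoretic image commutes with the flat map $k \to k'$ because $\Delta_{\cX/k}$ is quasi-compact, so $\im(\Delta_{\cX/k}) \times_k k' = \im(\Delta_{\cX_{k'}/k'})$; and weak separatedness of $\cX$ over $k$ passes to $\cX_{k'}$ over $k'$, since $\cX_{k'} \times_{k'} \Spec R = \cX \times_k \Spec R$ for any valuation ring $R$ over $k'$, so a violating diagram over $k'$ descends to one over $k$. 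Thus the forward implication of the Lemma follows by applying the clean equivalence to each $\cX_{k'}$, while the reverse implication follows by testing weak separatedness on a complete valuation ring $R$ with algebraically closed residue field $\kappa$ (permitted by the previous Lemma) and invoking the hypothesis with $k' = \kappa$. Throughout I will use the dictionary that closed points of $\im(\Delta_{\cX/k})$ are $k$-points $(x_1,x_2)$ of $\cX \times_k \cX$ lying in it, and that $(x_1,x_2)$ lies in the image of $\cX \to \im(\Delta_{\cX/k})$ exactly when $x_1 \cong x_2$.

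\textbf{The formal direction.}
Assuming surjectivity onto closed points, I would start from a diagram as in Diagram~(\ref{diagram_weakly_separated}) over a complete valuation ring $R$ with algebraically closed residue field $\kappa$, with two lifts $h_1, h_2$ whose central fibres $x_1 = h_1(0)$ and $x_2 = h_2(0)$ are closed points of $|\cX \times_k \Spec R|$; these lie in the closed fibre $\cX_\kappa$. The map $(h_1,h_2) \co \Spec R \to \cX \times_k \cX$ sends the generic point into the image of $\Delta_{\cX/k}$, since $h_1$ and $h_2$ are isomorphic over $\Spec K$; because $\im(\Delta_{\cX/k})$ is closed and contains the image of the generic point, it also contains the specialization $(x_1,x_2)$. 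Hence $(x_1,x_2)$ is a $\kappa$-point, and so a closed point, of $\im(\Delta_{\cX/k}) \times_k \kappa = \im(\Delta_{\cX_\kappa/\kappa})$; surjectivity forces it to be a diagonal point, i.e.\ $x_1 \cong x_2$, giving $x_1 = x_2$ in $\cX_\kappa$ and therefore in $|\cX \times_k \Spec R|$. This is precisely weak separatedness.

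\textbf{The substantive direction.}
The heart is the converse: weak separatedness implies $\cX \to \im(\Delta_{\cX/k})$ meets every closed point. I would fix a smooth presentation $p \co U \to \cX$ with $U$ affine and groupoid $s,t \co R_\bullet = U \times_{\cX} U \rightrightarrows U$, and observe that the fibre product of $\Delta_{\cX/k}$ along the (flat, surjective) map $U \times_k U \to \cX \times_k \cX$ is $(s,t) \co R_\bullet \to U \times_k U$; consequently the preimage of $\im(\Delta_{\cX/k})$ is $\overline{\im(s,t)}$. Given a closed point $w = (x_1,x_2) \in \im(\Delta_{\cX/k})$, I would lift it, using smoothness of $\overline{\im(s,t)} \to \im(\Delta_{\cX/k})$ and the Nullstellensatz, to a closed point $(u_1,u_2) \in \overline{\im(s,t)}$ with $p(u_i) = x_i$. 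As $(u_1,u_2)$ lies in the closure of $\im(s,t)$, curve selection for the dominant morphism $R_\bullet \to \overline{\im(s,t)}$ yields, after a field extension, a valuation ring $R'$ and a map $\Spec R' \to \overline{\im(s,t)}$ whose special point is $(u_1,u_2)$ and whose generic point factors through $\im(s,t)$, hence lifts to $R_\bullet$. The two projections then give $h_1, h_2 \co \Spec R' \to U \to \cX$, isomorphic over $\Frac(R')$ via the lifted element of $R_\bullet$, with central fibres $x_1,x_2$ that are closed (the $u_i$ being closed). Weak separatedness forces $x_1 = x_2$, so $w$ is a diagonal point and lies in the image. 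I expect this converse to be the main obstacle: the delicate step is the passage from the \emph{scheme-theoretic} image $\im(\Delta_{\cX/k})$ to an \emph{honest} valuation-ring family of isomorphisms that can be fed into the weak separatedness test, together with the bookkeeping ensuring the resulting central fibres remain closed so that the hypothesis applies.
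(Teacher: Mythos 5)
Your proposal is correct and takes essentially the same route as the paper: both directions come down to the observation that a valuation-ring diagram with generically $2$-isomorphic lifts sends its special point into the closed substack $\im(\Delta_{\cX/k})$, and that, conversely, every closed point of this image is the special point of such a diagram, so that weak separatedness and surjectivity onto closed points translate into each other. The remaining differences are organizational (you produce the valuation ring through the smooth atlas and the groupoid $U \times_{\cX} U$ rather than citing the specialization results of \cite{lmb}, and you package the extension $k \to k'$ by base change), and your two justificatory slips --- ``a $\kappa$-point, and so a closed point'' and ``the $u_i$ being closed'' --- should both be replaced by the fact that closedness of the pair $(x_1,x_2)$ forces $x_1, x_2$ to be closed in $|\cX|$, which is the same unproven assertion the paper's own proof makes.
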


\begin{proof}
\medskip \noindent
First suppose $\cX$ is weakly separated over $k$.  Let $k \to k'$ be a field extension and set $\cX' = \cX \times_k k'$.  Let $(x_1,x_2) \in |\im(\Delta_{\cX'/k'})|$ be a closed point.  Then $x_1, x_2 \in |\cX'|$ are closed points. There is a valuation ring $R$ with fraction field $K$, residue field $k'$ and a diagram
$$\xymatrix{
\Delta^* = \Spec K \ar[r] \ar[d]		& \Delta = \Spec R \ar[d]^{h_1,h_2} \\
\cX'  \ar[r]^{\Delta_{\cX'/k'}}	& \cX' \times_{k'} \cX' 
}$$
such that $h_i(0) = x_i$. 
Then we have a diagram
$$
\xymatrix{
 \Spec K \ar[r] \ar[d]	& \cX' \ar[d]^f \\
 \Spec R \ar[r] \ar@<.5ex>[ur]^{h_1} \ar@<-.5ex>[ur]_{h_2}		& \Spec k'
}$$
Since $\cX$ is weakly separated over $k$, $h_1(0) = h_2(0)$ so that $(x_1,x_2)$ is in the set-theoretic image of $\cX' \to \im \Delta_{\cX'/k'}$.

\medskip \noindent
Conversely, if $\cX$ is not weakly separated over $\Spec k$, then there is diagram
$$
\xymatrix{
 \Delta^* = \Spec K \ar[r] \ar[d]	& \cX \ar[d]^f \\
 \Delta = \Spec R \ar[r] \ar@<.5ex>[ur]^{h_1} \ar@<-.5ex>[ur]_{h_2}		& \Spec k
}$$
where $R$ is a valuation ring with fraction field $K$ and residue field $\kappa$  such that 
$h_1(0)$ and $h_2(0) \in |\cX \times_k \Delta|$ closed and distinct.  But then $(h_1(0), h_2(0))$ is a closed point in $|\im \Delta_{\cX \times_k \kappa /\kappa}|$ which is not in the image of $|\cX \times_k \kappa|$. 
\end{proof}

\begin{prop}  \label{prop-ws}
Let $\cX$ be an algebraic stack finite type over an algebraically closed field $k$.  Then $\cX$ is weakly separated over $k$ if and only if the morphism $\cX \to \im (\Delta_{\cX/k})$ surjects onto closed points.
\end{prop}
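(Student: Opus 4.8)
The key observation is that Proposition~\ref{prop-ws} differs from Lemma~\ref{lemma-ws} only in that it drops the base-change over all algebraically closed extensions $k \to k'$. My plan is therefore to deduce the proposition from the lemma by showing that the single condition over $k$ already implies the condition over every algebraically closed extension $k'$. In one direction this is immediate: if $\cX$ is weakly separated over $k$, then since weak separatedness is evidently preserved under base change (the diagrams in Definition~\ref{definition-weakly-proper} pull back), $\cX \times_k k'$ is weakly separated over $k'$ for every $k'$, and Lemma~\ref{lemma-ws} gives that $\cX \to \im(\Delta_{\cX/k})$ surjects onto closed points (taking $k' = k$).

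For the converse, suppose $\cX \to \im(\Delta_{\cX/k})$ surjects onto closed points; I must verify the hypothesis of Lemma~\ref{lemma-ws}, namely that for every algebraically closed $k \to k'$ the morphism $\cX \times_k k' \to \im(\Delta_{\cX/k}) \times_k k'$ surjects onto closed points. The crux is the compatibility of scheme-theoretic image with flat base change: since $k \to k'$ is flat, we have a natural identification $\im(\Delta_{\cX \times_k k'/k'}) \cong \im(\Delta_{\cX/k}) \times_k k'$, because the diagonal of $\cX \times_k k'$ over $k'$ is the base change of $\Delta_{\cX/k}$ and formation of scheme-theoretic image commutes with flat morphisms for quasi-compact maps (here $\cX$ is finite type over $k$, so $\Delta_{\cX/k}$ is quasi-compact). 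Granting this identification, the claim reduces to showing that surjectivity onto closed points of $\cX \to \im(\Delta_{\cX/k})$ is preserved under the base change $-\times_k k'$.

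The main obstacle is precisely this last point: surjectivity onto \emph{closed} points need not be preserved under arbitrary field extension, since a closed point of $\im(\Delta_{\cX/k}) \times_k k'$ need not lie over a closed point of $\im(\Delta_{\cX/k})$, and conversely closed points can fail to pull back to closed points. To handle this I would argue as follows: let $(x_1, x_2) \in |\im(\Delta_{\cX/k}) \times_k k'|$ be a closed point, lying over some point $w \in |\im(\Delta_{\cX/k})|$. Using that $k'$ is algebraically closed and the fibers of $\im(\Delta_{\cX/k}) \times_k k' \to \im(\Delta_{\cX/k})$ are geometrically connected schemes over the residue fields, one finds a closed point $w_0 \in |\im(\Delta_{\cX/k})|$ with $w \rightsquigarrow w_0$ whose fiber contains a closed point specializing $(x_1,x_2)$; by hypothesis $w_0$ is in the image of the closed points of $\cX$, and because $\cX \times_k k' \to \im(\Delta_{\cX/k})\times_k k'$ is of finite type, the image is closed under the relevant specializations, so $(x_1, x_2)$ itself lies in the set-theoretic image of $\cX \times_k k'$. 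Combining this with the identification of the previous paragraph verifies the hypothesis of Lemma~\ref{lemma-ws}, and the converse direction of that lemma then yields that $\cX$ is weakly separated over $k$, completing the proof.
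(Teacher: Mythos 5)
Your opening moves coincide with the paper's: the reduction to Lemma~\ref{lemma-ws}, and the identification $\im(\Delta_{\cX\times_k k'/k'})\cong \im(\Delta_{\cX/k})\times_k k'$ via flat base change of scheme-theoretic images (the paper uses this silently when it asserts its comparison diagram is cartesian); your forward direction is also fine. The gap is in your final descent step, and it is fatal as written. You take a closed point $(x_1,x_2)\in|\im(\Delta_{\cX/k})\times_k k'|$ lying over $w\in|\im(\Delta_{\cX/k})|$ and propose to find a closed point $w_0$ with $w\rightsquigarrow w_0$ whose fiber contains a closed point that $(x_1,x_2)$ specializes to. But $\im(\Delta_{\cX/k})\times_k k'$ is of finite type over the algebraically closed field $k'$, so the closed point $(x_1,x_2)$ admits no specialization other than itself; any point it specializes to lies over $w$, never over a strictly smaller $w_0$. (And $w$ genuinely need not be closed: when $k'$ has positive transcendence degree over $k$, closed points upstairs routinely lie over non-closed points downstairs --- this is exactly the difficulty the proof has to address.) Moreover, the principle you invoke to conclude --- that the image of a finite-type morphism is ``closed under the relevant specializations'' --- is false: Chevalley gives only constructibility of the image, and what your argument actually needs is stability under \emph{generization}, which fails even more badly. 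In the present context such stability would gut the proposition: for the non-separated line $X=\AA^1\cup_{\AA^1\setminus\{0\}}\AA^1$ appearing in Example~\ref{example-weakly-separated1}, the generic point of $\im(\Delta_{X/k})$ lies in the image of $|X|$ while its closed specializations $(0_1,0_2)$ and $(0_2,0_1)$ do not.

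The paper runs the argument downstairs, at the possibly non-closed point $w$ itself, and in the direction opposite to yours. The set-theoretic image of $|\cX|\to|\im(\Delta_{\cX/k})|$ is constructible; by the hypothesis it contains \emph{every} closed point of $\overline{\{w\}}$; and a constructible subset containing all closed points of $\overline{\{w\}}$ must contain its generic point $w$ --- this density-of-closed-points inference is where the paper's proof does its real work. Hence $w$ is in the image of $|\cX|$. Then, since the square with vertices $\cX\times_k k'$, $\im(\Delta_{\cX/k})\times_k k'$, $\cX$, $\im(\Delta_{\cX/k})$ is cartesian, the fiber of $\cX\times_k k'$ over $(x_1,x_2)$ is a field-extension base change of the nonempty fiber of $\cX$ over $w$, hence nonempty, so $(x_1,x_2)$ lies in the image of $|\cX\times_k k'|$, verifying the hypothesis of Lemma~\ref{lemma-ws}. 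If you replace your specialization step by this constructibility argument, your proof becomes the paper's.
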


\begin{proof}  By Lemma \ref{lemma-ws}, it suffices to show that if $\cX \to \im (\Delta_{\cX/k})$ surjects onto closed points, then for every extension $k \to k'$ with $k'$ algebraically closed, the morphism $\cX \times_k k' \to \im (\Delta_{\cX/k}) \times_k k'$ surjects onto closed points.  Let $\cX' = \cX \times_k k'$ and consider the cartesian diagram
$$\xymatrix{
\cX' \ar[r] \ar[d]		& \im(\Delta_{\cX'/k'}) \ar@{^(->}[r] \ar[d]	& \cX' \times_{k'} \cX' \ar[d] \\
\cX \ar[r]			& \im(\Delta_{\cX/k}) \ar@{^(->}[r]		& \cX \times_k \cX
}$$	
Suppose $(z'_1, z'_2) \in \im(\Delta_{\cX'/k'})$ is a closed point with image $(x'_1, x'_2) \in \im(\Delta_{\cX/k})$.  Since every closed point $(x_1, x_2) \in |\im(\Delta_{\cX/k})|$ which is a specialization of $(x'_1, x'_2)$ is in the image of $\cX \to \im(\Delta_{\cX/k})$, so is $(x'_1, x'_2)$.  It follows that $(x'_1, x'_2)$ is in the image of $\cX' \to \im \Delta_{\cX'/k'}$.  
\end{proof}

\begin{prop} \label{prop_weakly_separated_k}
Let $\cX$ be an algebraic stack finite type over an algebraically closed field $k$.  Then is $\cX \to \Spec k$ weakly separated if and only if for all diagrams
$$
\xymatrix{
 \Delta^* = \Spec k((x)) \ar[r] \ar[d]	& \cX \ar[d]^f \\
 \Delta = \Spec k[[x]] \ar[r] \ar@<.5ex>[ur]^{h_1} \ar@<-.5ex>[ur]_{h_2}		& \Spec k
}$$
such that $h_1(0)$ and $h_2(0)$ are closed in $|\cX|$, then $h_1(0) = h_2(0)$.
\end{prop}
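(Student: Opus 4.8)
The plan is to deduce this from Proposition \ref{prop-ws}, which already characterizes weak separatedness over $k$ by the condition that $\cX \to \im(\Delta_{\cX/k})$ surjects onto closed points. The forward implication is immediate: $k[[x]]$ is a valuation ring with fraction field $k((x))$ over $\Spec k$, and since the special fibre of $\cX \times_k \Spec k[[x]] \to \Spec k[[x]]$ is a closed substack isomorphic to $\cX$, a point of that fibre is closed in $|\cX \times_k \Spec k[[x]]|$ if and only if it is closed in $|\cX|$. Thus the displayed condition is literally the instance of Definition \ref{definition-weakly-proper}(1) attached to the valuation ring $k[[x]]$.

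For the converse I would argue by contraposition, and the main work is all there. Suppose $\cX$ is not weakly separated. By Proposition \ref{prop-ws} the map $\cX \to \im(\Delta_{\cX/k})$ fails to surject onto closed points, so there is a closed point $w = (\bar{x}_1, \bar{x}_2) \in |\im(\Delta_{\cX/k})|$ lying outside the image of $\cX$. As $\cX$ is finite type over the algebraically closed field $k$, the coordinates $\bar{x}_1, \bar{x}_2$ are closed $k$-points of $\cX$, and $w$ lying outside the set-theoretic image of the diagonal means precisely that $\bar{x}_1 \neq \bar{x}_2$ in $|\cX|$. The goal is to manufacture out of $w$ a diagram of the required shape over $k[[x]]$ whose two extensions have closed fibres $\bar{x}_1 \neq \bar{x}_2$.

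To do this I would first perform curve selection on the scheme-theoretic image. Since $\Delta_{\cX/k}$ is of finite type, its set-theoretic image $W_0 \subseteq |\im(\Delta_{\cX/k})|$ is constructible by Chevalley, and it is dense by the definition of the scheme-theoretic image; in particular $w$ lies in $\overline{W_0}$. Choosing a smooth presentation $P \to \im(\Delta_{\cX/k})$ by a finite-type $k$-scheme, lifting $w$ to a closed point $\tilde{w} \in P$, and selecting a curve through $\tilde{w}$ meeting the dense constructible preimage of $W_0$, I obtain a trait $\gamma \co \Spec k[[x]] \to \im(\Delta_{\cX/k}) \hookrightarrow \cX \times_k \cX$ with $\gamma(0) = w$ and with $\gamma(\eta)$ landing in the image of the diagonal; because $\tilde{w}$ is a $k$-point one can arrange the DVR of the selected curve to be exactly $k[[x]]$.

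It then remains to lift the generic point through the diagonal, and this is where I expect the only real obstacle to lie. Since $\gamma(\eta)$ lies in the set-theoretic image of $\Delta_{\cX/k}$, the fibre product $\cX \times_{\Delta_{\cX/k}, \, \cX \times_k \cX} \Spec k((x))$ is a non-empty finite-type space over $k((x))$, hence has a point over some finite extension $L/k((x))$; such a point is a lift $\ell \co \Spec L \to \cX$ with $\Delta_{\cX/k} \circ \ell = \gamma(\eta)|_L$, so that the two projections $h_1 := \mathrm{pr}_1 \circ \gamma$ and $h_2 := \mathrm{pr}_2 \circ \gamma$ satisfy $h_1|_L \cong \ell \cong h_2|_L$. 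A priori the extension $L/k((x))$ threatens to take us away from $k[[x]]$, which is exactly the subtlety the statement is guarding against; here is where I would invoke that $k$ is algebraically closed. The integral closure $S$ of $k[[x]]$ in $L$ is a complete, equicharacteristic discrete valuation ring whose residue field is finite over $k$, hence equal to $k$, so by the Cohen structure theorem $S \cong k[[y]]$. Base-changing $\gamma$ along $k[[x]] \hookrightarrow S$ therefore keeps us in the class of the proposition: the special point still maps to $w$, giving closed and distinct fibres $h_1(0) = \bar{x}_1 \neq \bar{x}_2 = h_2(0)$, while the isomorphism $h_1|_{\Spec L} \cong h_2|_{\Spec L}$ exhibits $h_1, h_2$ as two extensions of one common generic object over $\Spec L = \Spec k((y))$. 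Identifying $S \cong k[[y]]$ produces a diagram as in the statement violating the $k[[x]]$-condition, which completes the contrapositive.
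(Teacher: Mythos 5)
Your proof is correct and follows essentially the same route as the paper: both directions reduce to Proposition \ref{prop-ws}, and the substance is the construction of a diagram over $k[[x]]$ through a closed point of $\im(\Delta_{\cX/k})$ whose generic point lifts through the diagonal, yielding two lifts with closed distinct special fibers (equivalently, forcing $x_1 = x_2$ in the paper's direct phrasing). The paper simply asserts the existence of this diagram (citing the argument of Lemma \ref{lemma-ws}) and argues directly rather than contrapositively; your curve-selection, finite-extension, and Cohen-structure-theorem steps fill in exactly what the paper leaves implicit.
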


\begin{proof} By Proposition \ref{prop-ws}, it suffices to show that if the valuative criterion holds for DVRs of the form $k[[x]]$, then $\cX \to \im(\Delta_{\cX/k})$ surjects on closed points.  As in the argument of Lemma \ref{lemma-ws}, let $(x_1,x_2) \in |\im(\Delta_{\cX/k})|$ be a closed point and choose a diagram
$$\xymatrix{
\Delta^* = \Spec k((x)) \ar[r] \ar[d]		& \Delta = \Spec k[[x]] \ar[d]^{h_1,h_2} \\
\cX  \ar[r]^{\Delta_{\cX/k}}	& \cX \times_{k} \cX
}$$
with $h_i(0) = x_i$. 
By applying the restricted valuative criterion, we see that $x_1 = x_2$.
\end{proof}

\begin{remark}  \label{remark-disc} It follows from Proposition \ref{prop_weakly_separated_k} and Artin approximation that an algebraic stack $\cX$ finite type over an algebraically closed field $k$ is weakly separated if and only if for every smooth curve $C \to \Spec k$ with a closed point $0 \in C$ and diagram
$$
\xymatrix{
C \setminus \{ 0 \} \ar[r] \ar[d]	& \cX \ar[d]^f \\
C \ar[r] \ar@<.5ex>[ur]^{h_1} \ar@<-.5ex>[ur]_{h_2}		& \Spec k
}$$
such that $h_1(0)$ and $h_2(0)$ are closed in $|\cX|$, then $h_1(0) = h_2(0)$.  
\end{remark}

\begin{prop} 
\label{proposition-weakly-separated-properties} \quad
\begin{enumerate}
\item  If $f \co \cX \to \cY$ is a locally separated and representable morphism of algebraic stacks, then $f$ is separated if and only if $f$ is weakly separated.  In particular, a morphism of schemes is separated if and only if it is weakly separated.
\item Weakly separated (resp., weakly proper) morphisms are stable under base change.
\item Weakly separated (resp., weakly proper) morphisms satisfy descent in the fpqc topology.  
\end{enumerate}
\end{prop}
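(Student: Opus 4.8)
The plan is to deduce all three assertions from the valuative reformulations proved above rather than from the diagonal directly. Recall from Lemma~\ref{lemma-specialization} that the set of extensions $\Sigma_{\fD}$ of a diagram $\fD$ as in Diagram~(\ref{diagram_weakly_separated}) is exactly the set of specializations of the generic point $\xi$ of $\cX\times_{\cY}\Delta$; that (by the preceding valuative-criterion Lemma, parts (1) and (3)) weak separatedness of $f$ is equivalent to $\Sigma_{\fD}$ having at most one closed point for every $\fD$; and that in testing this we may take $R$ to be a complete valuation ring with algebraically closed residue field. I will use these facts together with the classical valuative criteria for separatedness and universal closedness.

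\textbf{(1)} The implication ``separated $\Rightarrow$ weakly separated'' is immediate: if $f$ is separated, then any two lifts $h_1,h_2$ of a fixed $\Delta^*\to\cX$ are canonically isomorphic by the valuative criterion of separatedness, so in particular $h_1(0)=h_2(0)$. For the converse, assume $f$ is locally separated, representable and weakly separated, and verify the valuative criterion of separatedness; after extension we take $R$ complete with algebraically closed residue field, and we may assume $\cY$ affine, so that $\cZ:=\cX\times_{\cY}\Delta$ is an algebraic space locally separated over $\Delta$. Given two lifts $h_1,h_2\co\Delta\to\cX$ of a fixed $\Delta^*\to\cX$, the locus in $\Delta$ on which they agree is the preimage of the diagonal of $\cX/\cY$, hence locally closed since $f$ is locally separated; as it contains $\eta$ it is either all of $\Delta$, whence $h_1=h_2$, or it is $\Delta^*$, whence the points $h_1(0)\neq h_2(0)$ are distinct in $\cZ$. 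In the latter case both are specializations of $\xi$, hence both lie in $\Sigma_{\fD}$ by Lemma~\ref{lemma-specialization}. Since (after the standard reductions) $\cZ$ is Jacobson and sober, distinct points have distinct closures, so we may choose closed points $w_1\in\overline{\{h_1(0)\}}$ and $w_2\in\overline{\{h_2(0)\}}$ with $w_1\neq w_2$; both lie in the specialization-closed set $\Sigma_{\fD}$. This exhibits two distinct closed points of $\Sigma_{\fD}$, contradicting weak separatedness. Hence $f$ is separated. Since every morphism of schemes is locally separated, the final sentence of (1) follows.

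\textbf{(2)} Let $\cY'\to\cY$ be arbitrary and $f'\co\cX':=\cX\times_{\cY}\cY'\to\cY'$ the base change. For any diagram over $\cY'$ as in Diagram~(\ref{diagram_weakly_separated}), composing with $\cY'\to\cY$ gives a diagram over $\cY$, and there is a canonical identification $\cX'\times_{\cY'}\Delta=\cX\times_{\cY}\Delta$ (with $\Delta\to\cY$ the composite $\Delta\to\cY'\to\cY$); thus the two lifts and the closedness of their central fibers are literally the same datum upstairs and downstairs, so weak separatedness of $f$ gives that of $f'$. For weak properness one adds that finite type and universal closedness are stable under base change. \textbf{(3)} For fpqc descent, suppose $g\co\cY'\to\cY$ is an fpqc cover with $f'$ weakly separated; given a diagram over $\cY$ with lifts $h_1,h_2$ having closed central fibers, take $R$ complete with algebraically closed residue field. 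By the standard valuative lifting for fpqc covers there is an extension of valuation rings $R\hookrightarrow R'$ and a lift $\Spec R'\to\cY'$ of $\Spec R'\to\Spec R\to\cY$. Pulling $h_1,h_2$ back yields lifts $h_1',h_2'$ of the resulting diagram over $\cY'$ with $h_i'(0')\mapsto h_i(0)$ under $\cX'\times_{\cY'}\Spec R'\to\cX\times_{\cY}\Delta$; as the residue-field extension is between algebraically closed fields, the $h_i'(0')$ remain closed, so weak separatedness of $f'$ gives $h_1'(0')=h_2'(0')$, whence $h_1(0)=h_2(0)$. For weak properness one combines this with fpqc descent of finite type and of universal closedness (the latter via Lemma~\ref{lemma-technical-uc}).

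The main obstacle is the closedness bookkeeping in (1): weak separatedness only constrains \emph{closed} central fibers, whereas the two limits produced by a failure of separatedness need not be closed. The device that resolves this is to observe that, over an algebraically closed residue field, two distinct central fibers have distinct closures and hence admit distinct closed specializations, which then provide the two closed points of $\Sigma_{\fD}$ that contradict weak separatedness; justifying the Jacobson/sober reduction used here (reducing to the finite-type setting of the earlier lemmas) is the one genuinely delicate point. The analogous preservation of closedness under the residue-field extension is the only subtlety in the descent argument (3), and it is handled by the same reduction to algebraically closed residue fields.
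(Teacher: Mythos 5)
Your parts (2) and (3) follow the paper's own arguments closely and are fine. The problem is in part (1), at precisely the step you flagged as delicate. You claim that $\cZ=\cX\times_{\cY}\Delta$ is ``Jacobson and sober,'' and that since distinct points have distinct closures you may choose \emph{distinct} closed points $w_1\in\overline{\{h_1(0)\}}$ and $w_2\in\overline{\{h_2(0)\}}$. Both halves of this fail. First, $\cZ$ is an algebraic space over $\Delta=\Spec R$ with $R$ a valuation ring, and $\Spec R$ is never Jacobson once $\dim R\geq 1$ (its closed point is not dense); moreover part (1) carries no finite-type hypothesis at all, so there is no ``finite-type setting of the earlier lemmas'' to reduce to --- those lemmas concern $\cX$ of finite type over a field, not the fibered product over $\Delta$. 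Second, and more seriously, even in a sober space in which every point admits a closed specialization, two distinct points need not admit \emph{distinct} closed specializations: in $\Spec$ of a rank-two valuation ring, the generic point and the height-one prime are distinct, yet the unique closed point is the only closed specialization of either. Since producing two distinct closed points of $\Sigma_{\fD}$ was the entire content of your contradiction, this is a genuine gap, not a presentational one.

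The repair is that no passage to closed specializations is needed: in the representable, locally separated situation the points $h_1(0)$ and $h_2(0)$ are \emph{already} closed in $|\cZ|$. Each $h_i(0)$ is the image of a section of $\cZ\to\Delta$ over the closed point, hence is a $k(0)$-rational point of the closed fibre $\cZ_0$; a rational point of a quasi-separated, locally separated (hence decent) algebraic space over a field is closed, and $\cZ_0$ is closed in $\cZ$. So weak separatedness applies directly to give $h_1(0)=h_2(0)$ as points, and then (since both sections induce the same identification of residue fields) as morphisms $\Spec k(0)\to\cZ$. This is exactly the paper's route: it feeds $h_1(0)=h_2(0)$ into the equalizer $X\times_{X\times_Y X}\Delta\to\Delta$, which is an immersion by local separatedness and contains both $\eta$ and $0$, hence equals $\Delta$, giving $h_1=h_2$. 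Your ``agreement locus'' dichotomy is the same device, so once you replace the Jacobson/sober maneuver by the observation that sections have closed central fibres, your proof of (1) coincides with the paper's.
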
 

\begin{proof}
For (1), it is clear that if $f$ is separated, then $f$ is weakly separated.  Conversely, let $f \co \cX \to \cY$ be a locally separated and weakly separated morphism of algebraic stacks.  Let $Y \to \cY$ be a smooth presentation with $Y$ a scheme.  Then $f' \co \cX \times_{\cY} Y \to Y$ is a locally separated and weakly separated morphism of algebraic spaces.  If $f'$ is separated, then by descent $f$ is separated.  So we can assume that $f \co X \to Y$ is a locally separated and weakly separated morphism of algebraic spaces.  Suppose we have a commutative diagram
$$
\xymatrix{
 \Delta^* \ar[r] \ar[d]	& X \ar[d]^f \\
 \Delta \ar[r] \ar@<.5ex>[ur]^{h_1} \ar@<-.5ex>[ur]_{h_2}		& Y
}$$
with two lifts $h_1$ and $h_2$.  This gives a commutative diagram
$$\xymatrix{
\Delta^* \ar[r]\ar[d]		& X \ar[d] \\
\Delta \ar[r]^{(h_1,h_2)}			& X \times_Y X
}$$
Since $X \to X \times_Y X$ is an immersion, $X \times_{X \times_Y X} \Delta \to \Delta$ is an immersion.  Since $f$ is weakly separated, $h_1(0) = h_2(0)$ in $X \times_Y \Delta$ and it follows that $X \times_{X \times_Y X} \Delta \to \Delta$ is an isomorphism.  Therefore $h_1 = h_2$ and $f$ is separated.  Part $(2)$ is clear.

For part $(3)$, suppose $f \co \cX \to \cY$ and $g \co \cY' \to \cY$ are morphism of algebraic stacks with $g$ faithfully flat and quasi-compact such that the base change $\cX' := \cX \times_{\cY} \cY' \to \cY'$ is weakly separated.  Suppose we have a 2-commutative diagram as in Diagram (\ref{diagram})
$$\xymatrix{
 \Delta^* = \Spec K \ar[r] \ar[d]	& \cX \ar[d]^f \\
\Delta = \Spec R \ar[r] \ar@<.5ex>[ur]^{h_1} \ar@<-.5ex>[ur]_{h_2}		& \cY
}
$$
with two lifts $h_1, h_2 \co \Delta \to  \cX$ such that $h_1(0)$ and $h_2(0)$ are closed in $|\cX \times_{\cY} \Delta|$.   There exists an extension $K'$ of $K$ and a valuation ring $R'$ for $K'$ dominating $R$ and a morphism $\Delta'=\Spec R' \to \cY'$ extending $\Delta \to \cY$.  There exists unique extensions $h_1',h_2' \co \Delta' \to \cX'$ with a 2-isomorphism $h_1'|_{\Delta'^*} \iso h_2'|_{\Delta'^*}$.  Moreover, $h_1'(0')$ and $h_2'(0')$ are closed points in $|\cX' \times_{\cY'} \Delta'|$.  It follows that $h_1'(0') = h_2'(0')$ and $h_1(0) = h_2(0)$.  Therefore, $f$ is weakly separated.
\end{proof}



\begin{remark}  When $\cY$ is locally noetherian, the above proof of part (3) shows more generally that if $f \co \cX \to \cY$ is a morphism of algebraic stacks and $g \co \cY' \to \cY$ is a quasi-compact,  universally submersive morphism
such that $\cX \times_{\cY} \cY' \to \cY'$ is weakly separated (resp., weakly proper), then $f \co \cX \to \cY$ is weakly separated (resp., weakly proper).
\end{remark}


\begin{example}
\label{example-weakly-separated1}
A weakly separated morphism of algebraic spaces need not be separated.  For example, take the bug-eyed cover of $\AA^1$ over $\CC$ obtained by taking the quotient $X/\ZZ_2$ of the non-separated affine line $X = \AA^1 \bigcup_{\AA^1 \setminus \{0\}} \AA^1$ by the action of $\ZZ_2$ which acts by $x \mapsto -x$ and flips the origins.  Let $h_1, h_2 \co \Spec \CC[[x]] \to X/\ZZ_2$ be the two morphisms obtained by mapping to the two origins in $X$.  This gives a diagram
$$\xymatrix{
 \Delta^* \ar[r] \ar[d]^i	& X/\ZZ_2 \ar[d]^f \\
 \Delta \ar[r] \ar@<.5ex>[ur]^{h_1} \ar@<-.5ex>[ur]_{h_2}		& \Spec \CC
}
$$
with $h_1 \circ i = h_2 \circ i$ and $h_1(0) = h_2(0) \in |X/\ZZ_2|$.  However, $h_1 \neq h_2$.
\end{example}

\begin{example}
\label{weakly-separated2}
A Deligne-Mumford stack with non-finite inertia may be weakly separated.  For an example, let $\cX$ be the Deligne-Mumford locus of $[(\Sym^4 \PP^1) / PGL_2]$ over $\CC$ consisting of points with finite stabilizer groups.  Then $\cX \to \Spec \CC$ is weakly separated but is not separated.
\end{example}

\begin{example} 
\label{example-weakly-separatd3}
Consider the $\GG_m$-action on the nodal cubic $X$ in $\PP^2$.  Let $\cX = [X/\GG_m]$.  Then $\cX$ is weakly proper but does not admit a good moduli space; a good moduli space would necessarily be $\Spec \CC$ which would imply that $\cX$ is cohomologically affine and therefore that $X$ is affine, a contradiction.  Moreover, consider the $\GG_m$-action on the normalization $\tilde X \cong \PP^1$ fixing $0$ and $\infty$.  Consider the composition $[\tilde X / \GG_m] \stackrel{f}{\to} [X/\GG_m] \stackrel{g}{\to} \Spec \CC$.  Then $g$ is weakly proper and $f$ is finite, but the composition $g \circ f$ is not weakly separated.
\end{example}


The following proposition is the main result of this section and justifies the introduction of the weakly separated/properness condition.

\begin{prop} \label{prop-good-weakly-separated}
Suppose $\cX$ is an algebraic stack over a scheme $S$.  Let $\phi \co \cX \to Y$ be a good moduli space.  
\begin{enumerate}
\item $\phi$ is weakly separated.  If $\phi$ is also finite type (e.g., if $\cX \to S$ is finite type), then $\phi$ is weakly proper.
\end{enumerate}
Suppose in addition that $Y$ is locally separated.  Then
\begin{enumerate} \setcounter{enumi}{1}
\item $\cX \to S$ is weakly separated if and only if $Y \to S$ is separated.
\item If $\cX$ and $Y$ are finite type over $S$, then $\cX \to S$ is weakly proper if and only if $Y \to S$ is proper.
\end{enumerate}
\end{prop}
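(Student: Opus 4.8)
The plan is to reduce everything to three foundational facts about good moduli spaces from \cite{alper_good_arxiv}: they are stable under arbitrary base change, they are surjective and universally closed, and each fiber $\cX\times_Y\Spec k$ over an algebraically closed field has a unique closed point. By the preceding lemmas, every weak-separatedness assertion may be tested on diagrams as in (\ref{diagram_weakly_separated}) with $R$ a complete valuation ring (a complete DVR in the locally noetherian case) having algebraically closed residue field, so all special fibers below live over an algebraically closed field. For (1), given lifts $h_1,h_2\co\Delta\to\cX$ of a common $\Delta^*\to\cX$ with $h_1(0),h_2(0)$ closed in $|\cX\times_Y\Delta|$, I base change $\phi$ along $\Delta\to Y$: then $\cX\times_Y\Delta\to\Delta$ is again a good moduli space, the points $h_i(0)$ lie over $0\in\Delta$ and are closed in the fiber $(\cX\times_Y\Delta)_0$, and that fiber---being a good moduli space over the algebraically closed field $\kappa(0)$---has a unique closed point. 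Hence $h_1(0)=h_2(0)$, so $\phi$ is weakly separated; since $\phi$ is universally closed, adding the finite-type hypothesis makes it weakly proper.

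For the implication ($Y\to S$ separated) $\Rightarrow$ ($\cX\to S$ weakly separated) in (2), I start from a diagram for $\cX\to S$ with lifts $h_1,h_2$ and $h_i(0)$ closed in $|\cX\times_S\Delta|$. Composing with $\phi$ gives $\phi h_1,\phi h_2\co\Delta\to Y$ over $S$ agreeing on $\Delta^*$, so separatedness forces $\phi h_1=\phi h_2=:g$, and both $h_i$ factor through $\cX\times_{Y,g}\Delta$. Separatedness of $Y\to S$ makes the graph of $g$ a closed immersion, hence $\cX\times_{Y,g}\Delta\hookrightarrow\cX\times_S\Delta$ is a closed immersion; therefore the $h_i(0)$ stay closed in $|\cX\times_{Y,g}\Delta|$, and applying (1) to the good moduli space $\cX\times_{Y,g}\Delta\to\Delta$ gives $h_1(0)=h_2(0)$.

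The converse in (2) is the main obstacle. Using Proposition~\ref{proposition-weakly-separated-properties}(1) together with local separatedness of $Y$, it suffices to prove $Y\to S$ weakly separated, and for this I must lift the given data to $\cX$. Concretely, for $g_1,g_2\co\Delta\to Y$ over $S$ agreeing on $\Delta^*$ with $g_1(0),g_2(0)$ closed in $|Y\times_S\Delta|$, I would first pick, after a field extension, a single lift $\Delta^*\to\cX$ of the shared generic map (using surjectivity of $\phi$), then extend it along each $g_i$---invoking universal closedness of the good moduli space $\cX\times_{Y,g_i}\Delta\to\Delta$---to maps $h_i\co\Delta\to\cX$ with $\phi h_i=g_i$ whose special value is the unique closed point of the central fiber. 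Writing $\phi_\Delta$ for the base change of $\phi$ along $\Delta\to S$, that closed point is precisely the unique closed point of $\phi_\Delta^{-1}(g_i(0))$, which is closed in $|\cX\times_S\Delta|$ because $g_i(0)$ is closed and $\phi_\Delta$ is universally closed. Since $h_1,h_2$ share the chosen generic lift, weak separatedness of $\cX\to S$ forces $h_1(0)=h_2(0)$, and applying $\phi$ yields $g_1(0)=g_2(0)$. I expect the bulk of the labor here to be bookkeeping: securing the common generic lift, confirming that the limits produced by universal closedness really are the closed points sitting over $g_i(0)$, and tracking closedness across the two distinct fiber products $\cX\times_S\Delta$ and $\cX\times_{Y,g_i}\Delta$ through a tower of valuation extensions.

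Finally, (3) is formal. Weakly proper means weakly separated, finite type, and universally closed, and under the finite-type hypotheses part (2) exchanges weak separatedness of $\cX\to S$ with separatedness of $Y\to S$. Universal closedness transfers in both directions: if $\cX\to S$ is universally closed then so is $Y\to S$, since $\phi$ is surjective; while if $Y\to S$ is proper then $\cX\to S$ is universally closed, being the composite of the universally closed $\phi$ with $Y\to S$. Thus $\cX\to S$ is weakly proper if and only if $Y\to S$ is proper.
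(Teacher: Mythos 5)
Your proof is correct and follows essentially the same route as the paper: part (1) via base change of the good moduli space and the unique closed point in the fiber over $0$, the converse of (2) by lifting the two extensions to $\cX$ using universal closedness of $\phi$ (arranging closed special points via the unique closed point of the central fiber) and then invoking weak separatedness of $\cX \to S$ together with Proposition \ref{proposition-weakly-separated-properties}(1), and (3) by the formal transfer of universal closedness through the surjective, universally closed $\phi$. The only cosmetic difference is that in the forward direction of (2) you verify directly that the graph of $g$ is a closed immersion, where the paper simply cites the fact that a weakly separated morphism followed by a separated morphism is weakly separated.
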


\begin{remark}  Recall that if $S$ is locally noetherian and $\cX \to S$ is finite type, then $Y \to S$ is also finite type; see \cite[Theorem 6.3.3]{alper_adequate}.  
\end{remark}

\begin{proof} 
For $(1)$, suppose we have a commutative diagram as in Diagram (\ref{diagram_weakly_separated}) for the morphism $\cX \to Y$ with two lifts $h_1, h_2 \co \Delta \to \cX$   Then the base change $\cX \times_Y \Delta \to \Delta$ is a good moduli space and therefore there is a unique closed point in $\cX \times_Y \Delta$ over the closed point of $0 \in \Delta$ (see \cite[Prop. 4.7, Thm. 4.16]{alper_good_arxiv}).  Therefore $h_1(0) = h_2(0) \in |\cX \times_Y \Delta|$.  The second statement in $(1)$ follows because $\phi$ is universally closed (\cite[Thm. 4.16]{alper_good_arxiv}).

For $(2)$, suppose $Y \to S$ is separated.  As the composition of a weakly separated morphism followed by a separated morphism is weakly separated, it follows from (1) that $\cX \to S$ is weakly separated.
Conversely, suppose that $\cX \to S$ is weakly separated.  Then there exists a commutative diagram as in Diagram (\ref{diagram_weakly_separated}) for the morphism $Y \to S$ with two distinct lifts $h_1, h_2 \co \Delta \to Y$.  Since $\cX \to Y$ is universally closed, there exists an extension $K \hookarr K'$ and valuation ring $R' \subseteq K'$ dominating $R$, a lift $\Delta'^*=\Spec K' \to \cX'$ and two lifts $h'_1,  h'_2 \co \Delta'= \Spec R' \to \cX$ such that $h'_1(0')$ and $h'_2(0')$ are closed in $|\cX \times_S \Delta'|$, where $0' \in \Delta'$ is the closed point.  But since $\cX \to S$ is weakly separated, $h'_1(0') = h'_2(0')$ in $| \cX \times_S \Delta|$ so $h_1(0) = h_2(0)$ in $|Y \times_S \Delta|$.  Therefore, $Y \to S$ is weakly separated and by Proposition \ref{proposition-weakly-separated-properties}$(1)$, $Y \to S$ is separated.

For $(3)$, the morphism $\cX \to S$ is universally closed if and only if $Y \to S$ is universally closed as $\phi$ is surjective. 
\end{proof}

\section{$S_{m,1}/S_{m,2}$-curves and $H_{m,1}/H_{m,2}$-curves} \label{section-sh}

The purpose of this section is to construct weakly proper moduli stacks of crimping data (Section \ref{s-scurves}) and of stable tails associated to an arbitrary $A_k$-singularity (Section \ref{s-hcurves}). In both cases, the weak properness of the moduli stacks follows from their explicit construction as quotient stacks (Corollary \ref{cor-H-weakly-proper}).
\begin{defn}[$A_k$-singularities]

We say that a curve $C$ has an \emph{$A_k$-singularity} at a point
$p$ if $\widehat{\cO}_{C,p} \isom
\CC[[x,y]]/(y^2-x^{k+1})$.
An $A_1$- (resp., $A_2$-, $A_3$-, $A_4$-) singularity is also
called a \emph{node} (resp., cusp, tacnode, ramphoid cusp).
\end{defn}



\subsection{$S_{m,1}/S_{m,2}$-curves}\label{s-scurves}

\begin{defn}[$S_{m,1}/S_{m,2}$-curves]

\begin{enumerate}
\item[]
\item An \emph{$S_{m,1}$-curve} is a $1$-pointed curve of
arithmetic genus $m$ obtained by taking a smooth rational curve
with a labeled point and imposing an $A_{2m}$-singularity on it
at a point distinct from the labeled point.
\item An \emph{$S_{m,2}$-curve} is a $2$-pointed curve of
arithmetic genus $m$ obtained by taking two smooth rational
components, each with a labeled point, and identifying them at
points distinct from the labeled points to form an
$A_{2m+1}$-singularity.
\end{enumerate}
\end{defn}

A family of curves $(\cC \stackrel{\pi}{\arr} B,\sigma)$
(resp., $(\cC \stackrel{\pi}{\arr} B, \sigma_1,\sigma_2)$) is a
\emph{family of $S_{m,1}$- (resp., $S_{m,2}$-) curves} if its
geometric fibers are $S_{m,1}$- (resp., $S_{m,2}$-) curves,
the relative singular locus $\Sigma$ of $\pi$ maps
isomorphically to $B$ and $\pi$ has an $A_{2m}$-
(resp., $A_{2m+1}$-) singularity along $\Sigma$. Note that this is
not just a condition on fibers.

\begin{defn}

Let $\bar{\cS}_{m,1}$ (resp., $\bar{\cS}_{m,2}$) be the moduli stack of families of $S_{m,1}$- (resp., $S_{m,2}$-) curves.
\end{defn}

\begin{prop}\label{s-quotient-stack}

$\bar{\cS}_{m,1}$ is isomorphic to the stack $[\AA^{m-1}/\GG_m]$,
where for $m \geq 2$ $\GG_m$ acts with weights
$1,3,\ldots,2m-3$. $\bar{\cS}_{m,2}$ is isomorphic to the stack
$[\AA^{m-1}/\GG_m]$, where for $m \geq 2$ $\GG_m$ acts with
weights $1,2,\ldots,m-1$.
\end{prop}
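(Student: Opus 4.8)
The plan is to reduce both statements to an explicit description of the \emph{crimping data}---the ways of imposing an $A_{2m}$- (resp.\ $A_{2m+1}$-) singularity on a fixed rational normalization---and then to recognize the resulting parameter scheme, together with its residual torus action, as $[\AA^{m-1}/\GG_m]$. First I would rigidify by normalizing. The relative normalization of a family of $S_{m,1}$-curves $(\cC\to B,\sigma)$ is a $\PP^1$-bundle carrying two disjoint sections, namely $\sigma$ and the reduced preimage of the relative singular locus; since $\Aut(\PP^1,0,\infty)=\GG_m$, the trivializations of this pointed bundle form a $\GG_m$-torsor $P\to B$. The only remaining datum is the crimping, i.e.\ fibrewise the subring $R=\widehat{\cO}_{\cC}\subseteq\widehat{\cO}_{\tilde\cC}=\CC[[z]]$ at the singular point, with $R\cong R_0:=\CC[[t^2,t^{2m+1}]]$. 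This identifies $\bar{\cS}_{m,1}\cong[\mathrm{Crimp}_{m,1}/\GG_m]$, where $\mathrm{Crimp}_{m,1}$ is the scheme of such subrings and $\GG_m$ acts through the coordinate $z$; likewise $\bar{\cS}_{m,2}\cong[\mathrm{Crimp}_{m,2}/(\GG_m\times\GG_m)]$, the two factors acting on the branch coordinates $z_1,z_2$. It then remains to compute each crimping scheme and its residual weights.

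For $S_{m,1}$, extending the isomorphism $R\cong R_0$ to normalizations produces a coordinate $w=w(z)$ with $R=\CC[[w^2,w^{2m+1}]]$. The conductor is $z^{2m}\CC[[z]]\subseteq R$ and $w^{2m+1}$ lies in it, so $R=\CC[[w^2]]+z^{2m}\CC[[z]]$ depends only on $v:=\overline{w^2}\in\CC[z]/(z^{2m})$, a series of valuation $2$; two such $v$ give the same $R$ exactly when related by the truncated substitution $v\mapsto c_1v+\cdots+c_{m-1}v^{m-1}$ ($c_1\ne0$), which is the induced action of $\Aut(R_0)$. Passing to the square root $u=\sqrt v$ (a valuation-$1$ coordinate, defined up to sign), this becomes the group of odd substitutions $u\mapsto d_1u+d_3u^3+\cdots$, and I would use it to put $u$ into the unique normal form $u=z+b_2z^2+b_4z^4+\cdots+b_{2m-2}z^{2m-2}$ in which the leading coefficient is $1$ and the coefficients of $z^3,\dots,z^{2m-3}$ vanish (possible and unique by a triangular recursion). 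Thus $\mathrm{Crimp}_{m,1}\cong\AA^{m-1}$ with coordinates $b_2,\dots,b_{2m-2}$; and $z\mapsto\lambda z$, followed by the compensating $u\mapsto\lambda^{-1}u$ restoring the normal form, sends $b_{2k}\mapsto\lambda^{2k-1}b_{2k}$, giving the weights $1,3,\dots,2m-3$.

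For $S_{m,2}$ the computation is cleaner, since the $A_{2m+1}$-singularity has two branches and the gluing is rigid. The singularity is obtained by matching $(m{+}1)$-jets of the two branches, so $R=\{(F,G):G(z_2)\equiv F(\kappa(z_2))\bmod z_2^{m+1}\}$ for a unique gluing jet $\kappa(z_2)=\kappa_1z_2+\cdots+\kappa_mz_2^m$ with $\kappa_1\ne0$; hence $\mathrm{Crimp}_{m,2}\cong\GG_m\times\AA^{m-1}$ with coordinates $(\kappa_1;\kappa_2,\dots,\kappa_m)$. Rescaling $z_1,z_2$ by $\alpha,\beta$ acts by $\kappa_i\mapsto\alpha^{-1}\beta^i\kappa_i$; normalizing $\kappa_1=1$ trivializes one factor and leaves the diagonal $\alpha=\beta$ acting on $(\kappa_2,\dots,\kappa_m)$ by $\kappa_i\mapsto\alpha^{i-1}\kappa_i$, i.e.\ with weights $1,2,\dots,m-1$. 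This yields $\bar{\cS}_{m,2}\cong[\AA^{m-1}/\GG_m]$ with the stated weights.

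The main obstacle is to upgrade this point-wise picture to an isomorphism of stacks, i.e.\ to work in families. The essential steps will be: (i) proving the relative normalization of a family of $S_{m,i}$-curves is a $\PP^1$-bundle whose singular locus is a section, so the $\GG_m$-torsor $P$ exists \'etale-locally on $B$---this uses that the $A_k$-singularity is imposed uniformly along a section, not merely fibrewise; (ii) promoting the normal-form reductions to the statement that the relevant substitution group acts \emph{freely} with affine-space quotient, so that $\mathrm{Crimp}_{m,1}$ is genuinely representable by $\AA^{m-1}$ and the crimping datum of a family becomes a $\GG_m$-equivariant morphism $P\to\AA^{m-1}$; and (iii) verifying that every parameter value $v$ (resp.\ $\kappa$) produces a subring abstractly isomorphic to the $A_{2m}$- (resp.\ $A_{2m+1}$-) local ring, so the parameterization is surjective onto the moduli problem. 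The last point is essentially automatic from the construction $R=\CC[[w^2]]+z^{2m}\CC[[z]]$, so I expect the real work to lie in (i) and in the freeness/representability of (ii); granting these, the identification $P\times\GG_m\cong P\times_{\bar{\cS}_{m,i}}P$ of the presenting groupoid is formal.
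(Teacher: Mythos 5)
Your proposal is correct and takes essentially the same approach as the paper: the paper (deferring details to van der Wyck's thesis, Examples 1.111 and 1.112) also identifies $S_{m,1}/S_{m,2}$-curves with crimping subalgebras of $\CC[[z]]$ (resp.\ $\CC[[z_1]]\times\CC[[z_2]]$) up to reparametrization, exhibits an $\AA^{m-1}$ normal form, and reads off the residual $\GG_m$-weights. The only cosmetic difference is the choice of slice — the paper normalizes the generator as $t^2 + a_1t^3 + a_2t^5 + \cdots + a_{m-1}t^{2m-1}$, whereas you normalize its square root $u = z + b_2z^2 + \cdots + b_{2m-2}z^{2m-2}$ — and these are equivalent slices of the same substitution-group action, yielding the same weights $1,3,\ldots,2m-3$ (resp.\ $1,2,\ldots,m-1$).
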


\begin{proof}

This is proved carefully in \cite[Examples 1.111 and 1.112]{wyck}. Let us sketch the idea of the isomorphism $\bar{\cS}_{m,1} \simeq [\AA^{m-1}/\GG_m]$. To specify the isomorphism class of an $S_{m,1}$-curve is equivalent to specifying a $\CC$-subalgebra $R \subset \CC[[t]] \simeq \hat{\O}_{\P^1,0}$ which is abstractly isomorphic to $\CC[[x,y]]/(y^2=x^{2m+1})$. It is not difficult to see that any such subalgebra is generated by $t^{2m+1}$ and $t^2+a_1t^3+a_2t^5+ \ldots a_{m-1}t^{2m-1}$, with $a_i \in \CC$, and that two such subalgebras are isomorphic iff
$
(a_1,\ldots, a_{m-1})=(\lambda a_1', \ldots, \lambda^{m-1}a_{m-1}'), \lambda \in \GG_m.
$

\end{proof}

\subsection{$H_{m,1}/H_{m,2}$-curves}\label{s-hcurves}

\begin{defn}[$H_{m,1}/H_{m,2}$-curves] 
\begin{enumerate}
\item[]
\item An \emph{$H_{m,1}$-curve} is a $1$-pointed curve $(E,q)$ of
arithmetic genus $m$ which admits a finite, surjective, degree two
map $\phi: E \rightarrow \P^{1}$ such that
$\phi^{-1}(\{\infty\})=\{q\}$ and $q$ is a smooth point.
\item An \emph{$H_{m,2}$-curve} is a $2$-pointed curve
$(E,q_1,q_2)$ of arithmetic genus $m$ which admits a finite,
surjective, degree two map $\phi: E \rightarrow \P^{1}$ such that
$\pi^{-1}(\{\infty\})=\{q_1+q_2\}$ and $q_1,q_2$ are smooth
points.
\end{enumerate}
\end{defn}
\begin{remark}
It will occasionally be useful to use the notation
\emph{$H_k$-curve}, defined as follows: if $k=2m$ is even, then an
$H_k$-curve is an $H_{m,1}$-curve; if $k=2m+1$ is odd, then an
$H_k$ curve is an $H_{m,2}$-curve.
\end{remark}

\begin{lemma}\label{C:Akcurves}
\begin{enumerate}
\item[]
\item If $(E,q)$ is an $H_{m,1}$-curve, then $E$ is irreducible.
\item If $(E,q_1,q_2)$ is an $H_{m,2}$ curve, then $E$ has at most
two irreducible components. Furthermore, if $E$ has two
components, each component is a smooth rational curve.
\end{enumerate}
\end{lemma}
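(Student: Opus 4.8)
The plan is to reduce the whole statement to elementary facts about the degree-two finite map $\phi \co E \to \P^1$, treating the component count uniformly and then separating the two parts at the very end. First I would bound the number of irreducible components of $E$. Since $\phi$ is finite and surjective, every irreducible component $E_i$ of $E$ dominates $\P^1$: a finite morphism is quasi-finite and closed, so the image of the one-dimensional $E_i$ is a closed irreducible one-dimensional subset of $\P^1$, hence all of $\P^1$. Because $E$ is a reduced curve it is Cohen--Macaulay of pure dimension one and $\P^1$ is regular, so miracle flatness makes $\phi$ flat and $\phi_*\O_E$ locally free of rank $\deg\phi = 2$. Computing this generic rank over the generic point $\xi$ of $\P^1$, where the fibre ring of a reduced curve is $\prod_i K(E_i)$, gives
\[
2 \;=\; \sum_{i=1}^{r}\, [\,K(E_i) : K(\P^1)\,],
\]
with each summand at least $1$. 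Hence $r \leq 2$, which already establishes the first assertion of part (2).

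Next I would analyze the two-component case $r=2$. Here each extension $[K(E_i):K(\P^1)]$ must equal $1$, i.e. each $\phi|_{E_i} \co E_i \to \P^1$ is finite and birational. Since $E_i$ is integral (a component of a reduced curve) and $\P^1$ is normal, Zariski's main theorem forces $\phi|_{E_i}$ to be an isomorphism, so each $E_i \cong \P^1$ is a smooth rational curve. This proves the ``furthermore'' clause and completes part (2).

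For part (1) I would bring in the remaining hypothesis that $\phi^{-1}(\infty) = \{q\}$ is a \emph{single smooth} point. Suppose for contradiction that $E$ is reducible; by the component count $E = E_1 \cup E_2$ with each $\phi|_{E_i}$ an isomorphism onto $\P^1$, as just shown. Each such isomorphism sends $\infty$ to exactly one point of $E_i$, so set-theoretically $\phi^{-1}(\infty)$ consists of one point of $E_1$ together with one point of $E_2$. For these to constitute the single point $q$ they must coincide as points of $E$, forcing $q \in E_1 \cap E_2$; but a point lying on two distinct irreducible components of a curve is never smooth, contradicting the smoothness of $q$. Therefore $E$ is irreducible.

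The genuinely new input relative to part (2) is exactly this final contradiction: in the $H_{m,2}$ case the fibre over $\infty$ is permitted to be the two distinct points $q_1,q_2$, so a second component is not excluded, whereas the single marked point of an $H_{m,1}$-curve is precisely what rules one out. The only step demanding real care---and the closest thing to an obstacle---is justifying the degree/component count rigorously, namely the flatness of $\phi$ and the length-two computation of the generic fibre; once that is in place the rest is a direct appeal to Zariski's main theorem together with the observation that a point on two components is singular.
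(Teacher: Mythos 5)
Your proof is correct, but it takes a genuinely different route from the paper's. The paper argues via ampleness of the polarization: since $\phi$ is finite, $\phi^*\O_{\P^1}(1)=\O_E(2q)$ (resp.\ $\O_E(q_1+q_2)$) is ample, hence $\O_E(q)$ (resp.\ $\O_E(q_1+q_2)$) has positive degree on \emph{every} irreducible component; as $q$ is smooth and so lies on only one component, this forces irreducibility in case (1), and the total degree $2$ bounds the number of components by two in case (2). You instead get the component bound from the generic-fiber computation $2=\sum_i[K(E_i):K(\P^1)]$ (your appeal to miracle flatness is not even strictly needed for this, since the degree of a finite surjection onto the integral curve $\P^1$ is by definition this generic rank), and then you need a separate argument for part (1): the fiber over $\infty$ would consist of one point on each of the two components, which must either be distinct (contradicting $\phi^{-1}(\infty)=\{q\}$) or coincide at a point of $E_1\cap E_2$ (contradicting smoothness of $q$). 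Both approaches then identify a component in the two-component case with $\P^1$ the same way, as a degree-one finite surjection onto a normal curve (you cite Zariski's main theorem; the paper states it directly). What the paper's argument buys is brevity and uniformity --- ampleness handles (1) and (2) by one mechanism; what yours buys is a more elementary, purely function-field/fiber-theoretic argument that makes transparent exactly why the single marked point in the $H_{m,1}$ case excludes a second component while the two points $q_1,q_2$ in the $H_{m,2}$ case do not.
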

\begin{proof}
In case (1), $\O_{E}(q)$ is ample, since
$\O_{E}(2q)=\phi^*\O_{\P^{1}}(1)$. In particular, $\O_{E_1}(q)$
has positive degree on every irreducible component of $E$, which
forces $E$ to be irreducible. Similarly, in case (2),
$\O_{E}(q_1+q_2)$ has positive degree on each irreducible
component, which means that $E$ has at most two components, one
containing $q_1$ and one containing $q_2$. Finally, if $E$ has two
components, say $E=E_1 \cup E_2$, then each composition $E_1
\hookrightarrow E \rightarrow \P^{1}$ is finite, surjective of
degree one, so $E_i \simeq \P^{1}$.
\end{proof}

A family of $1$-pointed curves $(\cC \stackrel{\pi}{\arr}
B,\sigma)$ is a \emph{family of $H_{m,1}$-curves} if there is a
map $\phi$ from $\cC$ to a $\PP^1$-bundle $\cP$ over $B$,
such that $\phi$ is a uniform cyclic cover of degree $2$ and
branch degree $m+1$ which is simply ramified along $\sigma$.

A family of $2$-pointed curves $(\cC \stackrel{\pi}{\arr}
B,\sigma_1,\sigma_2)$ is a \emph{family of $H_{m,2}$-curves} if
there is a map $\phi$ from $\cC$ to a $\PP^1$-bundle $\cP$ over
$B$,
such that $\phi$ is a uniform cyclic cover of degree $2$ and
branch degree $m+1$ and $\phi \sigma_1 = \phi \sigma_2$.




\begin{defn}

Let $\bar{\cH}_{m,1}$ (resp., $\bar{\cH}_{m,2}$) be the moduli stack of families of $H_{m,1}$- (resp., $H_{m,2}$-)
curves.
\end{defn}

\begin{prop}\label{h-quotient-stack}

$\bar{\cH}_{m,1}$ is isomorphic to the stack $[\AA^{2m}/\GG_m]$
where $\GG_m$ acts with weights
$-4,-6,\ldots,-(4m+2)$. $\bar{\cH}_{m,2}$ is isomorphic to the
stack $[\AA^{2m+1}/\GG_m]$, where $\GG_m$ acts with weights
$-2,-3,\ldots,-(2m+2)$.
\end{prop}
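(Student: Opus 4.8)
The plan is to identify an $H_{m,1}$-curve with its defining double cover data and then parameterize the branch divisor, exactly as in the proof sketch given for Proposition \ref{s-quotient-stack}. First I would recall that the defining datum of an $H_{m,1}$-curve $(E,q)$ is a degree-two uniform cyclic cover $\phi\co E \to \PP^1$ branched along a divisor of degree $m+1$, with the total ramification condition $\phi^{-1}(\infty)=\{q\}$ pinning down one branch point at $\infty$ (accounting for simple ramification there). A uniform cyclic cover of degree $2$ is equivalent to the data of a line bundle $L$ on $\PP^1$ together with a section $f \in H^0(\PP^1, L^{\tensor 2})$ cutting out the branch divisor, via $E = \sSpec(\cO \oplus L^{-1})$ with algebra structure determined by $f$. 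Since the branch divisor has degree $m+1$ and one branch point is forced to sit at $\infty$, the remaining freedom is a degree-$m$ effective divisor on $\AA^1 = \PP^1 \setminus \{\infty\}$, i.e. a monic polynomial $x^m + b_{m-1}x^{m-1}+\cdots+b_0$ in the affine coordinate $x$.

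Next I would set up the moduli stack as a quotient. The relevant automorphisms are those of the base $(\PP^1, \infty, 0)$-data compatible with the covering structure: after using the $\infty$-marking and the scaling normalization that makes the polynomial monic, the residual symmetry is precisely $\GG_m$ acting on the affine coordinate by $x \mapsto \lambda x$ (together with the compatible action on the fiber coordinate $y$ of the double cover, where $y^2 = f(x)$). Under $x \mapsto \lambda x$, each coefficient $b_i$ (the elementary symmetric functions of the $m$ affine branch points) scales with weight depending on $i$, and one also records the weight on $y$ coming from $y^2 = f$. Tracking these weights carefully — this is the heart of the computation — produces the claimed weights $-4, -6, \ldots, -(4m+2)$ on the $\AA^{2m}$ of coefficients. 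For the two-pointed case $H_{m,2}$, the double cover is branched along a degree $m+1$ divisor but now \emph{two} smooth points $q_1, q_2$ lie over $\infty$ (so $\phi$ is unramified there and $\infty$ is \emph{not} a branch point), giving one extra parameter and hence $\AA^{2m+1}$; the same $\GG_m$-normalization then yields weights $-2, -3, \ldots, -(2m+2)$.

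The main obstacle I expect is pinning down the weight normalization correctly, rather than any conceptual difficulty. Two subtleties must be handled with care: (i) the correct choice of $\GG_m$-linearization, i.e. how the generator of $\GG_m$ acts on the fiber coordinate $y$ and on the line bundle $L$, which shifts all the weights by a uniform amount and is responsible for the weights being negative and for the specific base values $-4$ (resp.\ $-2$); and (ii) checking that the quotient presentation is an \emph{isomorphism} of stacks, not merely a bijection on isomorphism classes — this requires verifying that families of $H_{m,1}$-curves over an arbitrary base $B$ correspond functorially to $\GG_m$-equivariant maps to $\AA^{2m}$, which amounts to showing the cyclic-cover construction is compatible with base change and that every family arises this way (relative $\Proj$/relative $\sSpec$ of the even part of the pushforward $\phi_* \cO_{\cC}$). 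As with Proposition \ref{s-quotient-stack}, I would cite \cite{wyck} for the careful verification and present the weight bookkeeping as the substance of the argument. The consequence recorded after this proposition — that every $H_{m,1}$- or $H_{m,2}$-curve isotrivially degenerates to the monomial curve — then follows because the $\GG_m$-action contracts $\AA^{2m}$ (resp.\ $\AA^{2m+1}$) onto the origin, all weights being of a single sign.
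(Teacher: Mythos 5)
Your overall strategy is the same as the paper's: describe an $H_{m,\nu}$-curve by its double-cover (branch) data over $\PP^1$, normalize using the marking at $\infty$, and exhibit the moduli stack as the quotient of the remaining coefficient space by the residual automorphism group. The paper implements this by citing Arsie--Vistoli to write $\bar{\cH}_{m,1} \cong [\AA_{2m+2}/(\Tri_2/\mu_{m+1})]$, where $\AA_{2m+2}$ is a space of binary forms of degree $2m+2$ and $\Tri_2$ is the upper-triangular subgroup of $\GL_2$, and then restricting to the slice $\{x^{2m+1}z + a_{2m-1}x^{2m-1}z^3 + \cdots + a_0z^{2m+2}\}$ and computing the restricted groupoid. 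However, your execution contains a genuine numerical error that breaks the argument. The phrase ``branch degree $m+1$'' in the definition of a family of $H_{m,1}$-curves refers to the degree of the line bundle $L$, not of the branch divisor: the branch divisor is the zero locus of a section of $L^{\tensor 2}$ and has degree $2m+2$. Indeed, by Riemann--Hurwitz a double cover of $\PP^1$ branched over a degree $m+1$ divisor would have genus $(m-1)/2$ (not even an integer for $m$ even), not $m$. With the correct count, one branch point sits at $\infty$ and $2m+1$ lie on $\AA^1$, so after making the affine equation monic and killing one more coefficient you get $2m$ parameters --- consistent with $\AA^{2m}$. Your setup produces a monic polynomial of degree $m$, i.e.\ only $\AA^{m}$, which cannot match the statement; the weights $-4,-6,\ldots,-(4m+2)$ (there are $2m$ of them) could never come out of it.

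A second, related gap: normalizing the polynomial to be monic only uses the scaling part of the automorphisms of $(\PP^1,\infty)$. The translations $x \mapsto x + c$ (the strictly upper-triangular part of $\Tri_2$) survive this normalization, so the residual group at that stage is $\GG_a \rtimes \GG_m$, not $\GG_m$ as you assert. The paper's slice imposes the additional condition that the coefficient of $x^{2m}z^2$ vanish (equivalently, the affine branch points sum to zero), which is exactly what eliminates the translations; only then is the stabilizer of the slice a one-dimensional torus, identified with $\GG_m$ via $[M] \mapsto (M_{11})^{m+1}$ (the $\mu_{m+1}$-quotient being the reason this map, and hence the final weights, take the form they do). Both gaps are repairable --- replace ``branch divisor of degree $m+1$'' by ``section of $L^{\tensor 2}$ with $\deg L = m+1$,'' add the trace-zero normalization, and identify the residual torus carefully --- and the remaining weight bookkeeping and the family/base-change verification you outline (citing \cite{wyck}) would then proceed as in the paper.
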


\begin{proof}

It follows as in the proof of \cite[Theorem~4.1]{arsie-vistoli}
that $\bar{\cH}_{m,1}$ is isomorphic to
$[\AA_{2m+2}/(\Tri_2/\mu_{m+1})]$, where 
\[
\AA_{2m+2} := \Spec \CC[a_{2m+1},a_{2m+1}^{-1},a_{2m},\ldots,a_0]
\]
and $\Tri_2$ is the subgroup scheme of $\GL_2$ of upper-triangular
matrices and $\mu_{m+1}$ is the group scheme of
$(m+1)$\textsuperscript{th} roots of unity, embedded in $\Tri_2$
as multiples of the identity matrix. For any ring $A$, we regard
$\AA_{2m+2}(A)$ as the following set of homogeneous
polynomials in the variables $x,z$:
\[
\{ a_{2m+1}x^{2m+1}z+a_{2m} x^{2m}
z^2+\ldots+a_0z^{2m+2}:a_{2m+1}\in A^\times;a_{2m},\ldots,a_0\in
A\}.
\]
The right action of $\Tri_2/\mu_{m+1}$ on $\AA_{2m+2}$ is given
in functorial notation by
\[
f(\begin{smallmatrix}
x\\z \end{smallmatrix}).[M] = f\left(M(\begin{smallmatrix}
x\\z \end{smallmatrix})\right).
\]
This is clearly well-defined. Note that the absence of an
$x^{2m+2}$ term in elements of $\AA_{2m+2}(A)$ corresponds to the
requirement that $\phi:\cC \arr \cP$ be ramified along $\sigma$
and the condition that the coefficient of $x^{2m+1}z$ is a unit in
$A$ corresponds to the requirement that $\cC$ be smooth along
$\sigma$.

It follows similarly that $\bar{\cH}_{m,2}$ is isomorphic
to $[\AA_{2m+3}/(\Tri_2/\mu_{m+1})]$, where
\[
\AA_{2m+3} := \Spec \CC[b,b^{-1},a_{2m+1},a_{2m},\ldots,a_0].
\]
We regard $\AA_{2m+3}(A)$ as the following set of homogeneous
polynomials in the variables $x,z$.
\[
\{ b x^{2m+2} + a_{2m+1}x^{2m+1}z+a_{2m} x^{2m}
z^2+\ldots+a_0z^{2m+2}:b\in A^\times; a_{2m+1},\ldots,a_0\in A\}.
\]
The right action of $\Tri_2/\mu_{m+1}$ on $\AA_{2m+3}$ is now
defined as follows. The coefficients of
$x^{2m+1}z,\ldots,z^{2m+2}$ in $f(\begin{smallmatrix}
x\\z \end{smallmatrix}).[M]$ are determined via the formula
\[
f(\begin{smallmatrix} x\\z \end{smallmatrix}).[M] =
f\left(M(\begin{smallmatrix} x\\z \end{smallmatrix})\right)
\]
but the coefficient of $x^{2m+2}$ in $f(\begin{smallmatrix}
x\\z \end{smallmatrix}).[M]$ is $(M_{11})^{m+1}b$, where $b$ is
the coefficient of $x^{2m+2}$ in $f(\begin{smallmatrix}
x\\z \end{smallmatrix})$. This action is clearly well-defined.
Note that the condition on elements of $\AA_{2m+3}(A)$ that the
coefficient of $x^{2m+2}$ be a unit corresponds to the requirement
that $\cC$ is not ramified along $\sigma_1,\sigma_2$. The fact
that $[M]$ acts on the coefficient of $x^{2m+2}$ by multiplication
by $(M_{11})^{m+1}$ rather than $(M_{11})^{2m+2}$ corresponds to
the requirement that $\sigma_1, \sigma_2$ have been given an
ordering, and that automorphisms respect the ordering.

Return to the the stack $\cH_{m,1}$ and consider the map $\pi:
\AA^{2m} \arr [\AA_{2m+2}/(\Tri_2/\mu_{m+1})]$ induced by the map
$\imath:\AA^{2m} \arr \AA_{2m+2}$, $(a_{2m-1},\ldots,a_0) \mapsto
x^{2m+1}z+a_{2m-1}x^{2m-1}z^3+\ldots+a_0 z^{2m+2}$. We will check
that $\pi$ is smooth and surjective and that the associated
groupoid scheme is isomorphic to the groupoid scheme
$\xymatrix{\AA^{2m} \times \GG_m \ar@<0.5ex>[r] \ar@<-0.5ex>[r] &
\AA^{2m}}$ associated to the specified action of $\GG_m$ on
$\AA^{2m}$.
Indeed, consider the $2$-commutative diagram
\[
\xymatrix{ R \ar[r] \ar[d] &
           R_2 \ar[r] \ar[d] &
           \AA^{2m} \ar[d]^\imath \\
           R_1 \ar[r] \ar[d] &
           \AA_{2m+2} \times (T_2/\mu_{m+1}) 
             \ar[r]^{t} \ar[d]^{s} &
           \AA_{2m+2} \ar[d] \\
           \AA^{2m} \ar[r]^\imath &
           \AA_{2m+2} \ar[r] &
           [\AA_{2m_2}/(T_2/\mu_{m+1})] }
\]
where $t$ and $s$ denote the action and projection maps
respectively and $R_1$, $R_2$ and $R$ are
defined to make the relevant squares Cartesian. It follows
immediately from the definitions that we can
identify $R$ with (using functorial notation)
\[
\{ f,[M] : f_{2m+1}=1, f_{2m}=0,
                  (f.[M])_{2m+1}=1, (f.[M])_{2m}=0 \}.
\]
But this is equal to
\[
\{ f,[M] : f_{2m+1}=1, f_{2m}=0, 
                  M_{22}=(M_{11})^{-2m-1}, M_{12}=0 \},
\]
using the fact that $2m+1$ is not a zero-divisor
since $\ch(\CC)=0$. Under this identification, the horizontal map
$R \arr \AA^{2m}$ is
\[
(a_{2m-1},\ldots,a_0) , [(\begin{smallmatrix}\alpha & 0 \\ 0 &
\alpha^{-2m-1} \end{smallmatrix})] \mapsto
(\ldots,\alpha^{i+(2m+2-i)(-2m-1)} a_{i},\ldots).
\]
Now the map of group schemes
\[
\{ [M] \in T_2/\mu_{m+1}: M_{22}=(M_{11})^{-2m-1}, M_{12}=0 \}
\arr \GG_m \quad 
[M] \mapsto (M_{11})^{m+1}
\]
is an isomorphism,
so we can identify $R$ with $\AA^{2m} \times \GG_m$. Under this
identification, the horizontal map $R \arr \AA^{2m}$ is
\[
(a_{2m-1},\ldots,a_0) , t \mapsto
(\ldots,t^{2i-4m-2} a_{i},\ldots).
\]
and the vertical map $R \arr \AA^{2m}$ is projection onto the
first factor. This proves the desired isomorphism of groupoid
schemes. It remains to check that the map $R_2 \arr
\AA_{2m+2}$ is smooth and surjective. This is immediate from the
fact that $R_2$ can be identified with $\AA_{2m+2} \times
(\GG_m/\mu_{m+1})$ (using as above that $\ch{\CC}=0$) in such a
way that that the map to $\AA_{2m+2}$ is projection onto the first
factor.

For $\bar{\cH}_{m,2}$, we find, using analogous notation, that $R$
can be identified with
\[
\{ f,[M] : f_{2m+1}=1, f_{2m}=0, 
                  M_{11}^{m+1}=1, M_{12}=0 \}.
\]
Now using the isomorphism of group schemes
\[
\{ [M] \in T_2/\mu_{m+1}: (M_{11})^{m+1}=1, M_{12}=0 \}
 \arr \GG_m \quad [M] \mapsto M_{11}/M_{22},
\]
we can identify $R$ with $\AA^{2m+1} \times \GG_m$. Under this
identification, the horizontal map $R \arr \AA^{2m+1}$ is
\[
(\ldots,a_i,\ldots), t \mapsto
(\ldots,t^{i-2m-2} a_{i},\ldots)
\]
and the vertical map $R \arr \AA^{2m+1}$ is projection onto the
first factor. This proves the desired isomorphism of groupoid
schemes. It is easy to complete the proof by checking as for
$\bar{\cH}_{m,2}$ that the map $R_2 \arr \AA_{2m+1}$ is smooth and
surjective.
\end{proof}

\subsection{Monomial $H_{m,1}/H_{m,2}$-curves} \label{section-monomial}

\begin{defn}[Monomial $H_{m,1}/H_{m,2}$-curves]
\label{definition-monomial}

\begin{enumerate}
\item[]
\item The \emph{monomial $H_{m,1}$-curve} is the following
$1$-pointed curve $(E,q)$: $E$ is obtained by identifying $\Spec
\CC[x,y]/(y^2-x^{2m+1})$ and
$\Spec \CC[s]$ along $D(x)$ and $D(s)$ via
$x=s^{-2},y=s^{-(2m+1)}$; $q$ is the point
$s=0$.
\item The \emph{monomial $H_{m,2}$-curve} is the following
$2$-pointed curve $(E,q_1,q_2)$: $E$ is obtained by identifying
$\Spec \CC[x,y]/(y^2-x^{2m+2})$ and
$\Spec \CC[s_1] \dunn \Spec \CC[s_2]$ along $D(x)$ and
$D(s_1) \dunn D(s_2)$ via $x=s_1^{-1}\oplus
s_2^{-1}$, $y=s_1^{-(m+1)}\oplus
-s_2^{-(m+1)}$; $q_1$ is the point $s_1=0$ of $\Spec \CC[s_1]$ and
$q_2$ is the point $s_2=0$ of $\Spec \CC[s_2]$.
\end{enumerate}
\end{defn}

The monomial $H_{m,1}$- (resp., $H_{m,2}$-) curve is also an
$H_{m,1}$- (resp., $H_{m,2}$-) curve.  We will denote by $p$
its singular point.

The automorphism group scheme of the monomial $H_{m,1}$-curve
$(E,q)$ is isomorphic to $\GG_m$. We fix once and for all the
following isomorphism $\GG_m \iso \Aut(E,q)$:
\[
A^\cross \arr \Aut(E,q)(A) \qquad
a \mapsto (x \mapsto a^2 x, y \mapsto a^{2m+1} y,
           s\mapsto a^{-1}s).
\]

The automorphism group scheme of the monomial $H_{m,2}$-curve
$(E,q_1,q_2)$ is isomorphic to $\GG_m$. We fix once
and for all the following isomorphism $\GG_m \iso
\Aut(E,q_1,q_2)$:
\begin{gather*}
A^\cross \arr \Aut(E,q_1,q_2)(A) \\
a \mapsto (x \mapsto a x, y \mapsto a^{m+1} y,
            s_1 \mapsto a^{-1} s_1
            s_2 \mapsto a^{-1} s_2)
\end{gather*}

\begin{prop}\label{prop-H-S-minimal}

Every $S_{m,1}$- (resp., $S_{m,2}$-) curve and every $H_{m,1}$
(resp., $H_{m,2}$-) curve admits an isotrivial specialization to
the monomial $H_{m,1}$- (resp., $H_{m,2}$-) curve, which is the
unique closed point of $\bar{\cS}_{m,1}$ (resp., $\bar{\cS}_{m,2}$)
and $\bar{\cH}_{m,1}$ (resp., $\bar{\cH}_{m,2}$).
\end{prop}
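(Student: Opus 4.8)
The plan is to deduce everything from the explicit quotient presentations established in Propositions \ref{s-quotient-stack} and \ref{h-quotient-stack}. In all four cases the relevant stack is of the form $[\AA^N/\GG_m]$ for a linear $\GG_m$-action whose weights are \emph{all of the same sign}: positive for $\bar{\cS}_{m,1}$ and $\bar{\cS}_{m,2}$, negative for $\bar{\cH}_{m,1}$ and $\bar{\cH}_{m,2}$. The whole proposition is then an instance of a single elementary fact about such actions, applied four times, plus a bookkeeping step identifying the distinguished point.

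First I would record the general principle: if $\GG_m$ acts linearly on $\AA^N$ with all weights strictly of one sign, then the origin $0$ is the unique closed point of $[\AA^N/\GG_m]$, and every point isotrivially specializes to it. Concretely, given $a=(a_1,\dots,a_N)\in\AA^N(\CC)$ with weights $w_i$, consider the morphism $\Delta=\Spec\CC[[t]]\to\AA^N$ sending $t\mapsto(t^{|w_1|}a_1,\dots,t^{|w_N|}a_N)$, composed with the quotient map. Over $\Delta^*=\Spec\CC((t))$ this is the translate of the constant point $[a]$ by the group element $t^{\pm1}\in\GG_m(\CC((t)))$ (the common sign of the $w_i$), hence equals the constant family at $[a]$; and its special fiber is the origin $[0]$ since each exponent $|w_i|>0$. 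This exhibits the isotrivial specialization $[a]\rightsquigarrow[0]$. Since $0$ is a fixed point its orbit is closed, so $[0]$ is a closed point of the stack; and because $[0]\in\overline{\{[a]\}}$ for every $a$, any closed point must coincide with $[0]$, which gives both existence of the limit and uniqueness of the closed point.

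It then remains to match the origin with the monomial curve under each isomorphism. For $\bar{\cS}_{m,1}$ this is immediate from the proof of Proposition \ref{s-quotient-stack}: setting all $a_i=0$ in the subalgebra generated by $t^{2m+1}$ and $t^2+a_1t^3+\cdots+a_{m-1}t^{2m-1}$ yields $\CC[[t^2,t^{2m+1}]]$, the monomial $A_{2m}$-singularity, i.e. the monomial $H_{m,1}$-curve of Definition \ref{definition-monomial} (which is both an $S_{m,1}$- and an $H_{m,1}$-curve). For $\bar{\cH}_{m,1}$ one traces the map $\imath$ of Proposition \ref{h-quotient-stack}: at the origin the branch polynomial degenerates to $x^{2m+1}z$, whose associated double cover is again the monomial $H_{m,1}$-curve. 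The cases $\bar{\cS}_{m,2}$ and $\bar{\cH}_{m,2}$ are entirely parallel, with the monomial $H_{m,2}$-curve arising at the origin.

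I expect the only delicate point to be this final bookkeeping, namely confirming that the distinguished origin of each $\AA^N$ really is carried to the monomial curve of Definition \ref{definition-monomial}, rather than to some other curve with trivial crimping data, together with the trivial but essential check that the weights listed in Propositions \ref{s-quotient-stack} and \ref{h-quotient-stack} are genuinely all of a single sign, since this fixes the direction of the specializing one-parameter family. The abstract specialization statement itself is purely formal.
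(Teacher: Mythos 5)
Your proposal is correct and takes essentially the same approach as the paper: the paper's proof also deduces everything directly from the quotient presentations of Propositions \ref{s-quotient-stack} and \ref{h-quotient-stack}, observing that since the $\GG_m$-weights are all of one sign, the origin (corresponding to the monomial curve) is the unique closed point and lies in the closure of every point. The paper declares this ``immediate,'' while you supply the details it leaves implicit, namely the explicit one-parameter specialization $t\mapsto(t^{|w_1|}a_1,\dots,t^{|w_N|}a_N)$ and the identification of the origin with the monomial $H_{m,1}$/$H_{m,2}$-curve.
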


\begin{proof}

It is immediate from the descriptions of $\bar{\cS}_{m,1}$
(resp., $\bar{\cS}_{m,2}$) and $\bar{\cH}_{m,1}$
(resp., $\bar{\cH}_{m,2}$), Proposition~\ref{s-quotient-stack} and
Proposition~\ref{h-quotient-stack}, that the point corresponding
to the monomial $H_{m,1}$- (resp., $H_{m,2}$-) curve is the unique
closed point of both stacks and that every point contains this
point in its closure.
\end{proof}
\begin{cor}\label{cor-H-weakly-proper}

$\bar{\cH}_{m,1}$, $\bar{\cH}_{m,2}$, $\bar{\cS}_{m,1}$, $\bar{\cS}_{m,2}$ are weakly proper. \epf
\end{cor}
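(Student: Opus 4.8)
The plan is to combine the explicit quotient presentations of Propositions~\ref{s-quotient-stack} and~\ref{h-quotient-stack} with the good moduli space characterization of weak properness in Proposition~\ref{prop-good-weakly-separated}. In each of the four cases the stack is of the form $[\AA^N/\GG_m]$ for a linear action of $\GG_m$ on an affine space, and the essential feature is that the weights of this action all share a single sign: they are strictly positive for $\bar{\cS}_{m,1}$ and $\bar{\cS}_{m,2}$, and strictly negative for $\bar{\cH}_{m,1}$ and $\bar{\cH}_{m,2}$.

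First I would note that, since $\GG_m$ is linearly reductive in characteristic zero and the action is on an affine scheme, the quotient morphism
$$[\AA^N/\GG_m] \larr \Spec \CC[x_1,\dots,x_N]^{\GG_m}$$
is a good moduli space. The one substantive step is to compute the invariant ring. A monomial $x_1^{a_1}\cdots x_N^{a_N}$ is $\GG_m$-invariant exactly when $\sum_i a_i d_i = 0$, where $d_i$ denotes the weight of $x_i$; as all the $d_i$ are nonzero of a single sign and the exponents $a_i$ are nonnegative, this forces $a_1 = \cdots = a_N = 0$. Hence $\CC[x_1,\dots,x_N]^{\GG_m} = \CC$ and the good moduli space of each stack is $\Spec \CC$. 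This is consistent with Proposition~\ref{prop-H-S-minimal}, which records that each stack has a single closed point, the monomial curve.

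With the good moduli space identified as $\Spec \CC$, the conclusion follows formally. The good moduli space morphism $\phi \co [\AA^N/\GG_m] \to \Spec \CC$ is nothing but the structure morphism over $\CC$; each stack is finite type over $\CC$, so $\phi$ is finite type, and Proposition~\ref{prop-good-weakly-separated}(1) then gives that $\phi$ is weakly proper. (Alternatively, since $\Spec \CC$ is proper and separated over itself, part~(3) of the same proposition applies verbatim.) There is thus no genuine obstacle here: the only non-formal input is the invariant computation, which the single-sign weight condition renders immediate, and this is precisely what underlies the remark opening the section that weak properness follows from the quotient descriptions.
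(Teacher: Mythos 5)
Your proposal is correct, and it takes a genuinely different route from the paper's. The paper treats Corollary~\ref{cor-H-weakly-proper} as an immediate consequence of Proposition~\ref{prop-H-S-minimal}: from the quotient presentations with single-sign weights one reads off that the monomial curve is the unique closed point of each stack and lies in the closure of every point, and weak properness (uniqueness of closed limits, plus existence of limits by clearing poles with the $\GG_m$-action) is then left as an unwritten point-level argument about isotrivial specializations. You instead bypass Proposition~\ref{prop-H-S-minimal} entirely: you compute that the single-sign weight condition forces $\CC[x_1,\dots,x_N]^{\GG_m}=\CC$, identify the good moduli space of $[\AA^N/\GG_m]$ as $\Spec \CC$ (using the existence of good moduli spaces for affine quotients by linearly reductive groups, an input from \cite{alper_good_arxiv} that the paper itself invokes in Section~\ref{section-variation-GIT}), and then apply Proposition~\ref{prop-good-weakly-separated}. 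What your route buys is rigor on the universal-closedness half of weak properness, which is packaged into Proposition~\ref{prop-good-weakly-separated}(1) via \cite[Thm.~4.16]{alper_good_arxiv} rather than left to the reader; it also records the (mildly useful) extra fact that the good moduli space is a point. What the paper's route buys is that it directly exhibits the closed-point structure of $\bar{\cH}_{m,1}$, $\bar{\cH}_{m,2}$, $\bar{\cS}_{m,1}$, $\bar{\cS}_{m,2}$ — the statement that is actually reused throughout Sections 5--8 — and it stays within the elementary specialization formalism of Section~\ref{section-weak-properness}. One cosmetic remark: for $m=1$ the stacks $\bar{\cS}_{1,1}$ and $\bar{\cS}_{1,2}$ are $[\AA^0/\GG_m]=B\GG_m$, so the weight condition is vacuous there; your invariant computation still returns $\CC$, so nothing breaks, but it is worth noting that this degenerate case is covered.
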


\section{$A_{k}^-/A_{k}/A_{k}^+$-stability} \label{section-stability}

In this section, we define $A_{k}^-/A_{k}/A_{k}^+$-stability for
$k \in\{2,3,4\}$, and show that these are deformation open conditions.  In particular, we show that corresponding moduli stacks $\SM_{g,n}(A_{k}^-)$, $\SM_{g,n}(A_{k})$, and $\SM_{g,n}(A_{k}^+)$ are algebraic stacks of finite type over $\mathbb{C}$.

\subsection{Definition of $A_{k}^-/A_{k}/A_{k}^+$-stability}

\begin{definition}[Gluing morphism]
If $(E,\qm)$ is an $m$-pointed curve and $C$ is any curve, a \emph{gluing morphism} $i:(E, \qm) \hookrightarrow C$ is a morphism $E \rightarrow C$, which is an open immersion when restricted to $E-\{q_1, \ldots, q_m\}$.
\end{definition}
\begin{remark}
\begin{enumerate}
\item[]
\item We do not require the points $\{i(q_i)\}_{i=1}^{m}$ to be distinct. 
\item Locally around $i(q_j)$, $i$ is the normalization of one branch of $i(q_j) \in C$.
\end{enumerate}
\end{remark}

\begin{definition} We say that an $n$-pointed curve $(C, \pn)$
\emph{contains an $H_{m,1}$-curve} (resp., \emph{contains an
$H_{m,2}$-curve}) if there is a gluing morphism
\[
i:(E,q) \hookrightarrow C
\text{  (resp.,  }
i:(E,q_1,q_2) \hookrightarrow C
\text{  ),}
\]
where $(E,q)$ is an $H_{m,1}$-curve (resp., $(E,q_1,q_2)$ is an
$H_{m,2}$-curve). In this case we say also that $(E,q)$ \emph{is
an $H_{m,1}$-tail} (resp., $(E,q_1,q_2)$ \emph{is an
$H_{m,2}$-bridge}) of $(C,\pn)$.
\end{definition}

\begin{definition}
We say that an $n$-pointed curve $(C, \pn)$ \emph{contains an $H_{m,2}$-chain of length r} if there exists a morphism
$$i: \coprod_{i=1}^{r}(E_i,q_{2i-1},q_{2i}) \hookrightarrow C,$$
where each $i|_{(E_i, q_{2i-1}, q_{2i})}$ is a gluing morphism satisfying:
\begin{enumerate}
\item $(E_i,q_{2i-1},q_{2i})$ is an $H_{m,2}$-curve for $i=1, \ldots, r$;
\item $i(q_{2i})=i(q_{2i+1})$ is an $A_{2m+1}$-singularity of $C$ for $i=1, \ldots, r-1$.
\end{enumerate}
In this case, we say also that $(\bigcup_{i=1}^{r}
E_i,q_{1},q_{2r}) $ \emph{is an $H_{m,2}$-chain} of $(C,\pn)$
\end{definition}
\begin{remark}
An $H_{m,2}$-bridge is the same thing as an $H_{m,2}$-chain of
length one.
\end{remark}

\begin{definition}[Nodally-attached and destabilizing tails/chains]
\begin{enumerate}
\item[]
\item We say that an $H_{m,1}$-tail $i:(E,q) \hookrightarrow C$ is \emph{nodally-attached} if $i(q)$ is a node or marked point. We say that an $H_{m,2}$-chain  $i:  (E,q_{1},q_{2}) \hookrightarrow C$ is \emph{nodally-attached} if $i(q_1), i(q_{2})$ are nodes or marked points.
\item We say that an $H_{m,1}$-tail $i:(E,q) \hookrightarrow C$ is \emph{destabilizing} if $i(q)$ is a node, a marked point, or an $A_{l}$-singularity with $l \geq 2m+1$. We say that an $H_{m,2}$-chain  $i:  (E,q_{1},q_{2}) \hookrightarrow C$ is \emph{destabilizing} if $i(q_1), i(q_{2})$ are nodes, marked points, or $A_{l}$-singularities with $l \geq 2m+2$.
\end{enumerate}
\end{definition}

\begin{remark}
\begin{enumerate}
\item[]
\item We will sometimes say that $C$ has an $H_k$-curve to mean that $C$ has an $H_{m,1}$-curve or $H_{m,2}$-curve (in the case that $k$ is even or odd respectively) where $m$ is the unique integer such that $k=2m$ or $k=2m+1$. Similarly, if $k$ is odd, we will say that $C$ contains an $H_k$-chain to mean that $C$ contains an $H_{m,2}$-chain, where $m$ is the unique integer such that $k=2m+1$.
\item If $C$ is a curve with only $A_1, \ldots, A_k$-singularities, then a destabilizing $H_k$-curve or $H_k$-chain of $C$ is the same as a nodally-attached $H_k$-curve or $H_k$-chain.
\end{enumerate}
\end{remark}

Now we may define our stability conditions:
\begin{definition}[$A_{k}^-/A_{k}/A_{k}^+$-stability] \label{definition-stability}
An $n$-pointed curve $(C, \pn)$ is \emph{$A_{k}^-/A_{k}/A_{k}^+$-stable} if  $\omega_{C}(\Sigma_ip_i)$ is ample, and
\begin{enumerate}
\item ($A_{k}^-$-stability)
\begin{enumerate}
\item $C$ has only $A_{l}$-singularities, $l < k$,
\item $C$ contains no destabilizing $H_l$-curves/chains, $l<k$. Equivalently, $C$ contains no $H_{m,1}$-tails $( m< \frac{k}{2})$ or destabilizing $H_{m,2}$-chains $(m  < \frac{k-1}{2})$.
\end{enumerate}
\item ($A_{k}$-stability)
\begin{enumerate}
\item $C$ has only $A_{l}$-singularities, $l  \leq k$,
\item $C$ contains no destabilizing $H_l$-curves/chains, $l<k$. Equivalently, $C$ contains no $H_{m,1}$-tails $( m< \frac{k}{2})$ or destabilizing $H_{m,2}$-chains $(m  < \frac{k-1}{2})$.
\end{enumerate}
\item ($A_{k}^+$-stability)
\begin{enumerate}
\item $C$ has only $A_{l}$-singularities, $l  \leq k$,
\item $C$ contains no destabilizing $H_l$-curves/chains, $l \leq k$. Equivalently, $C$ contains no $H_{m,1}$-tails $( m \leq \frac{k}{2})$ or destabilizing $H_{m,2}$-chains $(m  \leq \frac{k-1}{2})$.
\end{enumerate}
\end{enumerate}
\end{definition}
\begin{remark}
\begin{enumerate}
\item[]
\item $A_{k}^{+}$-stability is the same as $A_{k+1}^{-}$-stability.
\item The definition of $A_2^+$-stability is equivalent to Schubert's definition of pseudostability \cite{schubert}. The definition of $A_3$-stability and $A_3^+$-stability is equivalent to Hassett and Hyeon's definition of $c$-semistability and $h$-semistability respectively \cite{HH2}. The definitions of $A_4$ and $A_4^+$-stability are original to this paper.
\item The most subtle point in this definition is the fact that when $A_k$-singularities are introduced, one removes not arbitrary $H_k$-curves/chains, but only \emph{destabilizing} ones. The motivation for the definition of a destabilizing $H_k$-curve/chain stems from the order in which singularities are introduced. For example, a tacnodally-attached elliptic tail is a destabilizing $H_{1,1}$-curve and hence may not appear in an $A_3$-stable curve. Intuitively, the reasons for this is that elliptic tails should be replaced by cusps before elliptic bridges are replaced by tacnodes, so the correct $A_3$-stable replacement of a curve with an elliptic tail dangling off an elliptic bridge is to create a cuspidally-attached elliptic bridge (which \emph{is} $A_3$-stable), rather than a tacnodally-attached is elliptic tail.
\end{enumerate}
\end{remark}

A family of $A_{k}^-/A_{k}/A_{k}^+$-stable curves is defined in
the usual way to be a flat, proper, finitely presented morphism,
together with $n$ sections, whose geometric fibers are
$A_{k}^-/A_{k}/A_{k}^{+}$-stable curves of arithmetic genus $g$. Evidently, families of $A_{k}^-/A_{k}/A_{k}^+$-stable curves form a stack, so we may make the following definition.

\begin{defn}\label{definition-main-stacks}
Let $\SM_{g,n}(A_{k}^-), \SM_{g,n}(A_{k}) ,\SM_{g,n}(A_{k}^+)$ denote the moduli stacks of families of $A_{k}^-/A_{k}/A_{k}^+$-stable curves.
\end{defn}


\subsection{Deformation-Openness of Moduli Functors}

The purpose of this section is to prove Proposition \ref{P:Openness} stating that the stability conditions introduced in the previous section are deformation-open conditions. We will do this by showing that the stacks $\SM_{g,n}(A_k)$ are obtained by removing certain closed loci from the stack of all curves with $A_k$-singularities.

\begin{definition} \label{defn-ugn}
Let $\U_{g,n}(A_k)$ be the moduli stack of families of curves $(\pi:\C \rightarrow T, \sigman)$ satisfying:
\begin{enumerate}
\item $\omega_{\C/T}$ is relatively ample.
\item The sections $\sigman$ are distinct and lie in the smooth locus of $\pi$
\item The geometric fibers of $\pi$ are connected, reduced curves of arithmetic genus $g$.
\item The only singularities of the geometric fibers of $\pi$ are of type $A_1$-$A_k$.
\end{enumerate}
Since $\U_{g,n}(A_k)$ parameterizes canonically polarized curves, $\U_{g,n}(A_k)$ is obviously an algebraic stack of finite type over $\mathbb{C}$.
\end{definition}

\begin{definition} \label{definition-attaching}
If $(C, \pn)$ is a curve containing an $H_{m,1}$-tail (resp., $H_{m,2}$-chain) $i: (E, \qm) \hookrightarrow C$, we say that this $H_{m,1}$-tail (resp., $H_{m,2}$-chain) has \emph{$A_{k}$-attaching} (resp. \emph{$A_{k_1, k_2}$-attaching}) if $i(q_1)$ is an $A_{k}$-singularity (resp. $i(q_1)$, $i(q_2)$ are $A_{k_1}$, $A_{k_2}$-singularities). Note that we allow $k, k_1, k_2$ to be zero, with the understanding that in this case $i(q)$ (resp., $i(q_1)$, $i(q_2)$) is one of the marked points of $C$. We may then define the following constructible subsets of $\U_{g,n}(A_{l})$:
\begin{align*}
\T_{m}^{k}:=& \text{ Locus of curves admitting an $H_{m,1}$-tail with $A_{k}$-attaching},\\
\B_{m}^{k_1,k_2}:=& \text{ Locus of curves admitting an $H_{m,2}$-chain with $A_{k_1}/A_{k_2}$-attaching}.
\end{align*}

\end{definition}

Each of our stability conditions is defined by removing loci of the form $\T_{m}^{k}$ and $\B_{m}^{k_1,k_2}$ from $\U_{g,n}(A_k)$. More precisely, we have (as sets):

\begin{align*}
& \SM_{g,n}(A_1)=\U_{g,n}(A_1)&& \SM_{g,n}(A_1^+)=\SM_{g,n}(A_1)\\
& \SM_{g,n}(A_2)=\U_{g,n}(A_2) && \SM_{g,n}(A_2^+)=\SM_{g,n}(A_2)-\bigcup_{j \in \{0,1\}}T^{j}_1\\
& \SM_{g,n}(A_3)=\U_{g,n}(A_3)-\bigcup_{j \in \{0,1,3\}}T_1^j&& \SM_{g,n}(A_3^+)=\SM_{g,n}(A_3)-\bigcup_{i,j \in \{0,1\}}B_1^{i,j}\\
& \SM_{g,n}(A_4)=\U_{g,n}(A_4)-\bigcup_{j \in \{0,1,3\}}T_1^j-\bigcup_{i,j \in \{0,1,4\}}B_1^{i,j}&& \SM_{g,n}(A_4^+)=\SM_{g,n}(A_4)-\bigcup_{j \in \{0,1\}}T_2^{j},
\end{align*}

To show that our stability conditions are open, we must show that at each stage the collection of loci $\T_{m}^{k}$ and $\B_{m}^{k_1,k_2}$ that we excise are closed. For this, we must analyze degenerations of curves with $H_{m,1}$-tails and $H_{m,2}$-chains. We break this analysis into two stages: In Lemma \ref{L:HmLimits}, we analyze degenerations of a single $H_{m,1}$-tail or $H_{m,2}$-bridge, and in Lemma \ref{L:LimitNode}, we analyze how the attaching singularities of an $H_{m,1}$-tail or $H_{m,2}$-chain may degenerate. Combining these results will allow us to prove the desired statement (Proposition \ref{P:Openness}).

\begin{figure}
\scalebox{.55}{\includegraphics{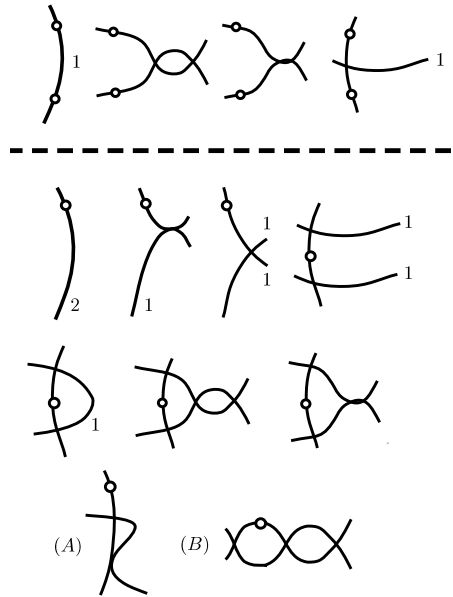}}
\caption{Topological types of curves in $\U_{1,2}(A_4)$ and $\U_{2,1}(A_4)$. For convenience, we have suppressed the data of singularities internal to each component, and we record only: the arithmetic genus of each component, and the singularities where two components meet (which are either nodes or tacnodes, as indicated by the picture). Components without a label have arithmetic genus zero.}\label{F:TopologicalTypes}
\end{figure}

\begin{lemma}[Limits of $H_{m,1}/H_{m,2}$-curves]\label{L:HmLimits}
\begin{itemize}
\item[]
\item[(1)]Let $(\H \rightarrow \Delta, \tau_1)$ be a family in $\U_{1,1}(A_{4})$ whose generic fiber is a smooth $H_{1,1}$-curve. Then the special fiber $C$ is an $H_{1,1}$ curve.
\item[(2)]Let $(\H \rightarrow \Delta, \tau_1, \tau_2)$ be a family in $\U_{1,2}(A_{4})$ whose generic fiber is a smooth $H_{1,2}$-curve. Then the special fiber $C$ satisfies one of the following conditions:
\begin{itemize}
\item[(a)] $C$ is an $H_{1,2}$-curve.
\item[(b)] $C$ contains an $H_{1,1}$-tail.
\end{itemize}
\item[(3)]Let $(\H \rightarrow \Delta, \tau_1)$ be a family in $\U_{2,1}(A_{4})$ whose generic fiber is a smooth $H_{2,1}$-curve. Then the special fiber $C$ satisfies one of the following conditions:
\begin{itemize}
\item[(a)] $C$ is an $H_{2,1}$-curve.
\item[(b)] $C$ contains an $H_{1,1}$-tail or an $H_{1,2}$-bridge.
\end{itemize}
\end{itemize}
\end{lemma}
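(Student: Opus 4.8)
The plan is to study the limit by extending the degree-two map $\phi_\eta\colon \H_\eta \to \PP^1$ that exhibits the generic fiber as a hyperelliptic curve, and to read off the special fiber from the degeneration of its branch divisor. After a finite base change the $2m+2$ branch points of $\phi_\eta$ become disjoint sections of $\PP^1_{\Delta^*}$; I would complete the family of branch divisors to a family of at-worst-nodal genus-zero curves $\mathcal P \to \Delta$ (a tree of $\PP^1$'s) and take the associated admissible double cover to obtain a model $\bar\H \to \mathcal P \to \Delta$ whose special fiber $\bar C$ is a nodal curve carrying a limiting hyperelliptic structure. The curve $C \in \U_{g,n}(A_4)$ in the statement is then recovered from $\bar C$ by contracting the subcurves on which $\omega_{\bar C}(\sum_i \tau_i)$ has non-positive degree: colliding branch points and collapsed chains become the $A_k$-singularities of $C$ through the local models $y^2 = \prod_i(x-b_i)$, a collision of $j$ branch points producing an $A_{j-1}$-singularity.

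The heart of the argument is then a finite combinatorial bookkeeping on the tree $\mathcal P$. Over a target component carrying $r$ branch points (with an incident node counted as a branch point exactly when the cover is ramified there), the cover is a double cover of $\PP^1$ of genus $(r-2)/2$, and the parity of $r$ records whether the node is a ramification point. The marked points supply the decisive parity constraint: in the $H_{m,1}$ and $H_{2,1}$ cases the section $\tau_1$ lies over the branch point $\infty$, so the number of finite branch points is odd, whereas in the $H_{m,2}$ case the sections $\tau_1,\tau_2$ are the two \emph{unramified} preimages of $\infty$ and the branch number is even. Using this I would classify the components of $\bar C$: a genus-one component attached along a single ramified node is an $H_{1,1}$-tail, a genus-one component attached along a conjugate pair of unramified nodes is an $H_{1,2}$-bridge, and genus-zero components are contracted or absorbed into the core.

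For part (1) the parity observation is almost the whole story: since $\phi_\eta$ is ramified over $\infty = \phi(\tau_1)$, the branch form has odd degree, hence can never specialize to a perfect square, and the double cover never splits; thus $\bar C$ is irreducible, no rational bubble can carry positive genus, and the only degenerations of the three finite branch points produce a node ($A_1$) or a cusp ($A_2$) on an irreducible arithmetic-genus-one curve. Such a curve, with its smooth marked point, again admits a degree-two map through $|2\tau_1|$, so by Proposition~\ref{h-quotient-stack} it is an $H_{1,1}$-curve. For part (2), in $\U_{1,2}(A_4)$ the only way to leave the locus of $H_{1,2}$-curves is for three branch points to be carried by a bubble attached along a ramified node, producing an $H_{1,1}$-tail (case (b)); every other collision pattern yields an $H_{1,2}$-curve, possibly of the two-component type permitted by Lemma~\ref{C:Akcurves} (case (a)). For part (3), in $\U_{2,1}(A_4)$ there are five finite branch points together with $\infty$, and a genus-one piece may split off either as a bubble carrying four branch points attached along a conjugate pair (an $H_{1,2}$-bridge) or as a bubble carrying three branch points attached along a ramified node (an $H_{1,1}$-tail), which is case (b); all remaining collision patterns keep the curve an irreducible $H_{2,1}$-curve, giving case (a).

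The hard part will not be producing the admissible-cover model but translating it faithfully into the curve that actually occurs in $\U_{g,n}(A_4)$. Because $\U_{g,n}(A_4)$ is non-separated, a single generic family admits several limits differing by the contraction of $\omega(\sum_i\tau_i)$-trivial subcurves — precisely the cusp-versus-elliptic-tail ambiguity already visible in the $\SM_{1,1}(A_2)$ example — so I must check that every such contraction lands in one of the listed types and that no further topological type from Figure~\ref{F:TopologicalTypes} can arise. Controlling which chains of rational components collapse to which $A_k$-singularity, and confirming that the genus budget $m \le 2$ together with the prohibition on singularities worse than $A_4$ rules out every configuration beyond those listed, is where the real work lies; the parity constraint at the marked points is the structural input that both makes this enumeration finite and forces exactly the stated dichotomies.
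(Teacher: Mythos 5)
Your strategy --- extend the hyperelliptic map on the generic fiber, pass to an admissible-cover limit over a tree of $\PP^1$'s, and recover the special fiber by contraction --- is a genuinely different route from the paper's, and it is the classical one; but as written it has a gap precisely at the step you yourself defer to ``where the real work lies,'' and that step is the entire content of the lemma. The special fiber $C$ is handed to you as an \emph{arbitrary} limit of the family in the non-separated stack $\U_{g,n}(A_4)$; your construction produces one particular model $\bar{\H} \to \Delta$ and particular contractions of it. To conclude anything about $C$ you must show that $C$ is dominated by a blow-up of your admissible-cover model and then classify \emph{all} contractions of such blow-ups having only $A_1$--$A_4$ singularities, ample $\omega_C(\sum_i \tau_i)$, and marked sections landing in the smooth locus. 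Nothing in the proposal does this: the sentence ``I must check that every such contraction lands in one of the listed types'' is a restatement of the lemma, not a proof of it. Moreover, some intermediate assertions are false as stated: in part (1) the admissible-cover fiber $\bar{C}$ need \emph{not} be irreducible (two finite branch points can bubble off, giving two rational components glued at two nodes; three can bubble off, and then the cover of the bubble has genus one, so a bubble certainly can carry positive genus). It is the contracted curve $C$ that is irreducible, and the reason is not parity of the branch divisor but the fact that $\omega_C(\tau_1(0))$ is ample of degree one --- which is exactly the paper's argument, and which needs no model at all.

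The contrast with the paper is instructive. The paper never constructs a model of the family: it bounds the number of components of $C$ by the degree ($1$, $2$, $3$ respectively) of the ample line bundle $\omega_C(\sum_i\tau_i(0))$, enumerates the finitely many topological types with only $A_1$--$A_4$ singularities (Figure \ref{F:TopologicalTypes}), and checks that every type except two is visibly an $H$-curve or contains an $H_{1,1}$-tail or $H_{1,2}$-bridge; in particular parts (1) and (2) hold for \emph{every} curve in $\U_{1,1}(A_4)$ and $\U_{1,2}(A_4)$, with no use of the hypothesis that $C$ is a limit of smooth hyperelliptic curves. That hypothesis enters only in part (3), to exclude the two residual types (A) and (B), and there the paper deploys a one-line version of your branch-divisor idea: each of these curves is a double cover of $\PP^1$ branched over $2(0)+4(\infty)$, respectively $2(0)+2(1)+2(\infty)$, and every deformation arises by deforming the branch divisor, so the six Weierstrass points of the general fiber are absorbed into the singular locus --- contradicting the fact that $\tau_1(0)$ is a smooth point and is the limit of the Weierstrass point $\tau_1(\eta)$. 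If you want to salvage your approach, the efficient repair is to adopt the paper's component-count-plus-classification skeleton and reserve your degeneration analysis for exactly this exclusion step, which is the only place a limit argument is genuinely needed.
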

\begin{proof}
For (1), the special fiber $(H,p)$ is necessarily a curve of arithmetic genus one with $\omega_{H}(p)$ ample. Since $\omega_{H}(p)$ has degree one, $H$ must be irreducible. It follows immediately (by Riemann-Roch) that $|2p|$ gives a degree two map to $\P^1$, so $(H,p)$ is an $H_{1,1}$-curve.

For (2), the special fiber $(H,p_1, p_2)$ is a curve of arithmetic genus one with  $\omega_{H}(p_1+p_2)$ ample. Since  $\omega_{H}(p_1+p_2)$ has degree two, $H$ has at most two components. The possible topological types of $H$ are listed in the top row of Figure \ref{F:TopologicalTypes}. We see immediately that any curve with one of the first three topological types is an $H_{2,1}$-curve, while any curve with the last topological type has an $H_{1,1}$-tail.

Finally, for (3), the special fiber $(H,p)$ is a curve of arithmetic genus two with  $\omega_{H}(p)$ ample. Since  $\omega_{H}(p)$ has degree three, $H$ has at most three components, and the possible topological types of $H$ are listed in the bottom three rows of Figure \ref{F:TopologicalTypes}. One sees immediately that if $H$ does not have an $H_{1,1}$-tail or an $H_{1,2}$-bridge, there are only three possibilities for the topological type of $H$: either $H$ is irreducible or $H$ has topological type $(A)$ or topological type $(B)$. Thus, it suffices to show that if $H$ is irreducible, then $(H,p)$ is an $H_{2,1}$-curve, and that curve of type $(A)$ and type $(B)$ cannot arise as limits of a family of smooth $H_{2,1}$-curves. The first claim is easy; we only need to know that $\omega_{H} \sim 2p$, but this follows from the corresponding linear equivalence on the general fiber.

It remains to show that topological types $(A)$ and $(B)$ cannot occur as the special fiber of a family of $H_{1,2}$-tails. For this, it suffices to prove that if $\H \rightarrow \Delta$ is \emph{any} family of genus two curves with smooth general fiber and special fiber of topological type $(A)$ or $(B)$, then the limits of the 6 Weierstrass points of the general fiber lie in the singular locus of the special fiber. More precisely, we claim that in a curve of type $(A)$, two Weierstrass points are absorbed into the node and four are absorbed into the tacnode, while in the curve of type $(B)$, two Weierstrass points are absorbed into each node. To see this, one simply observes that the unique isomorphism class of curve of type $(A)$ and $(B)$ can each be expressed as a double cover of $\P^1$ branched over $2(0)+4(\infty)$ and $2(0)+2(1)+2(\infty)$ respectively, and that all deformations of these curves are obtained by deforming the branch divisor (along with the corresponding cover).
\end{proof}

Next, we must consider how the singularities being used to attach a hyperelliptic bridge or tail may degenerate.
\begin{lemma}[Limits of Attaching Singularities]\label{L:LimitNode}
Suppose $\C \rightarrow \Delta$ is a flat, proper family of curves whose geometric fibers have only $A_{l}$-singularities. Suppose this family is endowed with $k$ sections $\tau_1, \ldots, \tau_k$ satisfying:
\begin{enumerate}
\item[(a)] $\tau_{i}(\bar{\eta}) \in \C_{\bar{\eta}}$ is an $A_{2m_i-1}$-singularity.
\item[(b)] The normalization of $\C_{\bar{\eta}}$ along $\bigcup_{i=1}^{k}\tau_{i}(\bar{\eta})$ consists of two connected components, and that $\pi^{-1}(\tau_i(\bar{\eta}))$ consists of two points $\alpha_i(\bar{\eta})$ and $\beta_i(\bar{\eta})$, with $\{\alpha_i(\bar{\eta})\}_{i=1}^{k}$ lying in the first component and $\{\beta_i(\bar{\eta})\}_{i=1}^{k}$ in the second.
\end{enumerate}
Then the normalization $\pi: \tilde{\C} \rightarrow \C$ of $\C$ along $\bigcup_{i=1}^{k} \tau_i$ consists of two connected components and (after a finite base-change) we may assume that $\pi^{-1}(\tau_i)$ splits into two sections $\alpha_i$ and $\beta_i$, with $\{\alpha_i\}_{i=1}^{k}$ lying in the first component and $\{\beta_i\}_{i=1}^{k}$ in the second. We claim that
\begin{enumerate}
\item If the limit points $\{\alpha_i(0)\}_{i=1}^{k}$ are distinct, then each limit point $\tau_i(0)$ remains on $A_{2m_i-1}$-singularity.
\item If any subset of limit points $\{\alpha_i(0) \}_{i \in S}$ coincide, then the subset $\{\beta_{i}(0)\}_{i \in S}$ also coincides, and the limit point $\tau_i(0)$ (for any $i \in S$) is an $A_{\sum_{j \in S}2m_{j}-1}$-singularity.
 \end{enumerate}
\end{lemma}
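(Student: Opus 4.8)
The plan is to pass from the family to its gluing description and then track the total $\delta$-invariant, which is what pins down the limiting singularity type. First I would perform the finite base change promised in the statement so that the sections $\alpha_i,\beta_i$ of the partial normalization $\pi\colon \tilde{\C}\to\C$ along $\bigcup_i\tau_i$ are defined, writing $\tilde{\C}=\tilde{\C}_1\sqcup\tilde{\C}_2$ with $\alpha_i\in\tilde{\C}_1$ and $\beta_i\in\tilde{\C}_2$. The key structural input, which I would establish next, is that although normalization does not commute with restriction to the special fibre, the gluing is flat: the coherent sheaf $\cQ:=\pi_*\O_{\tilde{\C}}/\O_{\C}$ is supported on the sections and has constant colength $\sum_i m_i$ over $\Delta$ (an $A_{2m_i-1}$-singularity contributes $\delta=m_i$), hence is $\O_\Delta$-flat. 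It follows that $\tilde{\C}_1,\tilde{\C}_2\to\Delta$ are flat families with constant $\chi(\O)$, that $\C_0$ is exactly the curve obtained from $\tilde{\C}_{1,0}\sqcup\tilde{\C}_{2,0}$ by gluing along $\cQ_0$, and that normalizing $\C_0$ at the (possibly collided) points $\{\tau_i(0)\}$ recovers $\tilde{\C}_{1,0}\sqcup\tilde{\C}_{2,0}$.

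I would then prove the collision statement by counting branches. Suppose $\alpha_i(0)=\alpha_j(0)=:P$ for some $i,j$; then $\tau_i(0)=\pi(P)=\tau_j(0)=:w$. If $\beta_i(0)\neq\beta_j(0)$, the fibre of $\tilde{\C}_0\to\C_0$ over $w$ contains the three distinct points $P,\beta_i(0),\beta_j(0)$ (the first on $\tilde{\C}_1$, the other two on $\tilde{\C}_2$). Since a partial normalization has over any point at most as many preimages as $\C_0$ has analytic branches there, this forces $w$ to carry at least three branches, contradicting the hypothesis that $\C_0$ has only $A_l$-singularities (which have at most two branches). Hence $\beta_i(0)=\beta_j(0)$; running the same argument with the roles of $\alpha$ and $\beta$ exchanged shows that the $\alpha$'s collide exactly when the $\beta$'s do, so the limit points cluster consistently on the two components.

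Finally I would identify the singularity types by conservation of Euler characteristic. For every $t$ the partial normalization gives $\chi(\O_{\C_t})=\chi(\O_{\tilde{\C}_{1,t}})+\chi(\O_{\tilde{\C}_{2,t}})-\sum_{p}\delta_{p}$, the sum running over the images of the sections. Both $\chi(\O_{\C_t})$ and $\chi(\O_{\tilde{\C}_{j,t}})$ are constant in $t$ by flatness, so the total $\delta$ is conserved: writing $w_1,\dots,w_r$ for the distinct limits of the $\tau_i(0)$ with clusters $S_1,\dots,S_r$, one gets $\sum_{a}\delta_{w_a}=\sum_{i=1}^{k}m_i$. On the other hand, superadditivity of the $\delta$-invariant under specialization gives $\delta_{w_a}\ge\sum_{i\in S_a}m_i$ for each $a$; summing and comparing with the conserved total forces equality $\delta_{w_a}=\sum_{i\in S_a}m_i$. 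Since a two-branch $A_l$-singularity is an $A_{2\delta-1}$-singularity, $w_a$ is an $A_{(\sum_{i\in S_a}2m_i)-1}$-singularity. Taking singleton clusters yields claim (1) and a general cluster yields claim (2).

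The hard part is really the first step: one must ensure that the convenient ``two components glued along the sections'' picture survives passage to the special fibre, where sections may have collided and the naive normalization of $\C_0$ need not agree with the special fibre of $\tilde{\C}$. Establishing $\O_\Delta$-flatness of the gluing sheaf $\cQ$ — equivalently, that no $\delta$ escapes in the limit — is exactly what legitimizes the Euler-characteristic bookkeeping; once it is in place, the branch count and the $\delta$-computation are routine.
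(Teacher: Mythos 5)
Your overall strategy is the paper's: pass to the partial normalization $\pi \co \tilde{\C} \to \C$, use branch counting against the $A_l$-hypothesis to control collisions, and pin down the limiting singularity by conservation of $\delta$ (your flatness of $\cQ := \pi_*\O_{\tilde{\C}}/\O_{\C}$ is a repackaging of the paper's observation that the arithmetic genera of the fibers of $\C$ and $\tilde{\C}$ are constant). But there is a gap at the pivot of your third step. You claim that flatness of $\cQ$ implies that ``normalizing $\C_0$ at the (possibly collided) points $\{\tau_i(0)\}$ recovers $\tilde{\C}_{1,0} \sqcup \tilde{\C}_{2,0}$,'' and you then write the Euler-characteristic identity using the \emph{true} delta invariants $\delta_{w_a}$ of $\C_0$. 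That deduction is a non sequitur. Flatness of $\cQ$ only conserves the \emph{colength} of the gluing, i.e. $\length \, \cQ_0 = \sum_i m_i$; locally at $w_a$ this colength equals $\delta_{w_a} - \sum_{q \mapsto w_a}\delta_q(\tilde{\C}_0)$, not $\delta_{w_a}$. If some preimage point $q \in \tilde{\C}_0$ were itself singular, then $\tilde{\C}_0 \to \C_0$ would \emph{not} be the normalization of $\C_0$ at $w_a$ (think of pinching a cuspidal branch onto another curve: pulling the two branches apart is not the normalization at that point), the true total $\sum_a \delta_{w_a}$ would strictly exceed $\sum_i m_i$, and your forcing argument (semicontinuity of $\delta$ versus the conserved total) would collapse.

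What closes the gap --- and what the paper's proof does explicitly --- is to show that the preimage points $\alpha_i(0), \beta_i(0)$ are \emph{smooth} points of $\tilde{\C}_0$, using the same $A_l$-hypothesis you already invoke for collisions: each limit $w_a$ has at least two preimages under the partial normalization (an $\alpha$-point and a $\beta$-point, lying on different connected components), hence at least two branches; an $A_l$-singularity with at least two branches must be an $A_{2j+1}$-singularity, which has exactly two branches, both smooth; so each preimage point carries exactly one smooth branch and is therefore a smooth point of $\tilde{\C}_0$. Only after this does your identity $\chi(\O_{\C_0}) = \chi(\O_{\tilde{\C}_{1,0}}) + \chi(\O_{\tilde{\C}_{2,0}}) - \sum_a \delta_{w_a}$ hold, and the rest of your bookkeeping (conservation plus semicontinuity of $\delta$, then ``a two-branch $A_l$-singularity is $A_{2\delta-1}$'') goes through exactly as in the paper.
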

\begin{proof}
Note that since $\tilde{\C}$ is $S_{2}$, the special fiber $\tilde{\C}|_0$ is reduced and $\pi|_0: \tilde{\C}|_{0} \rightarrow \C|_0$ is a partial normalization of $C_0:=\C|_0$. We will show that it is actually the full normalization of $\C_0$.

First, suppose that the limit points $\{\alpha_i(0)\}_{i=1}^{k}$ remain distinct. We claim that the limits $\{\beta_i(0)\}_{i=1}^{k}$ remain distinct as well. If $\beta_i(0)$ and $\beta_j(0)$ coincide, then $\pi|_0$ maps $\alpha_i(0)$, $\alpha_j(0)$, and $\beta_{i}(0)=\beta_{j}(0)$ to the same point $p \in C_0.$ Since $\pi|_0$ is a partial normalization map, this implies that $p \in C$ has at least three branches. This is impossible since $A_{k}$-singularities have at most two branches. Next, we claim that each of the limit points $\{\alpha_i(0)\}_{i=1}^{k}$ and $\{\beta_i(0)\}_{i=1}^{k}$ is smooth. If not, then one of the limit points $\alpha_i(0)$ or $\beta_i(0)$ has either two branches or a singular branch. In the first case, the limit point $\tau_i(0)$ would have at least three branches (since $\alpha_i(0)$ and $\beta_i(0)$ map to the same point of $C_0$). In the second case, one branch of $\tau_i(0)$ would necessarily be singular. Both are impossible since we are assuming the special fiber has only $A_{k}$-singularities. Obviously, $\tilde{\C}|_0 \rightarrow \C|_0$ is finite and surjective, and we have just shown that there are two smooth points lying above each of the points $\tau_i(0)$. It follows that $\tilde{\C}|_0$ is in fact the normalization of $\C|_0$ at $\{\tau_i(0)\}_{i=1}^{k}$. Thus,
$$
\sum_{i=1}^{k}\delta(\tau_i(0))-1=p_a(\tilde{\C}|_0)-p_a(\C|_0)=p_a(\tilde{\C}|_{\eta})-p_a(\C|_{\eta})=\sum_{i=1}^{k}m_i-1.
$$
Obviously, $\delta(\tau_i(0)) \geq m_i$ since the $\delta$-invariant of a singularity can only increase under specialization, so the above inequality forces $\delta(\tau_i(0)) = m_i$ for each $i$. Since $\tau_i(0)$ is an $A_{k}$-singularity with two branches, we conclude $\tau_i(0)$ is an $A_{2m_i-1}$-singularity as desired.

The second case is argued similarly. Suppose that $S_1, \ldots, S_l$ is a partition of $[k]$, and that two limit points $\alpha_i(0)$ and $\alpha_j(0)$ coincide iff $i,j$ lie in the same subset of the partition. Arguing exactly as above, we conclude that the limit points $\beta_i(0)$ coincide according to the same partition, and that each of the limit points $\{\alpha_i(0)\}_{i=1}^{k}$ and $\{\beta_i(0)\}_{i=1}^{k}$ is smooth. Thus, $\tilde{\C}|_0 \rightarrow \C|_0$ is a full normalization of $\C|_0$ at $\{\tau_{i}(0)\}_{i=1}^{k}$, and genus considerations force $\tau_i(0)$ to be an $A_{2\sum_{j \in S}m_j-1}$-singularity (where $S$ is the unique subset containing $i$).
\end{proof}

\begin{lem}\label{L:Openness}
\begin{enumerate}
\item[]
\item $T_1^k \subset \U_{g,n}(A_4)$ is closed for any $k \in \{0,1,3\}$.
\item $B_1^{k,l} \subset \U_{g,n}(A_4)-\bigcup_{i \in \{0,1,3\}}T_1^i$ is closed for any $k,l \in \{0,1,4\}$.
\item $T_2^k \subset \U_{g,n}(A_4)-\bigcup_{i \in \{0,1,3\}}T_1^i-\bigcup_{i,j \in \{0,1,4\}} B_1^{i,j}$ is closed.
\end{enumerate}
\end{lem}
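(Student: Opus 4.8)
The three assertions are closedness statements about constructible subsets, so in each case it suffices to verify stability under specialization: given a family $\C \to \Delta$ over the spectrum of a DVR whose generic fiber lies in the locus in question, we must show its special fiber does too (a finite base change of $\Delta$ is harmless, as we are only testing membership of the closed point in a constructible locus of a finite-type stack). The three parts are proved in order, each working inside the open substack carved out by the previous closedness results: part (1) shows $\U_{g,n}(A_4)-\bigcup_i T_1^i$ is open, part (2) then works inside it, and part (3) inside $\U_{g,n}(A_4)-\bigcup_i T_1^i-\bigcup_{i,j} B_1^{i,j}$. The common mechanism is this: take the closure of the distinguished $H_{m,1}$-tail (resp.\ $H_{m,2}$-chain) of the generic fiber, base change so that its attaching points extend to sections, and normalize $\C$ along those sections, separating the hyperelliptic piece from the core. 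One then controls the degeneration of the hyperelliptic piece by Lemma \ref{L:HmLimits}, and the degeneration of its attaching singularities by Lemma \ref{L:LimitNode}.

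For (1), the tail is attached at a single point, so Lemma \ref{L:LimitNode} applies with one section ($k=1$): no collision is possible and conclusion (1) forces $\sigma(0)$ to remain a singularity of the same type ($A_1 \rightsquigarrow A_1$ if $k=1$, $A_3 \rightsquigarrow A_3$ if $k=3$), while for $k=0$ the attaching point is one of the marked sections, which stay distinct smooth points in the limit. Meanwhile Lemma \ref{L:HmLimits}(1) shows the separated genus-one family degenerates to an $H_{1,1}$-curve, so regluing gives $\C_0 \in T_1^k$. Part (3) is structurally identical for the single attaching point, but now the genus-two tail may degenerate: by Lemma \ref{L:HmLimits}(3) the limit is an $H_{2,1}$-curve, or contains an $H_{1,1}$-tail or an $H_{1,2}$-bridge. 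The last two possibilities would place $\C_0$ in $\bigcup_i T_1^i$ or $\bigcup_{i,j} B_1^{i,j}$ (the emergent tails/bridges being nodally or tacnodally attached, hence destabilizing), which are precisely the loci removed from the ambient stack; so they cannot occur, forcing the limit to be an $H_{2,1}$-curve, and Lemma \ref{L:LimitNode}(1) keeps $\sigma(0)$ an $A_k$-singularity ($k\in\{0,1\}$). Thus $\C_0\in T_2^k$.

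Part (2) is the most delicate, as the bridge has two attaching points and two independent degeneration phenomena interact. The bridge itself may degenerate, but by Lemma \ref{L:HmLimits}(2) the only alternative to an $H_{1,2}$-limit is the appearance of a (nodally attached) $H_{1,1}$-tail, which lands in some $T_1^j$ and is excluded in the ambient stack. The two attaching points may also collide; by Lemma \ref{L:LimitNode}(2), if they do the genus-one bridge becomes attached at a single $A_3$ singularity, i.e.\ a destabilizing $H_{1,1}$-tail, again excluded. In the remaining, non-colliding case, node and marked-point ends are controlled by Lemma \ref{L:LimitNode}(1), while an $A_4$-attaching end---which falls outside the two-branch, separating setup of Lemma \ref{L:LimitNode}---is handled by a direct local argument: a ramphoid cusp cannot specialize to a milder singularity since the $\delta$-invariant cannot decrease and branches cannot split, and nothing worse exists in $\U_{g,n}(A_4)$, so an $A_4$ end stays $A_4$. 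Hence the only limit compatible with the ambient stack carries an $H_{1,2}$-bridge with $A_k/A_l$-attaching, $k,l\in\{0,1,4\}$, and $\C_0\in B_1^{k,l}$.

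The main obstacle is not any single degeneration but the bookkeeping that reconciles these two mechanisms while checking that every \emph{bad} limit---a lower-genus tail or bridge, a collision of attaching points, or a jump in an attaching singularity---either keeps us in the claimed locus or lands in one of the loci already removed from the ambient stack; this is exactly why the three parts must be proved in the given inductive order. Two subsidiary points require care: one must verify that the separated-off hyperelliptic subfamily genuinely lies in the appropriate $\U_{1,1}(A_4)$, $\U_{1,2}(A_4)$, or $\U_{2,1}(A_4)$ (reducedness of the limit, and relative ampleness of the dualizing sheaf twisted by the new marked points), so that Lemma \ref{L:HmLimits} applies; and the even $A_4$-attaching ends of part (2), being unibranch, must be excised from the scope of Lemma \ref{L:LimitNode} and treated by the separate semicontinuity argument indicated above.
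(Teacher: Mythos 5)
Your overall strategy is the paper's: reduce closedness of a constructible locus to stability under specialization over a DVR, split off the hyperelliptic piece by normalizing along sections through the attaching singularities, and control the two degeneration mechanisms separately --- Lemma~\ref{L:HmLimits} for the hyperelliptic piece, Lemma~\ref{L:LimitNode} for the attaching singularities --- proving the three parts in the stated order so that previously excised loci rule out bad limits. Parts (1) and (3) match the paper's argument (the paper compresses (3) to ``identical to (1)''; your explicit exclusion of emergent $H_{1,1}$-tails and $H_{1,2}$-bridges via the ambient stack is the right elaboration), and your semicontinuity argument for the $A_4$-attached ends in (2) is a reasonable way to fill in a case the paper explicitly leaves to the reader.

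However, part (2) has a genuine gap: you treat the $H_{1,2}$-datum as a single bridge, whereas $B_1^{k,l}$ is by definition the locus of curves admitting an $H_{1,2}$-\emph{chain} of arbitrary length $r$ with $A_k/A_l$-attaching at its two extreme ends. The chain case cannot be reduced to the bridge case you prove: the constituent bridges of a length-$r$ chain with $r \geq 2$ are attached at internal $A_3$-singularities, and $3 \notin \{0,1,4\}$, so a curve lying in $B_1^{k,l}$ via a long chain need not lie in any locus your argument shows to be closed; moreover Lemma~\ref{L:HmLimits}(2) applies to a family of $H_{1,2}$-curves, not to the chain as a whole. The missing step --- exactly where the paper does extra work --- is: after a base change, choose sections $\gamma_1,\dots,\gamma_{r-1}$ through the internal tacnodes of the chain, apply Lemma~\ref{L:LimitNode} to each to conclude these remain tacnodes in the limit, normalize along them to obtain $r$ flat families whose generic fibers are $H_{1,2}$-curves, and only then apply Lemma~\ref{L:HmLimits}(2) to each piece (together with the standing assumption that the special fiber lies in $\U_{g,n}(A_4)-\bigcup_{i}T_1^i$, i.e.\ contains no elliptic tails); reassembling along the persisting tacnodes exhibits the special fiber as carrying an $H_{1,2}$-chain with the correct end attaching. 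Relatedly, your collision analysis of the two extreme attaching points must be rerun in the chain setting: for $r \geq 2$ the two end sections lie on different irreducible components of the generic fiber, which is how the paper dispatches non-collision in that case.
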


\begin{proof}
The loci $T_{m}^k$ and $\B_{m}^{k_1, k_2}$ are obviously constructible, so it suffices in each case to show that they are closed under specialization.

For (1), let $(\pi: \C \rightarrow \Delta, \sigman)$ be the family in $\U_{g,n}(A_4)$ such that the generic fiber lies in $T_{1}^{k}$ with $k \in \{0,1,3\}$. Note that the case $k=0$ is vacuous unless $g=n=1$ in which case $T_1^1=\U_{1,1}(A_4)$ and the statement is obvious, so we may assume $k \in \{1,3\}$. We must show that special fiber lies in $T_1^k$. Possibly after  a finite base change, $\pi$ admits a section $\tau_1$ picking out the attaching $A_k$-singularity of an $H_{1,1}$-tail in the generic fiber. By Lemma \ref{L:LimitNode}, the limit point $\tau_1(0)$ is still an $A_{k}$-singularity and the normalization of $\tilde{\C} \rightarrow \C$ induces a simultaneous normalization of the family. Let $\H \subset \tilde{\C}$ be the component whose generic fiber is a smooth $H_{m,1}$-curve, and let $\alpha_1$ be the section on $\H$ lying over $\tau_1$. We may consider $(\H \rightarrow \Delta, \alpha_1)$ as a family in $\U_{1,1}(A_{4})$ whose generic fiber is a smooth $H_{1,1}$-curve. By Lemma \ref{L:HmLimits} (1), $(H_0, \alpha_1(0))$ is an $H_{1,1}$-curve, so $C_0$ contains an $H_{1,1}$-tail with $A_{k}$-attaching, as desired. The proof for (3) is identical, using Lemma \ref{L:HmLimits} (3) instead of Lemma \ref{L:HmLimits} (1).

For (2), let $(\pi: \C \rightarrow \Delta, \sigman)$ be a family in $\U_{g,n}(A_4)$ such that the general fiber lies in $B_{1}^{1,1}$, i.e. contains an elliptic chain. (The cases where the general fiber lies in $B_1^{i,j}$ with other combinations $i,j \in \{0,1,4\}$ are essentially identical, and we leave the details to the reader.) Possibly after a finite base change, there exist sections $\tau_1$, $\tau_2$ picking out the attaching nodes of the elliptic chain in the general fiber. We claim that $\tau_1(0)$ and $\tau_2(0)$ are distinct, so that the normalization $\tilde{\C} \rightarrow \C$ along $\tau_1$ and $\tau_2$ gives a simultaneous normalization of the fibers. By Lemma \ref{L:LimitNode}, it suffices to check that if $\alpha_1$ and $\alpha_2$ are the sections of $\tilde{\C}$ lying on the connected component of $\tilde{\C}$ representing the elliptic chain, then $\alpha_1(0) \neq \alpha_2(0)$. If $\alpha_1$ and $\alpha_2$ lie on different irreducible components of the general fiber, then this is clear; otherwise, the entire elliptic chain is a single irreducible curve of arithmetic genus one and $\alpha_1(0)=\alpha_2(0)$ would force the special fiber to have an elliptic curve meeting the rest of the fiber in a single point - a contradiction, since we are assuming the special fiber contains no elliptic tails. Thus, $\alpha_1(0) \neq \alpha_2(0)$ as desired.

Now, let $(\H \rightarrow \Delta, \alpha_1, \alpha_2)$ be the connected component of $\tilde{\C}$ whose generic fiber is the given elliptic chain; we must show that the special fiber is an elliptic chain. If the chain has length $r$, then there exist sections $\gamma_1, \ldots, \gamma_{r-1}$ picking out the tacnodes in the general fiber at which the sequence of $H_{1,2}$-curves are attached to each other. Applying Lemma \ref{L:LimitNode} to each of the sections $\gamma_i$ individually, we conclude that the limits $\gamma_1(0), \ldots, \gamma_{r-1}(0)$ remain tacnodes, so the normalization of $\H$ along $\gamma_1, \ldots, \gamma_{r-1}$ induces a simultaneous normalization of the fibers, and we obtain $r$ distinct flat families whose generic fiber is a $H_{1,2}$-curve. It suffices to see that all these remain $H_{1,2}$-curves in the special fiber. This follows immediately from Lemma \ref{L:HmLimits} (3), since we are assuming the special fiber has no elliptic tails.
\end{proof}

\begin{prop}\label{P:Openness}
For $k \in\{2,3,4\}$, $A_k$-stability and $A_k^+$-stability are deformation open conditions.  \end{prop}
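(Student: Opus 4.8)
The plan is to deduce deformation-openness formally from the set-theoretic descriptions displayed before Lemma~\ref{L:Openness}, together with the closedness statements established there. Recall that $A_k$-stability (resp.\ $A_k^+$-stability) is deformation-open precisely when the full substack $\SM_{g,n}(A_k) \subseteq \U_{g,n}(A_k)$ (resp.\ inside the appropriate $\U_{g,n}(A_\ell)$) is open; equivalently, when for every family in the ambient stack the locus of stable geometric fibers is open in the base. Since each stability condition is a condition on geometric fibers, $\SM_{g,n}(A_k)$ is a full substack of $\U_{g,n}(A_k)$, and it will suffice to exhibit it as the complement of a closed substack.

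First I would note that $\U_{g,n}(A_2) \subseteq \U_{g,n}(A_3) \subseteq \U_{g,n}(A_4)$ are open immersions: the condition that a fiber have only $A_{\le j}$-singularities is open, because any sufficiently small deformation of an $A_\ell$-singularity has type $A_{\ell'}$ with $\ell' \le \ell$. Consequently a locus closed in $\U_{g,n}(A_4)$ restricts to a closed locus in each of the smaller open substacks, so it is enough to invoke Lemma~\ref{L:Openness} in the single ambient stack $\U_{g,n}(A_4)$. For each of the six stacks I would then read off from the displayed equations which loci $T_m^k$ and $B_m^{k_1,k_2}$ are excised and match them to the lemma, being careful that the order of excision agrees with the nested hypotheses of its three parts. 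Concretely: $\SM_{g,n}(A_2) = \U_{g,n}(A_2)$ is tautologically open; $\SM_{g,n}(A_2^+)$ and $\SM_{g,n}(A_3)$ remove only the loci $T_1^k$, which are closed by part~(1); $\SM_{g,n}(A_3^+)$, and the $B_1$-part of $\SM_{g,n}(A_4)$, remove the $B_1^{k,l}$ after the $T_1$'s are gone, exactly the situation of part~(2); and $\SM_{g,n}(A_4^+)$ removes the $T_2^k$ from $\U_{g,n}(A_4)-\bigcup T_1^i-\bigcup B_1^{i,j} = \SM_{g,n}(A_4)$, which is part~(3). In every case the excised set is closed in the relevant ambient stack, so its complement supports an open substack $\cU$.

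Finally I would identify $\SM_{g,n}(A_k)$ (resp.\ $\SM_{g,n}(A_k^+)$) with $\cU$: a family in $\U_{g,n}(A_\ell)$ is stable if and only if its classifying morphism has image contained in $|\cU|$, hence if and only if it factors through the open substack $\cU$; thus the stability stack equals $\cU$ and is open, which is exactly deformation-openness. Algebraicity and finite type of the stability stacks then follow immediately, since an open substack of the finite-type algebraic stack $\U_{g,n}(A_4)$ is again algebraic and of finite type. The genuinely substantive input is Lemma~\ref{L:Openness} (and behind it the analyses of Lemmas~\ref{L:HmLimits} and~\ref{L:LimitNode}); given these, the Proposition is essentially bookkeeping, and the one point demanding care is precisely the matching of excision orders to the nested hypotheses of the lemma—for instance, that part~(2) is entitled to assume all $H_{m,1}$-tails have already been removed.
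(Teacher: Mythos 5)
Your proposal is correct and takes essentially the same route as the paper's own proof: both exhibit each stability stack as the complement of the closed loci $T_m^k$ and $B_m^{k_1,k_2}$ supplied by Lemma~\ref{L:Openness}, using the set-theoretic descriptions displayed after Definition~\ref{definition-attaching}. The points you single out for care---that the $\U_{g,n}(A_l)$ are nested (open) substacks so closedness restricts, and that the order of excision must match the nested hypotheses of the lemma's three parts---are left implicit in the paper but constitute the same bookkeeping argument.
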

\begin{proof}[Proof of Proposition \ref{P:Openness}]
Using the description of $\SM_{g,n}(A_k)$ and  $\SM_{g,n}(A_k^+)$ in the discussion following Definition \ref{definition-attaching}, we see that  Lemma \ref{L:Openness}(1) implies $\SM_{g,n}(A_2^+)$ and $\SM_{g,n}(A_3)$ are obtained by excising closed subsets of $\U_{g,n}(A_2)$ and $\U_{g,n}(A_3)$ respectively. Similarly, Lemma \ref{L:Openness}(2) implies that $\SM_{g,n}(A_3^+)$ and $\SM_{g,n}(A_4)$ are obtained by excising closed subsets of $\U_{g,n}(A_3)$ and $\U_{g,n}(A_4)$. Finally, Lemma \ref{L:Openness}(3) implies that $\SM_{g,n}(A_4^+)$ is obtained by excising a closed subset from $\U_{g,n}(A_4)$.
\end{proof}

\begin{corollary}\label{C:Openness}
  $\SM_{g,n}(A_{k}^-),\SM_{g,n}(A_{k}),\SM_{g,n}(A_{k}^+)$ are algebraic stacks of finite-type over $\Spec \CC$. Moreover, that natural inclusions, 
$$
\SM_{g,n}(A_{k}^-) \hookrightarrow \SM_{g,n}(A_{k}) \hookleftarrow \SM_{g,n}(A_{k}^+).
$$
are open immersions.

 \end{corollary}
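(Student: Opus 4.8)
The plan is to realize each of the three stacks as an \emph{open substack} of the algebraic stack $\U_{g,n}(A_k)$, and then read off both the algebraicity claim and the claim that the inclusions are open immersions as purely formal consequences. The entire substantive content has already been established in Lemma~\ref{L:Openness} and Proposition~\ref{P:Openness}; what remains is bookkeeping.

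First I would recall from Definition~\ref{defn-ugn} that $\U_{g,n}(A_k)$ is an algebraic stack of finite type over $\CC$. By Proposition~\ref{P:Openness}, $A_k$-stability and $A_k^+$-stability are deformation-open conditions, so the set-theoretic descriptions following Definition~\ref{definition-attaching} exhibit $\SM_{g,n}(A_k)$ as the open substack of $\U_{g,n}(A_k)$ obtained by deleting the closed loci $T_1^j$ (together with the $B_1^{i,j}$ when $k=4$), and exhibit $\SM_{g,n}(A_k^+)$ as the further open substack obtained by deleting the remaining closed locus inside $\SM_{g,n}(A_k)$. Hence $\SM_{g,n}(A_k)$ and $\SM_{g,n}(A_k^+)$ are open substacks of $\U_{g,n}(A_k)$. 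For $\SM_{g,n}(A_k^-)$ I would invoke the identity $A_k^- = A_{k-1}^+$: the locus of $\U_{g,n}(A_k)$ whose geometric fibers carry only $A_l$-singularities with $l \leq k-1$ is open, since acquiring a worse singularity raises the $\delta$-invariant and therefore only occurs under specialization, and this open substack is canonically $\U_{g,n}(A_{k-1})$. Thus $\SM_{g,n}(A_k^-) = \SM_{g,n}(A_{k-1}^+)$ is open in $\U_{g,n}(A_{k-1})$ and so open in $\U_{g,n}(A_k)$; in the boundary case $k=2$ this simply reads $\SM_{g,n}(A_2^-) = \U_{g,n}(A_1)$, which is manifestly open.

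With all three stacks realized as open substacks of the single stack $\U_{g,n}(A_k)$, algebraicity and finite type follow because an open substack of an algebraic stack of finite type over $\CC$ is again of that kind. Finally, since $\SM_{g,n}(A_k^-)$ and $\SM_{g,n}(A_k^+)$ are both contained in $\SM_{g,n}(A_k)$ — the containments being immediate from comparing the defining conditions in Definition~\ref{definition-stability} — and all three are open in the common ambient stack $\U_{g,n}(A_k)$, each inclusion is an inclusion of one open substack into another, hence an open immersion. I do not anticipate a genuine obstacle here: the only step not already supplied by the earlier results is the openness of the ``at worst $A_{k-1}$'' condition used to place $\SM_{g,n}(A_k^-)$ inside $\U_{g,n}(A_k)$, which is standard upper-semicontinuity of the $\delta$-invariant, so the corollary is essentially a formal corollary of Proposition~\ref{P:Openness}.
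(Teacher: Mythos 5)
Your proposal is correct and follows essentially the same route as the paper: both deduce the corollary formally from Proposition~\ref{P:Openness} together with the algebraicity of $\U_{g,n}(A_k)$ (which rests on the curves being canonically polarized, hence satisfying \'etale descent and admitting a Hilbert-scheme atlas). Your only addition is to make explicit the openness of $\U_{g,n}(A_{k-1})\subset \U_{g,n}(A_k)$ via semicontinuity when placing $\SM_{g,n}(A_k^-)$ in the common ambient stack, a step the paper subsumes under ``the obvious set-theoretic inclusions.''
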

 \begin{proof}
 Families of  $A_{k}^{-}/A_{k}/A_{k}^+$-stable curves satisfy \'etale descent since they are canonically polarized. Once we know these are deformation-open conditions, we can use an open subset of a suitable Hilbert scheme to give an atlas. The obvious set-theoretic inclusions give rise to the desired open immersions.
 \end{proof}

\section{Closed points of $\SM_{g,n}(A_k)$} \label{section-closed-points}
Throughout this section, we assume $k \in \{2,3,4\}$.

\subsection{The canonical decomposition of an $A_{k}$-stable curve}

\begin{lemma}\label{L:Disjoint}
Suppose that $(C,\pn)$ is an $A_k$-stable curve.
\begin{enumerate}
 \item If $k=2m$ is even and $i_1(E_{1}), i_2(E_{2}) \subset C$ are the images of two distinct destabilizing $H_{m,1}$-tails, then $i_1(E_1)$ and $i_2(E_2)$ have no component in common.
  \item If $k=2m+1$ is odd and $i_1(E_{1}), i_2(E_{2}) \subset C$ are the images of two distinct destabilizing $H_{m,2}$-chains, then $i_1(E_1)$ and $i_2(E_2)$ have no component in common.
 \end{enumerate}
\end{lemma}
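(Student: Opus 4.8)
The plan is to prove the contrapositive by reconstructing a tail (resp.\ chain) from its image together with its attaching data: assuming the two tails (resp.\ chains) share an irreducible component, I will show that the two are isomorphic as marked curves \emph{over} $C$, contradicting that they are distinct.

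The argument rests on two inputs. The first is the rigidity of the building blocks: by Lemma~\ref{C:Akcurves} an $H_{m,1}$-curve is irreducible, so in the even case each image $i_j(E_j)$ is an irreducible curve and sharing a component is the same as the equality $i_1(E_1)=i_2(E_2)=:D$. The second, and more important, input concerns the attaching locus. Since a gluing morphism $i\co(E,q)\hookrightarrow C$ is by definition an open immersion on $E\setminus\{q\}$, the set $D\setminus\{i(q)\}=i(E\setminus\{q\})$ is open in $C$; hence $\overline{C\setminus D}\cap D\subseteq\{i(q)\}$, so $D$ meets the closure of its complement only at the attaching point. Applying this to both tails, $D\setminus\{i_1(q_1)\}$ and $D\setminus\{i_2(q_2)\}$ are both open in $C$, and if the two attaching points were distinct their union would exhibit $D$ as an open and closed subcurve, forcing $D=C$. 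Setting aside this degenerate case (where $C$ is itself an $H_{m,1}$-curve and must be handled separately), I may assume $i_1(q_1)=i_2(q_2)=:x$.

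With the attaching points identified, the reconstruction is immediate. The restrictions of $i_1$ and $i_2$ give isomorphisms $E_1\setminus\{q_1\}\iso D\setminus\{x\}\iso E_2\setminus\{q_2\}$, and composing them yields an isomorphism $\psi_0$ of the punctured curves. Because $q_1$ and $q_2$ are smooth points and $E_1,E_2$ are proper, the valuative criterion of properness extends $\psi_0$ to a morphism $\psi\co E_1\to E_2$; since $i_2$ is injective and $i_2(\psi(q_1))=i_1(q_1)=x=i_2(q_2)$, necessarily $\psi(q_1)=q_2$. The same argument produces an inverse, so $\psi\co(E_1,q_1)\iso(E_2,q_2)$ is an isomorphism compatible with the gluing morphisms. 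This realizes the two tails as isomorphic, contradicting the hypothesis that they are distinct, and completes the even case.

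For the odd case the same strategy applies, but the bookkeeping is more delicate because an $H_{m,2}$-curve may be reducible (by Lemma~\ref{C:Akcurves} it has at most two components, each a $\P^1$) and a chain is a union of such links glued at $A_{2m+1}$-singularities, so a single shared component no longer forces equality of images. I will instead propagate the identification along the chain: a shared component $Z$ lies in a link of each chain, and the internal gluings of a chain occur precisely at $A_{2m+1}$-singularities while its endpoints are attached at nodes, marked points, or $A_l$-singularities with $l\geq 2m+2$. Since these local types are determined by $C$, a destabilizing chain is forced to extend maximally through the tacnodal gluings until it reaches such an endpoint; hence the two chains must agree link-by-link and share the same two endpoints, at which point the reconstruction argument above, applied at both marked endpoints simultaneously, shows they are isomorphic over $C$. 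I expect this rigidity of the tacnodal gluings---showing that one shared component forces the entire chains to coincide---to be the main obstacle, together with the separate treatment of the degenerate case in which the tail or chain is all of $C$.
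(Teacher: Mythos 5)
Your proposal has two genuine gaps, and the decisive one is in part (2), which you never actually prove. Your plan there rests on the claim that a shared component forces the two chains to coincide link-by-link, because ``the local types are determined by $C$'' and a chain must ``extend maximally through the tacnodal gluings until it reaches'' a node or marked point; you yourself flag this rigidity as the main obstacle, and it is in fact false as a mechanism. The reason is that a single $H_{m,2}$-bridge can contain \emph{internal nodes}: for $k=3$, two smooth rational curves meeting in two nodes form a perfectly good $H_{1,2}$-curve (it is a degree-two cover of $\P^1$ of arithmetic genus $1$). Consequently the local type of a node does not tell you whether it is an endpoint of a chain or internal to one of its bridges, and the dangerous configuration is exactly the one your propagation rule cannot detect: an attaching node $p=i_1(q_1)$ of one destabilizing chain that is internal to a bridge $Z_1\cup Z_2$ of the other. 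The paper's proof is organized entirely around this configuration: using that an $A_{2m+1}$-stable curve has no $A_l$-singularities with $l>2m+1$ (so all attaching points are nodes or marked points), a short case analysis reduces to the situation where $i_1(q_1)$ is a node internal to $E_2$, and then an arithmetic-genus computation finishes: the bridge of $E_2$ containing $p$ has two smooth rational components by Lemma~\ref{C:Akcurves}, the attaching of $E_1$ at $p$ forces these components to meet \emph{only} at $p$, so the bridge has arithmetic genus $0$, contradicting that a bridge has genus $m>0$. Your proposal contains no substitute for this genus argument.

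The secondary problem is your reading of ``distinct,'' and it explains why the ``degenerate case'' you set aside in part (1) resists treatment. The lemma is a statement about images as subcurves: distinct means $i_1(E_1)\neq i_2(E_2)$, which is exactly what the canonical decomposition needs, and under that reading part (1) is the paper's one-line proof (two irreducible subcurves sharing a component are equal). Under the stronger reading you use --- distinct as marked curves over $C$, to be contradicted by producing an isomorphism $\psi\co(E_1,q_1)\iso(E_2,q_2)$ over $C$ --- the statement is simply false: take $k=2$ and $(C,p_1,p_2)$ a smooth $2$-pointed curve of genus $1$; then $(C,p_1)$ and $(C,p_2)$, each mapped by the identity, are two distinct destabilizing $H_{1,1}$-tails (each attached at a marked point) with the same image. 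This is precisely your set-aside case $D=C$ with distinct attaching points, and no contradiction can be derived there, which is why you could not handle it. So both halves of the proposal need repair: read distinctness at the level of images, dispose of (1) by irreducibility alone, and for (2) replace the unproved ``rigidity of tacnodal gluings'' by the reduction to an attaching node internal to a bridge together with the genus computation.
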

\begin{proof}
Case (1) is obvious, since any $H_{m,1}$-tail is irreducible. For case (2), consider two distinct destabilizing $H_{m,2}$-chains:
\begin{align*}
i_1:&(E_1, q_1, q_2) \hookrightarrow C,\\
i_2:&(E_2,r_1,r_2) \hookrightarrow C.
\end{align*}
Note that since $C$ contains no $A_{l}$-singularities with $l>2m+1$, the attaching points $i_1(q_1), i_1(q_2), i_2(r_1), i_2(r_2)$ are either nodes or marked points of $C$. We claim that if $i_1(E_1) \subset C$ and $i_2(E_2) \subset C$ share a common component, we may assume, without loss of generality, that $i_1(q_1)$ is a node internal to $E_2$. To see this, consider three cases:
\begin{enumerate}
\item If $i_2(r_1)=i_2(r_2) \in C$ is a node, then $C \simeq E_2$ and $C$ has no marked points. Now if $E_1$ and $E_2$ are not identical, then one of the attaching points $i_1(q_1), i_2(q_2)$ must be a node internal to $E_2$. Without loss of generality, we may assume this attaching point is $i_1(q_1)$.
\item If $i_2(r_1)$ and $i_2(r_2)$ are marked points of $C$, then $C$ has exactly two marked points and $(C,p_1,p_2) \simeq (E_2,r_1,r_2)$. Obviously, if $E_1$ and $E_2$ are distinct, then one of the attaching points of $E_1$ must be a node, and this node is internal to $E_2$.
\item If $i_2(r_1)$ and $i_2(r_2)$ are distinct nodes of $C$ or a node and a marked point, then either $E_1$ contains a nodal attaching point internal to $i_2(E_2)$, or else $i_1(E_1)$ contains $i_2(E_2)$ entirely, in which case we just switch the role of $E_1$ and $E_2$.
\end{enumerate}
Now let $p:=i_1(q_1)$. Evidently, $p \in C$ must be a node adjacent to two irreducible components, say $Z_1, Z_2 \in C$, both of which are contained in $E_2$. Since the only nodes internal to an $H_{m,2}$-chain are contained within a single $H_{m,2}$-bridge, we must have $Z_1 \cup Z_2$ constituting an $H_{m,2}$-bridge inside $E_2$. By Corollary \ref{C:Akcurves}, each of $Z_1$ and $Z_2$ is smooth rational. Furthermore, the fact that $p \in C$ is in attaching point for the $H_{m,2}$-chain $E_1$ implies that $Z_1$ meets $Z_2$ \emph{only} at $p$. We conclude that $Z_1 \cup Z_2$ has arithmetic genus zero, a contradiction since the genus of an $H_{m,2}$-bridge is $m>0$. We conclude that $i_1(E_1)$ and $i_2(E_2)$ may not have components in common.
\end{proof}

Using Lemma \ref{L:Disjoint}, we may define a decomposition of an $A_k$-stable curve into its $H_{m,1}$-tails/$H_{m,2}$-chains and a complementary subcurve which we shall call the core. The $k$ even case is considerably easier than the $k$ odd case, so we state them separately.

\begin{figure}
\scalebox{1.15}{\includegraphics{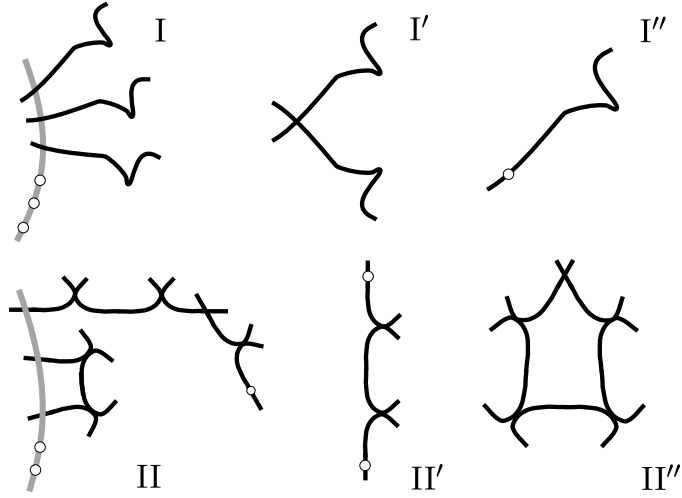}}
\caption{Canonical decompositions for an $A_k$-stable curve. The core of $C$ is shown in grey (cases I, II). Each black component represents an $H_{m,1}$-curve (cases I, I$'$, I$''$) or $H_{m,2}$-curve (cases II, II$'$, II$''$).}\label{F:CanonicalDecompositions}
\end{figure}

\begin{definition}[Canonical Decomposition - $k$ even] \label{canonical-decomposition-even}
Suppose that $(C,\pn)$ is $A_k$-stable ($k=2m$). Then one of the following holds:
\begin{itemize}
\item[] \textbf{Case I:} The $H_{m,1}$-tails of $C$ are disjoint, and we have a decomposition
$$C=K \cup E_1 \cup \ldots \cup E_r,$$
where $E_1, \ldots, E_r$ are the nodally-attached $H_{m,1}$-tails of $C$, $K:=\overline{C \backslash (E_1 \cup \ldots \cup E_r)}$, and each $E_i$ meets $K$ at a single node $q_i \in C$. Furthermore, if we consider $q_1, \ldots, q_r$ as marked points on $K$, then $(K, \pn, \q{r})$ is an $A_{k}$-stable curve with no destabilizing $H_{m,1}$-tails.  We call $(K, \pn, \q{r})$ the \emph{core of $(C, \pn)$}. In cases I$'$ and I$''$ below, we say that the core is empty.\\

\item[] \textbf{Case I$'$:} $C = E_1 \cup E_2,$
 where $(E_1,q_1)$ and $(E_2,q_2)$ are each $H_{m,1}$-curves, attached nodally at $q_1=q_2 \in C$.\\
\item[] \textbf{Case I$''$:} $(C, p_1)=(E,q_1)$ where $(E_1,q_1)$ is an $H_{m,1}$-curve.
\end{itemize}

\end{definition} 
To see that every $A_k$-stable curve $(C,\pn)$ satisfies I,I$'$, or I$''$, simply let $E_1, \ldots, E_r$ be the destabilizing $H_{m,1}$-tails of $C$. If the union of these tails comprise all of $C$, we must be in case I$'$ or I$''$. Otherwise, each $E_i$ meets $K:=:=\overline{C \backslash (E_1 \cup \ldots \cup E_r)}$ in a single node, and the fact that $(K, \pn, \q{r})$ is $A_k$-stable is immediate from the definition of $A_k$-stability.

To define the canonical decomposition in the $k$ odd case, we need a preliminary definition:
\begin{definition}[$H_{m,2}$-link]
We say that $C$ contains an \emph{$H_{m,2}$-link of length $r$} ($k=2m+1$) if there exists a gluing morphism
$$i: \bigcup_{i=1}^{r}(E_i,q_{2i-1},q_{2i}) \hookrightarrow C,$$
satisfying:
\begin{enumerate}
\item $(E_i,q_{2i-1},q_{2i})$ is an $H_{m,2}$-chain for $i=1, \ldots, r$.
\item $i(q_{2i})=i(q_{2i+1})$ is a node for $i=1, \ldots, r-1$.
\item $i(q_1)$, $i(q_{2r})$ are nodes, marked points, or $A_l$-singularities with $l > 2m +1$.
\end{enumerate}
\end{definition}
\begin{remark}
An $H_{m,2}$-chain is the same things as an $H_{m,2}$-link of length one.
\end{remark}

\begin{definition}[Canonical Decomposition - $k$ odd]\label{canonical-decomposition-odd}
Suppose that $(C,\pn)$ is $A_{k}$-stable with $k=2m+1$. Then one of the following holds:
\begin{itemize}

\item[] \textbf{Case II: } We have a decomposition
$$C=K \cup E_1 \cup \ldots \cup E_r \cup E_{r+1} \cup \ldots \cup E_{r+s}$$
where $E_1, \ldots, E_r$ are $H_{m,2}$-links meeting $K$ at two distinct nodes, $E_{r+1}, \ldots, E_s$ are $H_{m,2}$-links meeting $K$ in a single node (and whose other endpoint is a marked point of $C$), and $K:=\overline{C \backslash (E_1 \cup \ldots \cup E_s)}$. If we denote the points where $E_i$ meets $K$ by $\{q_{2i-1},q_{2i}\}$ $(i=1, \ldots, r)$, the point where $E_{r+i}$ meets $K$ by $q_{2r+i}$ $(i=1, \ldots, s)$, and the single marked point of $E_{r+i}$ by $p_{n-s+i}$, then the connected components of $(K, \{p_i\}_{i=1}^{n-s}, \q{2r+s})$ are $A_{k}$-stable curves with no destabilizing $H_{m,2}$-chains. We call $(K, \{p_i\}_{i=1}^{n-s}, \q{2r+s})$ the  \emph{core of $(C, \pn)$}.  In cases II$'$ and II$''$ below we say that the core of $(C, \pn)$ is empty.\\

\item[] \textbf{Case II$'$: } $(C,p_1,p_2)$ is an $H_{m,2}$-link, whose endpoints are marked points, i.e.
$$(C, p_1, p_2)=(E_1 \cup \ldots \cup E_r, q_1, q_{2r}),$$
where each $(E_i,q_{2i-1},q_{2i})$ is an $H_{m,2}$-chain, attached to each other nodally via $q_{2i} \sim q_{2i+1}$ for $i=1, \ldots, r-1$.\\

\item[] \textbf{Case II$''$: } $C$ is an $H_{m,2}$-link, whose endpoints are identified in a single node, i.e.
$C = E_1 \cup \ldots \cup E_r,$
where each $(E_i,q_{2i-1},q_{2i})$ is an $H_{m,2}$-chain, attached to each other nodally via $q_{2i} \sim q_{2i+1}$ for $i=1, \ldots, r-1$ and $q_1 \sim q_{2r}$.
\end{itemize}

\end{definition} 
To see that every $A_k$-stable curve $(C,\pn)$ satisfies I,I$'$, or I$''$, let $Z \subset C$ be the union of all $H_{m,2}$-chains of $(C,\pn)$, and note that each connected component of $Z$ is an $H_{m,2}$-link. If $Z$ comprises all of $C$, then we are in case I$'$ or I$''$. Otherwise, each connected component of $Z$ meets the complement $K:=\overline{C \backslash Z}$ in one or two nodes, and we are in case II.

The following lemma, which will be used repeatedly in the sequel, allows us to decompose one-parameter families of $A_k$-stable curves, according to the canonical decomposition of their generic fiber.
\begin{lemma}\label{L:LimitCanDecomp}
Let $(\C \rightarrow \Delta, \sigman)$ be a family of the $A_k$-stable curves, and let 
$$C_{\bar{\eta}}=K \cup E_1 \cup \ldots \cup E_r \text{ (resp. $K \cup E_1 \cup \ldots \cup E_{r+s}$)}$$
be the canonical decomposition of the generic fiber of $\C$ in the case $k$ even (resp. $k$ odd). After a finite base change, we may assume there exist sections $\{\tau_{i}\}_{i=1}^{r}$ (resp.  $\{\tau_{i}\}_{i=1}^{2r+s})$ which pick out the attaching nodes $E_i \cap K$ of the canonical decomposition of the generic fiber. We claim that the limits $\tau_i(0)$ are all nodes, and consequently we may consider the pointed normalization $\tilde{\C} \rightarrow \C$ of $\C$ along the union of the $\tau_i$. Then $\tilde{\C}$ decomposes as
$$\tilde{\C}=\K \cup \E_1 \cup \ldots \cup \E_r \text{(resp. $\K \cup \E_1 \cup \ldots \cup \E_s$)}$$
where $\K, \E_1, \ldots, \E_r$ (resp. $\K \cup E_1 \cup \ldots \cup \E_s$) are families of $A_k$-stable curves with generic fibers $K, E_1, \ldots, E_r$ (resp. $K \cup E_1 \cup \ldots \cup E_s$).
\end{lemma}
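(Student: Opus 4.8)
The plan is to reduce everything to the fiberwise behaviour of the attaching nodes, and to extract that behaviour from Lemma~\ref{L:LimitNode}. The one caveat is that Lemma~\ref{L:LimitNode} applies only when the normalization along the chosen sections splits the generic fiber into \emph{two} connected components, so I cannot apply it to all of the $\tau_i$ at once (that would produce the full disjoint decomposition directly). Instead I would treat each piece $E_i$ of the canonical decomposition in isolation: by Lemma~\ref{L:Disjoint} the tails/links share no components, so for a fixed $E_i$ I may normalize $\C$ along only the section $\tau_i$ attaching it (even case) or the pair of sections attaching its two endpoints (odd case). Since $E_i$ is a tail, respectively a link whose sole attaching points are its two endpoints, this normalization separates $E_i$ from the connected remainder $\overline{C_{\bar\eta}\setminus E_i}$, placing us squarely in the hypotheses of Lemma~\ref{L:LimitNode}. (The existence of the $\tau_i$ after a finite base change is routine, as the attaching nodes are cut out on the generic fiber and spread out over the singular locus.)

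First I would prove that every $\tau_i(0)$ is a node. In the even case each tail is attached by a single section, so Lemma~\ref{L:LimitNode}(1) applies with nothing to collide and yields that $\tau_i(0)$ is again an $A_1$-singularity. In the odd case a link is attached along two sections, and Lemma~\ref{L:LimitNode} allows the alternative that the two preimages on the link side collide into an $A_3$-singularity; I would exclude this exactly as in Lemma~\ref{L:Openness}(2). If the endpoints collided, the flat limit of $E_i$ would meet the rest of $C_0$ in a single point, and the limit analysis of Lemma~\ref{L:HmLimits} (which covers exactly the relevant low genus $m\le 2$) identifies this limit as containing a hyperelliptic $H$-curve, hence a destabilizing tail or chain of the special fiber $C_0$ — contradicting the $A_k$-stability of $C_0$. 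The same no-three-branches principle shows the limits are pairwise distinct: if $\tau_i(0)=\tau_j(0)$ for $i\neq j$, then the limits of $E_i$, of $E_j$, and of the complementary curve would all pass through this point, forcing at least three local branches and violating the $A_l$-singularity hypothesis on $C_0$.

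Once the $\tau_i(0)$ are distinct nodes, the normalization $\tilde\C\to\C$ along $\bigcup_i\tau_i$ is a simultaneous normalization: $\tilde\C$ is flat over $\Delta$ and $\tilde\C|_0$ is the normalization of $C_0$ at the nodes $\tau_i(0)$, each $\tau_i$ splitting into two sections $\alpha_i,\beta_i$. Because the generic fiber of $\tilde\C$ is the disjoint union $K\sqcup E_1\sqcup\cdots\sqcup E_r$ and each $E_i$ (and $K$) is connected with connected flat limit, $\tilde\C$ breaks up as the disjoint union $\K\cup\E_1\cup\cdots\cup\E_r$, with $\K$ carrying the original markings together with the $\beta_i$ and each $\E_i$ carrying the $\alpha_i$; their generic fibers are $K,E_1,\ldots$ as claimed (in the odd case $\K$ may itself be disconnected, its components being the cores, and the two-endpoint links are handled as above).

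It remains to see that each family consists of $A_k$-stable curves. On generic fibers this is built in: the core is $A_k$-stable by Definitions~\ref{canonical-decomposition-even} and~\ref{canonical-decomposition-odd}, and each tail/link is an $H_{m,1}$- or $H_{m,2}$-curve, which is $A_k$-stable because it has $\omega$ ample (Lemma~\ref{C:Akcurves}), only $A_l$-singularities with $l\le k$, and — being irreducible, respectively a minimal chain — no proper destabilizing subtail. On special fibers, $\omega$-ampleness descends from $C_0$ through the adjunction identity $\omega_{C_0}|_{\text{piece}}=\omega_{\text{piece}}(\text{attaching points})$, and the absence of destabilizing tails propagates downward: a destabilizing $H$-curve or $H$-chain inside a piece $\K|_0$ or $\E_i|_0$ would, after re-gluing along the nodes $\tau_i(0)$, give a destabilizing tail or chain of $C_0$, since a marked-point or nodal attachment in the piece is a nodal attachment in $C_0$. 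I expect the genuine difficulty to lie entirely in the node-preservation step in the odd case — the interplay of Lemma~\ref{L:LimitNode} and Lemma~\ref{L:HmLimits} certifying that a collision of link endpoints forces a destabilizing tail — while the simultaneous normalization, the decomposition, and the propagation of stability are essentially bookkeeping.
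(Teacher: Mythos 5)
Your skeleton matches the paper's proof: reduce everything to showing the $\tau_i(0)$ are nodes, apply Lemma~\ref{L:LimitNode} one tail or link at a time (so its two-component hypothesis holds), and in the odd case rule out the collision alternative; your distinctness observation and the stability bookkeeping at the end are fine (the paper treats those as immediate). The gap is in the one step you yourself flag as the crux: the collision exclusion. You invoke Lemma~\ref{L:HmLimits}, but that lemma applies only to families whose generic fiber is a \emph{smooth} $H$-curve, whereas the generic fiber of $\E_i$ here is an $H_{m,2}$-link, which in general has tacnodes (between bridges within a chain), nodes (between chains), and possibly singular bridges; moreover, for a link with two or more bridges the arithmetic genus is at least $2$, outside the range covered by Lemma~\ref{L:HmLimits}. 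So the ``limit analysis'' you lean on is simply not available in this setting.

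Even where the identification would hold (a single smooth bridge), the inference ``contains a hyperelliptic $H$-curve, hence a destabilizing tail or chain'' does not go through as stated. In the collision scenario, Lemma~\ref{L:LimitNode}(2) forces the two attaching nodes to limit to a \emph{single} $A_3$-singularity, so both endpoints of the limiting $H_{1,2}$-curve land at one $A_3$-point; by Definition~\ref{definition-stability} this is \emph{not} a destabilizing chain, since chains are destabilizing only when their endpoints are nodes, marked points, or $A_l$-singularities with $l \geq 2m+2 = 4$. The contradiction must come through the \emph{tail} route, which is exactly what the paper does: the collision makes the special fiber of $\E_i$ an arithmetic genus $m$ curve attached to the rest of $C_0$ at a single point, and since the only odd $k$ under consideration is $k=3$ (equivalently $m=1$), this is an elliptic tail, forbidden in an $A_3$-stable curve with \emph{any} form of attaching (a tail attached at $A_3$ is destabilizing because $3 \geq 2m+1$). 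This crucial use of $m=1$ never appears in your argument, and without it the step fails. For links with two or more bridges the paper argues differently again: the endpoints lie on non-adjacent components and cannot collide, or (for exactly two tacnodally glued bridges) a collision would have to occur at the limit of the internal $A_{2m+1}$-singularity, which is impossible since that limit remains an $A_{2m+1}$-singularity. That is the ``clear'' case of Lemma~\ref{L:Openness}(2) your citation gestures at, but your written argument substitutes the inapplicable Lemma~\ref{L:HmLimits} for it.
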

\begin{proof}
The only fact which needs to be proved is that the limits $\tau_i(0)$ are indeed nodes. In the case $k$ even, the fact that each limits $\tau_i(0)$ is a node is an immediate application of Lemma \ref{L:LimitNode}.
In the case $k$ odd, if $E_{r+i}$ is an $H_{m,2}$-link attached to $K$ at a single node at $\tau_{2r+i}$ picks out this attaching node in the general fiber, then the fact that $\tau_{2r+i}(0)$ is a node is again an immediate application of Lemma \ref{L:LimitNode}.

It only remains to show that $\tau_{2i-1}(0)$ and $\tau_{2i}(0)$ are nodes, in the case where $\tau_{2i-1}$ and $\tau_{2i}$ are the attaching sections of an $H_{m,2}$-link $E_i$ attached to $K$ at two nodes. Let $\tilde{\C} \rightarrow \C$ be the normalization of $\C$ along $\tau_{2i-1}$ and $\tau_{2i}$, let $\E$ be the connected component of $\tilde{\C}$ with $E_i$ in the general fiber and let $\alpha_{2i-1}$ and $\alpha_{2i}$ be the preimages of $\tau_{2i-1}$ and $\tau_{2i}$ on $\E$. According to Lemma \ref{L:LimitNode}, we only need to show that $\alpha_{2i-1}$ and $\alpha_{2i}$ do not collide in the special fiber. If $E_i$ consists of more than two $H_{m,2}$-bridges, then $\alpha_{2i-1}$ and $\alpha_{2i}$ lie on non-adjacent irreducible components of $\E$ and hence cannot possibly collide. If $E_i$ consists of two $H_{m,2}$-bridges meeting at an $A_{2m+1}$ singularity, then $\alpha_{2i-1}$ and $\alpha_{2i}$ can collide only if they collide with the limit of this $A_{2m+1}$ singularity; but this is impossible since the limit of the $A_{2m+1}$ singularity must be an $A_{2m+1}$ singularity. Finally, if $E_i$ consists of a single $H_{m,2}$-bridge and $\alpha_{2i-1}$ and $\alpha_{2i}$ collide, then the special fiber of $\E$ would be an arithmetic genus $m$ curve attached to the rest of the special fiber of $\C$ at a single point. Here, we invoke the fact that the only odd $k=2m+1$ under consideration is $k=3$ (equivalently, $m=1$) and an $A_3$-stable curve may not contain elliptic tails with any form of attaching. This completes the proof of the lemma.
\end{proof}

\subsection{Characterization of closed points of $\SM_{g}(A_{k})$}

\begin{defn} We say that an $A_k^-/A_k/A_k^+$-stable curve $(C, \pn)$ is \emph{closed} if $(C, \pn)$ is a closed point of $\bar{\cM}_{g,n}(A_k^-)/\bar{\cM}_{g,n}(A_k)/\bar{\cM}_{g,n}(A_k^+)$.
\end{defn}

\begin{definition}
We say that an $A_{k}$-stable curve $(C,\pn)$ with $k = 2m$ (resp. $k =2m+1$) is \emph{maximally degenerate} if the following conditions hold:
\begin{enumerate}
\item Every $A_{k}$-singularity of $C$ lies on a nodally-attached $H_{m,1}$-tail (resp. $H_{m,2}$-bridge).
\item Every nodally-attached $H_{m,1}$-tail (resp. $H_{m,2}$-bridge) of $C$ is monomial.
\item The core of $(C,\pn)$ is a closed $A_k^-$-stable curve.
\end{enumerate}
\end{definition}
\begin{remark}
\begin{enumerate}
\item[]
\item If $(C, \pn)$ satisfies the first two conditions, then the core of $C$ contains no $A_{k}$-singularities. Thus, the core of a maximally degenerate $A_{k}$-stable curve is $A_{k}^-$-stable, and it makes sense to require the third condition.
\item In a maximally degenerate $A_{k}$-stable curve $C$ ($k=2m+1$), condition (2) implies that two $H_{m,2}$-bridges can only meet in nodes. In particular, $C$ only admits $H_{m,2}$-chains of length one.
\end{enumerate}
\end{remark}

The goal of this section is to prove that an $A_k$-stable curve is closed iff it is maximally degenerate. One direction, namely that a closed $A_k$-stable curve must be maximally degenerate, is contained in the following lemma.

\begin{lemma}\label{L:AMD}
Every $A_{k}$-stable curve admits an isotrivial specialization to a maximally degenerate $A_{k}$-stable curve.
\end{lemma}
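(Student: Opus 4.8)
The plan is to assemble the required specialization as a composition of isotrivial specializations, organized around the canonical decomposition $C = K \cup E_1 \cup \cdots \cup E_r$ of Definition \ref{canonical-decomposition-even} (resp.\ Definition \ref{canonical-decomposition-odd}). Writing $k = 2m$ (resp.\ $k = 2m+1$), recall that maximal degeneracy demands three things: (i) every $A_k$-singularity lies on a nodally-attached $H_{m,1}$-tail (resp.\ $H_{m,2}$-bridge); (ii) every such tail or bridge is monomial; and (iii) the core is a closed $A_k^-$-stable curve. I would realize conditions (i)--(iii) by successive specializations and then concatenate them.

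First I would make monomial the tails and bridges already present. Each $E_i$ is an $H_{m,1}$- or $H_{m,2}$-curve, so Proposition \ref{prop-H-S-minimal} provides an isotrivial specialization of $E_i$ to its monomial model. Since each $E_i$ meets the rest of $C$ only at nodes, Lemma \ref{L:LimitCanDecomp} lets me glue these degenerations of the individual tails to the trivial family of the complement, producing an isotrivial specialization of $C$ that fixes everything except the chosen tail. Iterating secures condition (ii) for the tails visible in the initial decomposition.

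The heart of the argument --- and the step I expect to be the main obstacle --- is condition (i). The core $K$ is only required to contain no \emph{destabilizing} $H_{m,1}$-tails/$H_{m,2}$-chains, so it may still carry a ``free'' $A_k$-singularity $\xi$ that lies on no tail at all (for instance an irreducible curve of genus at least $3$ with a single ramphoid cusp when $k=4$). For each such $\xi$ I would construct an explicit one-parameter isotrivial specialization replacing $\xi$ by a nodally-attached monomial $H_{m,1}$-tail (resp.\ $H_{m,2}$-bridge) that carries the singularity, the partial normalization of $C$ at $\xi$ becoming the new attaching locus; a genus count ($\delta(\xi)=m$) confirms the arithmetic genus is preserved. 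This is the higher-singularity analogue of the cuspidal degeneration of the introduction (a trivial family modified on its central fibre by weighted blow-ups and blow-downs), and it realizes precisely the non-separated identification between a free $A_k$-singularity and a nodally-attached monomial tail; the relevant local crimping data are the $S_{m,1}/S_{m,2}$-curves of Section \ref{section-sh}, whose monomial limits are governed by Corollary \ref{cor-H-weakly-proper}. Making the construction uniform and checking it is a genuine isotrivial specialization inside $\bar{\cM}_{g,n}(A_k)$ is the technical crux, with the odd case $k=3$ (elliptic bridges, possibly in chains of length greater than one) the most delicate.

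Once (i) and (ii) hold, every $A_k$-singularity has been moved off the core, so the new core $K'$ has only $A_l$-singularities with $l<k$ and inherits the absence of destabilizing lower tails; hence $K'$ is $A_k^-$-stable. As $\bar{\cM}_{g,n}(A_k^-)$ is of finite type over $\CC$ (Corollary \ref{C:Openness}), its topological space is Noetherian, so the closure of $K'$ contains a closed point and $K'$ admits an isotrivial specialization to a closed point of $\bar{\cM}_{g,n}(A_k^-)$; this stays within $A_k^-$-stable curves and so introduces no new $A_k$-singularity, and tracking the attaching sections through Lemma \ref{L:LimitNode} (to keep them disjoint and in the smooth locus) lets me lift it to a specialization of the whole curve. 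The final step is bookkeeping: I would take the canonical decomposition of the resulting curve, re-crimp to monomial via Proposition \ref{prop-H-S-minimal} any $H_{m,1}$-tail newly exposed by the closed $A_k^-$-core, and verify that what remains is a closed $A_k^-$-stable core, so that conditions (i)--(iii) all hold. This verification leans on the characterization of closed $A_k^-$-points (the inductive hypothesis), and together with the sprouting construction I expect it to require the case analysis by topological type used in Lemmas \ref{L:HmLimits}--\ref{L:LimitCanDecomp}.
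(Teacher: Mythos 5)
Your proposal is correct and takes essentially the same route as the paper's proof: sprout a nodally-attached tail/bridge at each $A_k$-singularity by modifying the trivial family $C \times \Delta$ (normalize along the singular sections, blow up, re-crimp to an $A_k$-singularity, blow down semistable $\P^1$'s), monomialize all tails/bridges via Proposition \ref{prop-H-S-minimal}, and finally specialize the core to a closed point of $\SM_{g,n}(A_k^-)$. The only differences are cosmetic: the paper runs the steps in the order sprout--monomialize--specialize (sprouting at \emph{all} $A_k$-singularities rather than only the ``free'' ones, which slightly simplifies the bookkeeping), whereas you monomialize first and add a final re-crimping pass, but the constructions and supporting results invoked are the same.
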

\begin{proof}
Given any $A_k$-stable curve $(C, \pn)$, we must construct an isotrivial specialization to an $A_k$-stable curve satisfying (1), (2), and (3) in three steps. First, let us construct an isotrivial specialization $C \rightsquigarrow C'$ where $C'$ is an $A_k$-stable curve satisfying (1). Let $q_1, \ldots, q_r \in C$ be the $A_{k}$-singularities of $C$. Consider the trivial family $C \times \Delta$ and let $\C \rightarrow C \times \Delta$ denote the normalization of $C \times \Delta$ along $\cup_{i=1}^{r}(q_i \times \Delta)$ and let $\{\sigma_{i}\}_{i=1}^{r}$ (resp., $\{\sigma_{i}, \sigma_i'\}_{i=1}^{r}$) denote the sections of $\C \rightarrow \Delta$ lying above the $A_{k}$-singular locus in the case $k=2m$ is even (resp., $k=2m+1$ odd). Now let $\tilde{\C} \rightarrow \C$ denote the blow-up of $\C$ at the smooth points $\{\sigma_i(0)\}_{i=1}^{r}$ ($\{\sigma_{i}(0), \sigma_i'(0)\}_{i=1}^{r}$), and let $\tilde{\sigma}_{i}$  (resp., $\{\tilde{\sigma}_{i}, \tilde{\sigma}_i'\}_{i=1}^{r}$) denote the strict transforms of the sections. Note that the special fiber $\tilde{C}$ now decomposes as $C \cup Z_{1} \cup \ldots \cup Z_{r}$ (resp., $C \cup Z_{1}\cup Z_1' \cup \ldots \cup Z_{r} \cup Z_r'$), where each $Z_i$ is a smooth $\P^{1}$ meeting $C$ in a single node, with $\tilde{\sigma}_{i}(0) \in Z_{i}$. Now let $\tilde{\C} \rightarrow \C^{'}$ denote the map obtained by crimping the sections $\{\tilde{\sigma_i}\}_{i=1}^{n}$ (resp., $\{\tilde{\sigma}_{i}, \tilde{\sigma}_i'\}_{i=1}^{r}$) back to $A_{k}$-singularities. (For the fact that the limit singularity is again an $A_k$-singularity, cf. the proof of Proposition \ref{prop-S-weakly-proper}.) Now the family $\C^{'}$ is an isotrivial specialization $C \rightsquigarrow C^{'}$ in which the curve $C^{'}$ has sprouted $H_{m,1}$-curves (resp. $H_{m,2}$-bridges) at $q_1, \ldots, q_r$. The only problem is that this procedure may have introduced semistable $\P^{1}$'s into the special fiber, i.e. smooth $\P^{1}$'s meeting the rest of the fiber in two nodes. Since, however, one or both of these two attaching nodes must smooth in the general fiber, we can blow-down all these semistable $\P^{1}$'s to obtain a new special fiber which is $A_k$-stable and satisfies (1).

Next, we will construct an isotrivial specialization $C' \rightsquigarrow C''$ where $C''$ is an $A_k$-stable curve and satisfies (1) and (2). For this, it suffices to isotrivially specialize all the $H_{m,1}$-tails (resp., $H_{m,2}$-bridges) of $C^{'}$ to monomial $H_{m,1}$-tails (resp., $H_{m,2}$-bridges). More precisely, if
$$
C^{'}=K^{'} \cup E_1 \cup \ldots \cup E_{r}
$$ 
is the core decomposition of $C^{'}$, then by Proposition \ref{prop-H-S-minimal}, there exist isotrivial families $\E_i \rightarrow \Delta$ with generic fiber isomorphic to $E_i$ and special fiber isomorphic to the monomial $H_{m,1}$-curve (resp. $H_{m,2}$-curve). Gluing these to the trivial family $K \times \Delta$ gives the desired isotrivial specialization $C' \rightsquigarrow C''$.

Finally, we  will construct an isotrivial specialization $C'' \rightsquigarrow C'''$, where $C'''$ satisfies (1), (2), and (3). Indeed, if the core $K''$ of $C''$ is not closed, then there exists an isotrivial specialization $\K \rightarrow \Delta$ with general fiber $K'$ and special fiber a closed $A_k^-$-stable curve. Gluing this specialization to trivial families of monomial $H_{m,1}$-curves (resp., $H_{m,2}$-curves) gives the desired isotrivial specialization $C'' \rightsquigarrow C'''$.
\end{proof}

Before we can show the converse direction, namely that any maximally degenerate $A_k$-stable curve is closed, we will need a few lemmas.

\begin{lemma}\label{L:AkMinusClosed}
Let $(C,\pn)$ be an $A_k^-$-stable curve, and let $(K,\pn,\qm)$ be the core of $C$, which is defined by considering $(C, \pn)$ as an $A_k$-stable curve. Then $(C,\pn)$ is a closed $A_k^-$-stable curve iff $(K, \pn, \qm)$ is a closed $A_k^-$-stable curve.
\end{lemma}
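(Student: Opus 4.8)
The plan is to relate the isotrivial specializations of $(C,\pn)$ to those of its core through the canonical decomposition $C = K \cup E_1 \cup \cdots \cup E_r$ of Definition~\ref{canonical-decomposition-even}; I will treat the even case $k=2m$ in detail, the odd case being identical after replacing $H_{m,1}$-tails by $H_{m,2}$-links and $\bar{\cH}_{m,1}$ by $\bar{\cH}_{m,2}$. Here each $E_i$ is an $H_{m,1}$-curve meeting $K$ in a single node, and the attaching nodes are precisely the extra marked points $q_1,\dots,q_r$ of the core $(K,\pn,\qr)$. Since $(C,\pn)$ is $A_k^-$-stable, every singularity of $C$, and hence of each $E_i$, is an $A_l$-singularity with $l<2m$.

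For the implication ``core closed $\Rightarrow$ $C$ closed'', I would take an arbitrary isotrivial specialization $C \rightsquigarrow C'$ realized by a family $\C \to \Delta$ that is trivial over $\Delta^*$ and has $C,C'$ both $A_k^-$-stable. Applying Lemma~\ref{L:LimitCanDecomp} (after a finite base change) writes the relevant normalization as $\tilde{\C} = \K \cup \E_1 \cup \cdots \cup \E_r$, where $\K$ is an isotrivial specialization of the core $K$ and each $\E_i$ is an isotrivial specialization of the tail $E_i$ (all of them $A_k^-$-stable, since their special fibers are $A_{<2m}$-singular subcurves of $C'$). As $K$ is a closed point of $\bar{\cM}(A_k^-)$, the family $\K$ is trivial, so $\K|_0 \cong K$. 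The decisive point is that each $\E_i$ is trivial as well; granting this, reassembling the special fibers along the limiting attaching nodes $\tau_i(0)$ (which remain nodes by Lemma~\ref{L:LimitCanDecomp}) yields $C' \cong K \cup E_1 \cup \cdots \cup E_r = C$, so $C$ is closed.

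The triviality of the $\E_i$, which I call \emph{tail rigidity}, is the crux of the argument, and I would extract it from the quotient presentation of Proposition~\ref{h-quotient-stack}. First, Lemma~\ref{L:HmLimits} (which is stated exactly for the relevant values $m\in\{1,2\}$), together with the fact that $A_k^-$-stability of $C'$ forbids the lower $H_{m',1}$-tails and destabilizing $H_{m',2}$-bridges that could otherwise appear, shows that $\E_i|_0$ is again an $H_{m,1}$-curve; thus $\E_i$ defines a map $\Delta \to \bar{\cH}_{m,1} = [\AA^{2m}/\GG_m]$ that is constant over $\Delta^*$. Now an $H_{m,1}$-curve with only $A_l$-singularities for $l<2m$ corresponds to a \emph{nonzero} point of $\AA^{2m}$, the origin being the monomial curve with its excluded $A_{2m}$-singularity. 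Since the $\GG_m$-weights $-4,-6,\dots,-(4m+2)$ all have the same sign, every orbit in $\AA^{2m}\setminus\{0\}$ is closed, its only boundary point in $\AA^{2m}$ being the origin; hence every such $H_{m,1}$-curve is a closed point of the open substack of $\bar{\cH}_{m,1}$ parameterizing $A_{<2m}$-curves and admits no nontrivial isotrivial specialization within it. Therefore $\E_i|_0 \cong E_i$. The same mechanism governs the odd case, where the excluded singularity is $A_{2m+1}$ and one invokes the all-negative weights of $\bar{\cH}_{m,2}$ (and $\bar{\cS}_{m,2}$). I expect this tail-rigidity step to be the main obstacle, since it is precisely the statement that any attempt to degenerate a tail forces the specialization out of $A_k^-$-stability.

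For the converse ``$C$ closed $\Rightarrow$ core closed'', I would argue contrapositively. Given a nontrivial isotrivial specialization $K \rightsquigarrow K'$ in $\bar{\cM}(A_k^-)$, realized by $\K \to \Delta$ with sections $\bar q_j$ extending the marked points $q_j$, I glue the trivial families $E_i \times \Delta$ along the $\bar q_{j(i)}$ to obtain a family $\C \to \Delta$, trivial over $\Delta^*$, whose central fiber is $C' = K' \cup E_1 \cup \cdots \cup E_r$; since the tails are attached nodally at smooth points of the $A_k^-$-stable curve $K'$, the curve $C'$ is again $A_k^-$-stable, giving $C \rightsquigarrow C'$. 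To see this specialization is nontrivial, I would use that the number of nodally attached $H_{m,1}$-tails is an invariant of the canonical decomposition: $C$ has exactly $r$ of them, while $C'$ has the $r$ tails $E_i$ together with any that $K'$ may have acquired. An isomorphism $C \cong C'$ would force these tail-multisets to agree, so $K'$ sprouts none and $\mathrm{core}(C') = K'$; comparing cores then gives $K \cong K'$, contradicting $K \not\cong K'$. Hence $C$ is not closed, which completes the proof.
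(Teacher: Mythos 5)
Your proof is correct and follows essentially the same route as the paper's: decompose an arbitrary isotrivial specialization using Lemma~\ref{L:LimitCanDecomp} and reduce everything to the rigidity of the $H_{m,1}$-tails (resp.\ $H_{m,2}$-links), which the paper likewise deduces from the quotient presentation $\bar{\cH}_{m,1}\cong[\AA^{2m}/\GG_m]$ --- every non-monomial point is closed in $\bar{\cH}_{m,1}$ minus the monomial point, and the monomial curve is excluded by $A_k^-$-stability. The paper compresses both implications into this single rigidity statement ("we simply need to show that each $\E_i$ is trivial"), so your explicit treatment of the converse (gluing a nontrivial core specialization to trivial tail families and comparing canonical decompositions) just spells out what the paper leaves implicit.
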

\begin{proof}
We shall prove the lemma in the case $k=2m$ is even (the proof in the $k$ odd case is similar). Let $(\C \rightarrow \Delta, \sigman)$ be any isotrivial specialization in $\SM_{g}(A_k^-)$ and apply Lemma \ref{L:LimitCanDecomp} to decompose the normalization of $\C$ along the attaching nodes of the canonical decomposition of the general fiber:
$$
\tilde{\C}=\K \cup \E_1 \cup \ldots \E_r,
$$
where $\K$ is the connected component of $\tilde{\C}$ containing the core of the generic fiber and $\E_1, \ldots, \E_r$ comprise the $H_{m,1}$-tails of the general fiber. To prove the lemma, we simply need to show that each of the isotrivial specializations $\E_i$ is trivial. Equivalently, that any $H_{m,1}$-curve is a closed point of $\SM_{m,1}(A_k^-)$ but this is clear as any $H_{m,1}$-curve is closed in $\bar{\cH}_{m,1}$ minus the unique point corresponding to a monomial $H_{m,1}$-curve. 
\end{proof}

\begin{lemma}\label{L:AlmostClosed}
Suppose $(C, \pn)$ is closed point in $\SM_{g,n}(A_k^-)$ and that $(C, \pn)$ has no nodally-attached $H_{m,1}$-tail/$H_{m,2}$-bridge ($k=2m/2m+1$). Then $(C, \pn)$ is closed point in $\SM_{g,n}(A_k)$.
\end{lemma}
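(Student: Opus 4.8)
The plan is to show directly that $(C,\pn)$ admits no nontrivial isotrivial specialization inside $\SM_{g,n}(A_k)$, which by the discussion in the introduction is equivalent to $(C,\pn)$ being a closed point there. So I would start with an arbitrary family $(\C\to\Delta,\sigman)$ realizing an isotrivial specialization $(C,\pn)\rightsquigarrow(C',\pn')$ in $\SM_{g,n}(A_k)$, with generic fibre $C$ and special fibre $C'$, and aim to prove $(C',\pn')\cong(C,\pn)$. Since $A_k$-stability differs from $A_k^-$-stability precisely in that it allows $A_k$-singularities (Definition \ref{definition-stability}), the natural dichotomy is whether or not $C'$ carries an $A_k$-singularity.

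If $C'$ has no $A_k$-singularity then $C'$ is itself $A_k^-$-stable, and so both geometric fibres of $\C\to\Delta$ are $A_k^-$-stable of genus $g$. Hence $\C\to\Delta$ is already a family of $A_k^-$-stable curves and realizes the isotrivial specialization $(C,\pn)\rightsquigarrow(C',\pn')$ inside $\SM_{g,n}(A_k^-)$. As $(C,\pn)$ is by hypothesis a closed point of $\SM_{g,n}(A_k^-)$, this specialization must be trivial, giving $(C',\pn')\cong(C,\pn)$.

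The remaining case, in which $C'$ carries at least one $A_k$-singularity, is where I would produce a contradiction, and it is the heart of the argument. The idea is to reverse the sprouting-and-crimping construction in the proof of Lemma \ref{L:AMD}: since the generic fibre $C$ has no $A_k$-singularity, every $A_k$-singularity of $C'$ is a genuine degeneration supported on the central fibre, and the stable reduction of an $A_{2m}$- (resp. $A_{2m+1}$-) singularity replaces it, after a harmless finite base change, by a nodally-attached $H_{m,1}$-tail (resp. $H_{m,2}$-bridge) --- exactly the hyperelliptic stable tails/bridges encoded by $\bar{\cH}_{m,1}$ (resp. $\bar{\cH}_{m,2}$) in Section \ref{section-sh}. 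Performing this at every $A_k$-singularity yields a modification $\hat{\C}\to\Delta$ which is an isomorphism over $\Delta^*$, still has generic fibre $C$, and has central fibre $\hat{C}'$ obtained from $C'$ by exchanging each $A_k$-singularity for such a tail/bridge; a $\delta$-invariant bookkeeping shows that this operation preserves $p_a=g$, and the resulting $\hat{C}'$ has only $A_l$-singularities with $l<k$ and no destabilizing tails of genus $<m$, hence is $A_k^-$-stable. Thus $\hat{\C}\to\Delta$ is a family of $A_k^-$-stable curves realizing $(C,\pn)\rightsquigarrow(\hat{C}',\pn')$, so closedness of $(C,\pn)$ in $\SM_{g,n}(A_k^-)$ forces $(\hat{C}',\pn')\cong(C,\pn)$. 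But $\hat{C}'$ contains, by construction, a nodally-attached $H_{m,1}$-tail (resp. $H_{m,2}$-bridge), and therefore so does $C$ --- contradicting the standing hypothesis that $(C,\pn)$ has none. Hence this case never occurs, and $(C,\pn)$ is closed in $\SM_{g,n}(A_k)$.

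The main obstacle is the resolution step: given a one-parameter family whose generic fibre is $A_k^-$-stable but whose special fibre acquires an $A_k$-singularity, one must exhibit a modification over $\Delta$, an isomorphism over $\Delta^*$, whose central fibre is $A_k^-$-stable and replaces each $A_k$-singularity by a nodally-attached $H_{m,1}$-tail/$H_{m,2}$-bridge. This is essentially the standard stable reduction of an $A_n$-singularity, but the points requiring care are that the modified central fibre genuinely satisfies $A_k^-$-stability --- in particular that $\omega_{\hat{C}'}(\textstyle\sum_i p_i)$ remains ample and that no destabilizing tail of genus $<m$ is created --- and that the base change does not disturb the isomorphism class of the generic fibre, both of which are needed to place $\hat{\C}$ inside $\SM_{g,n}(A_k^-)$.
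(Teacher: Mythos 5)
Your overall strategy is the same as the paper's own proof: reduce to whether the special fibre acquires an $A_k$-singularity, and if it does, trade each $A_k$-singularity for a nodally-attached $H_{m,1}$-tail/$H_{m,2}$-bridge so as to produce an isotrivial specialization inside $\SM_{g,n}(A_k^-)$, whose triviality (forced by closedness of $(C,\pn)$ there) contradicts the no-tails hypothesis. Your observation that closedness of $(C,\pn)$ alone forces triviality is even a mild streamlining of the paper, which additionally appeals to weak properness of $\SM_{g,n}(A_k^-)$ via the induction in Theorem \ref{main-theorem}. However, the step you defer as ``the main obstacle'' is the entire mathematical content of the lemma, and there is a genuine gap in how you set it up. You work with an \emph{arbitrary} isotrivial specialization, so you never get to use the fact that the special fibre may be taken to be a \emph{closed} point of $\SM_{g,n}(A_k)$ (which suffices, since any point of $\overline{\{(C,\pn)\}}$ specializes further to a closed point). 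The paper's proof leans on this through Lemma \ref{L:AMD}: a closed point is maximally degenerate, so every $A_k$-singularity of the special fibre lies on a \emph{monomial} $H_{m,1}$-tail (resp.\ $H_{m,2}$-bridge) nodally attached to a closed $A_k^-$-stable core. It is exactly this structure that makes the surgery controllable: Fedorchuk's weighted blow-up (Lemma \ref{L:BlowUp}) is applied over the singular locus, and the $A_k^-$-stability and closedness of the output is read off from the core decomposition via Lemma \ref{L:AkMinusClosed}. With an arbitrary $A_k$-stable special fibre you have no control on where the $A_k$-singularities sit and no stated tool for verifying stability of the modified fibre; the ``$\delta$-invariant bookkeeping'' does not substitute for this.

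Moreover, the construction as you describe it would actually fail at the stability check. The fibre $\hat C'$ obtained from $C'$ ``by exchanging each $A_k$-singularity for such a tail/bridge'' is \emph{not} $A_k^-$-stable in precisely the relevant situation: if an $A_k$-singularity of $C'$ lies on a nodally-attached rational $H$-curve --- which $A_k$-stability permits, and which is \emph{forced} when $C'$ is closed, since the singularity then sits on a monomial tail/bridge --- then after the blow-up the strict transform of that component is a smooth $\P^1$ meeting the rest of the fibre in exactly two nodes, so $\omega_{\hat C'}(\sum_i p_i)$ has degree zero on it and is not ample. This is why the paper's proof has the extra step of blowing down these semistable $\P^1$'s (one per tail for $k$ even, two per bridge for $k$ odd); only after the contraction is the central fibre $A_k^-$-stable, and it is then not ``$C'$ with singularities exchanged for tails'' but ``$C'$ with each monomial $H$-curve replaced by a stable $H$-tail/bridge.'' The contradiction survives this correction, but as written your $\hat C'$ does not lie in $\SM_{g,n}(A_k^-)$, so the appeal to closedness of $(C,\pn)$ in $\SM_{g,n}(A_k^-)$ cannot be made.
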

\begin{proof}
Let $(\C \rightarrow \Delta, \sigman)$ be any isotrivial specialization in $\SM_{g,n}(A_k)$ with geometric generic fiber isomorphic to $(C,\pn)$ and special fiber closed in $\SM_{g,n}(A_k)$. We must show that this specialization is trivial. Since we are assuming that $(C, \pn)$ is closed in $\SM_{g,n}(A_k^-)$, it is sufficient to prove that the special fiber lies in $\SM_{g,n}(A_k^-)$, i.e. has no $A_k$-singularities. Suppose, on the contrary, that the special fiber contains an $A_k$-singularity; we will obtain a contradiction by showing that the generic fiber would then necessarily contain a nodally-attached $H_{m,1}$-tail/$H_{m,2}$-bridge ($k=2m/2m+1$).

By induction and Theorem \ref{main-theorem}, we may assume that $\SM_{g,n}(A_k^-)$ is weakly proper, so there exists a family $(\C' \rightarrow \Delta, \sigman)$ such that $(\C')^* \simeq \C^*$ and with central fiber $C'$ a closed point of $\SM_{g,n}(A_k^-)$. By assumption, the generic fiber of $\C'$ is a closed point of $\SM_{g,n}(A_k^-)$, so the isotrivial specialization $\C'$ must be trivial, i.e. the generic fiber of $\C$ is isomorphic to the special fiber $C'$. We will compute the $A_k^-$-stable limit $C'$ explicitly, and show that if $C$ contains $A_k$-singularities, then $C'$ necessarily contains $H_{m,1}$-tails/$H_{m,2}$-bridges ($k=2m/2m+1$). This will yield the desired contradiction.

By Lemma \ref{L:AMD}, the closed points of $\SM_{g,n}(A_k)$ are maximally degenerate. Thus, all $A_k$-singularities of $C$, say $q_1, \ldots, q_r$, lie on monomial $H_{m,1}$-curves (resp. $H_{m,2}$-curves), there are no $H_{m,2}$-chains, and the core of $C$ is maximally degenerate as an $A_k^-$-stable curve. We claim that the $A_k^-$-stable limit is then obtained in two steps as follows. By Lemma \ref{L:BlowUp} below, there exists (after a suitable base-change) a single weighted blow-up $\phi: \tilde{\C} \rightarrow \C$ centered over $q_1,\ldots, q_r$ such that $\phi^{-1}(q_i)$ is an $H_{m,1}$-curve (resp. $H_{m,2}$-curve). Note that $\phi^{-1}(q_i)$ meets the special fiber $\tilde{C}$ in one (resp. two) semistable $\P^1$'s in the case $k$ even (resp. odd). Let $\tilde{\C} \rightarrow \C'$ be the blow-down of these semistable $\P^1$'s. By Lemma \ref{L:AkMinusClosed}, $C'$ is a maximally degenerate $A_k^-$-stable curve. Since $C'$ was obtained by replacing $A_k$-singularities of $C$ by nodally attached $H_{m,1}/H_{m,2}$-curves, we are done. 
\end{proof}

\begin{lemma}(\cite[Proposition 7.2]{fedorchuk})\label{L:BlowUp}
Let $\X \rightarrow T$ be a miniversal deformation of an $A_{k}$-singularity. Then there exists an alteration $f: T'\rightarrow T$ and a weighted blow-up $\Y \rightarrow \X\times_{T}T'$ of the $A_{k}$-locus (i.e. the locus of $A_k$-singularities of the fibers of $\X \times_T T' \rightarrow T'$) such that
\begin{enumerate}
\item[(1)] $\Y \rightarrow T'$ is a flat and proper family of curves with at-worst $A_{k-1}$-singularities. \item[(2)] $\Y \vert_{f^{-1}(0)}\rightarrow f^{-1}(0)$ is a family of $H_{m,1}$-tails if $k=2m$ is even (resp. $H_{m,2}$-bridges if $k=2m+1$ is odd). 
\end{enumerate}
\end{lemma}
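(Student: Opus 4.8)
The plan is to reduce to the explicit miniversal deformation of an $A_k$-singularity and then realise $\Y$ as a weighted blow-up whose exceptional fibre can be computed in affine charts. Recall that the miniversal deformation may be written as
\[
\X = \{\, y^2 = x^{k+1} + a_{k-1}x^{k-1}+\cdots + a_1 x + a_0\,\}\;\longrightarrow\; T = \Spec \CC[a_0,\dots,a_{k-1}],
\]
with the $A_k$-locus equal to the origin $0\in T$, over which the fibre acquires the singularity $y^2 = x^{k+1}$ at $p$. The decisive observation is that this family is homogeneous for the grading $\mathrm{wt}(x)=2,\ \mathrm{wt}(y)=k+1,\ \mathrm{wt}(a_i)=2(k+1)-2i$, equivalently for the reduced coprime weights $(\mathrm{wt}(x),\mathrm{wt}(y))=(2,2m+1)$ when $k=2m$ and $(1,m+1)$ when $k=2m+1$; this is exactly the $\GG_m$-grading appearing in Propositions~\ref{s-quotient-stack} and~\ref{h-quotient-stack}.

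First I would construct the alteration $f\co T' \to T$ as (a regular cover dominating) the weighted blow-up of $T$ at the origin with weights $\mathrm{wt}(a_i)$, so that $f^{-1}(0)$ parametrises the degeneration directions. Over a generic such direction this amounts, after a ramified base change $\Delta = \Spec \CC[[t]]\to T$, $a_i\mapsto t^{\,\mathrm{wt}(a_i)}\alpha_i$, to the surface $S=\{y^2 = x^{k+1}+\sum_i \alpha_i t^{\mathrm{wt}(a_i)} x^i\}\to \Delta$. I would then take $\Y$ to be the weighted blow-up of the $A_k$-section with weights $(\mathrm{wt}(x),\mathrm{wt}(y),1)$ on $(x,y,t)$. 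In the $t$-chart $x=t^{\mathrm{wt}(x)}\bar x,\ y=t^{\mathrm{wt}(y)}\bar y$ the equation becomes, after dividing by $t^{2(k+1)}$,
\[
\bar y^2 = \bar x^{k+1}+\sum_{i=0}^{k-1}\alpha_i\,\bar x^i,
\]
so the exceptional fibre $\{t=0\}$ is the affine double cover of the $\bar x$-line branched along this degree-$(k+1)$ polynomial; patching with the remaining charts compactifies it. The monodromy/ramification of the weighted blow-up is precisely what the base change $T'\to T$ is needed to trivialise, which is why $f$ must be an honest alteration rather than a blow-up.

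It remains to read off the two conclusions. Since the blow-up is an isomorphism away from the central fibre, the fibres over $T'\setminus f^{-1}(0)$ coincide with the (at worst $A_{k-1}$) smoothings already present in $\X$, and a chart computation bounds the singularities of the exceptional curve by $A_{k-1}$ (the worst case being all but one branch point colliding, giving $\bar y^2 = \bar x^{2m}(\bar x-c)$, an $A_{2m-1}$-point); this gives (1). For (2), the projective completion of $\bar y^2 = \bar x^{k+1}+\cdots$ is a reduced curve of arithmetic genus $m$ carrying a degree-two map to $\P^1$: when $k=2m$ the degree $2m+1$ is odd, so there is a single Weierstrass point over $\infty$ and the exceptional curve is an $H_{m,1}$-curve; when $k=2m+1$ the degree $2m+2$ is even, giving two points over $\infty$ and an $H_{m,2}$-curve. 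Varying the direction over $f^{-1}(0)$ exhibits these uniformly as a family of $H_{m,1}$-tails (resp.\ $H_{m,2}$-bridges), with attaching along the point(s) over $\infty$.

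For flatness I would note that in the one-parameter case $\Y\to\Delta$ is automatically flat, being a dominant morphism from a reduced surface to a regular curve; over the higher-dimensional $T'$ flatness follows from miracle flatness once $\Y$ is checked to be Cohen--Macaulay with equidimensional fibres, and properness of the exceptional part is immediate from the projectivity of the blow-up. The main obstacle is the integral bookkeeping of the weighted blow-up: verifying that with the reduced coprime weights the total space carries only the mild cyclic quotient singularities that leave the fibres undisturbed, that the central fibre is \emph{reduced} (this is what forces the ramified alteration), and that the exceptional curve glues across charts to exactly the double cover identified above with the correct behaviour at $\infty$. Everything else is the homogeneity computation of the second paragraph, which is why passing to the explicit $\GG_m$-graded model at the outset is the crucial simplification.
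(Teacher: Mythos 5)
The paper itself does not prove this lemma: it is imported wholesale from Fedorchuk as \cite[Prop.\ 7.2]{fedorchuk}, so there is no in-paper argument to compare yours against. Your proposal reconstructs what is essentially the cited proof: write the miniversal deformation explicitly as $y^2 = x^{k+1}+a_{k-1}x^{k-1}+\cdots+a_0$, exploit its quasi-homogeneity with $\mathrm{wt}(x)=2$, $\mathrm{wt}(y)=k+1$, $\mathrm{wt}(a_i)=2(k+1)-2i$, pass to an alteration so that the $a_i$ vanish to orders divisible by their weights along the exceptional locus, perform the weighted blow-up, and read off the exceptional curve $\bar y^2 = \bar x^{k+1}+\sum_i \alpha_i \bar x^i$ in the weight-one chart. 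The identification of its compactification as an $H_{m,1}$-curve (odd degree $2m+1$, one point over $\infty$) resp.\ $H_{m,2}$-curve (even degree $2m+2$, two points over $\infty$) is correct, as is the $A_{k-1}$ bound on fiber singularities. In fact you could shortcut your deferred verifications by invoking the paper's own Proposition~\ref{h-quotient-stack}: the exceptional family you produce is precisely the universal family over $\AA^{2m}$ (resp.\ $\AA^{2m+1}$) used there to present $\bar{\cH}_{m,1}$ (resp.\ $\bar{\cH}_{m,2}$) as a quotient stack, which shows conclusion (2) holds at \emph{every} point of $f^{-1}(0)$, not merely along generic arcs --- this is the main gap in your write-up, since the lemma is a statement about the whole family over $T'$ and your computation is only carried out along generic one-parameter directions before being asserted ``uniformly.''

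Two smaller corrections. First, your worst-case example $\bar y^2=\bar x^{2m}(\bar x - c)$ does not lie in the family: since there is no $\bar x^{2m}$-term, the roots sum to zero, so a $2m$-fold root at the origin would force all $\alpha_i=0$, which is excluded on the exceptional divisor. The correct worst case is $\bar y^2=(\bar x-\lambda)^{2m}(\bar x+2m\lambda)$ with $\lambda\neq 0$; this is still an $A_{2m-1}$-point, so your conclusion stands. Second, flatness is most cleanly seen not by miracle flatness but by observing that the exceptional family is a family of hypersurfaces (in the relative surface $S_{m+1}$ of the proof of Lemma~\ref{splitting}) over a reduced irreducible base, no member of which is contained in a fiber of the ambient projection; together with the fact that the blow-up is an isomorphism away from $f^{-1}(0)$, this gives (1) once the attaching points are checked to be nodes, a chart computation you correctly flag but do not carry out.
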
   

\begin{prop}[Closed Points of $\SM_{g,n}(A_k)$]\label{P:ClosedPoints}
An $A_k$-stable curve $(C,\pn)$ is closed if and only if $(C, \pn)$ is maximally degenerate.
\end{prop}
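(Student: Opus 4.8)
The plan is to prove the two implications separately, with essentially all the content residing in the converse. The forward implication (closed $\Rightarrow$ maximally degenerate) is immediate from Lemma \ref{L:AMD}: that lemma produces, for any $A_k$-stable $(C,\pn)$, an isotrivial specialization $(C,\pn)\rightsquigarrow (C',\pn)$ with $(C',\pn)$ maximally degenerate, and if $(C,\pn)$ is closed then it admits no nontrivial isotrivial specialization, forcing $(C',\pn)\cong(C,\pn)$ and hence that $(C,\pn)$ is itself maximally degenerate.

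For the converse I would take $(C,\pn)$ maximally degenerate with canonical decomposition $C = K\cup E_1\cup\cdots\cup E_r$ (core $K$, monomial tails resp. bridges $E_i$) and verify closedness directly. Given an arbitrary isotrivial specialization $(\C\to\Delta,\sigman)$ with geometric generic fiber $(C,\pn)$ and closed special fiber, the goal is to show it is trivial. First I would apply Lemma \ref{L:LimitCanDecomp}: the attaching nodes of the decomposition of the generic fiber extend to sections $\tau_i$ with nodal limits, and the pointed normalization of $\C$ along the $\tau_i$ splits as $\tilde{\C} = \K\cup\E_1\cup\cdots\cup\E_r$, a disjoint union of families of $A_k$-stable curves with generic fibers $K, E_1,\ldots,E_r$. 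It then suffices to show each constituent family is a trivial isotrivial specialization, since reassembling the special fibers along the attaching limits $\tau_i(0)$ recovers $C$.

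For the core family $\K$ the point is that $K$ is already closed as a point of $\SM(A_k)$. Indeed, since $(C,\pn)$ is maximally degenerate its core carries no $A_k$-singularities and is therefore $A_k^-$-stable; by definition of the core it contains no destabilizing — equivalently, since it has only $A_l$-singularities with $l<k$, no nodally-attached — $H_{m,1}$-tails (resp. $H_{m,2}$-bridges); and condition (3) of maximal degeneracy says $K$ is closed in $\SM(A_k^-)$. Lemma \ref{L:AlmostClosed} then upgrades this to closedness in $\SM(A_k)$, so $\K$ is trivial. For each tail family $\E_i$, its generic fiber is the monomial $H_{m,1}$-curve (resp. $H_{m,2}$-curve), which by Proposition \ref{prop-H-S-minimal} is the unique closed point of $\bar{\cS}_{m,1}$ and $\bar{\cH}_{m,1}$ (resp. $\bar{\cS}_{m,2}$ and $\bar{\cH}_{m,2}$). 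Normalizing $\E_i$ along the section(s) through its $A_{2m}$- (resp. $A_{2m+1}$-) singularity and running the crimping analysis already used in Lemma \ref{L:AMD}, I would argue the special fiber remains an $S_{m,1}$- (resp. $S_{m,2}$-) curve, so that $\E_i$ is a family in $\bar{\cS}_{m,1}$ (resp. $\bar{\cS}_{m,2}$) whose generic fiber is the unique closed point; hence $\E_i$ is trivial and its special fiber is again monomial.

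The step I expect to be the main obstacle is this tail analysis: one must rule out that the limit of a family whose generic fiber is the (singular) monomial curve leaves the locus of $S_{m,1}/S_{m,2}$-curves — that is, that the $A_{2m}$- (resp. $A_{2m+1}$-) singularity neither worsens nor causes a rational component to break off. This is precisely where the constraint $k=2m$ (resp. $2m+1$) on the allowable singularities, together with constancy of arithmetic genus, must be invoked, and it reduces to the crimping computation already established. Granting it, the reassembly is routine: the special fiber of $\C$ is obtained by gluing $K$ to the monomial tails/bridges along the nodal limits $\tau_i(0)$, and because each constituent family and each attaching section is constant, this glued curve is abstractly isomorphic to $C$. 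Thus the specialization is trivial and $(C,\pn)$ is closed.
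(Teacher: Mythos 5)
Your proposal follows the paper's proof skeleton in three of its four moves: the forward implication via Lemma \ref{L:AMD}, the decomposition of an arbitrary isotrivial specialization via Lemma \ref{L:LimitCanDecomp}, and the triviality of the core family via Lemma \ref{L:AlmostClosed} (your verification that the core of a maximally degenerate curve satisfies the hypotheses of that lemma is exactly the paper's). The divergence is in the tail step. The paper disposes of the families $\E_i$ in one line: the locus $\bar{\cH}_{m,1}\subset\SM_{m,1}(A_k)$ (resp.\ $\bar{\cH}_{m,2}$) is \emph{closed} by Lemma \ref{L:HmLimits}, so the special fiber of $\E_i$ is again an $H$-curve, and since the monomial curve is the unique closed point of $\bar{\cH}_{m,1}$ (Proposition \ref{prop-H-S-minimal}) the specialization must be trivial. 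You instead route the argument through $\bar{\cS}_{m,1}$, which is equally covered by Proposition \ref{prop-H-S-minimal}, but this obliges you to prove that the special fiber is an $S$-curve --- a fact the paper's route never needs.

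That step can be completed, but not by the means you cite. The crimping analysis invoked in Lemma \ref{L:AMD} (i.e.\ the computation in Proposition \ref{prop-S-weakly-proper}) shows how to complete a family of crimped curves after blow-ups; it does not show that an isotrivial degeneration of the monomial curve inside $\SM_{m,1}(A_k)$ retains its $A_{2m}$-singularity, and Lemma \ref{L:LimitNode} does not apply either, since it concerns two-branch singularities marked by sections. What you actually need is: (i) persistence of the top singularity, e.g.\ because in the miniversal deformation of $A_{2m}$ (resp.\ $A_{2m+1}$) the locus of fibers with an $A_{2m}$- (resp.\ $A_{2m+1}$-) singularity is just the origin, hence closed in any family with only $A_{\le k}$-singularities; (ii) exclusion of extra rational components, which genus constancy alone does \emph{not} give --- a tree of two $\PP^1$'s with the singularity crimped onto one component still has arithmetic genus $m$ --- so you must also invoke ampleness of $\omega_C(p)$, available because the fibers of $\E_i$ are $A_k$-stable; and (iii) to quote uniqueness of the closed point of $\bar{\cS}_{m,1}$ you need $\E_i$ to be a family of $S$-curves in the sense of Section \ref{s-scurves} (the paper stresses this is ``not just a condition on fibers''), which over the DVR requires a simultaneous-normalization argument using constancy of the $\delta$-invariant. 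Each point is manageable, but together they constitute exactly the work that the paper's appeal to Lemma \ref{L:HmLimits} is designed to bypass; if you keep your route, these three points must be written out.
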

\begin{proof}[Proof of Proposition \ref{P:ClosedPoints}]
The fact that a closed $A_k$-stable curve is maximally degenerate follows immediately from Lemma \ref{L:AMD}. It remains to show that a maximally degenerate curve $(C, \pn)$ is closed. Given a maximally degenerate curve $(C, \pn)$, let $(\C \rightarrow \Delta, \sigman)$ be any isotrivial specialization which is isomorphic to $(C,\pn) \times \Delta^*$ over $\Delta^*$. We must show that the central fiber is isomorphic to  $(C, \pn)$ as well.

As in Lemma \ref{L:LimitCanDecomp}, we may decompose $\C$ along the locus of attaching nodes of the canonical decomposition of the general fiber to obtain:
$$
\tilde{\C}=\K \cup \E_1 \cup \ldots \E_r,
$$
where $\K$ is the connected component of $\tilde{\C}$ containing the core of the generic fiber and $\E_1, \ldots, \E_r$ comprise the $H_{m,1}$-tails ($H_{m,2}$-bridges) of the general fiber. Since the monomial $H_{m,1}$-tail (resp., $H_{m,2}$-bridge) is the unique closed point of $\bar{\H}_{m,1}/\bar{\H}_{m,2}$ (Proposition~\ref{prop-H-S-minimal}) and $\bar{\H}_{m,1} \subset \SM_{m,1}(A_k)$ (resp., $\bar{\H}_{m,2} \subset \SM_{m,2}(A_k)$) is closed by Lemma \ref{L:HmLimits}, the isotrivial specializations $\E_i$ must be trivial. Thus, to show that the entire specialization $\C \rightarrow \Delta$ is trivial, it only remains to show that the isotrivial specialization $\K$ is trivial. By hypothesis, the generic fiber is a closed $A_k^-$-stable curve with no nodally attached $H_{m,1}$-tail (resp. $H_{m,2}$-bridge). By Lemma \ref{L:AlmostClosed}, it is closed as an $A_k$-stable curve as well, so the specialization $\K$ is trivial, as desired.
\end{proof}

A corollary of this characterization, which will be used in the proof of weak properness of $\SM_{g,n}(A_k)$, is that the isomorphism class of a closed $A_{k}$-stable curve is uniquely determined by its core.

\begin{corollary}[Isomorphism class determined by core]\label{C:CoreIso}
When $k$ is even, the isomorphism class of a maximally degenerate $A_{k}$-stable curve $(C, \pn)$ is determined by the isomorphism class of its core $(K, \q{r})$. When $k$ is odd, the isomorphism class of a maximally degenerate $A_{k}$-stable curve $(C, \pn)$ is determined by its core $(K, \{p_i\}_{i=s+1}^{n}, \q{2r+s})$,  along with the sequence of integer lengths $l_1, \ldots, l_r$ of the links of monomial $H_{m,2}$ curves connecting $q_{2i-1}$ to $q_{2i}$, and the lengths $l_1, \ldots, l_s$ of the link of monomial $H_{m,2}$ curves attached at $q_{2r+i}$.
\end{corollary}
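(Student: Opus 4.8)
The plan is to read the isomorphism class of a maximally degenerate curve directly off its canonical decomposition, exploiting two facts: that all attaching points occurring in the decomposition are \emph{nodes}, and that the pieces being attached are rigid. The guiding principle is that nodal gluing carries no continuous parameter: forming a node by identifying two smooth points $p_1 \in C_1$ and $p_2 \in C_2$ produces a pushout $C_1 \sqcup_{p_1 \sim p_2} C_2$ whose isomorphism class depends only on the pointed isomorphism classes of $(C_1, p_1)$ and $(C_2, p_2)$, since the completed local ring of a node is $\CC[[x,y]]/(xy)$ regardless of the branches. Consequently, isomorphisms of the pieces that respect the marked points glue to an isomorphism of the nodal union. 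This reduces the corollary to the assertion that every piece appearing in the canonical decomposition is pinned down by the stated data, and I would prove the corollary in the contrapositive-free form: if two maximally degenerate curves have isomorphic cores (and, for $k$ odd, matching link lengths), then they are isomorphic.

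For $k = 2m$ even, Definition~\ref{canonical-decomposition-even} writes $(C, \pn) = K \cup E_1 \cup \cdots \cup E_r$ with each $E_i$ a nodally-attached $H_{m,1}$-tail meeting the core $K$ at the marked point $q_i$. Since $(C,\pn)$ is maximally degenerate, Proposition~\ref{P:ClosedPoints} forces each $(E_i, q_i)$ to be the \emph{monomial} $H_{m,1}$-curve of Definition~\ref{definition-monomial}, a single fixed isomorphism class of pointed curve. Thus, given an isomorphism of cores carrying each $q_j$ to the corresponding $q_j'$, I extend it over each tail by an isomorphism of monomial $H_{m,1}$-curves matching marked points, and glue by the principle above. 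Hence the core $(K, \pn, \q{r})$ determines $C$ when $k$ is even.

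For $k = 2m+1$ odd, Definition~\ref{canonical-decomposition-odd} decomposes $C$ into the core $K$ together with $H_{m,2}$-links $E_1, \ldots, E_{r+s}$, where a link of length $l$ is a chain of $l$ monomial $H_{m,2}$-curves glued to one another at their endpoints. First I would record that such a chain is, as a $2$-pointed curve, determined up to isomorphism by $l$ alone: each bridge is the fixed monomial $H_{m,2}$-curve and each internal attachment is a node, so the chain is rigid by the same gluing principle. The attachments of $E_i$ to $K$ (and, for the links $E_{r+i}$, the free endpoint, which is a marked point of $C$) are again nodal, so an isomorphism of cores matching the ordered attaching points $q_{2i-1}, q_{2i}$ (respectively $q_{2r+i}$) extends link by link to an isomorphism of the whole curve as soon as the lengths $l_i$ agree. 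Therefore $C$ is determined by the core together with the integers $l_1,\ldots,l_r$ and $l_1, \ldots, l_s$, as asserted.

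The one genuinely delicate point is the rigidity claims themselves --- that nodal gluing introduces no moduli, and that a chain of monomial $H_{m,2}$-curves is determined by its length. I would justify these uniformly by passing to normalizations: an isomorphism of the normalizations of two such curves that matches the preimages of the nodes descends to an isomorphism of the glued curves, precisely because the local structure $\CC[[x,y]]/(xy)$ at a node admits no gluing invariant beyond the pair of branch points. A secondary bookkeeping subtlety in the odd case is the ordering of a link's two endpoints; this causes no trouble, since both curves use links of the same length attached in the same order, so an endpoint-preserving isomorphism of each link is available (the monomial $H_{m,2}$-curve has automorphism group $\GG_m$ fixing both its marked points, by the explicit description preceding Proposition~\ref{prop-H-S-minimal}). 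Beyond Propositions~\ref{P:ClosedPoints} and~\ref{prop-H-S-minimal} no further input is required.
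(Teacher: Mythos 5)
Your proposal is correct and is essentially the argument the paper intends: the paper states this corollary without proof, treating it as immediate from the definition of maximal degeneracy (all nodally-attached tails/bridges are monomial, the core is closed) together with the fact that nodal gluing carries no moduli, which is exactly what you spell out. The only cosmetic remark is that you cite Proposition~\ref{P:ClosedPoints} to force the tails to be monomial, whereas this is already condition (2) in the definition of maximally degenerate; either reference is fine.
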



\section{Deformation theory} \label{section-deformation-theory}

Let $(C,\pn)$ be an $A_k$-stable curve. We denote by
$\TT^1(C,\pn)$ the vector space of first-order deformations of
$(C,\pn)$. The \emph{deformation space} of $(C,\pn)$ is
\[
\Def(C,\pn) 
:= \Spec \Sym \TT^1(C,\pn)^\vee;
\]
the \emph{formal deformation space} of $(C,\pn)$ is
\[
\hat{\Def}(C,\pn) 
:= \Spf \hat{\cO}_{\Def(C,\pn),0}.
\]
Let $\Aut(C,\pn)$ denote the automorphism group scheme of $(C,\pn)$ 
and  $\Aut(C,$ $\pn)^{\star}$  the connected component of the identity of the subgroup  
consisting of automorphisms which restrict to the identity on the
core of $C$ (Definition \ref{canonical-decomposition-even} and 
Definition \ref{canonical-decomposition-odd}). 

In this section, we describe the natural action of the group
$\Aut(C,\pn)^{\star}$ on $\TT^1(C,$ $\pn)$ and more generally the action of
the group scheme $\Aut(C,\pn)^{\star}$ on $\hat{\Def}(C, $ $\pn)$. This
prepares the way for the proof of Proposition~\ref{theorem-etale-variation} in the next section, which says that, \'etale locally around closed points of $\SM_{g,n}(A_k)$, the
reduced closed substacks
\[
\bar{\cS}_{g,n}(A_k) :=
\bar{\cM}_{g,n}(A_k) \minus \bar{\cM}_{g,n}(A_k^-)
\]
and
\[
\bar{\cH}_{g,n}(A_k) := \bar{\cM}_{g,n}(A_k) \minus
\bar{\cM}_{g,n}(A_k^+)
\]
of $\bar{\cM}_{g,n}(A_k)$ correspond to the VGIT minus and plus chambers associated to the action of $\Aut(C,\pn)^{\star}$ on $\hat{\Def}(C,\pn)$. 

The first and key step is to write down the action of $\Aut(C,\pn)^{\star}$ on $\TT^1(C,\pn)$, where $(C,\pn)$ is a monomial $H_{m,1}/H_{m,2}$-curve. To do this, we prove that 
$$\TT^1(C,\pn)=T_{0}\cS_{m,1} \oplus T_{0}\cH_{m,1} 
\text{ (resp., $\TT^1(C,\pn) = T_{0}\cS_{m,2} \oplus T_{0}\cH_{m,2}$ ) }$$
and use the explicit descriptions of $\cS_{m,1}$ and $\cH_{m,1}$ (resp.,  $\cS_{m,1}$ and $\cH_{m,1}$) given in Section \ref{section-sh} to compute the action. Using the description of the closed points of $\SM_{g,n}(A_k)$ in Section 5, the description of the action $\Aut(C,\pn)^{\star}$ on for an arbitrary closed point boils down to the combinatorics of tails and links of monomial $H_{m,1}/H_{m,2}$-curve (Propositions \ref{first-order-action-even} and \ref{first-order-action-odd}). Finally, it is an easy formal argument to enhance this description to the level of the complete local ring $\hat{\Def}(C,\pn)$ (Propositions \ref{formal-action-even} and \ref{formal-action-odd}).

\subsection{First-order deformations}

Let $(E, q)$ (resp., $(E, q_1, q_2)$) be a 
monomial $H_{m,1}$-curve (resp., monomial $H_{m,2}$-curve).  Recall that $\xi \in E$ denotes the singular point. Denote by
\[
\Cr^1(E, q_1) \quad (\text{resp., } \Cr^1(E, q_1, q_2))
\]
the vector space of first-order deformations of
$(E, q_1)$ (resp., $(E, q_1, q_2)$)
which induce a trivial deformation of
$\hat{\cO}_{E,\xi}$, so that there is a short exact sequence
\begin{equation}\label{deformation-sequence}
\begin{aligned}
 &0 \arr \Cr^1(E,q_1)   \arr
        \TT^1(E,q_1)   \arr
        \TT^1(\hat{\cO}_{E,\xi}) \arr  0 \\
(\text{resp., } & 0 \arr \Cr^1(E,q_1, q_2)   \arr
        \TT^1(E,q_1, q_2)   \arr
        \TT^1(\hat{\cO}_{E,\xi}) \arr  0).
\end{aligned}
\end{equation}
$(E,q_1)$ (resp., $(E, q_1, q_2)$) defines a point of
$\overline{\cM}_{m,1}(A_{2m})$ (resp., $\overline{\cM}_{m,2}(A_{2m+1})$) contained in $
\overline{\cS}_{m,1} \cap \overline{\cH}_{m,1}$ (resp., $
\overline{\cS}_{m,2} \cap \overline{\cH}_{m,2}$) which we
denote by $0$. Here, we use the obvious identifications
\begin{align*}
\bar{\cS}_{m,1}(A_{2m})=\bar{\cS}_{m,1}
\quad&\text{and}\quad
\bar{\cS}_{m,2}(A_{2m+1})=\bar{\cS}_{m,2}\\
\bar{\cH}_{m,1}(A_{2m})=\bar{\cH}_{m,1}
\quad&\text{and}\quad
\bar{\cH}_{m,2}(A_{2m+1})=\bar{\cH}_{m,2},
\end{align*}
where $\bar{\cS}_{m,1}, \bar{\cS}_{m,2}, \bar{\cH}_{m,1}, \bar{\cH}_{m,2}$ are the stacks introduced in Section \ref{section-sh}.

\begin{lem}\label{splitting}
With notation as above, there are natural isomorphisms
$$\begin{aligned} 
\Cr^1(E,q_1) 	&	\cong\TT_{\overline{\cS}_{m,1},0} 	
& (\text{resp. }   \Cr^1(E,q_1, q_2)  	& \cong \TT_{\overline{\cS}_{m,2},0})\\
\TT^1(E,q_1) 	&	\cong \TT_{\overline{\cM}_{m,1}(A_{2m}),0} 	
& (\text{resp. }	 \TT^1(E,q_1, q_2)  	& \cong \TT_{\overline{\cM}_{m,2}(A_{2m+1}),0}) \\
\TT^1(\hat{\cO}_{E,\xi}) & \cong \TT_{\overline{\cH}_{m,1},0}
& (\text{resp. }   \TT^1(\hat{\cO}_{E,\xi}) \arr  0)  & \cong \TT_{\overline{\cH}_{m,2},0} )
\end{aligned}$$
such that map of tangent spaces
\[
\TT_{\overline{\cS}_{m,1},0} \arr
\TT_{\overline{\cM}_{m,1}(A_{2m}),0}
\quad
(\text{resp., } 
\TT_{\overline{\cS}_{m,2},0} \arr
\TT_{\overline{\cM}_{m,2}(A_{2m+1}),0})
\]
corresponds to the injection in 
\eqref{deformation-sequence} and the map of tangent spaces
\[
\TT_{\overline{\cH}_{m,1},0} \arr
\TT_{\overline{\cM}_{m,1}(A_{2m}),0}
\quad
(\text{resp., } 
\TT_{\overline{\cH}_{m,2},0} \arr
\TT_{\overline{\cM}_{m,2}(A_{2m+1}),0})
\]
induces a splitting of \eqref{deformation-sequence} which is
equivariant for the natural linear action of
$\Aut(E,q_1)$ (resp., $\Aut(E, q_1, q_2)$).
\end{lem}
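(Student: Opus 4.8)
The plan is to identify each of the three terms in the sequence \eqref{deformation-sequence} with the tangent space of the relevant moduli stack via deformation theory, and then to produce the splitting purely from the $\GG_m$-weights recorded in the quotient presentations of Section~\ref{section-sh}. First I would record the deformation-theoretic identifications. Since $\overline{\cM}_{m,1}(A_{2m})$ is an algebraic stack (Corollary~\ref{C:Openness}) whose $\CC$-points are $A_{2m}$-stable curves, its tangent space at the point $0$ determined by $(E,q_1)$ is canonically the first-order deformation space, giving the middle isomorphism $\TT^1(E,q_1)\cong \TT_{\overline{\cM}_{m,1}(A_{2m}),0}$. Under the inclusions $\overline{\cS}_{m,1},\overline{\cH}_{m,1}\subset \overline{\cM}_{m,1}(A_{2m})$ noted in the text, the same reasoning realizes $\TT_{\overline{\cS}_{m,1},0}$ and $\TT_{\overline{\cH}_{m,1},0}$ as subspaces of $\TT^1(E,q_1)$. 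The content of the lemma then reduces to two claims: (a) that $\TT_{\overline{\cS}_{m,1},0}$ is exactly the subspace $\Cr^1(E,q_1)\subset \TT^1(E,q_1)$, and (b) that $\TT_{\overline{\cH}_{m,1},0}$ maps isomorphically onto $\TT^1(\hat{\cO}_{E,\xi})$ and hence splits \eqref{deformation-sequence}.

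For (a) I would use that a family of $S_{m,1}$-curves carries an $A_{2m}$-singularity along a section (not merely fiberwise), so every first-order $S_{m,1}$-deformation of $(E,q_1)$ preserves the analytic type of the singularity at $\xi$. As $A_{2m}$ is a simple singularity, the only first-order deformation of $\hat{\cO}_{E,\xi}$ preserving its analytic type is the trivial one, so any such deformation lies in $\Cr^1(E,q_1)=\ker\bigl(\TT^1(E,q_1)\to \TT^1(\hat{\cO}_{E,\xi})\bigr)$; conversely every crimping deformation fixes $\hat{\cO}_{E,\xi}$ and is realized by an $S_{m,1}$-family. Thus $\TT_{\overline{\cS}_{m,1},0}=\Cr^1(E,q_1)$ and the induced map to $\TT^1(E,q_1)$ is precisely the injection of \eqref{deformation-sequence}; as a consistency check, Proposition~\ref{s-quotient-stack} gives $\dim\TT_{\overline{\cS}_{m,1},0}=m-1$.

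For (b) I would avoid comparing deformation functors directly and instead argue by $\GG_m=\Aut(E,q_1)$-weights. The sequence \eqref{deformation-sequence} is $\Aut(E,q_1)$-equivariant, and by Proposition~\ref{s-quotient-stack} the subspace $\Cr^1(E,q_1)=\TT_{\overline{\cS}_{m,1},0}$ carries only the positive weights $1,3,\ldots,2m-3$, while by Proposition~\ref{h-quotient-stack} the subspace $\TT_{\overline{\cH}_{m,1},0}$ carries only the negative weights $-4,-6,\ldots,-(4m+2)$. Hence these two subspaces of $\TT^1(E,q_1)$ meet only in $0$, so the composite $\TT_{\overline{\cH}_{m,1},0}\hookrightarrow \TT^1(E,q_1)\twoheadrightarrow \TT^1(\hat{\cO}_{E,\xi})$ is injective; since $\dim\TT_{\overline{\cH}_{m,1},0}=2m$ equals the dimension $2m$ of the miniversal deformation of the $A_{2m}$-singularity, it is an isomorphism. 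This yields simultaneously the third identification $\TT^1(\hat{\cO}_{E,\xi})\cong \TT_{\overline{\cH}_{m,1},0}$ and exhibits $\TT_{\overline{\cH}_{m,1},0}$ as a splitting of \eqref{deformation-sequence}. Because every map in sight is induced by a morphism of stacks, the splitting is automatically $\Aut(E,q_1)$-equivariant.

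The $H_{m,2}$ case runs identically, replacing $A_{2m}$ by $A_{2m+1}$ and reading off the positive weights $1,2,\ldots,m-1$ for $\overline{\cS}_{m,2}$ and the negative weights $-2,-3,\ldots,-(2m+2)$ for $\overline{\cH}_{m,2}$ from Propositions~\ref{s-quotient-stack} and~\ref{h-quotient-stack}; again positivity versus negativity forces the two subspaces to be complementary, with $\dim\TT_{\overline{\cH}_{m,2},0}=2m+1$ matching the miniversal deformation of $A_{2m+1}$. I expect the genuine obstacle to be claim (a): everything else is formal once the weights are in hand, but identifying the tangent space of the crimping stack $\overline{\cS}_{m,1}$ with the equisingular subspace $\Cr^1$ rests on the rigidity of simple singularities under equi-analytic deformation, and this is the one point where I would need to argue with the geometry of the singularity rather than the combinatorics of the $\GG_m$-action.
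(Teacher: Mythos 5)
Your proof is correct in substance but follows a genuinely different route from the paper's. The paper reduces the entire lemma to the single claim that the composite $\alpha\colon \TT_{\bar{\cH}_{m,\nu},0}\to\TT^1(E,\{q_i\}_{i=1}^\nu)\to\TT^1(\hat{\cO}_{E,\xi})$ is a $G$-equivariant isomorphism (the identifications of the other two terms being taken as immediate from the constructions of Section \ref{section-sh}), and proves this by writing down the explicit universal family over $\AA^{2m+\nu-1}$ coming from Proposition \ref{h-quotient-stack} and observing that the induced deformation of $\hat{\cO}_{E,\xi}$ is the miniversal deformation written in the same coordinates $a_0,\ldots,a_{2m+\nu-2}$, with the same $\GG_m$-action; thus $\alpha$ is literally the identity in those coordinates. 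You instead (i) justify $\TT_{\bar{\cS}_{m,\nu},0}=\Cr^1$ by rigidity of simple singularities --- a correct argument (the locus in the miniversal deformation of an $A_k$-singularity where the singularity persists along a section is the reduced origin in characteristic zero), and a point the paper silently treats as definitional --- and (ii) obtain the splitting abstractly: injectivity of $\alpha$ from $\TT_{\bar{\cH},0}\cap\Cr^1=0$ via disjointness of $\Aut$-weights, then surjectivity from the dimension count $2m+\nu-1=\dim\TT^1(\hat{\cO}_{E,\xi})$. Your version is softer and avoids the universal-family computation; what it does not deliver, and the paper's computation does, is that $\alpha$ is the identity in the natural coordinates, which the paper exploits later when writing down coordinates on $\TT^1(C,\pn)$ in Propositions \ref{first-order-action-even} and \ref{first-order-action-odd}. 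So the trade is: a cleaner formal argument for the lemma itself, at the cost of information the sequel wants.

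There is one step you should shore up, and it is not the one you flagged. The weights quoted in Propositions \ref{s-quotient-stack} and \ref{h-quotient-stack} are attached to abstract presentations $[\AA^N/\GG_m]$; each presentation identifies the stabilizer of the origin with $\GG_m$ only up to the inversion automorphism of $\GG_m$, so the signs of the two weight sets can be compared only after checking that both presentations are taken with respect to one and the same isomorphism $\Aut(E,\{q_i\}_{i=1}^\nu)\cong\GG_m$. For $\nu=1$ this is harmless: $\{1,3,\ldots,2m-3\}$ consists of odd integers and $\{4,6,\ldots,4m+2\}$ of even ones, so the sets are disjoint under any sign convention. But for $\nu=2$ and $m\geq 3$, a sign mismatch would turn your two sets into $\{1,2,\ldots,m-1\}$ and $\{2,3,\ldots,2m+2\}$, which overlap, and the intersection argument would collapse. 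The compatibility does hold --- one can read it off from the explicit constructions in the proofs of Propositions \ref{s-quotient-stack} and \ref{h-quotient-stack}, or check directly that, with respect to the isomorphism fixed after Definition \ref{definition-monomial}, crimping parameters have positive weight while the coefficients of the miniversal deformation of the singularity have negative weight --- but it is exactly the kind of computation your argument was designed to avoid, so it must be stated. By contrast, your claim (a), which you singled out as the real obstacle, is fine as you argued it.
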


\begin{proof}

Let $\nu \in \{1,2\}$.  Let $G:=\Aut(E,\{q_i\}_{i=1}^\nu))=\GG_m$. It suffices to
show that the composition
\[
\alpha:\ \TT_{\overline{\cH}_{m,\nu},0} \arr
\TT_{\overline{\cM}_{m,\nu}(A_{2m+\nu-1}),0} = 
\TT^1(E,\{q_i\}_{i=1}^\nu)) \arr
\TT^1(\hat{\cO}_{E,\xi}) 
\]
is a $G$-equivariant isomorphism. The isomorphism
$\bar{\cH}_{m,\nu} \isom [\AA^{2m+\nu-1}/\GG_m]$ of
Proposition~\ref{h-quotient-stack} identifies
$\TT_{\bar{\cH}_{m,\nu},0}$ with $\TT_{\AA^{2m+\nu-1},0}$, i.e.
\[
\TT_{\bar{\cH}_{m,\nu},0}
=\CC\langle a_0,\ldots,a_{2m+\nu-2}\rangle.
\]
On the other hand, noting that $\ch(\CC)=0$, an easy calculation
shows that
\[
\begin{array}{l}
\TT^1(\widehat{\cO}_{E,\xi}) =\\
\{
\Spec \CC[[x,y,\eps]]/
(y^2-x^{2m+\nu}-a_{2m+\nu-2}\eps x^{2m+\nu-2}
               -a_{2m+\nu-3}\eps x^{2m+\nu-3}
               -\ldots-a_{0}\eps,\eps^2) \\
\quad: a_{2m+\nu-2},\ldots,a_{0}\in\CC \}.
\end{array}
\]

We can describe explicitly the universal family of curves $\cC
\arr \AA^{2m+\nu-1}$ induced by the identification
$\bar{\cH}_{m,\nu} \isom [\AA^{2m+\nu-1}/\GG_m]$.  It comes with
an action of $\GG_m$ on $\cC$ compatible with the action of
$\GG_m$ on $\AA^{2m+\nu-1}$.
Choose coordinates $a_0,\ldots,a_{2m+\nu-2}$ on $\AA^{2m+\nu-1}$.
Let $S_{m+1}$ be the surface obtained by identifying $\Spec
\CC[x,y]$ and $\Spec \CC[u,v]$ along $D(x)$ and $D(u)$ via
$u=x^{-1}$ and $v=x^{-m-1}y$. ($S_{m+1}$ is the complement in
$\PP(\cO_{\PP^1}\oplus\cO_{\PP^1}(-m-1))$ of the section
corresponding to $\cO_{\PP^1}\ctdin
\cO_{\PP^1}\oplus\cO_{\PP^1}(-m-1)$.) 

If $\nu=1$, $\cC$ is the
subscheme of $S_{m+1} \times \AA^2_m$ defined by
\[
y^2-x^{2m+1}-a_{2m-1}x^{2m-1}-a_{2m-2}x^{2m-2}-\ldots-a_{0}
\]
and
\[
v^2-u-a_{2m-1}u^3-a_{2m-2}u^4-\ldots-a_{0}u^{2m+2}.
\]
The section of
$\cC \arr \AA^{2m}$ is
\[
u \mapsto 0, v \mapsto 0.
\]
The action of $\GG_m=\Spec \CC[t,t^{-1}]$ on $S_{m+1} \times
\AA^k$ given by
\[
x \mapsto t^{-2} x, y \mapsto t^{-2m-1} y,
u \mapsto t^2 u, v \mapsto t^ v.
\]
induces the action of $\GG_m$ on $\cC$.

If $\nu=2$, $\cC$ is the subscheme of $S_{m+1} \times \AA^{2m+1}$
defined by
\[
y^2-x^{2m+2}-a_{2m}x^{2m}-a_{2m-1}x^{2m-1}-\cdots-a_{0}
\]
and
\[
v^2-1-a_{2m}u^2-a_{2m-1}u^3-\cdots-a_{0}u^{2m+2}.
\]
The two sections of $\cC \arr \AA^k$ are
\[
u \mapsto 0, v \mapsto \pm 1.
\]
The action of $\GG_m=\Spec \CC[t,t^{-1}]$ on $S_{m+1} \times
\AA^k$ given by
\[
x \mapsto t^{-1} x, y \mapsto t^{-m-1} y,
u \mapsto t u, v \mapsto v,
\]
induces an action of $\GG_m$ on $\cC$.

It follows from the explicit description of the universal curve
over $\AA^{2m+\nu-1}$ that $\alpha$ is the identity map in the
coordinates described above, in particular an
isomorphism. Moreover, the actions of $G$ in these coordinates are
the same (cf. Lemma~\ref{first-order-action-even} and
Lemma~\ref{first-order-action-odd} below), so $\alpha$ is a
$G$-equivariant isomorphism as required.
\end{proof}

We are now ready to describe the action of $\Aut(C,\pn)^{\star}$ on the space of first-order deformations. We start with the case that $k=2m$ is even. According to
Definition~\ref{canonical-decomposition-even}, we have three
possible cases for the canonical decomposition of a maximally degenerate
$A_{2m}$-stable curve $(C, \pn)$:
\begin{description}
\item[Case I] $(C,\pn) = (K \unn E_1 \unn \ldots \unn E_r,\pn)$,
where for $1 \leq i \leq r$, $E_i$ is a monomial $H_{m,1}$-curve meeting $K$ in a single node $q_i$.\\
\item[Case I$'$] $C = E_1 \cup E_2$ where $(E_1,q_1)$ and $(E_2,q_2)$ are
monomial $H_{m,1}$-curves attached nodally via $q_1 \sim q_2$.\\
\item[Case I$''$] $(C, p_1)=(E,q_1)$ is a monomial
$H_{m,1}$-curve.
\end{description}

\begin{prop}\label{first-order-action-even}
Let $(C, \pn)$ be a maximally degenerate
$A_{k}$-stable curve with $k = 2m$.  
\begin{description}
\item[Case I] 
There exist decompositions
\begin{align*}
\Aut(C,\pn)^{\circ} &= 
\Aut(K,\pn,\qr)^{\circ} 
 \times \Aut(C,\pn)^{\star}\\
 &=\Aut(K,\pn,\qr)^{\circ} 
 \times \prod_{i=1}^r \Aut(E_i,q_i)
\end{align*}
and
\[
\TT^1(C, \pn) = \TT^1(K,\pn,\qr)
         \oplus \bigoplus_{i=1}^r 
         \left(\Cr^1(E_i,q_i)
         \oplus  \TT^1(\hat{\cO}_{E_i,\xi_i})\right)
         \oplus \bigoplus_{i=1}^r \TT^1(\hat{\cO}_{C,q_i}).
\]
For $1 \leq i \leq r$, let $t_i$ be the coordinate on
$\Aut(E_i,q_i) \cong \GG_m$ given by the isomorphism following
Definition \ref{definition-monomial}. Let $g(K)$ be the arithmetic genus of the core $K$. There
are coordinates 
\[
\begin{array}{lllll}
\text{``{\bf k}ore''}		&\bk=(k_1,\ldots,k_{g(K)+n+r}) & \text{on}  &	\TT^1(K,\pn,\qr) & \\
\text{``{\bf c}rimping''}	&\bc_i=(c_{i,1},\ldots,c_{i,m-1}) & \text{on} & \Cr^1(E_i,q_i) & \text{for $1 \leq i \leq r$} \\
\text{``{\bf s}ingularity''}	&\bs_i=(s_{i,0},\ldots,s_{i,2m-1}) & \text{on} & \TT^1(\hat{\cO}_{E_i,\xi_i}) & \text{for $1 \leq i \leq r$} \\
\text{``{\bf n}ode''}	&
n_i & \text{on} & \TT^1(\hat{\cO}_{C,q_i}) &
 \text{for $1 \leq i \leq r$}
\end{array}
\]
such that the
action of $\Aut(C,\pn)^{\star}$ on $\TT^1(C,\pn)$ is
given by
\[
\begin{array}{lll}
k_l & \mapsto & k_l \\ 
c_{i,l} & \mapsto & t_i^{2l-1} c_{i,l} \\ 
s_{i,l} & \mapsto & t_i^{2l-4m-2} s_{i,l} \\ 
n_i  & \mapsto & t_i n_i
\end{array}
\]
\item[Case I$'$]
There are decompositions
\[
\Aut(C)^{\circ} = \Aut(C)^{\star} = \prod_{i=1}^2 \Aut(E_i,q)
\]
and
\[
\TT^1(C) = 
\bigoplus_{i=1}^2 
\left(\Cr^1(E_i,q) \oplus  \TT^1(\hat{\cO}_{E_i,\xi_i})\right)
\oplus \TT^1(\hat{\cO}_{C,q}).
\]
For $1 \leq i \leq 2$, let $t_i$ be the coordinate on
$\Aut(E_i,q) \cong \GG_m$ given by the isomorphism following
Definition \ref{definition-monomial}. There are coordinates
\[
\begin{array}{llll}
\bc_i=(c_{i,1},\ldots,c_{i,m-1}) & \text{on} & 
 \Cr^1(E_i,q) & \text{for $1 \leq i \leq 2$} \\
\bs_i=(s_{i,0},\ldots,s_{i,2m-1}) & \text{on} &
 \TT^1(\hat{\cO}_{E_i,\xi_i}) & \text{for $1 \leq i \leq 2$} \\
n & \text{on} & \TT^1(\hat{\cO}_{C,q}) &
\end{array}
\]
such that the action of $\prod_{i=1}^2
\Aut(E_i,q)$ on $\TT^1(C)$ is given by
\[
\begin{array}{lll}
c_{i,l} & \mapsto & t_i^{2l-1} c_{i,l} \\
s_{i,l}  & \mapsto  & t_i^{2l-4m-2} s_{i,l}  \\
n & \mapsto & t_1 t_2 n.
\end{array}
\]
\item[Case I$''$]
In this case $(C,p)=(E,q)$ so
\[
\Aut(C,p)^{\circ} = \Aut(C,p)^{\star} = \Aut(E,q)
\]
and there is a decomposition
\[
\TT^1(C,p) = 
\Cr^1(E,q) \oplus  \TT^1(\hat{\cO}_{E,\xi})
\oplus \TT^1(\hat{\cO}_{C,q}).
\]
Let $t$ be the coordinate on $\Aut(E,q) \cong \GG_m$ given by
the isomorphism following Definition
\ref{definition-monomial}. There are coordinates 
\[
\begin{array}{lll}
\bc=(c_{1},\ldots,c_{m-1}) & \text{on} & 
 \Cr^1(E,q) \\
\bs=(s_{0},\ldots,s_{2m-1}) & \text{on} &
 \TT^1(\hat{\cO}_{E,\xi}) 
 \end{array}
\]
such that
the action of $\Aut(E,q)$ on $\TT^1(C,p)$ is given
by
\[
\begin{array}{lll}
c_{l} & \mapsto & t^{2l-1}  c_{l}  \\
s_{l} & \mapsto & t^{2l-4m-2} s_{l} \\
\end{array}
\]
\end{description}
\end{prop}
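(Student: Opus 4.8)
The plan is to assemble the statement from two inputs already established: the $\Aut(E,q)$-equivariant splitting of Lemma~\ref{splitting} for a single monomial tail, and the $\GG_m$-weights recorded in Propositions~\ref{s-quotient-stack} and~\ref{h-quotient-stack}. I treat Case~I; Cases~I$'$ and~I$''$ are the same computation with an empty core.

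First I would pin down the automorphism group. Since the canonical decomposition of Definition~\ref{canonical-decomposition-even} is intrinsic to $(C,\pn)$, every automorphism preserves it, and the identity component $\Aut(C,\pn)^{\circ}$ fixes the core $K$ and each tail $E_i$ setwise (no tails are permuted on the identity component). An automorphism restricting to the identity on $K$ must fix each attaching node $q_i$ and hence restricts to an element of $\Aut(E_i,q_i)$; conversely any tuple in $\prod_i\Aut(E_i,q_i)$ glues to such an automorphism along the identity on $K$. This gives $\Aut(C,\pn)^{\star}=\prod_{i=1}^r\Aut(E_i,q_i)$, each factor isomorphic to $\GG_m$ by the identification following Definition~\ref{definition-monomial}. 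As the $q_i$ are marked points of the core, automorphisms of $(K,\pn,\qr)$ extend by the identity on the tails and commute with $\Aut(C,\pn)^{\star}$, which yields the displayed product decomposition of $\Aut(C,\pn)^{\circ}$.

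Next I would decompose $\TT^1$. Because each $E_i$ meets $K$ in a single node $q_i$, the attaching points are disconnecting nodes, so the standard gluing description of deformations of a curve assembled at nodes gives an $\Aut(C,\pn)^{\star}$-equivariant direct sum
\[
\TT^1(C,\pn)=\TT^1(K,\pn,\qr)\oplus\bigoplus_{i=1}^r\TT^1(E_i,q_i)\oplus\bigoplus_{i=1}^r\TT^1(\hat{\cO}_{C,q_i}),
\]
in which the $i$-th node contributes the one-dimensional smoothing space $\TT^1(\hat{\cO}_{C,q_i})\cong T_{q_i}K\otimes T_{q_i}E_i$. Splitting each tail summand by Lemma~\ref{splitting} as $\TT^1(E_i,q_i)=\Cr^1(E_i,q_i)\oplus\TT^1(\hat{\cO}_{E_i,\xi_i})$ produces the asserted decomposition. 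Equivariance is then immediate: $\Aut(C,\pn)^{\star}$ fixes the core, so acts trivially on $\TT^1(K,\pn,\qr)$, and the factor $\Aut(E_i,q_i)$ acts only on the three summands indexed by $i$. Thus the weights on $\bk$ are all zero. The weights on $\bc_i$ and $\bs_i$ are exactly the $\GG_m$-weights of $\TT_{\bar{\cS}_{m,1},0}$ and $\TT_{\bar{\cH}_{m,1},0}$ transported through the equivariant isomorphisms of Lemma~\ref{splitting}, namely $1,3,\ldots,2m-3$ and $-4,-6,\ldots,-(4m+2)$ from Propositions~\ref{s-quotient-stack} and~\ref{h-quotient-stack}, i.e.\ $c_{i,l}\mapsto t_i^{2l-1}c_{i,l}$ and $s_{i,l}\mapsto t_i^{2l-4m-2}s_{i,l}$. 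For the node coordinate, $\Aut(C,\pn)^{\star}$ fixes $T_{q_i}K$ while acting on $T_{q_i}E_i$ through $s\mapsto t_i^{-1}s$ at $q_i\in E_i$, and normalizing the local node equation $uv=n_i$ then gives $n_i\mapsto t_in_i$. Cases~I$'$ and~I$''$ run identically: in Case~I$'$ the single node joins two tails, so its line is $T_qE_1\otimes T_qE_2$ and the weight becomes $t_1t_2$, while in Case~I$''$ there is neither a core nor a node.

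The step I expect to demand the most care is the $\Aut(C,\pn)^{\star}$-equivariance of the nodal gluing decomposition of $\TT^1(C,\pn)$: one must check that the splitting into core, tail, and node contributions is canonical and respects the group action, which is precisely where Lemma~\ref{splitting} is needed to identify the tail summands equivariantly. A secondary but genuine nuisance is fixing the normalizations so that all $\GG_m$-weights -- in particular the signs on the node and crimping coordinates -- are read off consistently against the single automorphism coordinate $t_i$ of Definition~\ref{definition-monomial}.
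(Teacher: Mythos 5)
Your architecture matches the paper's: the action on $\TT^1(K,\pn,\qr)$ is trivial because the attaching is nodal, Lemma~\ref{splitting} splits each tail contribution into $\Cr^1(E_i,q_i)\oplus\TT^1(\hat{\cO}_{E_i,\xi_i})$, and everything reduces to a per-tail computation (the paper's proof does exactly this, taking $r=1$). Where you part ways is in how the numerical weights are obtained, and that is the entire mathematical content of the statement: the paper computes them by direct substitution --- plugging $x\mapsto a^2x$, $y\mapsto a^{2m+1}y$, $s \mapsto a^{-1}s$ into the explicit first-order deformations $y^2=x^{2m+1}+\sum_l s_l\eps x^l$ and $su=c\eps$, and quoting \cite[Example~1.111]{wyck} for the crimping coordinates --- whereas you propose to transport the weights from Propositions~\ref{s-quotient-stack} and~\ref{h-quotient-stack} through the equivariant isomorphisms of Lemma~\ref{splitting}.

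That transport has a genuine gap, and it is not the ``secondary nuisance'' you call it. Propositions~\ref{s-quotient-stack} and~\ref{h-quotient-stack} record the weights of the $\GG_m$ appearing in the quotient presentations $[\AA^{m-1}/\GG_m]$ and $[\AA^{2m}/\GG_m]$. To convert these into weights for $\Aut(E_i,q_i)$ \emph{in the coordinate $t_i$ fixed after Definition~\ref{definition-monomial}}, you must identify the stabilizer of the origin in the presentation with $\Aut(E_i,q_i)$; any such identification is an automorphism of $\GG_m$, hence the identity or inversion, and nothing in your argument rules out inversion, which would flip every sign. The signs are precisely what matters downstream: $V^-$ (the singularity-preserving locus) and $V^+$ (the tail locus) in Section~\ref{section-local-vgit} are defined by negative and positive weights, so a global sign flip would scramble the identification of the VGIT chambers with $\bar{\cS}_{g,n}(A_k)$ and $\bar{\cH}_{g,n}(A_k)$. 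Pinning the identification down requires writing the explicit $\GG_m$-action on the universal family over $\AA^{2m}$ and restricting to the central fiber (this is what the proof of Lemma~\ref{splitting} actually does), which is the same computation as the paper's. The same convention issue infects your node weight: the canonical isomorphism $\TT^1(\hat{\cO}_{C,q_i})\cong T_{q_i}K\otimes T_{q_i}E_i$, combined with the pushforward action on $T_{q_i}E_i$ induced by $s\mapsto t_i^{-1}s$, naively yields weight $-1$, not the asserted $+1$; the correct sign emerges only when the deformation-theoretic action is computed in the same convention as the singularity coordinates (substitute into $su=c\eps$ and renormalize the ideal, as the paper does). So your proof becomes complete exactly when you carry out those explicit substitutions --- at which point it \emph{is} the paper's proof.
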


\begin{proof}

We give the proof for case $\mathbf{I}$ under the assumption that
$r=1$. The proof for arbitrary $r$ is not more difficult and the
proofs for cases $\mathbf{I'}$ and $\mathbf{I''}$ are
similar. Write $(E,q)$ for $(E_1,q_1)$ and set $G:=\Aut(E,q)$.

The action of $G$ on $\TT^1(K,\pn,q)$ is trivial, since $C$
is nodal at $q$.

The coordinates specified in the definition of the monomial
$H_{m,1}$-curve induce an isomorphism $\hat{\cO}_{E,\xi}
\iso \CC[[x,y]]/(y^2-x^{2m+1})$. Fix the following isomorphism
$\TT^1(\hat{\cO}_{E,\xi}) \iso \CC^{\oplus 2m}$:
\[
\Spec \CC[[x,y,\eps]]/
(y^2-x^{2m+1}-s_{2m-1}\eps x^{2m-1}-s_{2m-2}\eps x^{2m-2}
 -\ldots-s_{0}\eps,\eps^2) 
\mapsto (s_0,\ldots,s_{2m-1}).
\]
Then one computes from the definition that the action of $G$ is:
\[
s_l \mapsto t^{2l-4m-2} s_l \quad\text{for}\quad
 0 \leq l \leq 2m-1
\]

Choose an isomorphism
$\hat{\cO}_{C,q} \iso \CC[[s,u]]$ such that there
is a commutative diagram
\[
\xymatrix{ \widehat{\cO}_{C,q} \ar[r] \ar[d] &
            \CC[[s,u]]/(s u) \ar[d]           \\
           \widehat{\cO}_{E,q} \ar[r]        &
           \CC[[s]]                          }
\]
where the lower horizontal map is the isomorphism induced by the
coordinates specified in the definition of the monomial
$H_{m,1}$-curve. Fix the following isomorphism
$\TT^1(\hat{\cO}_{C,q}) \iso \CC$:
\[
\Spec \CC[[s,u,\eps]]/
(s u-c\eps,\eps^2) 
\mapsto c.
\]
Then one computes from the definition that the action of $G$ is:
\[
c \mapsto t c.
\]

From Example~1.111 of \cite{wyck}, one sees that there is the
following isomorphism $\Cr^1(E,q) \iso \CC^{\oplus (m-1)}$:
\[
\Spec \CC[(s+c_1\eps s^2+c_2\eps
s^4+\ldots+c_{m-1}\eps s^{2m-2})^2,
s^{2m},\ldots,s^{4m-1},\eps]/(\eps)^2 \mapsto
(c_1,\ldots,c_{m-1}).
\]
Then one computes from the definition that the action of $G$ is:
\[
c_l \mapsto t^{2l-1} c_l \quad\text{for}\quad
 1 \leq l \leq m-1.
\]
\end{proof}


Next we give the analogous proposition for the case that $k=2m+1$
is odd. Again, there are three possible cases for the canonical
decomposition of a maximally degenerate $A_{2m+1}$-stable curve $(C, \pn)$ (
Definition~\ref{canonical-decomposition-odd}).
\begin{description}

\item[Case II] $(C,\pn)=(K \cup E_1 \cup \ldots
\cup E_r \cup E_{r+1} \cup \ldots \cup E_{r+s},\pn)$ where for $1 \leq i \leq r$, $E_i=\bigcup_{j=1}^{l_i}(E_{ij},q_{i,2j-1}, q_{i,2j})$ is a link of $l_i$ monomial $H_{m,2}$-curves meeting $K$ at two nodes $\{q_{i,1}, q_{i,2l_i}\}$ and with $E_{i,j}$ glued nodally to $E_{i,j+1}$ at $q_{i,2j} \sim q_{i,2j+1}$, and for $r+1 \leq i \leq r+s$,  $E_i=\bigcup_{j=1}^{l_i}(E_{ij},q_{i,2j-1}, q_{i,2j})$ is a link of $l_i$ monomial $H_{m,2}$-curves meeting $K$ at one point $q_{i,1}$ and terminating in the marked point $p_{n-s-r+i}$. \\

\item[Case II$'$] $(C,p_1,p_2)=(E_1 \cup \ldots \cup E_r,q_1,q_{2r})$,
where $(E_i,q_{2i-1},q_{2i})$ are monomial $H_{m,2}$-curves glued nodally at $q_{2i} \sim q_{2i+1}$ for each $1 \leq i \leq r-1$. \\
\item[Case II$''$] $C=(E_1 \cup \ldots \cup E_r)$ where $(E_i,q_{2i-1},q_{2i})$ are monomial $H_{m,2}$-curves glued nodally at $q_{2i} \sim q_{2i+1}$ for each $1 \leq i \leq r-1$ and at $q_{2r} \sim q_1$.
\end{description}

\begin{prop}\label{first-order-action-odd}
Let $(C, \pn)$ be a maximally degenerate $A_{k}$-stable curve with $k = 2m+1$ and $m=1$.  
\begin{description}
\item[Case II]There are decompositions
\begin{align*}
\Aut(C,\pn)^{\circ} &= \Aut(K,\{p_i\}_{i=1}^{n-s},\{q_{i,1}\}_{i=1}^{r+s}, \{q_{i,2l_i}\}_{i=1}^{r}) \times \Aut(C,\pn)^{\star}\\
&=  \Aut(K,\{p_i\}_{i=1}^{n-s},\{q_{i,1}\}_{i=1}^{r+s}, \{q_{i,2l_i}\}_{i=1}^{r})
 \times \prod_{i=1}^{r+s} \prod_{j=1}^{l_i} \Aut(E_{ij}, q_{i,2j-1}, q_{i,2j})
\end{align*}
and
$$
\begin{aligned}
\TT^1(C,\pn) = & \TT^1(K,\{p_i\}_{i=1}^{n-s},\{q_{i,1}\}_{i=1}^{r+s}, \{q_{i,2l_i}\}_{i=1}^{r})
         \oplus \bigoplus_{i=1}^{r+s} \bigoplus_{j=1}^{l_{i}}
        (  \TT^1(\hat{\cO}_{E_{ij},\xi_{ij}})) \\
         &\oplus \bigoplus_{i=0}^{r} (\TT^1(\hat{\cO}_{C,q_{i,1}}) \oplus \TT^1(\hat{\cO}_{C,q_{i,2l_i}})) \oplus  \bigoplus_{i=r+1}^{r+s} (\TT^1(\hat{\cO}_{C,q_{i,1}}) )
\end{aligned}
$$
Now let $t_{i,j}$ be the coordinate on
$\Aut(E_{ij},q_{i,2j-1}, q_{i,2j}) \cong \GG_m$ given by the isomorphism
following Definition \ref{definition-monomial}, and let $g(K)$ be the arithmetic genus of $K$.  There are
coordinates
\[
\begin{tabular}{lllll}
``{\bf k}ore''		& $\bk=\left(k_l\right)_{l=1}^{g(K)+n+2r}$ & \text{on} &  \multicolumn{2}{l}{$\TT^1(K,\{p_i\}_{i=1}^{n-s},\{q_{i,1}\}_{i=1}^{r+s}, \{q_{i,2l_i}\}_{i=1}^{r})$}   \\
``{\bf s}ingularity''	&$\bs_{i,j}=\left( s_{i,j,l}\right)_{l=1}^{2m}$
& on & $\TT^1(\hat{\cO}_{E_i,\xi_i})$ \qquad \quad \, &
 $1 \leq i \leq r+s$, $1 \leq j \leq l_{i}$ \\
``{\bf n}ode''	&$n_{i,j}$ & \text{on} & $\TT^1(\hat{\cO}_{C,q_{i,2j}})$ &
$1 \leq i \leq r+s$, $1 \leq j \leq l_{i}-1$\\
&&&& $r+1 \leq i \leq r+s$, $j=l_i$
\end{tabular}
\]
such that the action of $\Aut(C,\pn)^{\star}$ on $\TT^1(C,\pn)$ is given by
\[
\begin{array}{lll}
k_l & \mapsto & k_l \\
s_{i,j,l} &  \mapsto & t_{i,j}^{l-2m-2} s_{i,j,l}  \\
n_{i,j} & \mapsto & t_{i,j} t_{i,j+1} n_{i,j} \,\,(j \neq 0, l_i)\\
n_{i,0} & \mapsto & t_{i,1} n_{i,0}\\
n_{i,l_i} & \mapsto & t_{i,l_i} n_{i,l_i}
\end{array}
\]

\item[Case II$'$] There are decompositions
\[
 \Aut(C, p_1, p_2)^{\circ} =  \Aut(C, p_1, p_2)^{\star} =
\prod_{i=1}^r \Aut(E_i, q_{2i-1}, q_{2i})
\]
and
\[
\TT^1(C,p_1,p_2) 
= \bigoplus_{i=1}^r
   ( \TT^1(\hat{\cO}_{E_i,\xi_i}))
  \oplus \bigoplus_{i=1}^{r-1} \TT^1(\hat{\cO}_{C,q_{2i}})
\]
For $1 \leq i \leq r$, let $t_i$ be the coordinate on
$\Aut(E_i,q_{2i-1}, q_{2i}) \cong \GG_m$ given by the isomorphism
following Definition \ref{definition-monomial}. There are
coordinates
\[
\begin{array}{llll}
\bs_i=(s_{i,0},\ldots,s_{i,2m}) & \text{on} &
  \TT^1(\hat{\cO}_{E_i,\xi_i}) & \text{for $1 \leq i \leq r$} \\
n_i & \text{on} & \TT^1(\hat{\cO}_{q_{2i}}) &
  \text{for $1 \leq i \leq r-1$}
\end{array}
\]
such that the action of $\Aut(C,\pn)^{\star}$ on $\TT^1(C,p_1,p_2)$ is given by
\[
\begin{array}{lll}
s_{i,l} & \mapsto & t_i^{l-2m-2} s_{i,l}  \\
n_i & \mapsto & t_i t_{i+1} n_i.
\end{array}
\]
\item[Case II$''$] There are decompositions
\[
\Aut(C)^{\circ} = \prod_{i=1}^{r} \Aut(E_i, q_{2i-1}, q_{2i})
\]
and
\[
\TT^1(C) = 
\bigoplus_{i=1}^{r} 
 (\TT^1(\hat{\cO}_{E_i,\xi_i}))
\oplus \bigoplus_{i=1}^{r} \TT^1(\hat{\cO}_{C,q_{2i}})
\]
For $1 \leq i \leq r$, let $t_i$ be the coordinate on
$\Aut(E_i,q_{2i-1}, q_{2i}) \cong \GG_m$ given by the
isomorphism in Definition \ref{definition-monomial}. There are
coordinates
\[
\begin{array}{llll}
\bs_i=(s_{i,0},\ldots,s_{i,2m}) & \text{on} &
  \TT^1(\hat{\cO}_{E_i,\xi_i}) & \text{for $1 \leq i \leq r$} \\
n_i & \text{on} & \TT^1(\hat{\cO}_{q_{2i}}) &
  \text{for $1 \leq i \leq r$}
\end{array}
\]
such that
\[
\begin{array}{lll}
s_{i,l}&  \mapsto & t_i^{l-2m-2} s_{i,l} \\
n_i & \mapsto & t_i t_{i+1} n_i 
\end{array}
\]
where $t_{r+1}:=t_1$.
\end{description}
\end{prop}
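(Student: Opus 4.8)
The plan is to follow the template of the even-case proof (Proposition~\ref{first-order-action-even}) and reduce everything to three building blocks — core deformations, tacnodal ($A_{2m+1}$) singularity deformations, and node smoothings — computing the $\Aut(C,\pn)^{\star}$-weight of each. I will carry out \textbf{Case II} in detail, since cases II$'$ and II$''$ differ only in the boundary behaviour of the links, and I will use throughout the standing hypothesis $m=1$. For the automorphism decomposition, the point is that $C$ is nodal at every attaching point $q_{i,\cdot}$ (both where a link meets $K$ and where consecutive $H_{m,2}$-curves $E_{ij},E_{i,j+1}$ meet inside a link), and the $\GG_m$-action on each monomial $H_{m,2}$-curve $E_{ij}$ fixes its two marked points, since the action is $s_1\mapsto a^{-1}s_1,\ s_2\mapsto a^{-1}s_2$ by the isomorphism following Definition~\ref{definition-monomial}. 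Hence an automorphism restricting to the identity on the core may be scaled independently on each $E_{ij}$ while preserving all gluings, yielding $\Aut(C,\pn)^{\star}=\prod_{i,j}\Aut(E_{ij},q_{i,2j-1},q_{i,2j})$ together with the claimed splitting of $\Aut(C,\pn)^{\circ}$.

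Next I would establish the first-order deformation decomposition. Since all attaching and internal points are nodes, the standard normalization sequence for the nodal union of the core and the links — iterated along the chain of $H_{m,2}$-curves within each link — expresses $\TT^1(C,\pn)$ as the direct sum of $\TT^1$ of the core (with the attaching points regarded as marked points), the deformation spaces $\TT^1(E_{ij})$ of the monomial pieces keeping their marked points fixed, and one smoothing line $\TT^1(\hat{\cO}_{C,q})$ for each node $q$. By Lemma~\ref{splitting}, $\TT^1(E_{ij})=\Cr^1(E_{ij},q_{i,2j-1},q_{i,2j})\oplus\TT^1(\hat{\cO}_{E_{ij},\xi_{ij}})$ equivariantly; crucially, since $m=1$ we have $\bar{\cS}_{1,2}\cong B\GG_m$, so $\Cr^1(E_{ij},q_{i,2j-1},q_{i,2j})\cong\TT_{\bar{\cS}_{1,2},0}=0$. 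This is exactly why no crimping coordinates appear in the odd case, and it leaves precisely the summands listed in the proposition.

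For the weight computation, the action is trivial on the core summand because every element of $\Aut(C,\pn)^{\star}$ restricts to the identity on $K$. On each tacnodal summand, Lemma~\ref{splitting} identifies $\TT^1(\hat{\cO}_{E_{ij},\xi_{ij}})\cong\TT_{\bar{\cH}_{m,2},0}=\TT_{\AA^{2m+1},0}$, and the $\GG_m$-weights $-2,-3,\dots,-(2m+2)$ of Proposition~\ref{h-quotient-stack} give exactly $s_{i,j,l}\mapsto t_{i,j}^{\,l-2m-2}s_{i,j,l}$. For the node smoothings I use the local model $\hat{\cO}_{C,q}\cong\CC[[u,v]]/(uv)$ with smoothing parameter $uv=n\eps$: exactly as computed in the even case, each branch lying on a monomial piece $E_{ij}$ contributes a factor $t_{i,j}$ to the weight of $n$. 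Thus an internal node between $E_{ij}$ and $E_{i,j+1}$ transforms by $t_{i,j}t_{i,j+1}$, while a node where a link meets the fixed core has only one active branch and transforms by the single factor $t_{i,1}$ or $t_{i,l_i}$. In Case II$''$ the link closes into a loop, so the node factors wrap around under the convention $t_{r+1}:=t_1$, which produces the final displayed formula.

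The main obstacle will be the bookkeeping rather than any new geometric input: correctly matching each node-smoothing coordinate $n_{i,j}$ to its two adjacent pieces across the three boundary situations — core-attachment, internal, and marked-point endpoints in Case II$'$ — together with the loop closure in Case II$''$, and verifying that the iterated normalization sequence splits $\Aut(C,\pn)^{\star}$-equivariantly as asserted. Each underlying weight computation is identical to one already performed in the even case; only their combinatorial assembly along the links is genuinely new.
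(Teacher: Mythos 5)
Your proposal is correct and follows essentially the same route as the paper's proof: decompose $\TT^1(C,\pn)$ into core, singularity, and node summands and compute the $\GG_m$-weights in the explicit monomial-curve coordinates, with each branch of a node lying on a monomial piece contributing one factor $t_{i,j}$, so that internal nodes transform by $t_{i,j}t_{i,j+1}$ and attaching nodes by a single $t_{i,1}$ or $t_{i,l_i}$, exactly as in the even case. The only cosmetic differences are that you obtain the tacnode weights $s_{i,j,l}\mapsto t_{i,j}^{\,l-2m-2}s_{i,j,l}$ by citing Lemma~\ref{splitting} together with Proposition~\ref{h-quotient-stack} rather than recomputing them from the deformation $y^2=x^{2m+2}+\sum_l s_l\eps x^l$ as the paper does, and that you make explicit the (correct) reason no crimping coordinates appear in the odd case: $\Cr^1=\TT_{\bar{\cS}_{1,2},0}=0$ when $m=1$, a point the paper leaves implicit.
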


\begin{proof}

We give the proof for case $\mathbf{II}$ under the assumption
that $r=s=1$. The proof for arbitrary $r,s$ is not more difficult and
the proofs for cases $\mathbf{II'}$ and $\mathbf{II''}$ are
similar. Write $E$ for $E_1$ and set $G:=\Aut(E,q_1,q_2)$.

The coordinates specified in the definition of the monomial $H_{m,2}$-curve 
induce an isomorphism $\hat{\cO}_{E,\xi}
\iso \CC[[x,y]]/(y^2-x^{2m+2})$. Fix the following isomorphism
$\TT^1(\hat{\cO}_{E,\xi}) \iso \CC^{\oplus 2m+1}$:
\[
\Spec \CC[[x,y,\eps]]/
(y^2-x^{2m+2}-s_{2m}\eps x^{2m}-s_{2m-1}\eps x^{2m-1}
 -\ldots-s_{0}\eps,\eps^2) 
\mapsto (s_0,\ldots,s_{2m}).
\]
Then one computes from the definition that the action of $G$ is:
\[
s_l \mapsto t^{l-2m-2} s_l \quad\text{for}\quad 0 \leq l \leq 2m
\]

For $i=1,2$, choose an isomorphism
$\hat{\cO}_{C,q_i} \isom \CC[[s_i,u]]$ such that there
is a commutative diagram
\[
\xymatrix{ \widehat{\cO}_{C,q_i} \ar[r] \ar[d] &
            \CC[[s_i,u]]/(s_i u) \ar[d]           \\
           \widehat{\cO}_{E,q_i} \ar[r]        &
           \CC[[s_i]]                          }
\]
where the lower horizontal map is the isomorphism induced by the
coordinates specified in the definition of the monomial
$H_{m,2}$-curve. Fix the following isomorphism
$\TT^1(\hat{\cO}_{C,q_i}) \iso \CC$:
\[
\Spec \CC[[s_i,u,\eps]]/
(s_i u-n_i\eps,\eps^2) 
\mapsto n_i.
\]
Then one computes from the definition that the action of $G$ is:
\[
n_i \mapsto t n_i \quad\text{for}\quad 1 \leq i \leq 2.
\]
\end{proof}

\subsection{Formal deformations}\label{subsection-action}

Proposition~\ref{formal-action-even} and
Proposition~\ref{formal-action-odd} below will be used in the
proof of Proposition~\ref{theorem-local-GIT} in the next section.

\begin{prop}\label{formal-action-even}

Let $(C,\pn)$ be a maximally-degenerate $A_{2m}$-stable curve in
case $\mathbf{I}$ (see Proposition \ref{first-order-action-even}).  Set
\[
A:=\CC[[\bk, \bc_1,\ldots,\bc_r,
\bs_1,\ldots,\bs_r, n_1,\ldots,n_r]].
\]
There is a miniversal
formal deformation $\psi:\Spf A \arr \bar{\cM}_{g,n}(A_k)$ of
$(C,\pn)$ such that there is a $2$-cartesian diagram
\[
\xymatrix{ \Spf A/\inn_{i=1}^r (\bs_i) 
              \ar[r] \ar[d]^{\theta}     &
           \Spf A \ar[d]^\psi                            &
           \Spf A/\inn_{i=1}^r (\bc_i,n_i)
              \ar[l]\ar[d]^{\phi}    \\
           \overline{\cS}_{g,n}(A_{2m}) \ar[r]^{\imath} &
           \overline{\cM}_{g,n}(A_{2m})                 &
           \overline{\cH}_{g,n}(A_{2m}) \ar[l]_{\jmath} }
\]
and the action of the group scheme $\Aut(C,\pn)^{\star} $ on
$\hat{\Def}(C,\pn)$ induced by $\psi$ coincides with the action
induced by the action of the group $\Aut(C,\pn)^{\star} $ on
$\TT^1(C,\pn)$ described in
Proposition~\ref{first-order-action-even}.
\end{prop}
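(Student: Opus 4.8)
The plan is to build $\psi$ by a gluing construction that respects the canonical decomposition $C = K \cup E_1 \cup \cdots \cup E_r$, and then to read off the two vanishing loci from the local analysis of each tail. Since $C$ is a reduced curve with the marked points in its smooth locus, its deformations are unobstructed, so $\hat{\Def}(C,\pn)$ is the formal spectrum of a power series ring on $\TT^1(C,\pn)^\vee$; because $A_{2m}$-stability is open (Corollary~\ref{C:Openness}), the same formal scheme is a miniversal deformation inside $\bar{\cM}_{g,n}(A_{2m})$, which already forces $A = \CC[[\bk,\bc_1,\dots,\bc_r,\bs_1,\dots,\bs_r,n_1,\dots,n_r]]$ with the coordinates of Proposition~\ref{first-order-action-even}. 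First I would produce $\psi$ concretely: deform the core $(K,\pn,\qr)$ using $\bk$, deform each tail $E_i$ as a point of $\bar{\cM}_{m,1}(A_{2m})$ using the coordinates $(\bc_i,\bs_i)$ supplied by Lemma~\ref{splitting}, and glue the deformed tails to the deformed core along the nodes $q_i$ with the node-smoothing parameters $n_i$. Standard local-to-global deformation theory for a curve that is nodal at the $q_i$ guarantees that gluing the versal deformations of the pieces along the versal deformation of each node yields a miniversal deformation of $C$, so this construction does realize $\Spf A$.

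For the equivariance statement, I would carry out the gluing $G$-equivariantly, where $G := \Aut(C,\pn)^\star = \prod_{i=1}^r \Aut(E_i,q_i) \cong \GG_m^r$ is a torus, hence linearly reductive. The action of $G$ on the core deformation is trivial (as $G$ restricts to the identity on $K$), the action on the $i$-th tail factor is the linear action of Lemma~\ref{splitting}, and the node parameter $n_i$ transforms by the tensor of the two branch weights; since the splitting of Lemma~\ref{splitting} is $\Aut(E_i,q_i)$-equivariant, the resulting $G$-action on $\Spf A$ is exactly the linearization of the $\TT^1(C,\pn)$-action recorded in Proposition~\ref{first-order-action-even}. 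Equivalently, one may invoke the fact that a linearly reductive group acting on an unobstructed deformation functor admits a linearizable equivariant miniversal object.

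It remains to identify the two closed substacks. The locus $\bar{\cS}_{g,n}(A_{2m})$ consists of curves retaining an $A_{2m}$-singularity. Over $\Spf A$ the universal curve carries the singular point $\xi_i$ of each tail, and by Lemma~\ref{splitting} the induced deformation of $\hat{\cO}_{C,\xi_i}$ is precisely the restriction to the $\bs_i$-coordinates; hence the $A_{2m}$-singularity at $\xi_i$ persists exactly over $V(\bs_i)$, so $\psi^{-1}(\bar{\cS}_{g,n}(A_{2m})) = \bigcup_i V(\bs_i) = V(\bigcap_i(\bs_i))$, giving the left-hand square. Since each $(\bs_i)$ is prime, $\bigcap_i(\bs_i)$ is radical, matching the reduced structure of $\bar{\cS}_{g,n}(A_{2m})$. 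Dually, $\bar{\cH}_{g,n}(A_{2m})$ is the locus of curves carrying a destabilizing (nodally attached) $H_{m,1}$-tail. The node $q_i$ survives exactly over $V(n_i)$, and over $V(n_i)$ the tail $E_i$ varies in $\bar{\cM}_{m,1}(A_{2m})$; using the quotient presentations of Propositions~\ref{s-quotient-stack} and~\ref{h-quotient-stack} together with Lemma~\ref{splitting}, the sublocus where $E_i$ remains a genuine $H_{m,1}$-curve — i.e.\ where it stays in $\bar{\cH}_{m,1}$, equivalently where the attaching point remains a Weierstrass point and is not sheared off by crimping — is cut out by $\bc_i = 0$. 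Therefore a destabilizing $H_{m,1}$-tail survives at $E_i$ exactly over $V(\bc_i,n_i)$, while the core and its deformations contain no such tails by openness; this yields $\psi^{-1}(\bar{\cH}_{g,n}(A_{2m})) = \bigcup_i V(\bc_i,n_i) = V(\bigcap_i(\bc_i,n_i))$ and the right-hand square.

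The main obstacle is the last identification, for $\bar{\cH}$: one must show that the crimping coordinates $\bc_i$ cut out precisely the divisorial condition that the tail stay hyperelliptically attached (the genuine $H_{m,1}$-condition), rather than merely retain its $A_{2m}$-singularity, and then verify the scheme-theoretic (radicality) comparison. This is exactly the point isolated in the introductory example $V(n,c)$ versus $V(\{s_i\})$, and it is why the explicit presentations of $\bar{\cS}_{m,1}$ and $\bar{\cH}_{m,1}$ and the equivariant splitting of Lemma~\ref{splitting} were established beforehand; the remaining content is bookkeeping over the $r$ tails.
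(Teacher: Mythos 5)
Your outline has the right architecture and cites the right ingredients, but there is a genuine gap at precisely the step you set aside as ``bookkeeping.'' Lemma~\ref{splitting} is a statement about \emph{first-order} deformations only: it gives an $\Aut(E_i,q_i)$-equivariant splitting $\TT^1(E_i,q_i) \cong \Cr^1(E_i,q_i)\oplus \TT^1(\hat{\cO}_{E_i,\xi_i})$. It does not, by itself, produce coordinates on the formal deformation space with the property that the locus where the $A_{2m}$-singularity persists \emph{equals} $V(\bs_i)$ and the locus where the tail remains a nodally attached $H_{m,1}$-curve \emph{equals} $V(\bc_i,n_i)$. What the quotient presentations (Propositions~\ref{s-quotient-stack} and~\ref{h-quotient-stack}) plus Lemma~\ref{splitting} actually give is that these two loci are smooth $G$-invariant formal subschemes whose tangent spaces are the expected coordinate subspaces; for an arbitrary choice of coordinates lifting the splitting of $\TT^1$, the loci are merely \emph{tangent} to $V(\bs_i)$ and $V(\bc_i,n_i)$, not equal to them. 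Your gluing construction hides the same issue one level down: to ``deform each tail over $\Spf \CC[[\bc_i,\bs_i]]$ using the coordinates supplied by Lemma~\ref{splitting},'' you must already possess a miniversal family of the tail in which the singularity-preserving locus and the $H_{m,1}$-locus are coordinate subspaces — and that is exactly the single-tail case ($r=1$, empty core, i.e.\ Case I$''$) of the proposition being proved.

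The missing argument — which is the actual content of the paper's proof — is a simultaneous equivariant straightening of the two loci. Start from \emph{any} miniversal deformation $\Spf A'\to\bar{\cM}_{g,n}(A_{2m})$ on which $G=\prod_i\Aut(E_i,q_i)$ acts linearly (possible since $G$ is linearly reductive), and define $S_i'$, $H_i'$ geometrically (the singularity $\xi_i$ is preserved; the node $q_i$ is preserved and the induced deformation of $(E_i,q_i)$ is a family of $H_{m,1}$-curves) with ideals $I_i'$, $J_i'$; with these definitions the $2$-cartesian diagram is essentially tautological, and the whole problem is to compute the ideals. Formal smoothness of $A'$, $S_i'$, $H_i'$ identifies the conormal spaces $I_i'/\fm_{A'}I_i'\cong\TT^1(\hat{\cO}_{E_i,\xi_i})^\vee$ and $J_i'/\fm_{A'}J_i'\cong\Cr^1(E_i,q_i)^\vee\oplus\TT^1(\hat{\cO}_{C,q_i})^\vee$; linear reductivity lets you choose $G$-equivariant sections of $I_i'\to I_i'/\fm_{A'}I_i'$ and $J_i'\to J_i'/\fm_{A'}J_i'$, and Nakayama shows the lifted elements generate the ideals. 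The point where Lemma~\ref{splitting} genuinely enters is the verification that the images of all these lifts in $\fm_{A'}/\fm_{A'}^2=\TT^1(C,\pn)^\vee$ are linearly independent (the conormal directions of the $S_i'$ and the $H_i'$ sit in complementary summands of $\TT^1$), so that adjoining invariant kore coordinates yields an isomorphism $A\cong A'$ carrying $(\bs_i)$ to $I_i'$, $(\bc_i,n_i)$ to $J_i'$, and linearizing the action with the weights of Proposition~\ref{first-order-action-even}. Without this lift-and-Nakayama step your identification of the two squares, in particular the right-hand one for $\bar{\cH}_{g,n}(A_{2m})$, remains an assertion rather than a proof.
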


\begin{proof}

Let $G:=\Aut(C,\pn)^{\star} =\prod_{i=1}^r \Aut(E_i,q_i)$. We may choose a
miniversal formal deformation $\psi':\Spf A' \arr
\bar{\cM}_{g,n}(A_{2m})$ of $(C,\pn)$ where $A'$ has a linear $G$-action 
(induced by the action of $\fm_{A'}/\fm_{A'}^2$). 
Let $\cC' \arr \Spf A'$ be
the corresponding family of curves. Let $\Spf S_i'$ be the formal
closed subscheme of $\Spf A'$ where the $A_{2m}$-singularity
$\xi_i$ is preserved in $\cC'$ ($1 \leq i \leq r$).
Let $\Spf H_i'$ be the formal closed subscheme of $\Spf A'$ where
the node $q_i$ is preserved in $\cC'$ and the induced deformation
of $(E_i,q_i)$ is a family of $H_{m,1}$-curves ($1 \leq i \leq
r$). Clearly there is a $2$-cartesian diagram
\[
\xymatrix{ \bigcup_{i=1}^r \Spf S_i' \ar[r] 
            \ar[d]^{\unn_{i=1}^r\theta_i'} & 
           \Spf A' \ar[d]^{\psi'} &
           \bigcup_{i=1}^r \Spf H_i' \ar[l] 
            \ar[d]^{\unn_{i=1}^r\phi_i'} \\
           \bar{\cS}_{g,n}(A_{2m}) \ar[r] & 
           \bar{\cM}_{g,n}(A_{2m}) &
           \bar{\cH}_{g,n}(A_{2m}). \ar[l] }
\]
We claim that we can choose an isomorphism $\Spf A \arr \Spf A'$
such that for $1 \leq i \leq r$, the base change of $\Spf S_i'$ is
$\Spf A/(\bs_i)$ and the base change of $\Spf H_i'$ is $\Spf
A/(\bc_i,n_i)$ and in addition the action of $G$ on
$\Spf A$ is as claimed.

Let $S_i'=A'/I_i'$ and $H_i'=A'/J_i'$. It is easy to see that 
the inclusions $\Spf S_i'
\arr \Spf A'$ and $\Spf H_i' \arr \Spf A'$ are equivariant with
respect to the actions of $G$ induced by $\theta_i'$, $\psi'$ and
$\phi_i'$.


There is a commutative diagram of equivariant exact sequences
\[
\xymatrix{ 0 \ar[r] &
           I_i' \ar[d] \ar[r] &
           m_{A'} \ar[d] \ar[r] &
           m_{S_i'} \ar[d] \ar[r] &
           0 \\
           0 \ar[r]  &
           I_i'/m_{A'} I_i' \ar[r] \ar[d]^\isom &
           m_{A'}/m_{A'}^2 \ar[r] \ar[d]^\isom &
           m_{S_i'}/m_{S_i'}^2  \ar[r] \ar[d]^= &
           0  \\
           0 \ar[r] &
           \TT^1(\hat{\cO}_{E_i,\xi_i})^\vee \ar[r] &
           \TT^1(C,\pn)^\vee \ar[r] &
           \TT_{S_i}^\vee \ar[r] &
           0.  }
\]
The second row is exact on the left
because $A'$ and $S_i'$ are both formally smooth.
All the maps are equivariant. 

By Proposition~\ref{first-order-action-even}, we can choose, for
$1 \leq i \leq r$, a basis
$\bs_i'=(s_{i,0}',\ldots,s_{i,2m-1}')$ for
$\TT^1(\hat{\cO}_{E_i,\xi_i})^\vee$ such that the action of $G$ is
\[
s_{i,l}' \arr t_i^{2l-4m-2} s_{i,l}'.
\]
Choose an equivariant section $\sigma_i:
\TT^1(\hat{\cO}_{E_i,\xi_i})^\vee \arr I_i'$ and, abusing
notation, write $s_{i,l}'$ also for the image of $s_{i,l}'$ under
$\sigma_i$. By Nakayama's Lemma, $s_{i,0}',\ldots,s_{i,2m-1}'$
generate $I_i'$.

Recall from Lemma~\ref{splitting} that for $1 \leq i \leq r$ there
is an equivariant inclusion
\[
\Cr^1(E_i,q_i)^\vee \arr \TT^1(C,\pn)^\vee
\]
There is a commutative diagram of equivariant exact sequences
\[
\xymatrix{ 0 \ar[r] &
           J_i' \ar[d] \ar[r] &
           m_{A'} \ar[d] \ar[r] &
           m_{H_i'} \ar[d] \ar[r] &
           0 \\
           0 \ar[r]  &
           J_i'/m_{A'} J_i' \ar[r] \ar[d]^\isom &
           m_{A'}/m_{A'}^2 \ar[r] \ar[d]^\isom &
           m_{H_i'}/m_{H_i'}^2  \ar[r] \ar[d]^= &
           0  \\
           0 \ar[r] &
           \Cr^1(E_i,q_i)^\vee 
              \oplus \TT^1(\hat{\cO}_{C,q_i})^\vee \ar[r] &
           \TT^1(C,\pn)^\vee \ar[r] &
           \TT_{H_i}^\vee \ar[r] &
           0.  }
\]
Therefore, using Proposition~\ref{first-order-action-even} and
using an equivariant section as above, we can choose, for $1 \leq
i \leq r$, $\bc_i'=(c_{i,1}',\ldots,c_{i,m-1}')$ and
$n_i'$ such that $J_i'=(\bc_i',n_i')$ and the action of $G$ is
\[
c_{i,l}' \arr t_i^{2l-1} c_{i,l}' \qquad
n_i' \arr t_i n_i'.
\]

The subset $S:=\{\bc_1', \ldots, \bc_r', n_1', \ldots,
n_r', \bs_1', \ldots, \bs_r' \}$ of $\TT^1(C,\pn)^\vee$ is
linearly independent.
Choose $k_1', \ldots, k_{g(K)+n}'$ such that $S \unn \{k_1', \ldots, k_{g(K)+n}'\}$ is a basis and the
action of $G$ is
\[
k_l' \mapsto k_l'.
\]
Choose an equivariant section $\sigma:
\TT^1(C,\pn)^\vee \arr m_{A'}$
and, abusing notation, write $k_l'$ also for the image of
$k_l'$ under $\sigma$.
Define a map $A \arr A'$ by
\[
k_i \arr k_i' \qquad
c_{i,l} \arr c_{i,l}' \qquad
n_i \arr n_i' \qquad
s_{i,l} \arr s_{i,l}'.
\]
This is a local homomorphism inducing an isomorphism $A/m_A^2 \arr
A'/m_{A'}^2$ and therefore an isomorphism.
\end{proof}

\begin{remark} Similar descriptions can be given for cases I$'$ and I$''$.
\end{remark}

\begin{prop}\label{formal-action-odd}

Let $(C,\pn)$ be a maximally degenerate
$A_{2m+1}$-stable 
 curve in case $\mathbf{II}$ (see Proposition
\ref{first-order-action-odd}). Set
\[
A:=\CC[[\bk, \{
\bs_{i,j}, n_{i,j} \co 1 \leq i \leq  r+s, 1 \leq j \leq l_i \}]].
\]
There is a miniversal
formal deformation $\psi:\Spf A \arr \bar{\cM}_{g,n}(A_k)$ of
$(C,\pn)$ such that there is a $2$-cartesian diagram
\[
\xymatrix{ \Spf A/\inn_{i,j} (\bs_{i,j}) 
              \ar[r] \ar[d]^{\theta}     &
           \Spf A \ar[d]^\psi                            &
           \Spf A/\inn_{i,\mu,\nu \in S} J_{\mu,\nu} 
              \ar[l]\ar[d]^{\phi}    \\
           \overline{\cS}_{g,n}(A_{2m+1}) \ar[r]^{\imath} &
           \overline{\cM}_{g,n}(A_{2m+1})                 &
           \overline{\cH}_{g,n}(A_{2m+1}) \ar[l]_{\jmath}, }
\]
where $S:=\{i, \mu,\nu: 1 \leq i \leq r+s, 1 \leq \mu \leq \lceil\frac{l_i}{2}\rceil,\,0
\leq \nu \leq l_i-2\mu+1\}$ and
\[
J_{i,\mu,\nu}=
(n_{i,\nu}, 
 \bs_{i,\nu+2}, 
 \ldots, \bs_{i,\nu+2\mu-2}, 
n_{i,\nu+2\mu-1}),
\]
and the action of the group scheme $\Aut(C,\pn)^{\star} $ on
$\hat{\Def}(C,\pn)$ induced by $\psi$ coincides with the action
induced by the action of the group $\Aut(C,\pn)^{\star} $ on
$\TT^1(C,\pn)$ described in
Proposition~\ref{first-order-action-even}.
\end{prop}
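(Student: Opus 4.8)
The plan is to follow verbatim the strategy of the proof of Proposition~\ref{formal-action-even}, adapting the bookkeeping to the chain/link combinatorics of the odd case and exploiting the simplification $m=1$, which forces the crimping spaces $\Cr^1(E_{ij},q_{i,2j-1},q_{i,2j})$ to vanish. First I would set $G := \Aut(C,\pn)^\star = \prod_{i=1}^{r+s}\prod_{j=1}^{l_i}\Aut(E_{ij},q_{i,2j-1},q_{i,2j})$ and choose a miniversal formal deformation $\psi'\colon \Spf A' \to \bar{\cM}_{g,n}(A_{2m+1})$ of $(C,\pn)$ on which $G$ acts linearly (via its action on $\fm_{A'}/\fm_{A'}^2 \cong \TT^1(C,\pn)$), with universal family $\cC' \to \Spf A'$. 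Everything then reduces to producing a $G$-equivariant coordinate system on $A'$ in which the two substacks pull back to the stated ideals.

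Next I would identify the two loci geometrically inside $\Spf A'$. For the $\bar{\cS}$-locus, let $\Spf S'_{ij} \subseteq \Spf A'$ be the formal closed subscheme over which the $A_{2m+1}$-singularity $\xi_{ij}$ is preserved in $\cC'$; exactly as in the even case, $\imath^{-1}(\bar{\cS}_{g,n}(A_{2m+1}))$ is the union $\bigcup_{i,j}\Spf S'_{ij}$, cut out by $\inn_{i,j}(\bs_{i,j})$. The $\bar{\cH}$-locus is the genuinely new ingredient. For each triple $(i,\mu,\nu)\in S$ I would let $\Spf H'_{i,\mu,\nu}$ be the formal closed subscheme over which the block of bridges $E_{i,\nu+1},\ldots,E_{i,\nu+2\mu-1}$ degenerates to a \emph{single} destabilizing $H_{m,2}$-chain of length $\mu$: concretely, the two boundary nodes $n_{i,\nu}$ and $n_{i,\nu+2\mu-1}$ remain nodes and the alternate singularities $\xi_{i,\nu+2},\xi_{i,\nu+4},\ldots,\xi_{i,\nu+2\mu-2}$ are preserved (becoming the $\mu-1$ internal $A_{2m+1}$-singularities of the chain), while the intervening nodes $n_{i,\nu+1},\ldots,n_{i,\nu+2\mu-2}$ smooth and the odd-indexed bridges are free to smooth into the $\mu$ constituent $H_{m,2}$-bridges. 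This is precisely the vanishing locus of $J_{i,\mu,\nu}=(n_{i,\nu},\bs_{i,\nu+2},\ldots,\bs_{i,\nu+2\mu-2},n_{i,\nu+2\mu-1})$, and one checks that $\jmath^{-1}(\bar{\cH}_{g,n}(A_{2m+1}))=\bigcup_{(i,\mu,\nu)\in S}\Spf H'_{i,\mu,\nu}$, cut out by $\inn_{(i,\mu,\nu)\in S}J_{i,\mu,\nu}$. All the inclusions $\Spf S'_{ij}\hookrightarrow \Spf A'$ and $\Spf H'_{i,\mu,\nu}\hookrightarrow\Spf A'$ are $G$-equivariant, giving the asserted $2$-cartesian diagram.

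With the loci in hand, the final step is a purely formal weight-matching argument identical to the one in Proposition~\ref{formal-action-even}. Writing $S'_{ij}=A'/I'_{ij}$ and $H'_{i,\mu,\nu}=A'/J'_{i,\mu,\nu}$, I would feed in the conormal exact sequences $0\to I'_{ij}/\fm_{A'}I'_{ij}\to \fm_{A'}/\fm_{A'}^2\to \fm_{S'_{ij}}/\fm_{S'_{ij}}^2\to 0$ (and the analogue for the $H'$'s) together with the explicit $G$-weights supplied by Proposition~\ref{first-order-action-odd} and the equivariant splitting of Lemma~\ref{splitting}. Using $G$-equivariant sections and Nakayama's Lemma I would choose weight-homogeneous generators $\bs_{i,j}$, $n_{i,j}$ of the relevant conormal spaces transforming with the prescribed characters $t_{i,j}^{\,l-2m-2}$ and $t_{i,j}t_{i,j+1}$ (resp.\ $t_{i,1}$, $t_{i,l_i}$ at the ends), and complete them by $G$-fixed core coordinates $\bk$; lifting everything through an equivariant section $\TT^1(C,\pn)^\vee \to \fm_{A'}$ defines a local homomorphism $A\to A'$ that is an isomorphism on $\fm/\fm^2$, hence an isomorphism.

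The main obstacle is the combinatorial identification of the $\bar{\cH}$-locus carried out in the second paragraph. Unlike the even case---where each nodally attached tail contributes a single stratum $V(\bc_i,n_i)$---a single $H_{m,2}$-link of length $l_i$ supports destabilizing chains of every length $\mu\le \lceil l_i/2\rceil$ and every admissible offset $\nu$, each produced by a different pattern of smoothed nodes and preserved alternate singularities. The delicate point is to verify that these strata are exactly the $V(J_{i,\mu,\nu})$ and that, scheme-theoretically, $\jmath^{-1}(\bar{\cH}_{g,n}(A_{2m+1}))$ is their union; this rests on Lemma~\ref{L:LimitNode} to control how the attaching singularities degenerate and on the deformation theory of the $A_{2m+1}$-singularities. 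Once this is established, the remaining arguments are formal and transcribe directly from the even case.
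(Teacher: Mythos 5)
Your proposal is correct and follows essentially the same route as the paper's proof: choose a miniversal deformation $\Spf A'$ with its $G$-action, identify the $\bar{\cS}$-locus as the union of the singularity-preservation loci $\Spf S'_{i,j}$ and the $\bar{\cH}$-locus as the union over $(i,\mu,\nu)\in S$ of the strata cut out by preserving the two boundary nodes and the alternating singularities $\xi_{i,\nu+2},\ldots,\xi_{i,\nu+2\mu-2}$ (the paper writes these as $\Spf N'_{i,\nu}\cap \Spf N'_{i,\nu+2\mu-1}\cap\bigcap_j \Spf S'_{i,\nu+2j}$), and then transcribe the equivariant conormal-sequence/Nakayama coordinate-matching argument from Proposition~\ref{formal-action-even}. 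Your geometric description of the strata $H'_{i,\mu,\nu}$ as the loci where a block of $2\mu-1$ monomial bridges degenerates to a destabilizing chain of length $\mu$ is exactly the content the paper asserts when it declares its diagram $2$-cartesian.
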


\begin{proof}

The proof is similar to the proof of
Proposition~\ref{formal-action-even}. Let
$G:=\Aut(C,\pn)^{\star} =\prod_{i=1}^{r+s} \prod_{j=1}^{l_i} \Aut(E_{ij},q_{i,2j-1}, q_{i,2j})$. Choose a
miniversal formal deformation $\psi':\Spf A' \arr
\bar{\cM}_{g,n}(A_{2m+1})$ of $(C,\pn)$. Let $\cC' \arr \Spf A'$
be the corresponding family of curves. Let $\Spf S_{i,j}'$ be the
formal closed subscheme of $\Spf A'$ where the
$A_{2m+1}$-singularity $\xi_{i,j}$ is preserved in $\cC'$.
Let $\Spf N_{i,j}'$ be the formal closed subscheme of $\Spf A'$ where
the node $\eta_{i,j}$ is preserved in $\cC'$. Clearly there is a $2$-cartesian diagram
\[
\xymatrix{ \bigcup_{i,j} \Spf S_{i,j}' 
             \ar[r] \ar[d]^{\unn_{i,j}\theta_{i,j}'} & 
           \Spf A' \ar[d]^{\psi'} &
           \bigcup_{i,\mu,\nu \in S} \Spf H_{i,\mu,\nu}'  \ar[l]
           \ar[d]^{\unn_{i,\mu,\nu \in S} \phi_{i,\mu,\nu}'} \\
           \bar{\cS}_{g,n}(A_{2m+1}) \ar[r] & 
           \bar{\cM}_{g,n}(A_{2m+1}) &
           \bar{\cH}_{g,n}(A_{2m+1}), \ar[l] }
\]
where 
\[
\Spf H_{i,\mu,\nu}' = 
\Spf N_{i,\nu}' \inn \Spf N_{i,\nu+2\mu -1}' \inn
\bigcap_{j=1}^{m-1} \Spf S_{i,\nu+2i}'.
\]
Now argue as in the proof of Proposition~\ref{formal-action-even},
this time using Proposition~\ref{first-order-action-odd}, that we
can choose an isomorphism $\Spf A \arr \Spf A'$ such that the base change of $\Spf S_{i,j}'$ is $\Spf A/(\bs_{i,j})$
and the base change of $\Spf N_{i,j}'$ is $\Spf A/(n_{i,j})$ and in
addition the action of $G$ on $\Spf A$ is as claimed.
\end{proof}

\begin{remark}  Similar descriptions can be given for cases II$'$ and II$''$.
\end{remark}

\section{Local variation of GIT}
\label{section-local-vgit}

In this section, we calculate the variation of GIT chambers 
for the action of the automorphism group on the deformation space of a 
maximally degenerate curve in $\bar{\cM}_{g,n}(A_k)$.  The main result is 
Proposition \ref{theorem-etale-variation} which states that the inclusions
 $$\bar{\cM}_{g,n}(A_k^{-}) 
 \subseteq \bar{\cM}_{g,n}(A_k) \supseteq  
 \bar{\cM}_{g,n}(A_k^{+})$$
 correspond \'etale locally around closed points in
 $\bar{\cM}_{g,n}(A_k)$ to the variation of GIT chambers on the
 deformation space.  We begin by reviewing variation of GIT on an affine in Section \ref{section-variation-GIT}. We then use this formalism to prove a formal local version of the result in a sequence of successively more complicated cases (Sections \ref{subsection-monomial-case} - \ref{subsection-general-case}), culminating in Proposition \ref{theorem-local-GIT}. Finally,  in Section \ref{section-etale-GIT}, we show that Proposition \ref{theorem-etale-variation} follows from Proposition \ref{theorem-local-GIT} and the algebraization result of Proposition \ref{prop-algebraization}.

\subsection{Variation of GIT}
\label{section-variation-GIT}
Let $G$ be a linearly reductive algebraic group over $\CC$ acting on an 
affine scheme $X=\Spec A$ finite type over $\CC$.  
Let $\sigma: A \to \Gamma(G) \tensor A$ denote the dual action.  
Let $\chi: G \to \GG_m = \Spec \CC[t]_t$ be a character.  For an integer $n$, define
$$A_n = \{f \in A \mid \sigma(f) = (\chi^*t)^n f\}.$$
Note that $A_0 = A^G$.
Define $V^{-}$ and $V^{+}$ to be the reduced $G$-invariant closed subschemes 
of $X$ defined by the ideals 
$$\begin{aligned}
I^{-} &= (f \in A \mid f \in A_n \text{ for } n < 0), \\ 
I^{+} &= (f \in A \mid f \in A_n \text{ for } n > 0).
\end{aligned}$$
We define $X^{-} = X \setminus V^{-}$ and $X^{+} = X \setminus V^{+}$ to be 
the $G$-invariant open subschemes which, of course, depend on the character $\chi$.  Then it is easy to see (see \cite{dolgachev-hu} or \cite{thaddeus})  that that there is a commutative diagram 
$$\xymatrix{
X^{-} \ar@{^(->}[r] \ar[d]		&X  \ar[d]		& X^{+} \ar@{_(->}[l]\ar[d]\\
\Proj \bigoplus_{n \ge 0} A_{(-n)} = X^-//G	\ar[r]	& X//G= \Spec A^G			&X^+//G =  \Proj \bigoplus_{n \ge 0} A_n \ar[l] \\
}$$
where the vertical arrows are good GIT quotients.  Note that the induced morphisms $X^- \hookarr X$ and $X^+ \hookarr X$ are open immersions while $X^{-}//G \to X//G$ and $X^{+} //G \to X//G$ are projective.  
We will also use the following stack-theoretic language: set $\cX = [X/G]$, $\cX^{-} = [X^{-} / G]$ and $\cX = [X^{+} / G]$.  We have a commutative diagram
$$\xymatrix{
\cX^{-} \ar@{^(->}[r]\ar[d]		&\cX  \ar[d]		& \cX^{+} \ar@{_(->}[l] \ar[d]\\
X^{-}//G \ar[r]	& X//G	& X^{+}//G \ar[l]
}$$
where the vertical arrows are good moduli spaces.

\begin{remark}  The character $\chi$ induces a $G$-linearization $\cL_{\chi}$ of the structure sheaf $\oh_X$.  The semi-stable locus $X^{\ss}_{\cL_\chi}$ (resp., $X^{\ss}_{\cL^{\dual}_{\chi}}$) is identified with $X^{+}$ (resp., $X^{-}$).
\end{remark}

\begin{prop}[Affine Hilbert-Mumford criterion]
\label{prop-hilbert-mumford}
Suppose $G$ is a linearly reductive algebraic group over $\CC$ acting on an affine scheme $X = \Spec A$ finite type over $\CC$.  Let $\chi: G \to \GG_m$ be a character.  Let $x \in X(\CC)$.  Then $x \in V^{-}$ (resp., $x \in V^{+}$) if and only if there exists a one-parameter subgroup $\lambda: \GG_m \to G$ with $\chi \circ \lambda > 0$ (resp., $\chi \circ \lambda < 0$) such that $\lim_{t \to 0} \lambda(t) \cdot x$ exists.
\end{prop}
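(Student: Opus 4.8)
The plan is to prove both halves of the equivalence by first recording a normalization and then reducing the hard implication to the classical Hilbert--Mumford criterion. Write $\langle\chi,\lambda\rangle$ for the integer with $\chi\circ\lambda(t)=t^{\langle\chi,\lambda\rangle}$. The basic computation, immediate from the definition of $A_n$, is that for $f\in A_n$ one has $f(\lambda(t)\cdot x)=t^{\langle\chi,\lambda\rangle\,n}f(x)$; since $X$ is affine, $\lim_{t\to0}\lambda(t)\cdot x$ exists if and only if $t\mapsto g(\lambda(t)\cdot x)$ lies in $\CC[t]$ (has no pole at $t=0$) for every $g\in A$. Replacing $\chi$ by $\chi^{-1}$ interchanges $A_n$ with $A_{-n}$, hence $V^-$ with $V^+$, and reverses the sign of $\langle\chi,\lambda\rangle$; so it suffices to prove the $V^+$ assertion, namely that $x\in V^+$ if and only if there is a one-parameter subgroup $\lambda$ with $\langle\chi,\lambda\rangle<0$ and $\lim_{t\to0}\lambda(t)\cdot x$ existing.

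For the easy (``if'') direction, suppose such a $\lambda$ exists. For any $f\in A_n$ with $n>0$ we have $f(\lambda(t)\cdot x)=t^{\langle\chi,\lambda\rangle n}f(x)$ with $\langle\chi,\lambda\rangle n<0$; the existence of the limit forces this Laurent monomial to be regular at $t=0$, whence $f(x)=0$. As the positive-weight elements generate $I^+$ and $x$ is a $\CC$-point, this gives $x\in V(I^+)=V^+$.

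The ``only if'' direction is the substance of the proposition, and the plan is to convert $\chi$-semistability into an orbit-closure problem and then invoke the hard half of Hilbert--Mumford. Set $Y=X\times\AA^1$ and let $G$ act by $g\cdot(x,v)=(g\cdot x,\chi(g)^{-1}v)$, so that a weight bookkeeping gives $\cO(Y)^G=\bigoplus_{n\geq0}A_n\cdot v^n$; in particular, for $f\in A_n$ with $n\geq1$, the invariant $F_f:=f\,v^n$ vanishes on the closed invariant subscheme $X\times\{0\}$ and satisfies $F_f(x,1)=f(x)$. Since these $F_f$ generate the ideal of invariants vanishing on $X\times\{0\}$, and since the good quotient $Y\to Y/\!\!/G$ of the linearly reductive $G$ separates disjoint closed invariant subsets, the point $\widehat x:=(x,1)$ satisfies $x\in V^+$ exactly when every such $F_f$ vanishes at $\widehat x$, i.e.\ exactly when $\overline{G\cdot\widehat x}$ meets $X\times\{0\}$. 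In that case the unique closed orbit of $\overline{G\cdot\widehat x}$ lies in $X\times\{0\}$, so by the Hilbert--Mumford criterion \cite{git} there is a $\lambda$ with $\lim_{t\to0}\lambda(t)\cdot\widehat x\in X\times\{0\}$; since $\lambda(t)\cdot\widehat x=(\lambda(t)\cdot x,\,t^{-\langle\chi,\lambda\rangle})$, this limit lies in $X\times\{0\}$ precisely when $\lim_{t\to0}\lambda(t)\cdot x$ exists and $-\langle\chi,\lambda\rangle>0$, i.e.\ $\langle\chi,\lambda\rangle<0$, as desired. The main obstacle is exactly this last input --- the existence of a destabilizing one-parameter subgroup, the hard direction of Hilbert--Mumford due to Mumford and Kempf --- which I would quote rather than reprove; the remaining steps are the weight computation for $\cO(Y)^G$ and the standard separation properties of good quotients of linearly reductive groups.
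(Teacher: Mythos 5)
Your proposal is correct and follows essentially the same route as the paper's proof: the easy direction is the identical weight computation, and for the hard direction both arguments pass to the character-twisted action on $X \times \AA^1$, use linear reductivity to show the orbit closure of $(x,1)$ meets $X \times \{0\}$ (via an invariant function whose graded pieces would otherwise contradict $x \in V^{\pm}$), and then quote the classical Hilbert--Mumford/Kempf theorem \cite[Theorem 2.1]{git} to produce the destabilizing one-parameter subgroup. The only cosmetic differences are that you reduce $V^-$ to $V^+$ by replacing $\chi$ with $\chi^{-1}$ where the paper simply says the other case is similar, and you spell out the final sign bookkeeping that the paper leaves implicit.
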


\begin{proof}
Suppose there exists a one-parameter subgroup $\lambda: \GG_m \to G$ with $\chi \circ \lambda > 0$ such that $\lim_{t \to 0} \lambda(t) \cdot x$ exists.  Let $f \in A$ satisfy $\sigma(f) = \chi^*(t)^n f$ for $n < 0$.  Then under $\GG_m \to X, t \mapsto \lambda(t) \cdot x$, the function $f$ pulls back to $t^{n (\chi \circ \lambda)} f(x)$.  Since the limit exists and $n(\chi \circ \lambda) < 0$, $f(x)=0$.  Therefore $x \in V^{-}$.

Conversely, let $x \in V^{-}$.  Consider the induced action of $G$ on $\Spec A[y]$ via $y \mapsto (\chi^*t)y$.  (This is precisely the $G$-line bundle over $X$ corresponding to $\chi$.)  Then $\overline{O(x,1)} \cap \{y=0\} \neq 0$.  Otherwise, there would exist a function $f \in A[y]^G$ with $f(x,1) \neq 0$ and $f(x,0)= 0$; by writing $f = \sum_n f_n y^n$, we see that for some $n > 0$, $f_n \mapsto (\chi^* t)^{-n} f_n$ and $f_n(x) \neq 0$, which contradicts $x \in V^{-}$.  Therefore, in the closure of the $G^{\circ}$-orbit of $(x,1)$ there is a point $(x_0,0)$ with closed orbit with $x_0 \neq 0$.  By the Hilbert-Mumford criterion (\cite[Theorem 2.1]{git}), there exists a one-parameter subgroup $\lambda: \GG_m \to G$ such that $\lim_{t \to 0} \lambda(t) \cdot (x,1) = (x_0, 0)$.   This gives the desired one-parameter subgroup.  The $V^{+}$ case is similar. 
\end{proof}

\begin{lem} 
\label{lemma-vgit-product}
 Let $G_i$ be linearly reductive algebraic groups acting on affine schemes $X_i$ and $\chi_i: G_i \to \GG_m$be characters for $i=1, \ldots, n$.  Consider the diagonal action of $G = \prod_i G_i$ on $X = \prod_i X_i$ and the character $\prod_i \chi_i: G \to \GG_m$.  Then 
$$\begin{aligned}
V^{-} = & \bigcup_{i=1}^{n} X_1 \times \cdots \times V_i^{-} \times \cdots \times X_n \\
V^{+} = & \bigcup_{i=1}^{n} X_1 \times \cdots \times V_i^{+} \times \cdots \times X_n
\end{aligned}$$
\end{lem}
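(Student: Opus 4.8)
The plan is to reduce the asserted identity to a statement about $\CC$-points and then invoke the affine Hilbert--Mumford criterion (Proposition~\ref{prop-hilbert-mumford}) to convert membership in $V^-$ (resp.\ $V^+$) into the existence of a suitable one-parameter subgroup. Since a product $G = \prod_i G_i$ of linearly reductive groups is again linearly reductive and $X = \prod_i X_i$ is affine of finite type over $\CC$, the hypotheses of Proposition~\ref{prop-hilbert-mumford} are satisfied by the triple $(G, X, \prod_i\chi_i)$. Both sides of the claimed equality are reduced closed subschemes of $X$, so it suffices to show they have the same $\CC$-points. I will treat $V^-$, the case of $V^+$ being identical with all inequalities reversed.

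The two structural observations that make the product decomposition transparent are the following. First, a one-parameter subgroup $\lambda \co \GG_m \to G$ is exactly a tuple $\lambda = (\lambda_1,\ldots,\lambda_n)$ with $\lambda_i \co \GG_m \to G_i$, and, writing $\langle \chi, \lambda\rangle$ for the integer $d$ with $\chi(\lambda(t)) = t^d$, the product character satisfies $\langle \prod_i\chi_i,\lambda\rangle = \sum_{i=1}^n \langle \chi_i,\lambda_i\rangle$. Second, for $x = (x_1,\ldots,x_n)$ the limit $\lim_{t\to 0}\lambda(t)\cdot x$ exists in $X$ if and only if each $\lim_{t\to 0}\lambda_i(t)\cdot x_i$ exists in $X_i$, because a morphism $\AA^1 \to X = \prod_i X_i$ is the same datum as a tuple of morphisms $\AA^1 \to X_i$.

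For the inclusion $\supseteq$, suppose $x_i \in V_i^-$ for some index $i$; by Proposition~\ref{prop-hilbert-mumford} there is $\lambda_i \co \GG_m \to G_i$ with $\langle \chi_i,\lambda_i\rangle > 0$ for which $\lim_{t\to 0}\lambda_i(t)\cdot x_i$ exists. Extending by the trivial one-parameter subgroup in every other factor produces $\lambda \co \GG_m \to G$ with $\langle \prod_j\chi_j,\lambda\rangle = \langle \chi_i,\lambda_i\rangle > 0$ and with $\lim_{t\to 0}\lambda(t)\cdot x$ existing, so $x \in V^-$. For the inclusion $\subseteq$, suppose $x \in V^-$ and choose $\lambda = (\lambda_1,\ldots,\lambda_n)$ with $\sum_i \langle \chi_i,\lambda_i\rangle = \langle \prod_i\chi_i,\lambda\rangle > 0$ and all coordinate limits existing. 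Since this sum is positive, some summand $\langle \chi_i,\lambda_i\rangle$ is positive, and for that index $\lambda_i$ witnesses $x_i \in V_i^-$ via Proposition~\ref{prop-hilbert-mumford}. This establishes the equality on $\CC$-points, hence the equality of reduced closed subschemes.

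I expect the only genuine subtlety to be the decision to argue on $\CC$-points rather than comparing defining ideals directly. A naive ideal-theoretic comparison fails: a tensor product $f_1 \otimes \cdots \otimes f_n$ of negative-weight semi-invariants for the individual characters $\chi_i$ is in general \emph{not} a $(\prod_i\chi_i)$-eigenfunction of negative total weight, so $I^-$ is not simply generated by such products and need not be related to the $I_i^-$ by an elementary formula. The Hilbert--Mumford criterion circumvents exactly this difficulty, since the condition it encodes is phrased through one-parameter subgroups, and these split into tuples in a way compatible with both the character pairing and the existence of limits.
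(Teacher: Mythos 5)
Your proof is correct and is essentially the paper's own argument: the paper proves this lemma by simply citing the affine Hilbert--Mumford criterion (Proposition~\ref{prop-hilbert-mumford}), and your write-up fills in exactly the details that citation leaves implicit (splitting one-parameter subgroups of $\prod_i G_i$ into tuples, additivity of the character pairing, and coordinatewise existence of limits).
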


\begin{proof}  This follows from Proposition \ref{prop-hilbert-mumford}.
\end{proof}

\begin{lem}
\label{lemma-vgit-closed}
Let $G$ be a linearly reductive algebraic group over $\CC$ acting on an affine $X = \Spec A$ finite type over $\CC$.  Let $\chi: G \to \GG_m$ be a character.  Let $Z \subseteq X$ be a $G$-invariant closed subscheme.  Then with respect to the character $\chi$, we have $Z^{-} = X^{-} \cap Z$ and $Z^{+} = X^{+} \cap Z$.
\end{lem}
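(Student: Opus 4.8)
The plan is to reduce the statement to a comparison of $\CC$-points and then invoke the Hilbert--Mumford criterion (Proposition \ref{prop-hilbert-mumford}) for both $X$ and $Z$, exactly in the spirit of the proof of Lemma \ref{lemma-vgit-product}. Write $V^{-}_X, V^{+}_X$ and $V^{-}_Z, V^{+}_Z$ for the loci computed in $X$ and in $Z$ respectively, both with respect to the character $\chi$; this makes sense because $Z$ is $G$-invariant, so $G$ acts on $Z$ and $\chi$ is literally the relevant character there. Since $X^{-} \cap Z$ and $Z^{-}$ are both open subschemes of $Z$, and $Z$ is of finite type over the algebraically closed field $\CC$, it suffices to check that they have the same underlying set, i.e. that $V^{-}_X \cap Z$ and $V^{-}_Z$ have the same $\CC$-points (and likewise for the plus loci).

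I would fix $x \in Z(\CC)$ and apply Proposition \ref{prop-hilbert-mumford} twice. Applied to $X$, it gives $x \in V^{-}_X$ iff there is a one-parameter subgroup $\lambda \co \GG_m \to G$ with $\chi \circ \lambda > 0$ for which $\lim_{t \to 0} \lambda(t) \cdot x$ exists in $X$; applied to $Z$, it characterizes $x \in V^{-}_Z$ by the existence of such a $\lambda$ for which the limit exists in $Z$. The crux is then the elementary observation that for $x \in Z(\CC)$ and any one-parameter subgroup $\lambda$, the limit $\lim_{t \to 0} \lambda(t) \cdot x$ exists in $X$ if and only if it exists in $Z$. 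Indeed, since $Z$ is $G$-invariant and $x \in Z$, the orbit morphism $\GG_m \to X$ factors through the closed subscheme $Z = \Spec A/J$; writing the orbit as a ring map $A \to \CC[t,t^{-1}]$ factoring through $B := A/J$, the image of $A$ equals the image of $B$, so one lands in $\CC[t]$ (i.e. the limit exists) precisely when the other does.

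Combining these, for every $x \in Z(\CC)$ one has $x \in V^{-}_X$ iff $x \in V^{-}_Z$, so $V^{-}_X \cap Z$ and $V^{-}_Z$ coincide as subsets of $Z$; hence $X^{-} \cap Z = Z \setminus V^{-}_X$ equals $Z^{-} = Z \setminus V^{-}_Z$ as open subschemes of $Z$. The argument for $X^{+} \cap Z = Z^{+}$ is identical, replacing $\chi \circ \lambda > 0$ with $\chi \circ \lambda < 0$. The only point requiring care---and the closest thing to an obstacle---is the bookkeeping that these loci are cut out set-theoretically (so passing to $\CC$-points loses nothing) together with the compatibility of the limit-existence condition across the closed immersion $Z \hookrightarrow X$; no input beyond Proposition \ref{prop-hilbert-mumford} is needed.
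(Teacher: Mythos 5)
Your proof is correct, but it takes a genuinely different route from the paper's. The paper argues ideal-theoretically: one inclusion is immediate because a semi-invariant $f \in A$ of negative (resp.\ positive) weight restricts to a semi-invariant of the same weight on $Z$, and the substantive inclusion is where linear reductivity enters explicitly---a semi-invariant $f \in A/I$ of negative weight with $f(z) \neq 0$ is lifted to a semi-invariant $\tilde f \in A$ of the same weight with $\tilde f(z) \neq 0$, which the paper phrases as surjectivity of $A[y]^G \to (A/I)[y]^G$ for the auxiliary action $y \mapsto (\chi^* t)y$. You instead reduce to $\CC$-points and apply the affine Hilbert--Mumford criterion (Proposition \ref{prop-hilbert-mumford}) to both $X$ and $Z$, combined with the observation that for $x \in Z(\CC)$ a one-parameter-subgroup limit exists in $X$ if and only if it exists in $Z$, because $Z$ is closed and $G$-invariant; this is precisely how the paper proves Lemma \ref{lemma-vgit-product}, so your argument makes the two lemmas uniform. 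Linear reductivity is still used in your approach, but only inside the proof of Proposition \ref{prop-hilbert-mumford} rather than through an explicit lifting of functions. What the paper's approach buys is a direct statement at the level of defining ideals, with no need to pass through closed points; what yours buys is symmetry---both inclusions follow from the same pointwise criterion---at the modest cost of the Jacobson-type bookkeeping, which you correctly supply: open (or reduced closed) subschemes of a finite type $\CC$-scheme are determined by their $\CC$-points, so the pointwise equality $V^{-}_Z = V^{-}_X \cap Z$ does yield the equality $Z^{-} = X^{-} \cap Z$ of open subschemes, and likewise for the plus loci.
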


\begin{proof}  Clearly $Z^{-} \subseteq X^{-} \cap Z$.  Let $I \subseteq A$ be the invariant ideal defining $Z$.  If we consider the induced action of $G$ on $\Spec A[y]$ where $y \mapsto \chi^{*}t y$, then since $G$ is linearly reductive $A[y]^G \to (A[y]/I)^G \cong (A/I [y])^G$ is surjective.  Let  $z \in Z^{-}$ and $f \in A/I$ with $f \mapsto \chi^{*} t^d f$ for $d < 0$ with $f(z) \neq 0$.  It follows that there exists a lift $\tilde f \in A$ with $\tilde f \mapsto \chi^{*} t^d \tilde f$ for $d < 0$ with $\tilde f(z) \neq 0$ so $z \in X^{-}$.  The $Z^{+}$ case is similar.
\end{proof}



Now we will analyze the $V^-$ and $V^+$ chambers for the natural action of $\Aut(C,\pn)^\circ$ on the first order deformation space ${\Def}(C,\pn)$ in a sequence of successively more general cases.

\subsection{The case of a monomial $H_{m,1}$-curve/$H_{m,2}$-curve}
\label{subsection-monomial-case}

\noindent
Let $k = 2m$, and let $(E,q)$ be the monomial $H_{m,1}$-curve (Definition \ref{definition-monomial}).
By Case I$''$ of Proposition \ref{first-order-action-even}, we can write 
$${\Def}(E, q) = \Spec  \CC[s_0, \ldots, s_{2m-1}, c_1, \ldots, c_{m-1}]$$ 
where $\bs = (s_0, \ldots, s_{2m-1})$ are the ``singularity'' coordinates and $\bc = (c_1, \ldots, c_{m-1})$ are the ``crimping'' coordinates.  Furthermore, Proposition \ref{first-order-action-even} implies the action of $\Aut(E,q) = \GG_m$ is given by:
 $$
 s_l \mapsto t^{2l-4m-2} s_l \qquad c_l \mapsto t^{2l-1} c_l
 $$
 
 Next, let $k=2m+1=3$, 
 and let $(E,q_1,q_2)$ be the monomial $H_{m,2}$-curve  (Definition \ref{definition-monomial}). By Case II$''$ of Proposition \ref{first-order-action-odd} with $r=1$, we can write:
$${\Def}(E, q) = \Spec  \CC[s_0, \ldots, s_{2m},
]$$ 
where $\bs = (s_1, \ldots, s_{2m})$ are the ``singularity'' coordinates. 
Furthermore, Proposition \ref{first-order-action-even} implies the action of $\Aut(E, q_1, q_2) = \GG_m$ is given by:
 $$
 s_l \mapsto t^{l-2m-2} s_l 
 $$

In either case, since the singularity coordinates all have negative weight and the crimping coordinates all have positive weight, 
we obtain: 
 \begin{lem} \label{lemma-VGIT-monomial}
 For the monomial $H_{m,1}$-curve $(E, q)$ or monomial $H_{m,2}$-curve $(E,q_1,q_2)$ and with notation as above, we have
 $$ \begin{aligned}
 	V^{-} = &  V(\bs)  \\
 	V^{+} = &  V(\bc)
\end{aligned}
$$
\end{lem}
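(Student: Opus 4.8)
The plan is to read off the two ideals $I^{-}$ and $I^{+}$ directly from their definitions, exploiting the fact that $\Def(E,q)=\Spec\CC[\bs,\bc]$ (respectively $\Def(E,q_1,q_2)=\Spec\CC[\bs]$) is a polynomial ring on which $\Aut(E,q)=\GG_m$ acts linearly and diagonally. Since the coordinate functions $s_l$ and $c_l$ are weight eigenvectors for this action, each graded piece $A_n$ is spanned by the monomials $\prod_l s_l^{a_l}\prod_l c_l^{b_l}$ whose total weight is $n$. Consequently $I^{-}$ is generated by the negative-weight monomials and $I^{+}$ by the positive-weight monomials, and the whole computation reduces to sorting monomials by the sign of their weight.

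First I would record from Proposition~\ref{first-order-action-even} and Proposition~\ref{first-order-action-odd} that every singularity coordinate has strictly negative weight (the largest being $-4$ in the $H_{m,1}$ case and $-2$ in the $H_{m,2}$ case) while every crimping coordinate has strictly positive weight $2l-1\geq 1$. The two containments in each claimed equality are then immediate. On the one hand each $s_l$ lies in $I^{-}$ and each $c_l$ lies in $I^{+}$, giving $(\bs)\subseteq I^{-}$ and $(\bc)\subseteq I^{+}$. On the other hand, a monomial involving none of the $s_l$ is a product of the $c_l$ alone and hence has non-negative weight, so every negative-weight monomial must be divisible by some $s_l$; this yields $I^{-}\subseteq(\bs)$. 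Dually, a monomial involving none of the $c_l$ is a product of the $s_l$ alone and hence has non-positive weight, so every positive-weight monomial is divisible by some $c_l$, giving $I^{+}\subseteq(\bc)$. As $(\bs)$ and $(\bc)$ are radical, passing to reduced subschemes gives $V^{-}=V(\bs)$ and $V^{+}=V(\bc)$.

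The one place needing care is the monomial $H_{m,2}$-curve, where there are no crimping coordinates at all: here $\bc$ is empty, so $V(\bc)$ is the entire deformation space, and indeed no monomial in the $s_l$ alone can have positive weight, so $I^{+}=(0)$ and $V^{+}$ is everything. This degenerate bookkeeping, together with keeping the weight signs straight, is the only genuine subtlety; there is no substantial obstacle, as the statement is essentially an inspection of signs. As an alternative I could instead apply the Hilbert--Mumford criterion of Proposition~\ref{prop-hilbert-mumford}: since the only one-parameter subgroups of $\GG_m$ are $\lambda_d\co \tau\mapsto\tau^{d}$, a point $x$ lies in $V^{-}$ exactly when some $d>0$ makes $\lim_{\tau\to 0}\lambda_d(\tau)\cdot x$ exist, which—because all singularity weights are negative and all crimping weights positive—forces every singularity coordinate of $x$ to vanish, and symmetrically $x\in V^{+}$ forces every crimping coordinate to vanish. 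Either route gives the stated description of the chambers.
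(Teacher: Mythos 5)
Your proof is correct and takes essentially the same approach as the paper: the paper's own proof is simply ``immediate from the definitions of $V^{-}$ and $V^{+}$,'' relying on exactly the sign observation you spell out (all singularity coordinates have strictly negative weight and all crimping coordinates strictly positive weight, so $I^{-}=(\bs)$ and $I^{+}=(\bc)$ by sorting monomials according to the sign of their weight). Your remark on the degenerate case where $\bc$ is empty, so that $V^{+}$ is the whole deformation space, is consistent with the paper's conventions and needs no further justification.
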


\begin{proof} Immediate from the definitions of $V^-$ and $V^+$.
\end{proof}


\subsection{The case of an $H_{m,2}$-link}
\label{subsection-chains}

\noindent
In this section, we handle the special case of an $H_{m,2}$-link. If $k=2m+1=3$ is odd, $n=0$, and $C = K \cup E_1 \cup \cdots \cup E_r$ consists of the union of a core and a single $H_{m,2}$-link of monomial $H_{m,2}$-curves, case II of Proposition \ref{first-order-action-odd} (applied with $r=1$,$s=0$, $l_1=r$ implies that we can write
$${\Def}(C) = \Spec \CC[\bs_1, \ldots, \bs_r, 
n_0, \ldots, n_r]$$
where the $\bs_i$ are the ``singularity'' coordinates 
and $n_i$ are the ``node'' coordinates.  Furthermore, Proposition  \ref{first-order-action-odd} states that the action by $\Aut(C)^\circ = \GG_m^r$ is given by:
 $$\begin{array}{llll}
 								&					 n_0 \mapsto t_1 n_0\\
 s_{i,l} \mapsto t_i^{l-2m-2} s_{i,j} \qquad  
 & n_l \mapsto t_i t_{i+1} n_l, \qquad & k_l \mapsto k_l \\
  								&					 n_r \mapsto t_r n_r
 \end{array}
 $$
 \begin{lem} 
 \label{lemma-invariant-calculation}
  With the above notation, 
 $$
 	V^{-} =  \bigcup_{j=1}^r V( \bs_j) \qquad \qquad
 	V^{+} =  \bigcup_{\mu \ge 1} \bigcup_{\nu = 0}^{r-2\mu+1} V_{\mu, \nu}
$$ 
where $V_{\mu, \nu} = V(n_{\nu} , 
\bs_{\nu+2}, 
\ldots , \bs_{\nu+2\mu-2} , 
n_{\nu+2\mu-1})$.
\end{lem}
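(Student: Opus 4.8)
The plan is to apply the affine Hilbert--Mumford criterion (Proposition~\ref{prop-hilbert-mumford}) to the action of $G = \Aut(C)^\circ = \GG_m^r$ on $\Def(C)$ with respect to the character $\chi(t_1,\ldots,t_r) = t_1\cdots t_r$ (the character restricting to the identity on each factor $\Aut(E_i) \cong \GG_m$, consistent with the monomial computation of Lemma~\ref{lemma-VGIT-monomial}). A one-parameter subgroup is $\lambda(t) = (t^{a_1},\ldots,t^{a_r})$ with $\ba = (a_i) \in \ZZ^r$, and $\chi\circ\lambda = \sum_i a_i$. Reading off the weights recorded just before the lemma, the limit $\lim_{t\to 0}\lambda(t)\cdot x$ exists for a point $x = (\bs_i, n_l)$ precisely when $a_i \le 0$ for every $i$ with $\bs_i \neq 0$ (every singularity weight $l-2m-2$ being negative), and each nonvanishing node imposes $a_1 \ge 0$ (if $n_0 \neq 0$), $a_l + a_{l+1} \ge 0$ (if $n_l \neq 0$, $1 \le l \le r-1$), and $a_r \ge 0$ (if $n_r \neq 0$). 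Thus $x \in V^-$ (resp. $x \in V^+$) iff there exists such an admissible $\ba$ with $\sum_i a_i > 0$ (resp. $\sum_i a_i < 0$).

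The $V^-$ computation is then immediate: if $\bs_j = 0$ for some $j$, the subgroup with $a_j = 1$ and all other $a_i = 0$ makes every weight nonnegative and has $\chi\circ\lambda = 1 > 0$, so $x \in V^-$; conversely, if all $\bs_i \neq 0$ then every admissible $\ba$ has $a_i \le 0$ for all $i$, forcing $\sum_i a_i \le 0$. Hence $V^- = \bigcup_j V(\bs_j)$.

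For the inclusion $\bigcup_{\mu,\nu} V_{\mu,\nu} \subseteq V^+$ I would exhibit, for $x \in V_{\mu,\nu}$, the explicit subgroup supported on the window of bridges $E_{\nu+1},\ldots,E_{\nu+2\mu-1}$ given by $a_{\nu+j} = -1$ for $j$ odd and $a_{\nu+j} = +1$ for $j$ even, with $a_i = 0$ outside the window. The positive weights fall only on the preserved bridges $\bs_{\nu+2},\ldots,\bs_{\nu+2\mu-2}$, where no upper bound is imposed; every node weight inside the window telescopes to $0$; the two boundary nodes $n_\nu$ and $n_{\nu+2\mu-1}$ vanish and so impose nothing; and $\sum_i a_i = -1 < 0$. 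This works regardless of any extra vanishing nodes inside the window, so it yields the inclusion on the nose.

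The reverse inclusion $V^+ \subseteq \bigcup_{\mu,\nu} V_{\mu,\nu}$ is the main obstacle. Here I would dualize by Gordan--Farkas: no admissible $\ba$ has $\sum_i a_i < 0$ exactly when $(1,\ldots,1)$ lies in the cone generated by the node weight vectors $w_0 = e_1$, $w_l = e_l + e_{l+1}$, $w_r = e_r$ (over nonvanishing nodes) together with the vectors $-e_i$ (over non-preserved bridges). Such a certificate is an assignment $\beta_0,\ldots,\beta_r \ge 0$ on the nodes with $\beta_l = 0$ at each vanishing node and $\beta_{j-1}+\beta_j \ge 1$ for every bridge $j$, with equality when $E_j$ is preserved. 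Since the bridge relations couple only adjacent nodes, this system decomposes over the maximal segments cut out by the vanishing nodes. I would check that each boundary segment (one free end) is always feasible via the alternating assignment $0,1,0,1,\ldots$ anchored at its vanishing node, and that an interior segment between consecutive vanishing nodes at distance $d$ is \emph{infeasible} iff $d$ is odd and all even-offset bridges in it are preserved---this being exactly the pattern $V_{\mu,\nu}$ with $\mu = (d+1)/2$. Feasibility in the remaining cases is again produced by alternating $0/1$ assignments, breaking the alternation only at a non-preserved bridge, where the slack in $\beta_{j-1}+\beta_j \ge 1$ absorbs a sum of $2$. Assembling the segmentwise analysis shows that the $\beta$-system is infeasible---equivalently $x \in V^+$---iff some segment realizes a pattern $V_{\mu,\nu}$, which completes the identification of $V^+$.
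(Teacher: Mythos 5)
Your proof is correct, and its hard half follows a genuinely different route from the paper's. The $V^-$ computation and the inclusion $\bigcup_{\mu,\nu}V_{\mu,\nu}\subseteq V^+$ coincide with the paper's argument: both rest on the affine Hilbert--Mumford criterion (Proposition \ref{prop-hilbert-mumford}) and the same alternating one-parameter subgroup $(0,\ldots,0,-1,1,\ldots,1,-1,0,\ldots,0)$ supported on the window. The difference is in proving $V^+\subseteq\bigcup_{\mu,\nu}V_{\mu,\nu}$: the paper argues by induction on $r$, peeling off the last bridge and splitting into cases according to the signs of $\lambda_r$ and $\lambda_{r-1}+\lambda_r$ (restricting to invariant closed loci via Lemma \ref{lemma-vgit-closed}), whereas you dualize by Farkas, so that $x\notin V^+$ becomes feasibility of the system $\beta_l\ge 0$, with $\beta_l=0$ at vanishing nodes and $\beta_{j-1}+\beta_j\ge 1$ for each bridge, equality at preserved bridges; this system splits over the maximal segments cut out by the vanishing nodes, and your alternating $0/1$ assignments (with the slack at a non-preserved bridge absorbing a sum of $2$) settle each segment. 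I have checked your segment dichotomy, and note that its infeasibility direction is not even logically needed: for the reverse inclusion all you need is that every configuration avoiding all $V_{\mu,\nu}$-patterns is feasible, which your explicit assignments supply, while the bad patterns already lie in $V^+$ by your one-parameter subgroups. Your route buys robustness and transparency: the paper's inductive step is delicate because the terminal node of the length-$(r-1)$ subchain carries weight $t_{r-1}$ rather than $t_{r-1}t_r$, so a $\lambda$ admissible for the full chain need not restrict to one admissible for the subchain (a point the paper's write-up glosses over), whereas your dual certificate treats all vanishing-node configurations uniformly and explains structurally why exactly the odd-length windows with preserved even-offset singularities occur. The cost is importing Farkas, plus two routine additions you should make explicit: a real (hence rational) solution of the primal system must be scaled to an integer one so that it is an honest one-parameter subgroup, and the degenerate segment with two free ends (the case of no vanishing nodes at all) should be recorded as feasible by the same alternating assignment.
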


\begin{remark}  For instance, $V_{1,\nu} = V(n_{\nu}, 
n_{\nu+1})$ and $V_{2, \nu} = V(n_{\nu}, 
\bs_{\nu+2} , 
n_{\nu+3})$.
\end{remark}

\begin{proof}
We will use the Hilbert-Mumford criterion of Proposition \ref{prop-hilbert-mumford}.  For the $V^{-}$ case, suppose for $x \in \Def(C)$ that for some $j$, $\bs_j(x) = 0$.  Set $\lambda = (\delta^j_i): \GG_m \to \GG_m^r \cong \prod_{i=1}^r \Aut(E_i, q_{2i-1}, q_{2i})$.  Then $\lim_{t \to 0} \lambda(t) \cdot x$ exists so $x \in V^{-}$.  Conversely, let $\lambda = (\lambda_i)$ be a one-parameter subgroup with $\sum_i \lambda_i > 0$ such that $\lim_{t \to 0} \lambda(t) \cdot x$ exists.  Then for some $j$, $\lambda_j > 0$ which implies that $\bs_j(x)  = 0$.

For the $V^{+}$ case, the inclusion $\supseteq$ is easy: suppose that $x \in V_{\mu, \nu}$ for $\mu \ge 1$ and $\nu = 0, \ldots, r-2\mu+1$.  Set
$$
\lambda = \big( \underbrace{0, \ldots, 0}_{\nu} , \underbrace{-1, 1, -1, \ldots, 1, -1}_{2\mu-1} , \underbrace{0, \ldots, 0}_{r-2\mu-\nu+1} \big)
$$
Then $\sum_i \lambda_i = -1$ and $\lim_{t \to 0} \lambda(t) \cdot x$ exists 
so $x \in V^{+}$.  For the $\subseteq$ inclusion, we will use induction on $r$.  
If $r =1$, then clearly $V^+ = V(n_0, 
n_1)$.  For $r > 1$, 
suppose $x \in V^{+}$ and $\lambda = (\lambda_i): \GG_m \to \GG_m^r$ is a 
one-parameter subgroup with $\sum_{i=1}^r \lambda_i < 0$ such that 
$\lim_{t \to 0} \lambda(t) \cdot x$ exists.  If $\lambda_r \ge 0$, then 
$\sum_{i=1}^{r-1} \lambda_i < 0$ so by the induction hypothesis 
$x \in V_{\mu, \nu}$ for some $\mu \ge 1$ and $\nu = 0, \ldots, r-2\mu$. 
 If $\lambda_r < 0$, then we immediately conclude that $n_r(x) = 
 0$.  If $\lambda_{r-1} + \lambda_r < 0$, then $n_{r-1}(x) = 0$ so $x \in V_{1, {r-1}}$.  
 If $\lambda_{r-1} + \lambda_r \ge 0$, then $\lambda_{r-1} \ge 0$ so $\bs_{r-1}(x) = 0$.  Furthermore, $\sum_{i=1}^{r-2} \lambda_i < 0$ 
 so by applying the induction hypothesis and restricting to the locus $V(n_{r-2}, \bs_{r-1}, 
 n_{r-1}, \bs_{r}, 
 n_{r})$,  
we can conclude either:  (1) $x \in V_{\mu, \nu}$ for $\mu \ge 1$ and $\nu = 0, \ldots, r-2\mu-1$, or (2) $x \in V(n_{r-\mu-4} , 
\bs_{r-\mu-2} , 
\ldots , \bs_{r-3}) $
for some $\mu \ge 1$.  In case (2), since  $\bs_{r-1}(x) = 
n_r(x) = 0 $, we have $x \in V_{\mu+1,r-\mu-4}$.
 \end{proof}
 
 \begin{remark} The chamber $V^{-}$ is the closed locus in the deformation space where an $A_k$-singularity is preserved.  The chamber $V^{+}$ is the closed locus of curves containing an $H_{m,2}$-chain.
 \end{remark}

\subsection{The general case}
\label{subsection-general-case}
\noindent
Let $(C, \pn)$ be a maximally degenerate $A_k$-stable curve.  Consider the action of $\Aut(C, \pn)$ on $\Def(C, \pn)$ described in Section \ref{subsection-action}.  Let 
$$\chi^{\star}: \Aut(C, \pn)^{\star} \to \GG_m$$
 be the character which is the product of the natural characters on the monomial $H_{m,1}$ and $H_{m,2}$-subcurves and trivial on the core (see Definition \ref{canonical-decomposition-even}).
Let $V^{-}$ and $V^{+}$ be the reduced closed subschemes of $\Def(C, \pn)$ defined by the character $\chi^{\circ}$.
Let $\Spf \hat{A} \to \bar{\cM}_{g,n}(A_k)$ be a miniversal deformation space of $(C, \pn)$.  We can identify $\hat{A}$ with the completion of the origin in the first order deformation space $\Def(C,$ $ \pn)$.   Define the closed formal subschemes $\fZ^{-}$ and $\fZ^{+}$ of $\Spf \hat{A}$ as the cartesian products
\begin{equation}
\label{diagram-formal}
\xymatrix{
\fZ^{-} \ar@{^(->}[r] \ar[d]		&\Spf \hat{A}  \ar[d]		& \fZ^{+} \ar@{_(->}[l] \ar[d]\\
V^{-} \ar@{^(->}[r]	&\Def	& V^{+} \ar@{_(->}[l]
}
\end{equation}

\medskip \noindent
Recall that $\bar \cS_{g,n}(A_k)  = \bar \cM_{g,n}(A_k) \setminus \bar \cM_{g,n}(A_k^-)$ is the locus of curves with an $A_k$-singularity and $\bar \cH_{g,n}(A_k)  = \bar \cM_{g,n}(A_k) \setminus \bar \cM_{g,n}(A_k^+)$ is the locus of curves containing an $H_{m,1}$-tail (resp., $H_{m,2}$-chain)  if $k=2m$ (resp., $k =2m+1$).

\begin{prop}  
\label{theorem-local-GIT}
Let $(C, \pn)$ be a maximally degenerate $A_k$-stable curve.   With the notation above, there is a cartesian diagram
$$\xymatrix{
\fZ^{-} \ar@{^(->}[r] \ar[d]		&\Spf \hat{A}  \ar[d]		& \fZ^{+} \ar@{_(->}[l] \ar[d]\\
 \bar{\cS}_{g,n}(A_k) \ar@{^(->}[r] 						& \bar{\cM}_{g,n}(A_k) &  \bar{\cH}_{g,n}(A_k) \ar@{_(->}[l] 	
}$$
\end{prop}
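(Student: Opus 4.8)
The plan is to prove the proposition by comparing two independent descriptions of the relevant closed subschemes of the miniversal deformation space and checking that they coincide. On one hand, Propositions~\ref{formal-action-even} and~\ref{formal-action-odd} already identify, via the miniversal deformation $\psi\co \Spf\hat{A}\to\bar{\cM}_{g,n}(A_k)$, the pullbacks of $\bar{\cS}_{g,n}(A_k)$ and $\bar{\cH}_{g,n}(A_k)$ as explicit closed formal subschemes: cut out by $\inn_i(\bs_i)$ and $\inn_i(\bc_i,n_i)$ when $k=2m$, and by $\inn_{i,j}(\bs_{i,j})$ and $\inn_{i,\mu,\nu\in S}J_{i,\mu,\nu}$ when $k=2m+1$. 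On the other hand, $\fZ^-$ and $\fZ^+$ are by definition (Diagram~(\ref{diagram-formal})) the pullbacks to $\Spf\hat{A}$ of the variation-of-GIT chambers $V^-,V^+\subseteq\Def(C,\pn)$ for the action of $\Aut(C,\pn)^\star$ with character $\chi^\star$. So it suffices to compute $V^-$ and $V^+$ and verify that, as reduced closed subschemes, their ideals agree with those coming from the deformation-theoretic propositions; completion at the origin then yields the asserted cartesian squares, since completion is flat and commutes with the finite intersections of coordinate ideals involved.

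First I would treat the even case. Using the weights recorded in Proposition~\ref{first-order-action-even} (each $\bs_i$ has strictly negative weight, while each $\bc_i$ and each node coordinate $n_i$ has strictly positive weight under the $i$-th factor of $\Aut(C,\pn)^\star=\prod_i\Aut(E_i,q_i)$), the monomial computation of Lemma~\ref{lemma-VGIT-monomial}, augmented by the positive-weight node coordinate $n_i$, together with the product formula of Lemma~\ref{lemma-vgit-product}, gives $V^-=\bigcup_i V(\bs_i)$ and $V^+=\bigcup_i V(\bc_i,n_i)$; the core coordinates $\bk$ are $\Aut(C,\pn)^\star$-invariant and simply ride along, contributing no chamber conditions. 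Equivalently one applies the Hilbert--Mumford criterion of Proposition~\ref{prop-hilbert-mumford} directly: a point lies in $V^-$ iff some one-parameter subgroup of positive $\chi^\star$-weight has a limit, which forces the corresponding $\bs_i$ to vanish, and dually for $V^+$. Since each $(\bs_i)$ and each $(\bc_i,n_i)$ is a prime ideal generated by coordinates, the intersections $\inn_i(\bs_i)$ and $\inn_i(\bc_i,n_i)$ are radical and equal the ideals of the reduced loci $V^-$ and $V^+$; these are exactly the ideals produced by Proposition~\ref{formal-action-even}, so $\fZ^-$ and $\fZ^+$ agree with the pullbacks of $\bar{\cS}_{g,n}(A_k)$ and $\bar{\cH}_{g,n}(A_k)$ and the square is cartesian.

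The odd case is formally parallel but carries the real combinatorial weight. Here I would decompose $\Def(C,\pn)$ as a product over the $H_{m,2}$-links of the canonical decomposition (the node and singularity coordinates of each link depend only on that link's torus factors, by Proposition~\ref{first-order-action-odd}) and apply Lemma~\ref{lemma-vgit-product}. Each two-sided link is precisely the situation of Lemma~\ref{lemma-invariant-calculation}, which computes $V^+$ for a single link as a union of the $V_{\mu,\nu}$, and a one-sided link (one end a marked point) is handled by the evident variant of that lemma. The main obstacle, and the only genuinely delicate point, is checking that the resulting union of chambers reassembles into the ideal $\inn_{i,\mu,\nu\in S}J_{i,\mu,\nu}$ of Proposition~\ref{formal-action-odd}: this is exactly the bookkeeping that matches the node coordinates $n_{i,\nu},n_{i,\nu+2\mu-1}$ and the intermediate singularity coordinates $\bs_{i,\nu+2},\ldots,\bs_{i,\nu+2\mu-2}$ across the links, together with the simpler verification that $V^-=\bigcup_{i,j}V(\bs_{i,j})$ matches $\inn_{i,j}(\bs_{i,j})$. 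Once these are matched as radical coordinate ideals, completing at the origin gives the cartesian squares exactly as in the even case, completing the proof.
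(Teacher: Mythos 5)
Your overall strategy --- compute the VGIT chambers $V^{\pm}$ from the explicit weights via Proposition~\ref{prop-hilbert-mumford}, Lemma~\ref{lemma-vgit-product}, Lemma~\ref{lemma-VGIT-monomial} and Lemma~\ref{lemma-invariant-calculation}, then match the resulting radical coordinate ideals against the $2$-cartesian diagrams of Propositions~\ref{formal-action-even} and~\ref{formal-action-odd} --- is exactly the paper's proof. For the cases with nonempty core (Case I of Definition~\ref{canonical-decomposition-even} and Case II of Definition~\ref{canonical-decomposition-odd}) your argument is complete: there every node coordinate is moved by only one torus factor, so $\Def(C,\pn)$ really is a product of one representation per tail/link times the invariant core directions, and the lemmas combine just as you say.

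The gap is that the proposition also covers the coreless curves, which the paper treats as separate cases (I$'$, I$''$, II$'$, II$''$), and there your product decomposition breaks down because node coordinates are shared between two torus factors. In Case I$'$ (two monomial $H_{m,1}$-curves glued at a node) the single node coordinate transforms by $t_1t_2$, so Lemma~\ref{lemma-vgit-product} is inapplicable; your fallback Hilbert--Mumford argument does rescue this case (it is what the paper does), giving $V^{+}=V(n,\bc_1)\cup V(n,\bc_2)$ with the node coordinate shared rather than per-tail. More seriously, Case II$''$ --- a closed cycle of monomial $H_{m,2}$-curves --- is neither a ``two-sided'' nor a ``one-sided'' link: every node coordinate has weight $t_it_{i+1}$ with indices taken modulo $r$, so Lemma~\ref{lemma-invariant-calculation} does not apply as stated, and the induction in its proof (which is anchored at the free ends of the chain, where the end nodes have weights $t_1$ and $t_r$ alone) has no starting point in the cyclic situation. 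The paper explicitly flags that this case needs ``an argument similar to the proof of Lemma~\ref{lemma-invariant-calculation}'' with genuinely different combinatorics; your proposal neither identifies the case nor supplies that argument. Note also that Propositions~\ref{formal-action-even} and~\ref{formal-action-odd}, which you quote for the ideals cutting out the pullbacks of $\bar{\cS}_{g,n}(A_k)$ and $\bar{\cH}_{g,n}(A_k)$, are stated only for Cases I and II (the paper's remarks defer the degenerate cases), so in the coreless cases both sides of your comparison must be re-derived before the matching step can be carried out.
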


\begin{proof}  We split the proof into the cases according to the canonical decomposition of Definition \ref{canonical-decomposition-even}. 

\begin{description}
\item[Case I] 
$C = K \cup E_1 \cup \cdots \cup E_r$ where $(E_i,q_i)$ is an $H_{m,1}$-tail.  
By Lemma \ref{lemma-vgit-product}, we may assume $r=1$.  By Proposition \ref{formal-action-even}, the miniversal deformation space is $\hat{A} \cong \CC[[\bk, \bc,
\bs, n]]$ with the action of $\Aut^\circ(C) = \GG_m$ given by 
 $$
 s_{l} \mapsto t^{2l-4m-2)} s_{l} \qquad c_{l} \mapsto t^{2l-1} c_{l} \qquad n \mapsto t n,  \qquad k_l \mapsto k_l
 $$
where the $\bs = (s_0, \ldots, s_{2m-1})$ are the ``singularity'' coordinates, the $\bc = (c_1, \ldots,$ $c_{m-1})$ are the ``crimping'' coordinates, the $n$ variable is the ``node'' coordinate, and the $\bk=(k_i)$ are ``kore'' coordinates.

We see that $V^{-} = V(\{s_l\}_{l=0}^{2m-1})$ which defines the closed locus in the deformation space where the $A_k$-singularity is preserved.  On the other hand, $V^{+} = V(n, \{c_l\}_{l=1}^{m-1})$ defines the closed locus in the deformation space where an $H_{m,1}$-tail is attached at a node.

\medskip \noindent

\item[ Case I$'$] 
$C=E_1 \cup E_2$ where $(E_1,q_1)$ and $(E_2,q_2)$ are $H_{m,1}$-tails.  
We have $\hat{A} \cong \CC[[\bs_1, \bs_2, \bc_1, $ $\bc_2, n]]$ with 
$\bs_i = (s_{i,0}, \ldots, s_{i,2m-1})$ and $\bc_i = (c_{i,1}, \ldots, \bc_{i,m-1})$ for 
$i=1,2$.  The action of $\Aut(C)^{\circ} \cong \GG_m^2$ is given by
 $$
 s_{i,l} \mapsto t_i^{2l-4m-2} s_{i,l} \qquad c_{i,l} \mapsto t_i^{2l-1} c_{i,l} \qquad n \mapsto t_1 t_2 n
 $$
 Let $x \in \Def(C)$.  If $\lambda=(\lambda_1, \lambda_2): \GG_m \to \GG_m^2$ is a one-parameter 
 subgroup with $\lambda_i > 0$ such that $\lim_{t \to 0} \lambda(t) \cdot x$ exists then 
 $s_{i,0}(x) = \cdots = s_{i,2m-1}(x) = 0$.  Conversely, if $s_{i,0}(x) = \cdots = s_{i,2m-1}(x) = 0$ 
 for $j=1,2$, then $\lambda=(1,0)$ if $i=1$ or $\lambda=(0,1)$ if $i=2$ is a one-parameter 
 subgroup such that $\lim_{t \to 0} \lambda(t) \cdot x$ exists.  By 
 Proposition \ref{prop-hilbert-mumford}, 
 $V^{-} = V(\{s_{1,l} \}_{l=0}^{2m-1}) \cup V(\{s_{2,l}\}_{l=0}^{2m-1})$ 
 which corresponds in the deformation space to where one of the 
 two $A_k$-singularities is preserved.  

Let $x \in \Def(C)$.  If $\lambda=(\lambda_1, \lambda_2)  \to \GG_m^2$ is a one-parameter subgroup 
with $\lambda_1 + \lambda_2 < 0$ and $\lambda_j < 0$ such that $\lim_{t \to 0} \lambda(t) \cdot x$ exists, then 
$n(x) = c_{i,1}(x) = \cdots = c_{i,m-1}(x) = 0$.   Conversely, if 
$n = c_{i,1} = \cdots = c_{i,m-1} = 0$, then $\lambda=(-1,0)$ if $i=1$ or 
$\lambda=(0,-1)$ if $i=2$ is a one-parameter subgroup such that 
$\lim_{t \to 0} \lambda(t) \cdot x$ exists.  By Proposition \ref{prop-hilbert-mumford}, 
$V^{+} = V(n, \{c_{1,l} \}_{l=1}^{m-1}) \cup V(n, \{c_{2,l}\}_{l=1}^{m-1})$ which 
corresponds in the deformation space to where the node is preserved and one 
of the components is an $H_{m,1}$-tail.  

 \medskip \noindent

\item[Case I$''$]  This is Lemma \ref{lemma-VGIT-monomial}.

 \medskip \noindent

\item[Case II]
$C = K \cup E_1 \cup \cdots \cup E_r \cup E_{r+1} \cup \cdots \cup E_{r+s}$ where 
$E_1, \ldots, E_r$ are $H_{m,2}$-links intersecting $K$ at two nodes, and $E_{r+1}, \ldots, E_{r+s}$ 
are $H_{m,2}$-link intersecting $K$ at one node and terminating in a marked point.

By Lemma \ref{lemma-vgit-product}, it is enough to consider the case when either $r = 1, s=0$ or $r=0, s=1$.   The case of $r=1$ and $s=0$ is the example worked out in Lemma \ref{lemma-invariant-calculation}; the addition of marked points does not affect the calculation of Lemma \ref{lemma-invariant-calculation}.  If $r=1, s=0$, 
the action of $\Aut(C, \{p_i\}_{i=1}^n)^{\star}$ on $\Def(C, \{p_i\}_{i=1}^n)$ is precisely the action given in Section \ref{subsection-chains} restricted to the closed subscheme $V(n_{r+1}) = 0$.  
This case therefore follows from Lemmas \ref{lemma-vgit-closed} and \ref{lemma-invariant-calculation}.

\medskip \noindent
\item[Case II$'$] $(C,p_1,p_2)=(E_1 \cup \ldots \cup E_r,q_1,q_{2r})$,
where for $1 \leq j \leq r-1$, $E_j$ meets
$E_{j+1}$ in a node $q_{2j}=q_{2j+1}$ and for $1 \leq j \leq r$, $(E_j,q_{2j-1},q_{2j})$
is a monomial $H_{m,2}$-curve.
The action of $\Aut(C, \{p_i\}_{i=1}^n)^{\star}$ on $\Def(C, \{p_i\}_{i=1}^n)$ is 
the action given in Section \ref{subsection-chains} restricted to the closed 
subscheme $V(n_0,n_{r+1}) = 0$ so this case follows from 
Lemmas \ref{lemma-vgit-closed} and \ref{lemma-invariant-calculation}.

 \medskip \noindent

\item[Case II$''$]  This follows from an argument similar to the proof of Lemma \ref{lemma-invariant-calculation}.

\end{description}
  \end{proof}


\subsection{\'Etale local presentations by GIT chambers} \label{section-etale-GIT}

\begin{prop} \label{prop-algebraization} {\rm (\cite[Theorem 3]{alper_quotient})}
  Let $\cX$ be an algebraic stack of finite type over $\CC$.  Suppose $\cX$ is a quotient stack $[X/G]$, where $G$ is a connected algebraic group acting on a smooth and separated scheme $X$. If $x \in X(\CC)$ has linearly reductive stabilizer, there exists a locally closed $G_x$-invariant affine $W \hookarr X$ with $w \in W$ such that
$$[W/G_x] \to [X/G]$$
is affine and \'etale.
\end{prop}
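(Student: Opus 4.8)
The plan is to prove a Luna-type \'etale slice theorem: construct a $G_x$-invariant locally closed slice $W$ transverse to the orbit of $x$ and then verify that the induced map on quotient stacks is \'etale and affine. Throughout write $H = G_x$, which is linearly reductive, hence (as $\charr \CC = 0$) reductive. First I would reduce to the case where $X$ is affine. Since $X$ is smooth it is normal, so Sumihiro's theorem provides an $H^{\circ}$-invariant affine open $U_0 \ni x$; intersecting the finitely many translates $\gamma U_0$ over $\gamma \in H/H^{\circ}$ yields an $H$-invariant affine open $U \ni x$ (a finite intersection of affine opens in the separated scheme $X$ is affine, and $x$ is $H$-fixed). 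I then search for the slice $W$ inside $U = \Spec A$, keeping in mind that $W$ will only be $H$-invariant, while the slice map to $X$ will use the full group $G$.

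Next comes the construction of the slice. The differential of the orbit map identifies $T_x(G\cdot x)$ with $\mathfrak{g}/\mathfrak{g}_x$ inside the $H$-representation $T_x X$. As $H$ is linearly reductive I can choose an $H$-equivariant complement $N$, so that $T_x X = (\mathfrak{g}/\mathfrak{g}_x) \oplus N$. Lifting the orbit-directions through the $H$-equivariant surjection $\mathfrak{m}_x \mapsonto \mathfrak{m}_x/\mathfrak{m}_x^2$ (again using linear reductivity) produces $H$-equivariant functions $f_1,\dots,f_d \in \mathfrak{m}_x \subseteq A$; set $W = V(f_1,\dots,f_d) \subseteq U$, an $H$-invariant locally closed subscheme with $T_x W = N$. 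Transversality then makes the $G$-equivariant multiplication map $\Psi\co G\times_H W \to X$ \'etale at $(e,x)$, since its differential there is the isomorphism $(\mathfrak{g}/\mathfrak{g}_x)\oplus N \iso T_x X$. Shrinking $W$ — first to its smooth locus, then so that $G\times_H W$ lies in the \'etale locus of $\Psi$, which is $G$-invariant and hence of the form $G\times_H W'$, and finally to an $H$-invariant affine — I obtain an \'etale $\Psi$ with $W$ affine. Since $[G\times_H W/G]\cong [W/H]$ and $\Psi$ is the base change of $[W/H]\to[X/G]$ along the atlas $X \to [X/G]$, \'etaleness of $\Psi$ gives \'etaleness of $[W/H]\to[X/G]$, with $w = x$ lying over $x$.

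The hard part will be \emph{affineness}: an \'etale morphism need not be affine, so the construction above is not by itself enough. The classical mechanism (valid when $X$ is affine and $G$ reductive) is to arrange a Cartesian square in which the induced map $W/\!/H \to X/\!/G$ of GIT quotients is \'etale between affine schemes, whence $\Psi$ is identified with the base change $(W/\!/H)\times_{X/\!/G} X \to X$ and is therefore affine. Here there is no global quotient available, so instead I would factor $[W/H]\to[X/H]\xrightarrow{b}[X/G]$. The map $b$ is affine because $G/H$ is affine, by Matsushima's criterion, using that $H$ is reductive. It then remains to upgrade $[W/H]\to[X/H]$ to an affine morphism, and for this one must choose the slice to be \emph{saturated}, so that the good moduli space $[W/H]\to W/\!/H$ (which exists because $H$ is reductive and $W$ is affine) sits in a genuinely Cartesian square over a local good quotient of $[X/H]$, with \'etale bottom arrow. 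Producing such a saturated slice — the analogue of Luna's fundamental lemma in the absence of a global quotient — is the crux of the argument and is precisely the content imported from \cite[Theorem 3]{alper_quotient}, on which I would ultimately rely.
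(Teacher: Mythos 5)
There is nothing in the paper to compare your attempt against: the paper gives no proof of this proposition, but imports it verbatim from \cite[Theorem 3]{alper_quotient} (the citation is part of the statement). So the only question is whether your sketch stands on its own, and it does not, because it is circular. You correctly isolate affineness of $[W/G_x] \to [X/G]$ as the crux --- \'etaleness alone is the classical Luna-slice computation --- but your final sentence resolves that crux by appealing to \cite[Theorem 3]{alper_quotient}, which is exactly the statement you were asked to prove. As a result your attempt establishes only the \'etale half; the affine half, which is what distinguishes this theorem from the naive slice theorem (locally closed slices with $G \times_{G_x} W \to X$ \'etale are easy to produce, but give no control over affineness of the induced map of stacks, hence no control over the good-moduli-space comparison that the rest of the paper needs in Corollary \ref{corollary-algebraization} and Proposition \ref{theorem-etale-variation}), is never argued. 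The missing idea is precisely the replacement for Luna's fundamental lemma when no global quotient of $X$ by $G$ exists: one must shrink $W$ so that the slice is \emph{saturated}, i.e.\ so that closed orbits match up and $[W/G_x] \to [W/G_x]/\!/G_x$ sits in a cartesian square with \'etale affine bottom arrow; naming this as "the content of the cited theorem" concedes the proof rather than giving it.

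On the parts you do argue, the skeleton is standard and mostly sound: the $G_x$-equivariant complement $N$ of $\mathfrak{g}/\mathfrak{g}_x$ in $T_xX$, the equivariant lift of linear forms through $\mathfrak{m}_x \twoheadrightarrow \mathfrak{m}_x/\mathfrak{m}_x^2$, \'etaleness of $G \times_{G_x} W \to X$ at $(e,x)$, the identification $[(G\times_{G_x} W)/G] \cong [W/G_x]$, and affineness of $[X/G_x] \to [X/G]$ via affineness of $G/G_x$. Two caveats there: Matsushima's criterion as usually stated assumes $G$ reductive, whereas here $G$ is only a connected algebraic group --- the clean argument is that $G/G_x$ is the GIT quotient of the free right $G_x$-action on the affine scheme $G$, which is affine once $G_x$ is linearly reductive. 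And Sumihiro's theorem produces an invariant \emph{quasi-projective} open neighborhood with a linearization, not an invariant affine one; to get a $G_x$-invariant affine neighborhood of $x$ you need the further step of averaging (linear reductivity again) to find a $G_x$-invariant section of a linearized ample bundle nonvanishing at $x$. These are repairable; the circular appeal for the saturation/affineness step is not.
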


\begin{cor} \label{corollary-algebraization}
  Let $\cX$ be an algebraic stack finite type over $\CC$.  Suppose $\cX$ is a quotient stack $[X/G]$, where $G$ is a connected algebraic group acting on a smooth and separated scheme $X$.  If $x \in X(\CC)$ has linearly reductive stabilizer, there is an affine scheme $W=\Spec A$ with an action by the stabilizer $G_x$, a closed $G_x$-invariant point $w \in W$ and a commutative diagram
\begin{equation}
\label{diagram-refined}
\xymatrix{
\hat{\Def}(x) \ar[r]^-j	& \cW = [W / G_x ] \ar[ld]_f \ar[rd]^g  \ar[dd] \\
\cX	&					& [\Def(x) / G_x] \ar[dd] \\
				& W//G_x \ar[rd]^{\bar g} \\
				&					& \Def(x) // G_x
} \end{equation}
such that
\begin{enumerate}
\item There is an isomorphism $\hat{\Def}(x) \to \Spf \hat{\oh}_{W,w}$ inducing $j$ and $f \circ j: \hat{\Def}(x) \to \cX$ is a miniversal deformation space of $x$,
\item $f$ is \'etale, affine, and stabilizer preserving at $x$ with $f(w) = x$, 
\item $[W/G_x] \to W//G_x$ and $[\Def(x)/G_x] \to \Def(x) //G_x$ are good moduli spaces,
\item $g$ is affine, \'etale, stabilizer preserving and saturated (in particular, $g$ maps closed points to closed points), and
\item the right parallelogram is cartesian.
\end{enumerate}
\end{cor}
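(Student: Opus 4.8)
The plan is to refine Proposition \ref{prop-algebraization} by producing, alongside the \'etale affine chart $f \co \cW \to \cX$, a second \'etale morphism $g$ to the linearized model $[\Def(x)/G_x]$ that is compatible with the good moduli space maps. First I would apply Proposition \ref{prop-algebraization} to obtain a locally closed $G_x$-invariant affine $W_0 = \Spec A_0 \hookarr X$ with $w \in W_0$, a morphism $f_0 \co [W_0/G_x] \to \cX$ which is affine and \'etale, and $f_0(w) = x$. Since $f_0$ is \'etale and $X$ is smooth, $W_0$ is smooth; since $f_0$ is stabilizer preserving at $x$, the group $G_x$ fixes $w$, so $w$ is a closed $G_x$-invariant point. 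Because $G_x$ is linearly reductive and $W_0$ is affine, $[W_0/G_x] \to W_0//G_x = \Spec A_0^{G_x}$ is a good moduli space by \cite{alper_good_arxiv}, and likewise $[\Def(x)/G_x] \to \Def(x)//G_x$; this will give (3).

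Next I would construct $g$. The cotangent space $\fm_w/\fm_w^2$ is a $G_x$-representation canonically identified with $\TT^1(x)^\vee$, and linear reductivity of $G_x$ yields a $G_x$-equivariant splitting $\fm_w/\fm_w^2 \hookarr \fm_w \subseteq A_0$. Extending multiplicatively gives a $G_x$-equivariant algebra homomorphism $\Sym(\fm_w/\fm_w^2) \to A_0$, i.e. a $G_x$-equivariant morphism $g_0 \co W_0 \to \Def(x) = \Spec \Sym \TT^1(x)^\vee$ with $g_0(w) = 0$ inducing the identity on cotangent spaces at $w$; hence $g_0$ is \'etale at $w$. Being \'etale at $w$, $g_0$ induces an isomorphism $\hat{\oh}_{\Def(x),0} \iso \hat{\oh}_{W_0,w}$, which is precisely the isomorphism $\hat{\Def}(x) \iso \Spf \hat{\oh}_{W_0,w}$ of (1); since $f_0$ is \'etale and stabilizer preserving at $w$, the composite $f_0 \circ j$ is then a miniversal deformation of $x$, establishing (1) and (2).

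It remains to shrink $W_0$ to a $G_x$-invariant affine $W$ on which $g$ is not merely \'etale but saturated, so that the right parallelogram of Diagram (\ref{diagram-refined}) becomes cartesian. The non-\'etale locus of $g_0$ is a closed $G_x$-invariant subscheme $Z$ with $w \notin Z$; as $w$ is a closed point with closed orbit, its image in $W_0//G_x$ is disjoint from the closed image of $Z$, so I can pick an invariant $h \in A_0^{G_x}$ with $h(w) \neq 0$ and $h|_Z = 0$ and replace $W_0$ by the principal invariant affine open $(W_0)_h$, which keeps $f$ affine and \'etale while making $g$ \'etale everywhere. Then I would invoke Luna's fundamental lemma (in the form underlying \cite[Theorem 3]{alper_quotient}): since $g$ is \'etale, stabilizer preserving at $w$, and sends the closed point $w$ to the closed point $0$, there is a saturated $G_x$-invariant affine neighborhood of $w$ on which $g$ carries closed points to closed points and the square relating $\cW$ and $[\Def(x)/G_x]$ to their good moduli spaces is cartesian. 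Restricting all the above data to this neighborhood yields $W = \Spec A$, the point $w$, and morphisms $f, g, \bar g$ fitting into Diagram (\ref{diagram-refined}) and satisfying (3), (4) and (5); note that because $W//G_x$ and $\Def(x)//G_x$ are affine the map $\bar g$ is automatically affine, so the cartesian parallelogram forces $g$ to be affine as well, completing (4).

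The main obstacle will be this final saturation step: arranging a single $G_x$-invariant affine neighborhood of $w$ on which $g$ is simultaneously \'etale, stabilizer preserving, and \emph{strongly} \'etale (closed points to closed points, with cartesian parallelogram), without destroying the affineness of $f$. This is exactly where Luna's fundamental lemma enters, and the care required is to realize the successive shrinkings — first to make $g_0$ \'etale, then to make it saturated — by inverting invariant functions, so that every intermediate chart remains affine over $\cX$ and the two constructions are compatible at $w$.
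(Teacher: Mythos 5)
Your proposal is correct and follows essentially the same route as the paper: obtain $f$ from Proposition \ref{prop-algebraization}, build $g$ from a $G_x$-equivariant splitting of $\fm_w \to \fm_w/\fm_w^2$ (using linear reductivity) to get an \'etale-at-$w$ map to $[\Def(x)/G_x]$, and then shrink to a saturated invariant affine neighborhood to make the parallelogram cartesian. The only cosmetic difference is that you insert an explicit intermediate shrinking (inverting an invariant function on the non-\'etale locus) and invoke Luna's fundamental lemma for the saturation step, where the paper cites \cite[Theorem 6.10]{alper_quotient} directly — these are the same tool.
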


\begin{proof}
Proposition \ref{prop-algebraization} implies the existence of an affine scheme $W = \Spec A$ with a $G_x$-action, a closed $G_x$-invariant point $w \in W$ and a morphism
$$f: [\Spec A / G_x] \to \cX$$
which is \'etale and affine.  Furthermore, $f(w) = x$ and  $f$ is stabilizer preserving at $w$.  The maximal ideal $\fm \subseteq A$ corresponding to $w$ is $G_x$-invariant which induces a $G_x$-representation $\fm/\fm^2$.  Since $G_x$ is linearly reductive, there exists a splitting $\fm/\fm^2 \hookarr \fm$ of the surjection $\fm \to \fm/\fm^2$.  The inclusion $\fm/\fm^2 \hookarr \fm \subseteq A$ induces a morphism on algebras $\Sym^* \fm / \fm^2 \to A$ which is $G_x$-equivariant which in turns gives a morphism 
$$g: [\Spec A / G_x] \to [\Def(x)  / G_x]$$
such that $g(x)$ is the origin, $g$ is \'etale at $x$ and stabilizer preserving at $x$.  Therefore we have a commutative diagram as in (\ref{diagram-refined}) where $(1)$ and $(2)$ satisfied.
 By \cite[Theorem 6.10]{alper_quotient}, we may shrink $\Spec A$ by choosing an affine saturated open of $\Spec A$ containing $w$ such that the parallelogram is cartesian and $\bar{g}$ is \'etale which establishes $(3)-(5)$.
 \end{proof}

\begin{prop} 
\label{theorem-etale-variation}
 Let $x \in \bar{\cM}_{g,n}(A_k)$ be a closed point for $k \le 4$.
There exists a morphism $f: \cW = [\Spec A /G_x] \to \bar{\cM}_{g,n}(A_k)$ where $G_x$ acts on an affine scheme $\Spec A$ fixing a point $w$ with $f(w) = x$, a morphism $g: \cW \to [\Def(x) / G_x]$ and a commutative diagram
\begin{equation} \label{diagram-etale-variation}
\xymatrix{
 \cW^{-} \ar[d] \ar@{^(->}[r]						& \cW= [\Spec A / G_x] \ar[d]^f & \cW^{+} \ar@{_(->}[l] \ar[d]\\
 \bar{\cM}_{g,n}(A_k^-) \ar@{^(->}[r] 						& \bar{\cM}_{g,n}(A_k) &  \bar{\cM}_{g,n}(A_k^+) \ar@{_(->}[l] 	
}
\end{equation}
such that 
\begin{enumerate}
\item $f$ is \'etale, affine and stabilizer preserving at $w$, 
\item the induced map $\hat{\Def}(x) \to  \bar{\cM}_{g,n}(A_k)$ is a miniversal deformation space,
\item the squares are cartesian,
\item there exist good moduli spaces $\cW \to Y = \Spec A^{G_x}$, $\cW^{-} \to Y^{-}$ and $\cW^{+} \to Y^{+}$ and a commutative diagram
$$\xymatrix{
\cW^{-} \ar@{^(->}[r] \ar[d]		&\cW = [\Spec A/G_x]  \ar[d]		& \cW^{+} \ar@{_(->}[l] \ar[d]\\
Y^{-} \ar[r]	& Y	& Y^{+} \ar[l]
}$$
with $Y^{-} \to Y$ and $Y^{+} \to Y$ projective; in particular, $\cW^-$ and $\cW^+$ are weakly proper over $Y$, 
\item the morphism $g$ is affine, \'etale, stabilizer preserving and saturated, and
\item $g^{-1}([V^+ / G_x]) = \cW^+$ and $g^{-1}([V^- / G_x]) = \cW^-$ where $V^-$ and $V^+$ are the open GIT chambers of the deformation space $\Def(x)$ given by the character $\chi: G_x^{\circ} \to \GG_m$ defined in Section \ref{subsection-general-case}. 
\end{enumerate}
\end{prop}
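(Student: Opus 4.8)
The plan is to deduce this \'etale-local statement by combining the formal-local computation of Proposition~\ref{theorem-local-GIT} with the refined algebraization of Corollary~\ref{corollary-algebraization}; the only genuine work lies in propagating an identity of closed substacks from the formal neighborhood of $x$ to an honest \'etale neighborhood. First I would record that $\bar{\cM}_{g,n}(A_k)$ is a quotient $[X/\PGL_N]$ with $X$ smooth and separated: since every family of $A_k$-stable curves is canonically polarized by $\omega_C(\sum_i p_i)$, such a presentation is furnished by the appropriate locally closed subscheme of a Hilbert scheme (Corollary~\ref{C:Openness}), and $X$ is smooth because $A_k$-singularities are lci, so that $\TT^2(C,\pn)=0$ and deformations are unobstructed. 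Writing $x=[(C,\pn)]$, Proposition~\ref{P:ClosedPoints} shows a closed point is maximally degenerate, and for such a curve $G_x=\Aut(C,\pn)$ is an extension of a finite group by $\Aut(K,\dots)^{\circ}\times\prod\GG_m$, hence linearly reductive (reductivity of the core automorphisms following by induction, with base case the Deligne--Mumford stack $\bar{\cM}_{g,n}=\bar{\cM}_{g,n}(A_2^-)$). I may therefore apply Corollary~\ref{corollary-algebraization} to obtain $\cW=[\Spec A/G_x]$, the morphisms $f$ and $g$, and the diagram~\eqref{diagram-refined}. This immediately yields (1), (2) and (5), together with the cartesian parallelogram of good moduli spaces in which $\bar g\co Y\to\Def(x)/\!/G_x$ is \'etale.

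Next I would establish (4). The character $\chi\co G_x^{\circ}\to\GG_m$ of Section~\ref{subsection-general-case} is trivial on the core automorphisms and so factors through $\Aut(C,\pn)^{\star}=\prod\GG_m$; by the Hilbert--Mumford criterion (Proposition~\ref{prop-hilbert-mumford}) the resulting chambers $V^{\pm}\subseteq\Def(x)$ coincide with those computed in Proposition~\ref{theorem-local-GIT}, and they are $G_x$-invariant because their construction is canonical. Setting $\cW^{\pm}:=g^{-1}([V^{\pm}/G_x])$ and base-changing the projective VGIT quotients $V^{\pm}/\!/G_x\to\Def(x)/\!/G_x$ of Section~\ref{section-variation-GIT} along $\bar g$ produces good moduli spaces $\cW^{\pm}\to Y^{\pm}$ with $Y^{\pm}\to Y$ projective; weak properness of $\cW^{\pm}$ over $Y$ then follows from Proposition~\ref{prop-good-weakly-separated}.

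The heart of the argument is (6), equivalently the cartesianness of the squares in~\eqref{diagram-etale-variation}. Both $f^{-1}(\bar{\cS}_{g,n}(A_k))$ and $g^{-1}([V^{-}/G_x])$ are $G_x$-invariant reduced closed substacks of $\cW$, and Proposition~\ref{theorem-local-GIT} asserts precisely that they have equal completions at $w$: the composite $f\circ j$ is a miniversal deformation while $g\circ j$ realizes $\hat{\Def}(x)$ as the formal neighborhood of the origin in $[\Def(x)/G_x]$, so the two defining ideals $I,J\subseteq A$ satisfy $I\,\hat{\cO}_{W,w}=J\,\hat{\cO}_{W,w}$. Since $\hat{\cO}_{W,w}$ is faithfully flat over the local ring $\cO_{W,w}$, the ideals agree after localizing at $w$, and being finitely generated they agree on a Zariski-open neighborhood of $w$; shrinking $W$ to a $G_x$-saturated affine open containing $w$, permitted by \cite[Theorem~6.10]{alper_quotient} and preserving (1)--(5), yields the equality on all of $\cW$. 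The identical argument with $\bar{\cH}_{g,n}(A_k)$ and $V^{+}$ completes (6), and passing to complements gives the cartesian squares (3).

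The step I expect to be the main obstacle is this last globalization: a priori $\cW$ carries many closed orbits besides $w$, whereas Proposition~\ref{theorem-local-GIT} controls only the formal picture at the single point $x$. What rescues the argument is that $g$ is \'etale and saturated, so the faithfully-flat descent of ideals from $\hat{\cO}_{W,w}$ followed by a saturated shrinking converts the one-point formal identity into an identity of closed substacks. Along the way one must carefully track the compatibility between the $\Aut(C,\pn)^{\star}$-chambers of Proposition~\ref{theorem-local-GIT} and the full $G_x$-action used in the presentation, and confirm that shrinking $W$ destroys neither saturatedness of $g$ nor the good moduli space structure underlying (4).
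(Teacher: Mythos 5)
Your proposal is correct and follows essentially the same route as the paper's proof: apply Corollary \ref{corollary-algebraization} to obtain $\cW$, $f$, $g$ and properties (1), (2), (5); use Proposition \ref{theorem-local-GIT} to see that $f^{-1}(\bar{\cS}_{g,n}(A_k))$, $g^{-1}([V^-/G_x])$ (resp.\ $f^{-1}(\bar{\cH}_{g,n}(A_k))$, $g^{-1}([V^+/G_x])$) agree formally at $w$, hence on a Zariski neighborhood, and shrink to a saturated $G_x$-invariant affine open; then base-change the VGIT good moduli space diagram along $\Spec A^{G_x} \to \Def(x)/\!/G_x$ to get (4) and (6). The extra details you supply (faithful flatness of completion for the formal-to-Zariski step, and the verification of smoothness and linear reductivity needed to invoke the algebraization result) are exactly the points the paper leaves implicit, and your argument fills them in correctly.
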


\begin{proof} 
Since the stack $\bar{\cM}_{g,n}(A_k)$ is smooth and parameterizes canonically polarized curves, we may apply Corollary \ref{corollary-algebraization} to find morphisms $f: [\Spec A/G_x] \to \bar{\cM}_{g,n}(A_k)$ and $g: [\Spec A/G_x] \to [\Def(x) / G_x]$ giving a diagram as in (\ref{diagram-refined}) such that \ref{corollary-algebraization}$(1)-$\ref{corollary-algebraization}$(5)$ are satisfied.  In particular, $(1),(2)$ and $(5)$ in this theorem are satisfied. 

\medskip \noindent
 Let $V^{-}, V^{+} \hookarr \Def(x)$ and $\fZ^-, \fZ^+ \hookarr \Spf \hat{A}$ be as in Diagram (\ref{diagram-formal}).   By Proposition \ref{theorem-local-GIT},   $f^{-1}(\bar{\cS}_{g,n}(A_k))$ and $g^{-1}([V^-/G_x])$ (resp.,  $f^{-1}(\bar{\cH}_{g,n}(A_k))$ and $g^{-1}([V^+/G_x])$) are closed substacks that agree in a formal neighborhood of $w$.  Therefore, they agree in a Zariski-open neighborhood.  We may restrict to a saturated $G_x$-invariant open affine neighborhood of $w$ in $\Spec A$ giving a diagram as in (\ref{diagram-refined}) still satisfying \ref{corollary-algebraization}$(1)-$\ref{corollary-algebraization}$(5)$ and such that
$$\begin{aligned}
f^{-1}(\bar{\cS}_{g,n}(A_k))&= g^{-1}([V^-/G_x]), \\
f^{-1}(\bar{\cH}_{g,n}(A_k)) &= g^{-1}([V^+/G_x]).
\end{aligned}$$
This establishes that there exists the desired diagram (\ref{diagram-etale-variation}) satisfying properties $(1)-(3)$.  Furthermore, by variation of GIT on $\Def(x)$ with respect to $\chi$ (see Section \ref{section-variation-GIT}), we have a commutative diagram
$$\xymatrix{
[V^{-}/G_x] \ar@{^(->}[r] \ar[d]		&[\Def(x) / G_x]  \ar[d]		& [V^{+} / G_x] \ar@{_(->}[l] \ar[d]\\
V^{-}//G_x \ar[r]	& \Def(x)//G_x	& V^{+}//G_x \ar[l]
}$$
where the vertical arrows are good moduli spaces and the morphisms $V^{-}//G_x \to \Def(x)//G_x$ and $V^{+}//G_x \to \Def(x)//G_x$ are projective.  Base changing this diagram by $\Spec A^{G_x} \to \Def(x) //G_x$ gives properties $(4)$ and $(6)$.
\end{proof}


\section{Weak properness of $\SM_{g,n}(A_{k}^-),\SM_{g,n}(A_{k}),\SM_{g,n}(A_{k}^+)$} \label{section-proof}
In this section, we prove our main theorem.

\begin{thm}\label{main-theorem}
For $k \in \{2,3,4\}$, the stacks $\SM_{g,n}(A_{k}^-)$, $\SM_{g,n}(A_{k})$ and
$\SM_{g,n}(A_{k}^+)$ are weakly proper.
\end{thm}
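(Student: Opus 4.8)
The plan is to induct along the chain
$\bar{\cM}_{g,n} = \bar{\cM}_{g,n}(A_2^-) \subset \bar{\cM}_{g,n}(A_2) \subset \bar{\cM}_{g,n}(A_2^+) = \bar{\cM}_{g,n}(A_3^-) \subset \cdots \subset \bar{\cM}_{g,n}(A_4^+)$, with base case that $\bar{\cM}_{g,n}(A_2^-) = \bar{\cM}_{g,n}$ is proper, hence separated and universally closed, and separatedness immediately gives weak separatedness. Each stack is finite type over $\CC$ (Corollary \ref{C:Openness}), so it remains to prove the two implications: (1) $\bar{\cM}_{g,n}(A_k^-)$ weakly proper $\Rightarrow \bar{\cM}_{g,n}(A_k)$ weakly proper; and (2) $\bar{\cM}_{g,n}(A_k)$ weakly proper $\Rightarrow \bar{\cM}_{g,n}(A_k^+)$ weakly proper. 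By the reductions of Section \ref{section-weak-properness} (Proposition \ref{prop_weakly_separated_k} and Remark \ref{remark-disc}) it suffices to check universal closedness and weak separatedness on discs.

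For implication (1), I would prove universal closedness by a resolve--limit--contract procedure. Given a family $\cC^* \to \Delta^*$ of $A_k$-stable curves, the $A_k$-singular locus of the generic fiber is, after base change, a union of sections; applying the weighted blow-up of Lemma \ref{L:BlowUp} along these sections replaces each $A_k$-singularity by a nodally attached $H_{m,1}$-tail (resp.\ $H_{m,2}$-bridge) carrying only $A_{<k}$-singularities, yielding an $A_k^-$-stable family over $\Delta^*$. Completing this by weak properness of $\bar{\cM}_{g,n}(A_k^-)$ and then contracting (crimping) the exceptional tails back to $A_k$-singularities produces the required $A_k$-stable completion. For weak separatedness, let $\cC, \cC' \to \Delta$ be two completions of a common $\cC^*$ with central fibers $C, C'$ closed in $\bar{\cM}_{g,n}(A_k)$. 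By Proposition \ref{P:ClosedPoints} both are maximally degenerate, and by Lemma \ref{L:LimitCanDecomp} I would decompose $\cC, \cC'$ along the attaching nodes of the canonical decomposition of the common generic fiber into core families $\cK, \cK'$ and monomial tail/bridge families. The central fibers of $\cK, \cK'$ are the cores of $C, C'$, which are closed $A_k^-$-stable curves (Lemma \ref{L:AkMinusClosed}); since $\cK, \cK'$ agree over $\Delta^*$, weak separatedness of $\bar{\cM}_{g,n}(A_k^-)$ gives an isomorphism of cores. As the remaining tails/bridges are monomial and their combinatorial data (the link lengths, in the odd case) are read off the common generic fiber, Corollary \ref{C:CoreIso} upgrades this to $C \cong C'$.

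For implication (2), I would use the \'etale-local VGIT description of Proposition \ref{theorem-etale-variation} and a formal diagram chase. Given disc families $h_1, h_2 \co \Delta \to \bar{\cM}_{g,n}(A_k^+)$ agreeing over $\Delta^*$ with $C := h_1(0)$, $C' := h_2(0)$ closed in $\bar{\cM}_{g,n}(A_k^+)$, I first view them in $\bar{\cM}_{g,n}(A_k)$ and use universal closedness of $\bar{\cM}_{g,n}(A_k)$ (part (1)) to find closed $A_k$-limits of both; these agree over $\Delta^*$, so weak separatedness of $\bar{\cM}_{g,n}(A_k)$ forces a common limit point $x$. Since $C \rightsquigarrow x$ and the image of the \'etale chart $f \co \cW = [\Spec A/G_x] \to \bar{\cM}_{g,n}(A_k)$ is an open neighborhood of $x$ (hence contains the generization $C$), the families lift: after base change the common generic family lifts to a single $\ell \co \Delta^* \to \cW^+$, which extends to lifts $\hat h_1, \hat h_2 \co \Delta \to \cW^+$ of $h_1, h_2$ with $\hat h_1|_{\Delta^*} = \hat h_2|_{\Delta^*} = \ell$. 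Composing with the good moduli space $\cW^+ \to Y$ of Proposition \ref{theorem-etale-variation} and using that $Y = \Spec A^{G_x}$ is affine (hence separated), the two induced maps $\Delta \to Y$ coincide, so $\hat h_1, \hat h_2$ define $\Delta$-points of $\cW^+ \times_Y \Delta$ agreeing over $\Delta^*$. This morphism is weakly proper, being the base change of the weakly proper $\cW^+ \to Y$ (Proposition \ref{theorem-etale-variation}(4) and Proposition \ref{proposition-weakly-separated-properties}(2)); hence the closed specializations $\hat z_1, \hat z_2$ of $\hat h_1(0), \hat h_2(0)$ in the central fiber coincide. Applying $f$ and using that $C, C'$ admit no nontrivial specialization gives $C = f(\hat z_1) = f(\hat z_2) = C'$. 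Universal closedness of $\bar{\cM}_{g,n}(A_k^+)$ follows from the same lifting, with the weak-separatedness input replaced by universal closedness of $\cW^+ \to Y$.

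The main obstacle I anticipate is the bookkeeping in the diagram chase of (2): lifting the disc families through the \'etale but non-proper chart $f$ so that their generic parts become \emph{literally equal} in $\cW^+$, and transporting the closedness of $C, C'$ correctly. Since \'etale morphisms in general neither preserve nor reflect closed points, the argument must route everything through specializations in the fibers of the weakly proper $\cW^+ \times_Y \Delta \to \Delta$ rather than through closedness in $\cW^+$ itself, and it is precisely the compatibility of the two good moduli space structures furnished by Proposition \ref{theorem-etale-variation} that makes this possible. A secondary technical point is verifying in (1) that contracting the resolution tails returns an $A_k$-stable curve with the original generic fiber, i.e.\ that the crimping operation reproduces genuine $A_k$-singularities.
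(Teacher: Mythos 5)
Your implication (2) is essentially the paper's own argument (Proposition \ref{theorem-induction-step2}), and your reductions via Section \ref{section-weak-properness} are fine; the genuine gap is in your weak separatedness argument for implication (1). You decompose the two given completions $\C, \C'$ by Lemma \ref{L:LimitCanDecomp} and assert that the central fibers of the core families $\K, \K'$ are the cores of $C, C'$, hence closed $A_k^-$-stable curves, so that weak separatedness of $\SM_{g,n}(A_k^-)$ applies. This is false: the limit of the core of the generic fiber is in general strictly larger than the core of the limit, because the core itself degenerates in the special fiber --- it acquires $A_k$-singularities and sprouts monomial $H_{m,1}$-tails/$H_{m,2}$-bridges (this is exactly how the isotrivial specializations of Lemma \ref{L:AMD} are manufactured). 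In the simplest instance, if $C_{\bar\eta}$ is smooth then $\K = \C$ and the central fiber of $\K$ is all of $C$, a maximally degenerate curve, not its core. Consequently $\K, \K'$ are not families in $\SM_{g,n}(A_k^-)$ at all (their special fibers carry $A_k$-singularities on the sprouted monomial tails), and weak separatedness of $\SM_{g,n}(A_k^-)$ cannot be invoked; your argument also never treats the case where the common generic fiber itself has $A_k$-singularities, which the paper handles separately via Proposition \ref{prop-S-weakly-proper}. The missing idea is Lemma \ref{L:lifting}: one does not decompose the given limit but \emph{replaces} it, using the \'etale-local VGIT chart of Proposition \ref{theorem-etale-variation} (projectivity of $W^-\to W$ together with universal closedness of $\cW^-\to W^-$) to produce new families $h_i^-\co \Delta \to \SM_{g,n}(A_k^-)$ agreeing with $h_i$ on $\Delta^*$, with \emph{closed} $A_k^-$-stable central fibers $C_i^-$ satisfying $C_i \in \overline{\{C_i^-\}}$. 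Only then does weak separatedness of $\SM_{g,n}(A_k^-)$ give $C_1^-\cong C_2^-$, after which Lemma \ref{L:CoreFamilies} --- whose hypothesis that the generic fiber be a closed point of $\SM_{g,n}(A_k^-)$ is now satisfied --- transfers this to an isomorphism of the cores of $C_1$ and $C_2$, and Corollary \ref{C:CoreIso} concludes. In short, the VGIT input enters implication (1) as well, not only implication (2), and without it the core comparison does not get off the ground.

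A secondary but substantive point: in your existence argument for (1), the step you defer ("contracting the exceptional tails back to $A_k$-singularities") is not a routine verification, because the naive contraction of the completed $A_k^-$-stable family need not exist. The crimping data of the $A_k$-singularities of $\C^*$ determines a map $\Delta^* \to \GG(m-1,2m-1)$, and its limit yields a flat, $A_k$-singular filling only after further blow-ups of the completed family along the attaching sections (followed by blowing down the resulting semistable $\P^1$'s); this rescaling of crimping coordinates is the bulk of the existence proof of Proposition \ref{prop-S-weakly-proper} and cannot be waved through.
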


\noindent The proof is by induction. Since
$$\SM_{g,n}(A_2^-)=\SM_{g,n}$$ is weakly proper, it suffices to show that 
$$\SM_{g,n}(A_k^-) \text{ weakly proper }\implies \SM_{g,n}(A_k) \text{ weakly proper }$$
and
$$\SM_{g,n}(A_k) \text{ weakly proper }\implies \SM_{g,n}(A_k^+) \text{ weakly proper}.$$ 
These implications are proved in the following two sections as
Proposition~\ref{theorem-induction-step1} and
Proposition~\ref{theorem-induction-step2} respectively.
\subsection{$\SM_{g,n}(A_k^-)$ weakly proper $\implies$ $\SM_{g,n}(A_k)$ weakly proper}
Given a family $(\C^* \rightarrow \Delta^*, \{\sigma^*\}_{i=1}^{n})$ of $A_k$-stable curves, we must show that there exists a unique closed $A_k$-stable limit. The idea of the argument is as follows: Suppose first that the family $\C^* \rightarrow \Delta$ has no $A_k$-singularities in the geometric fibers, i.e. is actually $A_k^-$-stable. Using the weak properness of $\SM_{g,n}(A_k^-)$, one can fill in this family by a uniquely-determined closed $A_k^-$-stable limit (Lemmas \ref{L:IsotrivialLemma2} and \ref{L:lifting}). The essential point is that this $A_k^-$-stable limit actually determines the isomorphism class of any closed $A_k$-stable limit; roughly speaking, the unique $A_k$-stable limit is obtained by replacing all nodally-attached $H_k$-curves/chains by monomial $H_k$-curves/chains (Lemma \ref{L:CoreFamilies}). The case when the general fiber actually has $A_k$-singularities is then reduced to this case by a fairly standard normalization argument (Proposition \ref{prop-S-weakly-proper}).

\begin{lemma}\label{L:IsotrivialLemma2}  Suppose $(C, \pn) \rightsquigarrow (C_0, \pn)$ is an isotrivial specialization of $A_k$-stable curves satisfying:
\begin{enumerate}
\item  $(C, \pn)$ is an $A_k^-$-stable curve,  
\item $(C_0, \pn)$ is a closed point in $\bar{\cM}_{g,n}(A_k)$.
\end{enumerate}
Then there exists an isotrivial specialization $(C, \pn) \rightsquigarrow (C^-, \pn)$ in $\SM_{g,n}(A_k^-)$ satisfying:
\begin{enumerate}
\item $(C^-, \pn)$ is a closed point in  $\bar{\cM}_{g,n}(A_k^-)$.
\item There exists an isotrivial specialization $(C^-, \pn) \rightsquigarrow (C_0, \pn)$.
\end{enumerate}
\end{lemma}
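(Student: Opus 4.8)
The plan is to obtain both required specializations from the \'etale-local GIT description of $\bar{\cM}_{g,n}(A_k)$ near the closed point $C_0$, combined with the good moduli space theory of \cite{alper_good_arxiv}. Let $(\C \to \Delta, \sigman)$ be a family realizing the given isotrivial specialization $(C,\pn) \rightsquigarrow (C_0,\pn)$, and apply Proposition~\ref{theorem-etale-variation} at the closed point $x=[C_0]$. This produces an \'etale, affine, stabilizer-preserving chart $f \co \cW = [\Spec A/G_x] \to \bar{\cM}_{g,n}(A_k)$ with $f(w)=C_0$ for a closed point $w \in \cW$, good moduli spaces $\pi \co \cW \to Y$ and $\cW^- \to Y^-$ with $Y^- \to Y$ projective, and $\cW^- = f^{-1}(\bar{\cM}_{g,n}(A_k^-))$.

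First I would lift the family into the chart. Since $f$ is \'etale and $C_0 = f(w)$, after a finite base change on $\Delta$ the classifying map $\Delta \to \bar{\cM}_{g,n}(A_k)$ lifts to $\tilde{\Delta} \to \cW$ sending the closed point to $w$; its generic point maps to a point $\tilde{x} \in \cW$ over $[C]$. As $C$ is $A_k^-$-stable, $[C] \in \bar{\cM}_{g,n}(A_k^-)$, so $\tilde{x} \in \cW^-$. Running the good moduli space $\cW^- \to Y^-$, the point $\tilde{x}$ admits a unique closed specialization $\tilde{x} \rightsquigarrow \tilde{x}_0$ with $\tilde{x}_0$ closed in $\cW^-$, and this specialization takes place entirely inside the open substack $\cW^-$. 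Setting $C^- := f(\tilde{x}_0)$ and pushing forward by $f$ (using Lemma~\ref{lemma-specialization} to realize the topological specialization by a family) yields an isotrivial specialization $(C,\pn)\rightsquigarrow(C^-,\pn)$ whose total space maps into $\bar{\cM}_{g,n}(A_k^-)$, i.e.\ it lies in $\SM_{g,n}(A_k^-)$. That $(C^-,\pn)$ is a \emph{closed} point of $\bar{\cM}_{g,n}(A_k^-)$ follows from $\tilde{x}_0$ being closed in $\cW^-$ together with the saturation of the chart; alternatively, by the inductive hypothesis that $\bar{\cM}_{g,n}(A_k^-)$ is weakly proper, $C^-$ must be the unique closed $A_k^-$-stable limit of $C$.

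The heart of the matter is the second specialization $(C^-,\pn)\rightsquigarrow(C_0,\pn)$, which I would obtain by tracking images in $Y$. Since $w \in \overline{\{\tilde{x}\}}$ and $w$ is closed in $\cW$, the good moduli space contracts the orbit closure of $\tilde{x}$ to a single point, giving $\pi(\tilde{x}) = \pi(w) =: \bar{y}$, a closed point of $Y$; as $\tilde{x}_0 \in \overline{\{\tilde{x}\}}$ we likewise get $\pi(\tilde{x}_0) = \bar{y}$. Now $w$ is the unique closed point of $\cW$ in the fibre $\pi^{-1}(\bar{y})$, and every point of that fibre contains the unique closed point in its closure, so from $\tilde{x}_0 \in \pi^{-1}(\bar{y})$ we conclude $\tilde{x}_0 \rightsquigarrow w$ in $|\cW|$. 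Applying $f$ gives $C^- \rightsquigarrow C_0$ in $|\bar{\cM}_{g,n}(A_k)|$, which Lemma~\ref{lemma-specialization} upgrades to an honest isotrivial specialization; as its generic fibre is the constant curve $C^-$, it has the required form.

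I expect the main obstacle to be the bookkeeping around closed points and good moduli space fibres: confirming that $\tilde{x}_0$ and $w$ genuinely lie over the \emph{same} closed point $\bar{y} \in Y$, and that $f(\tilde{x}_0)$ is closed in $\bar{\cM}_{g,n}(A_k^-)$ and not merely in the chart $\cW^-$. Both reduce to the standard facts that a good moduli space sends each orbit closure to one point and has a unique closed point in every fibre \cite{alper_good_arxiv}; the only care needed is to choose the chart saturated (as in Proposition~\ref{theorem-etale-variation}) so that closedness is both preserved and reflected, and to ensure the base changes used to lift the family and to realize the final specialization disturb neither the marked points nor the chamber decomposition.
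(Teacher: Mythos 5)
Your use of the chart from Proposition~\ref{theorem-etale-variation} is legitimate here (Section~\ref{section-local-vgit} does not depend on this lemma, and the paper's own Lemma~\ref{L:lifting} lifts families into the same chart), and the second half of your argument is correct: since $w \in \overline{\{\tilde{x}\}}$ and good moduli spaces identify points precisely when their closures meet, $\pi(\tilde{x})=\pi(w)=\bar{y}$; hence $\tilde{x}_0 \in \pi^{-1}(\bar{y})$, the unique closed point $w$ of that fibre lies in $\overline{\{\tilde{x}_0\}}$, and so $f(\tilde{x}_0) \rightsquigarrow C_0$. The genuine gap is conclusion (1): you have not shown that $C^-=f(\tilde{x}_0)$ is closed in $\bar{\cM}_{g,n}(A_k^-)$, and neither of your justifications works. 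Proposition~\ref{theorem-etale-variation} asserts saturatedness only for $g \co \cW \to [\Def(x)/G_x]$, \emph{not} for $f$; and an \'etale morphism --- even an open immersion of quotient stacks such as $[(\AA^1\setminus\{0\})/\GG_m] \hookrightarrow [\AA^1/\GG_m]$ --- need not carry closed points to closed points. Moreover, $\bar{\cM}_{g,n}(A_k^-)$ is not known to admit a good moduli space at this stage of the induction, so there is no global ``unique closed point in each fibre'' fact to appeal to. Your fallback via weak properness also fails: the inductive hypothesis produces a unique closed isotrivial limit $D$ of $C$, but to identify $D$ with $f(\tilde{x}_0)$ you would need to already know that $f(\tilde{x}_0)$ is closed; and if instead you simply set $C^-:=D$, you only know $C \rightsquigarrow D$ and $C \rightsquigarrow f(\tilde{x}_0) \rightsquigarrow C_0$, from which $D \rightsquigarrow C_0$ does not follow, since two specializations of a common point need not be comparable.

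This is not a bookkeeping issue: the implication ``closed in the chart $\cW^-$ $\Rightarrow$ closed in $\bar{\cM}_{g,n}(A_k^-)$'' is exactly the non-formal content that this lemma exists to supply. Indeed, the paper's Lemma~\ref{L:lifting} hits the same wall (``the only problem is that $j'(0)$ may not be closed'') and resolves it by citing the present lemma, whose proof is geometric rather than formal: one normalizes the given family along the attaching nodes of the canonical decomposition of the generic fibre (Lemma~\ref{L:LimitCanDecomp}), writes $\tilde{\C}=\K \cup \E_1 \cup \cdots \cup \E_r$, glues $\K$ to \emph{trivial} families whose fibres are the generic tails $E_i$, and then certifies that the resulting special fibre is closed using Lemma~\ref{L:AkMinusClosed} (closedness of an $A_k^-$-stable curve is detected by its core) together with the fact that the core limit is closed because $C_0$ is maximally degenerate (Proposition~\ref{P:ClosedPoints}). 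To repair your argument you would have to identify the closed points of $\cW^-$ lying over $\bar{y}$ as curves of exactly this form and then invoke Lemma~\ref{L:AkMinusClosed} --- at which point you have reproduced the paper's proof rather than bypassed it.
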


\begin{proof}
We will prove the case when $k=2m$ is even (the case $k$ odd is essentially identical). Let $(\C \rightarrow \Delta, \sigman)$ be the family of $A_k$-stable curves witnessing the isotrivial specialization $(C, \pn) \rightsquigarrow (C_0, \pn)$. Let $\tilde{\C} \rightarrow \C$ be the normalization of $\C$ along the locus of attaching nodes of the canonical decomposition of the generic fiber as in Lemma \ref{L:LimitCanDecomp}, so that
$$
\tilde{\C}=\K \cup \E_1 \cup \ldots \cup \E_r
$$
where $\K \rightarrow \Delta$ picks out the core of the generic fiber, while $\E_1, \ldots, \E_r$ pick out the $H_{m,1}$-tails of the generic fiber. Note that $K_0$ is a closed $A_k$-stable curve with no $A_k$-singularities, hence a closed $A_k^-$-stable curve.

We construct a new isotrivial specialization $(C, \pn)
\rightsquigarrow (C^-,\pn)$ simply by gluing the family $\K$ to $r$ trivial
families $\E_1', \ldots, \E_r'$ whose fibers are all isomorphic to
the geometric general fiber of $\E_1, \ldots, \E_r$
respectively. The special fiber of this new family is a closed $A_k^-$-stable curve by Lemma \ref{L:AkMinusClosed} and it is obvious from the construction that it specializes to $(C_0, \pn)$.
\end{proof}

\begin{lemma}\label{L:lifting}
Let $j:\Delta \rightarrow \SM_{g,n}(A_k)$  be any map such that $j(\eta) \in \SM_{g,n}(A_k^-)$ and $j(0)$ is closed. Then there exists a lift $j^-:\Delta \rightarrow \SM_{g,n}(A_k^-)$ such that 
\begin{itemize}
\item $j|_{\eta}=j^-|_{\eta}$,
\item $j^{-}(0)$ is closed,
\item  $j(0) \in \overline{j^{-}(0)}$.
\end{itemize}
\end{lemma}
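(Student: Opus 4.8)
The plan is to construct $j^-$ explicitly by keeping the core of the family fixed while replacing its degenerate tails by their (already $A_k^-$-stable) generic fibers; this is the valuative-criterion analogue of the isotrivial construction in Lemma~\ref{L:IsotrivialLemma2}. Let $(\C \to \Delta, \{\sigma_i\})$ be the family representing $j$, so that its generic fiber $C_\eta$ is $A_k^-$-stable and its special fiber $C_0 = j(0)$ is a closed point of $\SM_{g,n}(A_k)$. By Proposition~\ref{P:ClosedPoints}, $C_0$ is maximally degenerate; in particular its core $K_0$ is a closed $A_k^-$-stable curve and all of its nodally-attached $H_{m,1}$-tails (resp. $H_{m,2}$-links) are monomial. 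The inductive weak properness of $\SM_{g,n}(A_k^-)$ enters only through this characterization of closed points.

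First I would decompose the family along the attaching nodes of the canonical decomposition of the generic fiber. By Lemma~\ref{L:LimitCanDecomp}, after a finite base change the pointed normalization of $\C$ splits as $\tilde{\C} = \K \cup \E_1 \cup \cdots \cup \E_r$, where $\K$ is a family of $A_k$-stable curves with generic fiber the core of $C_\eta$ and the $\E_i$ are families whose generic fibers are the $H_{m,1}$-tails (resp. $H_{m,2}$-links) of $C_\eta$. Because $C_0$ is maximally degenerate, its tails are the special fibers of the $\E_i$ and its core is the special fiber $K_0$ of $\K$; in particular $K_0$ is closed $A_k^-$-stable. Moreover $\K$ is already a family of $A_k^-$-stable curves: both its generic and special fibers lie in the open substack $\SM_{g,n}(A_k^-) \subseteq \SM_{g,n}(A_k)$ (Corollary~\ref{C:Openness}), and an open subset of the spectrum of a discrete valuation ring containing the closed point is the whole spectrum.

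Next I would form $\C^-$ by regluing $\K$ to trivial families $\E_1', \ldots, \E_r'$ whose fibers are all isomorphic to the geometric generic fibers of $\E_1, \ldots, \E_r$, using the same attaching sections as before. By construction the generic fiber of $\C^-$ is again $C_\eta$, so the induced map $j^-$ agrees with $j$ over $\eta$, and all fibers of $\C^-$ are $A_k^-$-stable, so $j^-$ lands in $\SM_{g,n}(A_k^-)$. Its special fiber $C^-$ is the union of $K_0$ with the geometric generic $H$-curves, which is a closed point of $\SM_{g,n}(A_k^-)$ by Lemma~\ref{L:AkMinusClosed}: the core $K_0$ is closed, and each attached $H$-curve, being non-monomial, is a closed point of $\bar{\cH}_{m,1}$ (resp. $\bar{\cH}_{m,2}$). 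Finally, $C^-$ and $C_0$ share the same core $K_0$ and differ only in that the tails of $C^-$ are arbitrary $H$-curves while those of $C_0$ are monomial; by Proposition~\ref{prop-H-S-minimal} each such $H$-curve isotrivially specializes to the monomial one, and gluing these tail degenerations to the trivial family $K_0 \times \Delta$ yields an isotrivial specialization $C^- \rightsquigarrow C_0$, i.e. $j(0) \in \overline{j^-(0)}$, as required.

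The main obstacle is the combinatorial bookkeeping in the $k$ odd case: there the tails are $H_{m,2}$-links rather than irreducible curves, and one must verify that the decomposition of Lemma~\ref{L:LimitCanDecomp} carries each generic link to a link of the \emph{same length} in the special fiber, so that the links reglued into $C^-$ match those of the maximally degenerate $C_0$. This is precisely the auxiliary data recorded by Corollary~\ref{C:CoreIso}, which guarantees that fixing the core together with the link-length sequence pins down the isomorphism class. A secondary point to check is the identification of $K_0$ with the core of $C_0$ (equivalently, that the generic $H$-tails do not sprout smaller destabilizing tails or bridges in the limit), which rests on the hypothesis $k \le 4$ together with Lemma~\ref{L:HmLimits}.
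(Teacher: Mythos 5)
Your construction has a genuine gap at its central step: the claim that, because $C_0$ is maximally degenerate, the special fiber $K_0$ of the core family $\K$ produced by Lemma~\ref{L:LimitCanDecomp} is the core of $C_0$, so that $\K$ is a family of $A_k^-$-stable curves. Lemma~\ref{L:LimitCanDecomp} only guarantees that $\K$ is a family of $A_k$-stable curves whose \emph{generic} fiber is the core of $C_{\bar\eta}$; nothing prevents the core from degenerating inside $\SM_{g,n}(A_k)$ so that $K_0$ itself acquires $A_k$-singularities and nodally-attached monomial tails (part of the core's genus concentrating into a tail in the limit). This is perfectly compatible with maximal degeneracy of $C_0$: such monomial tails of $K_0$ are simply additional tails in the canonical decomposition of $C_0$, whose core is then strictly smaller than $K_0$. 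A concrete counterexample with $k=2$: let $C_\eta$ be smooth of genus $g$, so the canonical decomposition of the generic fiber is trivial ($r=0$, $\K=\C$), and let $C_0 = K' \cup E'$, where $K'$ is a stable curve with no elliptic tails and $E'$ is a nodally-attached cuspidal rational curve (the monomial $H_{1,1}$-tail); such a family exists since the smooth locus is dense in $\SM_{g}(A_2)$, and $C_0$ is closed by Proposition~\ref{P:ClosedPoints}. Your regluing then returns $j^-=j$, whose special fiber contains a cusp, so it is not a lift to $\SM_{g}(A_2^-)$ at all. You do flag the identification of $K_0$ with the core of $C_0$, but you misdiagnose it as a ``secondary point'' about tails sprouting smaller tails: Lemma~\ref{L:HmLimits} controls limits of the tail families $\E_i$, not of $\K$, and Lemma~\ref{L:CoreFamilies}, which does identify the two cores, requires the generic fiber to be a \emph{closed} point of $\SM_{g,n}(A_k^-)$ --- a hypothesis you do not have and cannot arrange without essentially proving the present lemma first. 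Without the closedness hypothesis that identification is simply false, as the example shows.

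This failure is exactly why the paper's proof is not a geometric regluing argument. The paper applies Proposition~\ref{theorem-etale-variation} at $x=j(0)$ to obtain an affine \'etale chart $f \co \cW = [\Spec A/G_x] \to \SM_{g,n}(A_k)$ with $f^{-1}(\SM_{g,n}(A_k^-)) = \cW^-$ and good moduli spaces $\cW \to W$, $\cW^- \to W^-$ with $W^- \to W$ projective. One lifts $j$ to $\Delta \to \cW$ (\'etaleness), lifts $\Delta^* \to \cW^-$ (the square is Cartesian), then uses the valuative criterion for the projective morphism $W^- \to W$ together with universal closedness of $\cW^- \to W^-$ to produce $\Delta \to \cW^-$, and finally corrects the resulting special fiber to a closed one using Lemma~\ref{L:IsotrivialLemma2}. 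It is this local VGIT structure, not any decomposition of the family, that supplies the compatibility $j(0) \in \overline{j^-(0)}$ even when the core degenerates nontrivially; any repair of your argument would have to replace the unjustified core identification by an input of comparable strength.
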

\begin{proof}
To construct the lift $j^-$, apply Proposition \ref{theorem-etale-variation} with $x=j(0)$. We obtain an affine, \'etale morphism $f: \cW \to \bar{\cM}_{g,n}(A_k)$ inducing the following Cartesian diagram:
$$
\xymatrix{
	&  \cW^{-} \ar[d] \ar@{^(->}[r]	\ar[d]^{f^-} 					& \cW \ar[d]^f \\
\Delta^* \ar[r] \ar[d] &  \bar{\cM}_{g,n}(A_k^-) \ar@{^(->}[r] 						& \bar{\cM}_{g,n}(A_k) \\
\Delta \ar@/_2pc/[rru]^{j}universally
}
$$

\noindent
Since $f$ is \'etale, we may lift $j$ to a morphism $\Delta \rightarrow \cW$. Since the square is Cartesian, we may then lift the map $\Delta^* \rightarrow \SM_{g,n}(A_k^*)$ to a map $\Delta^* \rightarrow \cW^-$. Now we have a commutative diagram: 
$$
\xymatrix{
\Delta^* \ar[r] \ar[d]  &	\cW^{-} \ar[d] \ar@{^(->}[r]		& \cW \ar[d] \\
\Delta \ar[rru]			&	W^{-} \ar[r]				& W
}
$$
Now, since the morphism $W^- \rightarrow W$ is projective, the composition $\Delta \rightarrow W$ may be lifted to a morphism $\Delta \rightarrow W^-$. Then, since $\cW^- \rightarrow W^-$ is universally closed, we may lift to a morphism $\Delta \rightarrow \cW^-$. Finally, composing with $f^-$, we obtain a map $j':\Delta \rightarrow \SM_{g,n}(A_k^-)$. The point $j'(0)$ necessarily admits an isotrivial specialization to $j(0)$, the only problem is that $j'(0)$ may not be closed. However, by applying lemma 
\ref{L:IsotrivialLemma2} we see that $j'(0)$ admits an isotrivial specialization to a closed point in $\SM_{g,n}(A_k^-)$ which still specializes isotrivially to $j(0)$. Thus, using the valuative criterion for algebraic stacks, there exists a map $j^-:\Delta \rightarrow \SM_{g,n}(A_k^-)$ with the desired properties.
\end{proof}

\begin{lemma}\label{L:CoreFamilies}
Suppose $\C \rightarrow \Delta, \sigman$ is an isotrivial specialization in $\SM_{g,n}(A_k)$ satisfying
\begin{enumerate}
\item the generic fiber is a closed point of $\SM_{g,n}(A_k^-)$,
\item the special fiber is a closed point of $\SM_{g,n}(A_k)$.
\end{enumerate}
Then the core of $C_{\bar{\eta}}$ is isomorphic to the core of $C_0$.
\end{lemma}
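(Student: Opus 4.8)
The plan is to reduce the statement to the triviality of the ``core subfamily'' extracted by Lemma~\ref{L:LimitCanDecomp}, and then to match this trivial subfamily with the core of the special fiber via the well-definedness of the canonical decomposition. I will write out the even case $k=2m$; the odd case $k=2m+1$ is entirely analogous, replacing $H_{m,1}$-tails by $H_{m,2}$-links throughout. First I would apply Lemma~\ref{L:LimitCanDecomp} to $\C \to \Delta$. After a finite base change (which alters neither the isomorphism class of $C_{\bar{\eta}}$ nor that of $C_0$, nor the isotriviality of the family), writing the canonical decomposition of the generic fiber as $C_{\bar{\eta}} = K \cup E_1 \cup \cdots \cup E_r$, we obtain a pointed normalization
\[
\tilde{\C} = \K \cup \E_1 \cup \cdots \cup \E_r,
\]
where $\K$ is a family of $A_k$-stable curves with generic fiber $K = \mathrm{core}(C_{\bar{\eta}})$ and each $\E_i$ has generic fiber the $H_{m,1}$-tail $E_i$. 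Since $\C$ is an isotrivial specialization, each of these pieces is again an isotrivial specialization; in particular $\K$ is generically the constant family with fiber $K$.

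Next I would show that $K = \mathrm{core}(C_{\bar{\eta}})$ is in fact a \emph{closed} point of $\SM_{g,n}(A_k)$. Because $C_{\bar{\eta}}$ is $A_k^-$-stable it has no $A_k$-singularities, hence neither does its core, so $K$ is an $A_k^-$-stable curve; and since $C_{\bar{\eta}}$ is closed in $\SM_{g,n}(A_k^-)$, Lemma~\ref{L:AkMinusClosed} shows $K$ is closed in $\SM_{g,n}(A_k^-)$. By the definition of the core (Definition~\ref{canonical-decomposition-even}), together with the fact that in the absence of $A_k$-singularities destabilizing and nodally-attached $H_{m,1}$-tails coincide, $K$ has no nodally-attached $H_{m,1}$-tail. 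Lemma~\ref{L:AlmostClosed} then promotes $K$ to a closed point of $\SM_{g,n}(A_k)$. Consequently $\K$ is an isotrivial specialization in $\SM_{g,n}(A_k)$ whose generic fiber is closed, so $\K$ must be trivial and $\K|_0 \cong K = \mathrm{core}(C_{\bar{\eta}})$.

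It then remains to identify $\K|_0$ with the core of $C_0$. By Lemma~\ref{L:LimitCanDecomp} each $\E_i|_0$ is attached to $\K|_0$ at a single node, and by Lemma~\ref{L:HmLimits} the special fiber $\E_i|_0$ of a family of $H_{m,1}$-curves is again an $H_{m,1}$-curve (in the odd case $k=3$ one invokes that $A_3$-stable curves contain no elliptic tails, so that $\E_i|_0$ remains an $H_{1,2}$-link). Thus each $\E_i|_0$ is a nodally-attached $H_{m,1}$-tail of $C_0$ and is excised in forming $\mathrm{core}(C_0)$, giving $\mathrm{core}(C_0) \subseteq \K|_0$. Conversely, since $\K|_0 \cong K$ has no nodally-attached $H_{m,1}$-tail and such tails are irreducible (Lemma~\ref{C:Akcurves}), no component of $\K|_0$ can be a nodally-attached $H_{m,1}$-tail of $C_0$, even once the marked points $q_i$ of $K$ are reglued into nodes; hence $\K|_0 \subseteq \mathrm{core}(C_0)$. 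By the well-definedness of the canonical decomposition (Lemma~\ref{L:Disjoint}) we conclude $\mathrm{core}(C_0) = \K|_0 \cong \mathrm{core}(C_{\bar{\eta}})$.

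The main obstacle is this last step: one must be certain that the limiting pieces $\E_i|_0$ account for \emph{exactly} the nodally-attached tails stripped off in passing to $\mathrm{core}(C_0)$, with nothing extra hidden inside $\K|_0$ and nothing stripped that should belong to the core. This is precisely where the irreducibility of $H_{m,1}$-tails, and in the odd case the prohibition of elliptic tails in $A_3$-stable curves, are essential, together with the uniqueness built into the canonical decomposition.
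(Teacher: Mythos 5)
Your proof is correct, and for most of its length it follows the same route as the paper's: the decomposition $\tilde{\C} = \K \cup \E_1 \cup \cdots \cup \E_r$ via Lemma~\ref{L:LimitCanDecomp}, the combination of Lemmas~\ref{L:AkMinusClosed} and~\ref{L:AlmostClosed} to show that the core $K$ of $C_{\bar{\eta}}$ is a closed point of $\SM_{g,n}(A_k)$ (hence that $\K$ is a trivial family), and the observation that the limits $(\E_i)_0$ remain nodally-attached $H_{m,1}$-curves (here your citation of Lemma~\ref{L:HmLimits} is slightly off, since the $E_i$ need not be smooth; the paper instead invokes the closedness of $\bar{\cH}_{m,1} \subset \SM_{m,1}(A_k)$ via Proposition~\ref{P:Openness}). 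Where you genuinely diverge is the final step, showing that $K_0$ carries no further nodally-attached tails of $C_0$. The paper uses hypothesis (2): since $C_0$ is closed, Proposition~\ref{P:ClosedPoints} makes it maximally degenerate, so any nodally-attached $H_{m,1}$-tail of $C_0$ is monomial and hence contains an $A_k$-singularity, which is impossible inside $K_0 \cong K$ because $K$ is $A_k^-$-stable. You argue instead by irreducibility of tails (Lemma~\ref{C:Akcurves}) together with the core's defining property, never using hypothesis (2) at all; this buys a slightly stronger statement, but it is also the delicate point of your proof, and you gloss the one nontrivial case: a putative tail $Z \subseteq K_0$ of $C_0$ attached at a reglued node $q_i$. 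Since $q_i$ is a smooth (marked) point of $K_0$, such a $Z$ cannot meet $\overline{K_0 \setminus Z}$ there, so connectedness of the core (automatic in the even case you write out) forces $Z = K_0$; the identity gluing morphism then exhibits the core itself as a marked-point-attached $H_{m,1}$-tail, contradicting Definition~\ref{canonical-decomposition-even}. Spelled out this way your argument is complete; the paper's singularity count reaches the same conclusion with no case analysis, at the cost of invoking the closedness of $C_0$.
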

\begin{proof}
We will prove the lemma in the case $k=2m$ is even (the case when $k$ odd is essentially identical.) Let $C_{\bar{\eta}}=K_{\overline{\eta}} \cup E_1 \cup \ldots \cup E_r$ be the canonical decomposition of the geometric generic fiber. Let $\tilde{\C} \rightarrow \C$ denote the normalization of $\C$ along the locus of attaching nodes of the canonical decomposition as in Lemma \ref{L:LimitCanDecomp}, so we have
$$
\tilde{\C}=\K \cup \E_1 \cup \ldots \cup \E_r,
$$
where $\K$ and $\E_i$ are isotrivial specializations of $A_k$-stable curves with generic fiber $K_{\overline{\eta}}$ and $E_i$ respectively.
Since $K$ is a maximally degenerate $A_k^-$-stable curve with no nodally attached $H_{m,1}$-tails ($k=2m$) or $H_{m,2}$-chains ($k=2m+1$), $K$ is a maximally degenerate $A_k$-stable curve by Lemma \ref{L:AlmostClosed}. It follows that the isotrivial specialization $\K$ is trivial, i.e. $K_0 \simeq K_{\overline{\eta}}$. To complete the proof of the lemma, we only need to show that $K_0$ is the core of $C_0$.

On the one hand, since $\bar{\H}_{m,1} \subset \SM_{m,1}(A_k)$ is closed by Proposition \ref{P:Openness}, it is clear that the limits $(\E_i)_0$ are all $H_{m,1}$-curves. Thus, we only need to see that $K_0$ contains no nodally attached $H_{m,1}$-curves. By our characterization of closed points of $\SM_{g}(A_k)$ (Proposition \ref{P:ClosedPoints}), any nodally attached $H_{m,1}$-curve in $K_0$ must be monomial and hence contains an $A_k$-singularity. But since $K_0 \simeq K_{\overline{\eta}}$ is $A_k^-$-stable, this is clearly impossible.
\end{proof}

\begin{prop}
\label{theorem-induction-step1}
If $\bar{\cM}_{g,n}(A_k^-)$ is weakly proper, then $\bar{\cM}_{g,n}(A_k)$ is weakly proper.
\end{prop}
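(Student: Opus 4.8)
The plan is to verify the three ingredients of Definition \ref{definition-weakly-proper}: $\bar{\cM}_{g,n}(A_k)$ is of finite type over $\CC$ by Corollary \ref{C:Openness}, so only universal closedness and weak separatedness remain. Since we work over the algebraically closed field $\CC$, Proposition \ref{prop_weakly_separated_k} and Remark \ref{remark-disc} let me test both conditions on one-parameter families. Thus it suffices to show, for every family $(\C^* \to \Delta^*, \{\sigma_i^*\})$ of $A_k$-stable curves: (i) after a finite base change there is a completion $(\C \to \Delta, \{\sigma_i\})$ whose central fiber is a closed point of $\bar{\cM}_{g,n}(A_k)$ (universal closedness), and (ii) any two completions with closed central fibers $C_0, C_0'$ satisfy $C_0 \cong C_0'$, hence define the same point of $|\bar{\cM}_{g,n}(A_k) \times \Delta|$ (weak separatedness).

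First I would reduce both (i) and (ii) to the case where the generic fiber $C_{\bar\eta}$ is $A_k^-$-stable, i.e.\ carries no $A_k$-singularity. This is the normalization argument of Proposition \ref{prop-S-weakly-proper}: after base change one picks out sections through the $A_k$-singularities of the generic fiber and, using Lemma \ref{L:LimitNode} to control how those singularities degenerate and Lemma \ref{L:BlowUp} to replace them by $H_{m,1}$-tails / $H_{m,2}$-bridges, separates off the destabilizing $H_k$-curves/chains. Since $\bar{\cH}_{m,1}, \bar{\cH}_{m,2}, \bar{\cS}_{m,1}, \bar{\cS}_{m,2}$ are weakly proper (Corollary \ref{cor-H-weakly-proper}), each separated piece acquires a unique monomial closed limit, and the problem is transported to the core, which is $A_k^-$-stable. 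This reduction is carried out identically on both completions in (ii), so it preserves their common generic fiber. Granting the reduction, existence (i) is immediate: by the inductive hypothesis $\bar{\cM}_{g,n}(A_k^-)$ is weakly proper, hence universally closed, so $\C^*$ admits a closed $A_k^-$-stable limit, which is a fortiori $A_k$-stable; it need not be closed in $\bar{\cM}_{g,n}(A_k)$, but Lemma \ref{L:AMD} provides an isotrivial specialization to a maximally degenerate curve, which is closed by Proposition \ref{P:ClosedPoints}.

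For uniqueness (ii) in the reduced case, given two completions with closed central fibers $C_0, C_0'$, I would apply Lemma \ref{L:lifting} to each to obtain lifts $j_1^-, j_2^- \co \Delta \to \bar{\cM}_{g,n}(A_k^-)$ agreeing with the given family on $\eta$, with closed $A_k^-$-stable central fibers $C_1^-, C_2^-$ isotrivially specializing to $C_0, C_0'$ respectively. Because $j_1^-$ and $j_2^-$ share the same generic fiber and $\bar{\cM}_{g,n}(A_k^-)$ is weakly separated by induction, $C_1^- \cong C_2^-$. Lemma \ref{L:CoreFamilies}, applied to the isotrivial specializations $C_1^- \rightsquigarrow C_0$ and $C_2^- \rightsquigarrow C_0'$, then identifies the core of $C_0$ with the core of $C_1^- \cong C_2^-$, and likewise with the core of $C_0'$. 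Finally, Corollary \ref{C:CoreIso} shows that a maximally degenerate $A_k$-stable curve is determined up to isomorphism by its core, together with (when $k=3$) the lengths of its $H_{m,2}$-links; these lengths are read off the shared generic fiber and hence coincide for $C_0$ and $C_0'$. Therefore $C_0 \cong C_0'$, as needed.

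I expect the main obstacle to be the structural rigidity underlying the last step, namely that the $A_k^-$-stable limit — equivalently, its core — completely determines the closed $A_k$-stable limit. This is precisely what Lemma \ref{L:CoreFamilies} and Corollary \ref{C:CoreIso} encapsulate, and it ultimately rests on the characterization of closed points in Proposition \ref{P:ClosedPoints}; without it, uniqueness of $A_k^-$-limits would not propagate to uniqueness of $A_k$-limits. The normalization reduction of the second paragraph, though standard in spirit, is the other delicate point, since one must be sure that the $A_k$-singularities of the generic fiber specialize to $A_k$-singularities and that no spurious components survive after the separated $H_k$-pieces are reglued.
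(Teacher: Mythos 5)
Your proposal is correct and follows essentially the same route as the paper's proof: existence via the hypothesis together with the normalization/crimping machinery of Proposition \ref{prop-S-weakly-proper}, and uniqueness via Lemma \ref{L:lifting}, weak properness of $\bar{\cM}_{g,n}(A_k^-)$, Lemma \ref{L:CoreFamilies}, and Corollary \ref{C:CoreIso}. The only differences are cosmetic: the paper invokes Proposition \ref{prop-S-weakly-proper} as a black box for families whose generic fiber has an $A_k$-singularity rather than phrasing it as a reduction, and your explicit tracking of the $H_{m,2}$-link lengths when $k$ is odd is a point the paper glosses over but which is needed for, and consistent with, its use of Corollary \ref{C:CoreIso}.
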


\begin{proof}
{\it Existence of $A_{k}$-stable limits:}
Let $\C^* \rightarrow \Delta^*$ be a family of $A_k$-stable
curves.  If $\C^* \to \Delta^*$ is a family of $A_k^-$-stable
curves, then since $\SM_{g,n}(A_k^-)$ is universally closed by
hypothesis, there exists a limit (after a base change). Otherwise
$\C^* \to \Delta^*$ is a family of curves in
$\bar{\cS}_{g,n}(A_k)$, which is universally closed by
Proposition~\ref{prop-S-weakly-proper}, so again there exists a
limit after a base change.
 
 \medskip
 \noindent
{\it Uniqueness of closed $A_k$-stable limits: }
Suppose we have a diagram 
$$
\xymatrix{
 \Delta^* \ar[r] \ar[d]									&  \bar{\cM}_{g,n}(A_k) \ar[d] \\
 \Delta \ar[r] \ar@<.5ex>[ur]^{h_1} \ar@<-.5ex>[ur]_{h_2}		& \Spec \CC
}
$$
with two lifts $h_1, h_2: \Delta \to  \bar{\cM}_{g,n}(A_k)$ such that $h_1(0), h_2(0) \in |\bar{\cM}_{g,n}(A_k)  \times_\CC \Delta|$ are closed. If $h_1(\eta)=h_2(\eta)$ lies in $\bar{\cS}_{g,n}(A_k)$, then $h_1(0) = h_2(0)$ by Proposition~\ref{prop-S-weakly-proper}. Otherwise $h_1(\eta)=h_2(\eta)$ lies in $\SM_{g,n}(A_k^-)$.

In this case, let $\C_1 \rightarrow \Delta$ and $\C_2 \rightarrow \Delta$ be the
two families induced by $h_1$ and $h_2$. We must show that $C_1
\simeq C_2$. Using Lemma \ref{L:lifting}, we can lift $h_1$, $h_2$ to maps $h_1^-, h_2^-: \Delta \rightarrow\SM_{g,n}(A_k^-)$ such that the special fibers $C_1^-, C_2^-$ of the associated families $\C_1^- \rightarrow \Delta$ and $\C_2^- \rightarrow \Delta$ satisfy:
\begin{enumerate}
\item $[C_1] \in \overline{[C_1^-]}$, $[C_2] \in \overline{[C_2^-]}$;
\item $[C_1^-]$, $[C_2^-]$ are closed points of $\SM_{g,n}(A_k^-)$.
\end{enumerate}

Now by the weak properness of $\SM_{g,n}(A_k^-)$, we know that $C_1^- \simeq C_2^-$. In particular, their cores are isomorphic. By Lemma \ref{L:CoreFamilies}, the isotrivial specialization $C_i^- \rightsquigarrow C_i$ induces an isomorphism of cores for $i=1,2$. Thus, the core of $C_1$ is isomorphic to the core of $C_2$. By Corollary \ref{C:CoreIso}, it follows that $C_1$ is isomorphic to $C_2$ as desired.
\end{proof}

Finally, it remains to prove the existence and uniqueness of limits in the case when the general fiber of $\C^* \rightarrow \Delta^*$ has $A_k$-singularities, or, equivalently, the weak properness of $\bar{\cS}_{g,n}(A_k):=\SM_{g,n}(A_k) \backslash \SM_{g,n}(A_k^-) $. As in the case of nodal curves, the result is reduced to the case where the general fiber has no $A_k$-singularities by considering the pointed normalization along the locus of $A_k$-singularities. However, there is one additional complication: since the crimping data of an $A_k$-singularity can vary in families, we must understand how to take the limits of families with varying crimping data.

\begin{prop} \label{prop-S-weakly-proper}
$\bar{\cS}_{g,n}(A_k)$ is weakly proper for $k \in \{2,3,4\}$.
\end{prop}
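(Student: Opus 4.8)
The plan is to establish the three defining properties of a weakly proper morphism for $\bar{\cS}_{g,n}(A_k) \to \Spec \CC$: finite type, universal closedness, and weak separatedness. Finite type is immediate, since $\bar{\cS}_{g,n}(A_k)$ is a closed substack of the finite type stack $\bar{\cM}_{g,n}(A_k)$ (Corollary \ref{C:Openness}). By Proposition \ref{prop_weakly_separated_k} and Remark \ref{remark-disc}, it suffices to test the remaining two conditions on families $(\C^* \to \Delta^*, \{\sigma_i^*\})$ over $\Delta^* = \Spec \CC((t))$ whose general fiber carries at least one $A_k$-singularity: I must produce a completion with special fiber in $\bar{\cS}_{g,n}(A_k)$ after base change (existence), and show that any two completions with closed special fibers have isomorphic central fibers (uniqueness).

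First I would reduce to the $A_k^-$ situation by normalizing along the $A_k$-locus. After a finite base change, the $A_k$-singularities of the general fiber are cut out by disjoint sections $\tau_1,\dots,\tau_c$. Since curves in $\bar{\cS}_{g,n}(A_k)$ have only $A_{\le k}$-singularities, distinct $A_k$-points cannot collide into a worse singularity, so Lemma \ref{L:LimitNode} guarantees that the pointed normalization $\tilde{\C}^* \to \C^*$ along $\bigcup_j \tau_j$ extends to a simultaneous normalization over $\Delta$. The normalized family $\tilde{\C}^*$, with the preimages of the $\tau_j$ as new marked points, is then a family of $A_k^-$-stable curves (possibly a disjoint union, one per connected component of the normalization) of lower genus, in which the $A_k$-singularities have been resolved. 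The local data recording how each smooth branch is \emph{crimped} back to an $A_{2m}$- (resp.\ $A_{2m+1}$-) singularity is captured by a map $\Delta^* \to \bar{\cS}_{m,1}$ (resp.\ $\Delta^* \to \bar{\cS}_{m,2}$).

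Existence of limits then follows by completing each piece separately. By the inductive hypothesis of Theorem \ref{main-theorem}, $\SM_{g',n'}(A_k^-)$ is weakly proper, so $\tilde{\C}^*$ admits a closed $A_k^-$-stable limit after base change. By Corollary \ref{cor-H-weakly-proper} the stacks $\bar{\cS}_{m,1}, \bar{\cS}_{m,2}$ are weakly proper, and by Proposition \ref{prop-H-S-minimal} each has a unique closed point, the monomial curve; hence the crimping data may be degenerated to monomial crimping. Re-imposing the resulting monomial $A_k$-singularity on the closed limit of $\tilde{\C}^*$ produces a family in $\bar{\cS}_{g,n}(A_k)$ completing the original one; here the explicit $[\AA^{m-1}/\GG_m]$-presentation of $\bar{\cS}_{m,\nu}$ from Proposition \ref{s-quotient-stack} is used to verify that the limiting crimping subalgebra of $\CC[[t]]$ still defines an honest $A_{2m}$- (resp.\ $A_{2m+1}$-) singularity rather than something more degenerate. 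For uniqueness, let $C_0$ be a closed special fiber of a completion. By Proposition \ref{P:ClosedPoints}, $C_0$ is maximally degenerate: its $A_k$-singularities lie on monomial $H_{m,1}$-tails/$H_{m,2}$-bridges and its core is a closed $A_k^-$-stable curve. Thus the core of $C_0$ is exactly the closed $A_k^-$-stable limit of $\tilde{\C}^*$, which is unique by weak properness of $\SM_{g',n'}(A_k^-)$, while monomiality pins down the crimping. By Corollary \ref{C:CoreIso} the isomorphism class of $C_0$ is determined by this data, so any two closed completions agree.

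The main obstacle, as the preceding discussion flags, is the varying crimping data: I must show both that the crimping admits a well-defined limit and that re-crimping a smooth pointed branch along the limiting data yields precisely an $A_k$-singularity. This is exactly where the quotient-stack description of $\bar{\cS}_{m,\nu}$ (Proposition \ref{s-quotient-stack}) and the identification of its unique closed point with the monomial singularity (Proposition \ref{prop-H-S-minimal}) do the essential work. By contrast, the genus bookkeeping in the normalization and the exclusion of colliding $A_k$-points via Lemma \ref{L:LimitNode} are comparatively routine, and the uniqueness statement is largely a bookkeeping consequence of Proposition \ref{P:ClosedPoints} and Corollary \ref{C:CoreIso}.
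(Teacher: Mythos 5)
Your overall architecture is the same as the paper's: normalize along the $A_k$-locus, complete the resulting family using the inductive weak properness of $\bar{\cM}_{g',n'}(A_k^-)$, treat the crimping data via the stacks $\bar{\cS}_{m,\nu}$, and deduce uniqueness from Proposition \ref{P:ClosedPoints}, Corollary \ref{C:CoreIso} and uniqueness of closed $A_k^-$-limits. But there is a genuine gap exactly at the step you yourself flag as the main obstacle: the re-crimping. You assert that, after extending the crimping map $\Delta^* \to \bar{\cS}_{m,\nu}$ over $\Delta$ with monomial limit, ``re-imposing the resulting monomial $A_k$-singularity on the closed limit of $\tilde{\cC}^*$ produces a family completing the original one.'' This is false as stated. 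Relative to a fixed trivialization of the $(2m-1)$-st order neighborhood of the section in the \emph{fixed} completed family, the crimping data is a subalgebra $K[(s+c_1s^2+\cdots+c_{m-1}s^{2m-2})^2,s^{2m},\ldots,s^{4m-1}]$ with $c_i\in K$, and its flat limit (a point of a Grassmannian, hence unique) is a degenerate subalgebra, \emph{not} an $A_k$-crimp, whenever some $c_i$ has a pole. The extension you obtain from universal closedness of $[\AA^{m-1}/\GG_m]$ exists only after acting by some $t^b\in\GG_m(K)$, i.e.\ after the substitution $c_i\mapsto t^{(2i-1)b}c_i$; this twist is not induced by any isomorphism of the completed family, and must be realized geometrically by blowing up the section $b$ times in the special fiber (so that $s\mapsto t^b s$), crimping, and then contracting $b-1$ semistable $\PP^1$'s. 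This Grassmannian-plus-blow-up computation, with $c_i=u_it^{b_i}$ and $b$ minimal such that $(2i-1)b+b_i\ge 0$, is precisely the heart of the paper's proof and is what is missing from yours. Its upshot is that the special fiber acquires a \emph{nodally attached} monomial $H_{m,1}$-tail (resp.\ $H_{m,2}$-bridge), rather than a monomial singularity crimped directly into the closed $A_k^-$-limit. Indeed the curve you propose is in general not a limit of the family at all: it has strictly fewer irreducible components than the generic fiber (components cannot merge in a flat family of reduced curves), and it would also contradict Proposition \ref{P:ClosedPoints}, which says the core of any closed point is $A_k^-$-stable and so carries no $A_k$-singularity --- making your existence construction inconsistent with your own uniqueness paragraph.

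Two secondary points. First, the paper does not normalize the whole curve at once: it first splits off the nodally attached $H_{m,1}$-tails/$H_{m,2}$-bridges of the generic fiber and completes those inside $\bar{\cH}_{m,1}$/$\bar{\cH}_{m,2}$, and only normalizes the core; your blanket claim that the pointed normalization along the entire $A_k$-locus is a family of $A_k^-$-stable curves is unjustified (for instance, for $k=3$ an $H_{1,2}$-bridge attached at the tacnode and a node is permitted in an $A_3$-stable curve, but after normalizing the tacnode it becomes a destabilizing bridge attached at a marked point; $A_k$-singularities sitting on tails likewise normalize to non-ample configurations). Second, in your uniqueness argument the special fiber of the pointed normalization of a completion is not the core of $C_0$ but the core together with semistable $\PP^1$'s (the normalized monomial tails), which must be contracted before one can invoke uniqueness of closed $A_k^-$-limits; this contraction step, present in the paper, should be made explicit.
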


\begin{proof} {\it Existence of limits:} Assume first that $k=2m$ is even. Given a family ($\cC^*
\arr \Delta^*,\sigmanstar)$ in $\bar{\cS}_{g,n}(A_k)$, we may decompose $\cC^*$ as:
$$
\K^* \cup \E_1^* \cup \ldots \cup \E_r^*, 
$$
where $\K^*$ picks out the core of the geometric fiber, and $\E_1^*, \ldots, \E_r^*$ comprise the nodally attached $H_{m,1}$-tails of the geometric fiber. Since $\overline{\H}_{m,1}$ is weakly proper (Corollary \ref{cor-H-weakly-proper}), we may complete the families $\E_i^*$ to families $\E_i$ such that the special fiber is a monomial $H_{m,1}$-tail. Thus, it suffices to to complete the family $\K^*$.
 
After a finite base change, we may assume there exist sections $\{\tau_i^*\}_{i=1}^m$ picking out the $A_k$-singularities of the geometric fiber, and let $\tilde{\K}^* \rightarrow \K^*$ be the pointed normalization of $\K^*$ along these sections. Note that $(\tilde{\K}^* \rightarrow \Delta, \{\tilde{\tau}^*_i\}_{i=1}^m)$ is now a family of $A_k$-stable curves with no $A_k$-singularities, i.e. the family is in fact $A_k^-$-stable. Thus, by Proposition \ref{theorem-induction-step1} and induction on the genus, we may complete $(\tilde{\K}^*,\{\tilde{\tau}^*_i\}_{i=1}^m)$ to a family $(\tilde{\K},\{\tilde{\tau}_i\}_{i=1}^m)$. Now we must ``recrimp" the sections $\{\tilde{\tau}_i\}_{i=1}^m$ back to $A_k$-singularities. We will show that this is always possible, possibly after making some blow-ups and blow-downs in the special fiber which will have the effect of introducing monomial $H_{m,1}$-tails at a subset of the points $\{\tilde{\tau}_i(0)\}_{i=1}^{m}$. Since the special fiber of $(\tilde{\K},\{\tilde{\tau}_i\}_{i=1}^m)$ is a pointed $A_k$-stable curve, the curve obtained by attaching monomial $H_{m,1}$-tails at any subset of the marked points will still be $A_k$-stable, so this completes the proof of existence of limits.

To prove the crimping statement, fix an isomorphism of the
$(2m-1)$\textsuperscript{th}-order neighborhood of $\tilde{\tau}_i$ in
$\tilde{\K}$ with $\Spec R[s]/(s)^{2m}$. Using the induced isomorphism of
the $(2m-1)$\textsuperscript{th}-order neighborhood of $\sigma^*$
in $\tilde{\K}^*$ with $\Spec K[s]/(s)^{2m}$, there corresponds to $\tilde{\K}^*
\arr \cC^* \arr \Delta^*$ a map from $\Delta^*$ to the
Grassmannian $\GG(m-1,2m-1)$. Fill this in to a map $\Delta \arr
\GG(m-1,2m-1)$ and consider the corresponding map $\tilde{\K} \arr \K$
over $\Delta$. The limit of the $A_k$-singularity of $\K^*$ in
$K_0$ may not be an $A_k$-singularity but we will show that this
can be rectified by blowing up $\tilde{\K}$ at $\sigma(0)$ a
suitable number of times.

The $K$-subalgebra of $K[s]/(s)^{2m}$ corresponding to the map
$\Delta^* \arr \GG(m-1,2m-1)$ can be written in the form
\[
K[(s + c_1 s^2 + \ldots + c_{m-1} s^{2m-2})^2,
s^{2m},\ldots,s^{4m-1}],
\]
where $c_1,\ldots,c_{m-1}\in K$. (Cf. the proof of
Proposition~\ref{first-order-action-even} and see
\cite[Example~1.111]{wyck} for details.) Let $t$ be a uniformizing
parameter of $R$. Let $\tilde{\K}'$ be obtained by blowing $\tilde{\K}$ up $b$
times at $\sigma(0)$. This has the effect of making the coordinate
change $s \mapsto t^b s$, so the $K$-subalgebra of $K[s]/(s)^{2m}$ corresponding to the map $(\tilde{\K}')^*
\arr \K^*$ over $\Delta^*$ is 
\[
K[(s + t^b c_1 s^2 + \ldots +
t^{(2m-3)b} c_{m-1} s^{2m-2})^2,s^{2m},\ldots,s^{4m-1}].
\]
For $1 \leq i \leq m-1$, write $c_i = u_i t_i^{b_i}$, where $u_i$
is a unit. Choose $b \geq 0$ minimal such that $(2i-1)b+b_i \geq
0$ for each $i$, so that $t^{(2i-1)b}c_i \in R$ for each $i$. For this choice of $b$, the
$R$-subalgebra of $R[s]/(s)^{2m}$ corresponding to the natural filling-in $\Delta \arr
\GG(m-1,2m-1)$ is
\[
R[(s + t^b c_1 s^2 + \ldots + t^{(2m-3)b} c_{m-1}
s^{2m-2})^2,s^{2m},\ldots,s^{4m-1}].
\]
Forming the corresponding map $\tilde{\K}' \arr \K$ over $\Delta$, the
limit in $K_0$ of the $A_k$-singularity of $\cC^*$ is evidently an
$A_k$-singularity, as required. Blowing down the $b-1$
nodally-attached $\PP^1$'s of $K_0$ in $\K$, we obtain the desired family.

The entire proof of existence of limits in the case that $k=2m+1$ is odd is similar. Here, we just discuss the analogue of the crimping statement. The relevant $K$-subalgebra of $K[s_1]/(s_1)^{m+1} \oplus K[s_2]/(s_2)^{m+1}$ is:
\[
K[s_1+c_1 s_1^2+\ldots c_{m-1} s_1^m
\oplus s_2, s_1^{m+1} \oplus 0,\ldots,s_1^{2m+1}\oplus 0, 0 \oplus
s_2^{m+1},\ldots,0 \oplus s_2^{2m+1}]
\]
One can arrange
that the limit $R$-subalgebra of $R[s_1]/(s_1)^{m+1} \oplus
R[s_2]/(s_2)^{m+1}$ is of the required form by blowing $\tilde{\K}$ up
$b$ times at $\sigma_1(0)$, where $b$ is chosen
minimal such that $ib+b_i \geq 0$ for $1 \leq i \leq m-1$, where
$c_i = u_i t^{b_i}$ as before.

\medskip
\noindent {\it Uniqueness of closed limits:} Let $(\cC \arr
\Delta,\sigman)$ be any completion of a given family $(\cC^* \arr
\Delta^*,\{\sigma^*\}_{i=1}^{n})$ such that the central fiber is a closed point of $\S_{g,n}(A_k)$. We will show that the isomorphism class of the central fiber is uniquely determined. We will prove the statement in the case $k=2m$ is even (the case where $k$ is odd is essentially identical).


As in Lemma \ref{L:LimitCanDecomp}, we may consider the normalization $\tilde{\C} \rightarrow \C$ along the locus of attaching nodes of the core decomposition of the generic fiber, and $\tilde{\C}$ decomposes as
$$
\K \cup \E_1 \cup \ldots \cup \E_r, 
$$
where $\K$ picks out the core of the geometric generic fiber, and $\E_1, \ldots, \E_r$ comprise the nodally attached $H_{m,1}$-tails of the geometric generic fiber. Now, since $\overline{\H}_{m,1}$ is weakly proper, the special fibers $E_i \subset \E_i$ are certainly unique (they are the unique monomial $H_{m,1}$-tails). It remains to show that the special fiber $K \subset \K$ is uniquely determined.

If $\K^*$ has no $A_k$-singularities, then the generic fiber is $A_k^-$-stable and the limit is uniquely determined by Proposition \ref{theorem-induction-step1}. Thus, we may assume that $\K^*$ contains at least one $A_k$-singularity, and we let $\{\tau_i\}_{i=1}^{m}$ be sections of $\K$ picking out the $A_k$-singularities of the generic fiber. Note that the limits $\tau_i(0)$ are necessarily $A_k$-sin\-gularities. Furthermore, by the characterization of closed points in Proposition \ref{P:ClosedPoints}, each of the limits $\tau_i(0)$ sits on a monomial $H_{m,1}$-tail. Now let $(\tilde{\K}, \tilde{\tau}_i)$ be the pointed normalization along $\{\tau_i\}_{i=1}^{m}$. Evidently, the general fiber of $(\tilde{\K}, \tilde{\tau}_i)$ is $A_k^-$-stable and the limit is $A_k^-$-stable, save for the existence of $m$ semistable $\P^1$'s containing each of the $m$ points $\tau_i(0)$.  Let $\tilde{\K} \rightarrow \K'$ be the blowdown of these $m$ $\P^1$'s, so that the special fiber $K'$ is $A_k^-$-stable. By the weak properness of $\SM_{g,n}(A_k^-)$, this limit $K'$ is uniquely determined. Since the limit $K$ is obtained simply by adjoining $m$ monomial $H_{m,1}$-tails to $K'$, we conclude that the isomorphism class of $K$ is uniquely determined as desired.
\end{proof}

\medskip\medskip

\subsection{$\bar{\cM}_{g,n}(A_k)$ weakly proper $\implies \bar{\cM}_{g,n}(A_k^+)$ weakly proper}

The following theorem is a formal consequence of the \'etale
local description of $\bar{\cM}_{g,n}(A_k)$ at a closed point
given in Proposition \ref{theorem-etale-variation}.

\begin{prop}
\label{theorem-induction-step2}
If $\bar{\cM}_{g,n}(A_k)$ is weakly proper, then $\bar{\cM}_{g,n}(A_k^+)$ is weakly proper.
\end{prop}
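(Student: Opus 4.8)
The statement is purely formal given Proposition \ref{theorem-etale-variation}, and the plan is to transport the weak properness of the good-moduli/VGIT quotients $\cW^{+}\to Y$ through the \'etale charts back to $\bar{\cM}_{g,n}(A_k^+)$. Since $\bar{\cM}_{g,n}(A_k^+)\hookrightarrow\bar{\cM}_{g,n}(A_k)$ is an open immersion, $\bar{\cM}_{g,n}(A_k^+)$ is automatically of finite type, so by Definition \ref{definition-weakly-proper} it remains to prove that $\bar{\cM}_{g,n}(A_k^+)\to\Spec\CC$ is universally closed and weakly separated. By the valuative criteria recorded after Definition \ref{definition-weakly-proper} and by Proposition \ref{prop_weakly_separated_k}, both may be tested on complete discrete valuation rings with algebraically closed residue field (indeed on $\Delta=\Spec\CC[[t]]$ for weak separatedness), so I may assume residue field $\CC$ throughout. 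The single tool is Proposition \ref{theorem-etale-variation}: at any closed point $x\in\bar{\cM}_{g,n}(A_k)$ it produces an affine, \'etale, stabilizer-preserving chart $f\co\cW=[\Spec A/G_x]\to\bar{\cM}_{g,n}(A_k)$ with $f(w)=x$, cartesian squares identifying $\cW^{+}=f^{-1}(\bar{\cM}_{g,n}(A_k^+))$ with the plus-chamber $g^{-1}([V^+/G_x])$, and good moduli spaces $\cW^{+}\to Y^{+}\to Y$ with $Y^{+}\to Y$ projective; in particular $\cW^{+}$ is weakly proper over $Y$.

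For universal closedness, start with a family $\Delta^{*}\to\bar{\cM}_{g,n}(A_k^+)$. Viewing it in $\bar{\cM}_{g,n}(A_k)$ and using the inductive hypothesis that $\bar{\cM}_{g,n}(A_k)$ is universally closed, I may extend it, after base change, to $h\co\Delta\to\bar{\cM}_{g,n}(A_k)$ whose central fibre $x=h(0)$ is closed. Applying the chart at $x$ and lifting exactly as in Lemma \ref{L:lifting} gives $\tilde h\co\Delta\to\cW$ with $\tilde h(0)=w$; its generic part lands in $\cW^{+}$ because $h(\eta)\in\bar{\cM}_{g,n}(A_k^+)$ and $\cW^{+}=f^{-1}(\bar{\cM}_{g,n}(A_k^+))$. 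Composing with $\cW\to Y$ yields $\Delta\to Y$, and since $\cW^{+}\to Y$ is universally closed the partial section $\Delta^{*}\to\cW^{+}$ extends, after a further base change, to $\Delta\to\cW^{+}$; pushing this forward along $f$ produces the desired $A_k^{+}$-stable limit.

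For weak separatedness, take two lifts $h_1,h_2\co\Delta\to\bar{\cM}_{g,n}(A_k^+)$ of a common $\Delta^{*}\to\bar{\cM}_{g,n}(A_k^+)$ with $h_1(0),h_2(0)$ closed in $\bar{\cM}_{g,n}(A_k^+)$. Regarding the common generic point $\xi$ in $\bar{\cM}_{g,n}(A_k)$, weak properness of $\bar{\cM}_{g,n}(A_k)$ (existence of a closed specialization by universal closedness, uniqueness by weak separatedness via Lemma \ref{lemma-specialization}) shows that $\xi$ has a \emph{unique} closed specialization $x$, and that $h_1(0)\rightsquigarrow x$ and $h_2(0)\rightsquigarrow x$. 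I then apply the chart at this single point $x$. Using that $f$ is flat (so generizations lift) I choose, over each $h_i(0)$, a point specializing to $w$, and lift the common $\Delta^{*}\to\cW^{+}$ once and then each family to $\tilde h_i\co\Delta\to\cW^{+}$ restricting to that shared generic lift (the extension of a fixed generic lift through the separated \'etale $f$ being unique). Because $Y^{+}$ is separated (projective over the affine $Y$), $\tilde h_1$ and $\tilde h_2$ have a common composite $\Delta\to Y^{+}$; hence, by weak separatedness of the good moduli space $\cW^{+}\to Y^{+}$ (Proposition \ref{prop-good-weakly-separated}(1)), one obtains $\tilde h_1(0)=\tilde h_2(0)$ in $|\cW^{+}\times_{Y^{+}}\Delta|$, and applying $f$ gives $h_1(0)=h_2(0)$, as required.

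The genuinely delicate point---what the phrase ``formal diagram chase'' conceals---is the \textbf{bookkeeping of closedness} through the chart. In the valuative criterion for $\cW^{+}\to Y^{+}$ one needs the lifted fibres $\tilde h_i(0)$ to be closed in the relative space $|\cW^{+}\times_{Y^{+}}\Delta|$, whereas the hypothesis only supplies closedness of $h_i(0)$ in $\bar{\cM}_{g,n}(A_k^+)$. The crux is therefore to show that the affine, \'etale, stabilizer-preserving (and, by Corollary \ref{corollary-algebraization}, saturated) chart $f$ \emph{reflects} closed points, so that $h_i(0)$ closed in $\bar{\cM}_{g,n}(A_k^+)$ forces $\tilde h_i(0)$ closed in $\cW^{+}$; the preimage of the residual gerbe of $h_i(0)$ being a disjoint union of copies of $BG_{h_i(0)}$ makes $\tilde h_i(0)$ an isolated point of its $f$-fibre, which upgrades to closedness using that $f$ is affine and $h_i(0)$ is closed. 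One then checks this persists in the fibre over $Y^{+}$, using that the good moduli space $\cW^{+}\to Y^{+}$ maps closed points to closed points, so that $\tilde h_i(0)$ sits in a closed fibre. A secondary recurring ingredient, needed merely to produce the lifts, is the lifting of the families and of the specializations $h_i(0)\rightsquigarrow x$ through $f$, which rests on flatness (generization lifting) together with separatedness of $f$. Everything else is a direct appeal to the weak properness of $\cW^{\pm}$ over $Y$ furnished by Proposition \ref{theorem-etale-variation}.
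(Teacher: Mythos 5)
Your overall architecture mirrors the paper's proof: existence is obtained by extending in $\bar{\cM}_{g,n}(A_k)$, lifting into the chart of Proposition \ref{theorem-etale-variation}, and using that $\cW^{+}\to Y$ is universally closed; uniqueness is obtained by producing a common closed specialization via weak properness of $\bar{\cM}_{g,n}(A_k)$, lifting into the chart at that point, and invoking weak separatedness of $\cW^{+}$ coming from its good moduli space. Your detour through $Y^{+}$ and Proposition \ref{prop-good-weakly-separated}(1), instead of the paper's use of weak separatedness of $\cW^{+}$ over $\CC$ (which follows since $Y$ is affine), is an immaterial variant, and your closedness bookkeeping is essentially the justification behind the paper's one-line assertion that $w_1,w_2$ are closed (minor quibble: the fiber of the affine \'etale $f$ over the residual gerbe is a disjoint union of $BH$'s for finite-index subgroups $H\leq G_{h_i(0)}$, not necessarily copies of $BG_{h_i(0)}$, since $f$ is only stabilizer preserving \emph{at} $w$; the topological conclusion is unaffected).

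There is, however, a genuine gap at the lifting step of your weak-separatedness argument. You fix an \emph{arbitrary} generic lift $\tilde h^{*}\co\Delta^{*}\to\cW^{+}$ and assert that each $h_i$ extends it to $\tilde h_i\co\Delta\to\cW^{+}$, justified only by the remark that such an extension is unique because $f$ is separated and \'etale. Uniqueness is correct but is not the issue: existence can fail. The fiber product $V_i=\cW^{+}\times_{\bar{\cM}_{g,n}(A_k^{+}),\,h_i}\Delta$ is an affine scheme, \'etale over $\Delta$, and a section over $\Delta^{*}$ extends over $\Delta$ only if the corresponding point lies on a connected component of $V_i$ with nonempty special fiber; for an arbitrary generic lift this need not hold (already $V=\Delta\sqcup\Delta^{*}$ is affine and \'etale over $\Delta$ and has a generic section with no extension --- the chart only sees an \'etale neighborhood of $x$, so a random lift can ``run off the chart'' as $t\to 0$). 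Moreover, nothing in your argument ties $\tilde h_1(0)$ and $\tilde h_2(0)$ to the points you chose over $h_1(0),h_2(0)$ by lifting generizations, so those choices do no work. The paper arranges this step in the opposite order precisely to avoid the problem: it first lifts the specializations $h_i(0)\rightsquigarrow h_0(0)$ at $w_0$ (flatness of $f$) to points $w_1,w_2\in|\cW^{+}|$ with $w_i\rightsquigarrow w_0$, then produces a \emph{single} point $\chi\in|\cW|$ over $\xi$ admitting specializations $\chi\rightsquigarrow w_i$ for $i=0,1,2$, and only then realizes these specializations by morphisms $\tilde h_i\co\Delta\to\cW^{+}$ lifting $h_i$ with $\tilde h_1|_{\Delta^{*}}\cong\tilde h_2|_{\Delta^{*}}$ (via Lemma \ref{lemma-specialization}); the shared generic lift is thus constructed as a common generization of the prescribed limit points, which is exactly what guarantees that both extensions exist. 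Your proof needs this step (or an equivalent substitute, e.g.\ exploiting that $g\co\cW\to[\Def(x)/G_x]$ is saturated and stabilizer preserving everywhere), not merely the uniqueness of extensions.
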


\begin{proof}
{\it Existence of $A_{k}^+$-stable limits:} Consider a diagram
\begin{equation}
\label{diagram-universally-closed}
\xymatrix{
\Delta^* \ar[r] \ar[d]	& \bar{\cM}_{g,n}(A_k^+) \ar[d] \\
 \Delta \ar[r]		& \Spec \CC
}
\end{equation}
Since $\bar{\cM}_{g,n}(A_k)$ is universally closed by hypothesis, after a base change there is an extension $h: \Delta \to \bar{\cM}_{g,n}(A_k)$ giving a commutative diagram
$$\xymatrix{
\Delta^* \ar[r] \ar[d]	& \bar{\cM}_{g,n}(A_k^+) \ar[d] \ar@{^(->}[r]	& \bar{\cM}_{g,n}(A_k) \\
 \Delta \ar[r] \ar@{-->}[rru]^h		& \Spec \CC
}$$
By Proposition \ref{theorem-etale-variation}, there is an affine scheme $\Spec A$ with an action of $G_x$ and an \'etale morphism $\cW = [\Spec A/G_x] \to  \bar{\cM}_{g,n}(A_k)$ whose image contains $h(0)$ and such that the diagram
\begin{equation}
\label{diagram-cartesian}
\xymatrix{
 \cW^{+} \ar[d] \ar@{^(->}[r]						& \cW= [\Spec A / G_x] \ar[d] \\
 \bar{\cM}_{g,n}(A_k^+) \ar@{^(->}[r] 						& \bar{\cM}_{g,n}(A_k) 
}
\end{equation}
is cartesian; furthermore, there are good moduli spaces $\cW \to W$ and $\cW^+ \to W^+$ with $W^+ \to W$ proper.  Since $\cW \to \bar{\cM}_{g,n}(A_k)$ is \'etale, after a base change there is an extension
$$\xymatrix{
								& \cW \ar[d] \\
\Delta \ar[r] \ar@{-->}[ur]				& \bar{\cM}_{g,n}(A_k) 
}$$
Since Diagram \ref{diagram-cartesian} is cartesian, there is a unique morphism $\Delta^* \to \cW^{+}$ extending the given morphisms $\Delta^* \to \bar{\cM}_{g,n}(A_k^+)$ and $\Delta^* \to \Delta \to \cW$.
We have a commutative diagram
$$\xymatrix{
\Delta^* \ar[r] \ar[d]  &	\cW^{+} \ar[d] \ar@{^(->}[r]		& \cW \ar[d] \\
\Delta \ar[rru]			&	W^{+} \ar[r]				& W
}$$
 Since $\cW^{+} \to W^{+}$ is universally closed (\cite[Theorem 4.16(ii)]{alper_good_arxiv}, the composition $\cW^{+} \to W$ is universally closed.  Therefore, after a base change there is a morphism $\Delta \to \cW^{+}$ which after composing with $\cW^{+} \to \bar{\cM}_{g,n}(A_k^+)$ gives the desired extension of Diagram \ref{diagram-universally-closed}.

\medskip \noindent
{\it Uniqueness of closed $A_k^+$-stable limits: }
Suppose we have a diagram 
$$
\xymatrix{
 \Delta^* \ar[r] \ar[d]									&  \bar{\cM}_{g,n}(A_k^+) \ar[d] \\
 \Delta \ar[r] \ar@<.5ex>[ur]^{h_1} \ar@<-.5ex>[ur]_{h_2}		& \Spec \CC
}
$$
with two lifts $h_1, h_2: \Delta \to  \bar{\cM}_{g,n}(A_k^+)$ such
that $h_1(0), h_2(0)$ are closed points in
$|\bar{\cM}_{g,n}(A_k^+)  \times_\CC \Delta|$.  Since
$\bar{\cM}_{g,n}(A_k)$ is weakly separated by hypothesis, after
possibly making a base change there is a lift $h_0: \Delta \to \bar{\cM}_{g,n}(A_k)$ such that 
$h_0(0) \in |\bar{\cM}_{g,n}(A_k) \times_\CC \Delta|$ is closed and there are specializations 
$h_1(0) \rightsquigarrow h_0(0)$ and $h_2(0) \rightsquigarrow h_0(0)$ in $ |\bar{\cM}_{g,n}(A_k) \times_\CC \Delta|$.  By Proposition \ref{theorem-etale-variation}, there exists a morphism $f: \cW \to \bar{\cM}_{g,n}(A_k)$ with $h_0(0) = f(w_0)$ for $w_0 \in |\cW|$ which induces a cartesian diagram as in Diagram \ref{diagram-cartesian}.  We have a commutative diagram 
$$
\xymatrix{
	&  \cW^{+} \ar[d] \ar@{^(->}[r]						& \cW \ar[d]^f \\
\Delta^* \ar[r] \ar[d] &  \bar{\cM}_{g,n}(A_k^+) \ar@{^(->}[r] 						& \bar{\cM}_{g,n}(A_k) \\
\Delta \ar@<.5ex>[ur]^{h_1} \ar@<-.5ex>[ur]_{h_2} \ar@/_2pc/[rru]^{h_0} 
}
$$

\noindent
Since $f$ is \'etale and $f^{-1}(\bar{\cM}_{g,n}(A_k^+)) = \cW^+$,
there exist unique points $w_1, w_2 \in |\cW^+|$ and
specializations $w_i \rightsquigarrow w_0$ over $h_i(0)
\rightsquigarrow h_0(0)$.  Let $\xi = h_0(\eta) \in \bar{\cM}_{g,n}(A_k)$ be the 
image of the generic point.
There exist $\chi \in |\cW|$ over $\xi$ and specializations
$\chi \rightsquigarrow w_i$ for $i=0,1,2$.
The specializations $\chi \rightsquigarrow w_i$ for $i=1,2$ can be realized, after
a base change, by morphisms $\tilde h_i: \Delta \to \cW^+$ which
lift $h_i: \Delta \to \bar{\cM}_{g,n}(A_k^+)$ such that $\tilde{h}_1|_{\Delta^*} \cong \tilde{h}_2|_{\Delta^*}$.  Note that $w_1$
and $w_2$ are necessarily closed in $|\cW^+ \times_\CC \Delta|$ as
$h_1(0)$ and $h_2(0)$ are closed in $|\bar{\cM}_{g,n}(A_k^+)
\times_\CC \Delta|$.  Since $\cW^+$ is weakly separated, it
follows that $w_1 = w_2$ so $h_1(0) = h_2(0)$ as required.
\end{proof}

\bibliography{references}{}
\bibliographystyle{amsalpha}

\end{document}